\theoremstyle{plain}
\newtheorem{theorem}{Theorem}[section]
\newtheorem{proposition}[theorem]{Proposition}
\newtheorem{lemma}[theorem]{Lemma}
\newtheorem{corollary}[theorem]{Corollary}
\newtheorem{conjecture}[theorem]{Conjecture}
\theoremstyle{definition}
\newtheorem{definition}[theorem]{Definition}
\newtheorem{example}[theorem]{Example}
\newtheorem{remark}[theorem]{Remark}
\numberwithin{equation}{section}
\newcommand{\bC}{\mathbb{C}}
\newcommand{\bN}{\mathbb{N}}
\newcommand{\bP}{\mathbb{Z}_+}
\newcommand{\bQ}{\mathbb{Q}}
\newcommand{\tcr}[1]{\textcolor{red}{#1}}
\newcommand{\tcb}[1]{\textcolor{blue}{#1}}
\newcommand{\Sym}{\ensuremath{\operatorname{Sym}}}
\newcommand{\set}{\mathrm{set}} 
\newcommand{\des}{\mathrm{des}} 
\newcommand{\comp}{\mathrm{comp}} 
\newcommand{\suchthat}{\;|\;}
\definecolor{dred}{rgb}{0.72,0,0} 
\newcommand{\bemph}[1]{\emph{#1}} 
\newcommand{\Asc}{\mathrm{Asc}}
\newcommand{\Dsc}{\mathrm{Des}}
\newcommand{\Des}{\mathrm{Des}}
\newcommand{\asc}{\mathrm{asc}} 
\newcommand{\dsc}{\mathrm{des}} 
\newcommand{\Inv}{\mathrm{Inv}}
\newcommand{\Cat}[1]{{\sf Cat}_{#1}}
\newcommand{\Mot}[1]{{\sf Mot}_{#1}}
\newcommand{\Stir}{{\sf Stir}} 
\newcommand{\Nar}{{\sf Nar}} 
\newcommand{\lpbt}{\mathcal{LT}} 
\newcommand{\pbt}{\mathcal{T}} 
\newcommand{\pbtu}{\pbt} 
\newcommand{\pbtl}{\pbt^{\ell}} 
\newcommand{\pbtun}{\pbtu_{n}} 
\newcommand{\pbtln}{\pbtl_{n}} 
\newcommand{\berntree}{\mathcal{T}^{B}} 
\newcommand{\leftt}{\ell(T)} 
\newcommand{\rightt}{\mathrm{r}(T)} 
\newcommand{\hypsemi}{\mathcal{I}} 
\newcommand{\hypcat}{\mathcal{C}} 
\newcommand{\hyplin}{\mathcal{L}} 
\newcommand{\hypbraid}{\mathcal{B}} 
\newcommand{\hypshi}{\mathcal{S}} 
\newcommand{\alpx}{\mathsf{x}} 
\newcommand{\rasc}{\mathrm{rasc}}
\newcommand{\lasce}{\mathrm{lasc}}
\newcommand{\rdes}{\mathrm{rdes}}
\newcommand{\ldes}{\mathrm{ldes}}
\newcommand{\pre}[1]{\mathrm{pre}(#1)} 
\newcommand{\wt}[1]{\mathrm{wt}(#1)} 
\newcommand{\typ}[1]{\mathbf{c}(#1)} 
\newcommand{\ltyp}[1]{\widecheck{\mathbf{c}}(#1)} 
\newcommand{\utyp}[1]{\widehat{\mathbf{c}}(#1)} 
\newcommand{\al}{\alpha}
\newcommand{\shape}[1]{\mathrm{sh}(#1)} 
\newcommand{\can}[1]{\mathrm{can}(#1)} 
\newcommand{\ex}{\mathrm{ex}} 
\newcommand{\mbx}[1]{\mathbf{x}^{#1}} 
\newcommand{\mbxs}[1]{\mathbf{x}_{#1}} 
\newcommand{\noncross}[1]{\mathrm{nc}(#1)} 
\newcommand{\noncrossing}[1]{\mathrm{NC}(#1)} 
\newcommand{\mnoncrossing}[1]{\mathrm{mNC}(#1)} 
\newcommand{\anoncrossing}[2]{\mathrm{mNC}^*(#1,#2)} 
\newcommand{\concat}{\bullet} 
\newcommand{\nconcat}{\odot} 
\DeclareMathOperator{\blk}{bk} 
\newcommand{\weightt}{\wt{T}} 
\newcommand{\ra}{\bar\rho}
\newcommand{\rd}{\rho}
\newcommand{\la}{\bar\lambda}
\newcommand{\ld}{\lambda}
\newcommand{\da}{\mathrm{da}}
\newcommand{\canopy}{\nu}
\newcommand{\ptl}{\mathscr{P\!T}^\ell}
\newcommand{\alps}{\mathsf{s}} 
\newcommand{\alpt}{\mathsf{t}} 
\renewcommand{\bP}{\mathbb{P}} 
\DeclareMathOperator{\inorder}{in} 
\DeclareMathOperator{\std}{std} 
\DeclareMathOperator{\rt}{root} 
\DeclareMathOperator{\SYT}{SYT}
\newcommand{\foulkes}{F} 
\DeclareMathOperator{\leftleaves}{ll} 
\DeclareMathOperator{\rightleaves}{rl} 
\DeclareMathOperator{\leftmarks}{lm}  
\DeclareMathOperator{\rightmarks}{rm}  
\definecolor{mygreen}{rgb}{0.23, 0.45, 0.23}
\newcommand\ifrac[2]{#1/#2}
\newcommand{\regions}[1]{\mathrm{Regions}(#1)} 
\newcommand{\tcgray}[1]{\textcolor{gray}{#1}}
\let\oldsout=\sout 
\renewcommand\sout[1]{\tcgray{\oldsout{#1}}}
\renewcommand\sout[1]{}
\newcommand{\rootof}[1]{\mathrm{root}(#1)} 
\newcommand{\node}[1]{\mathrm{Nodes}(#1)} 
\newcommand{\edge}[1]{\mathrm{Edges}(#1)} 
\newcommand{\Comp}{\mathrm{Comp}} 
\newcommand{\ja}{\mathrm{ja}} 
\newcommand{\jd}{\mathrm{jd}} 
\newcommand{\sdd}{\mathrm{sdD}} 
\newcommand{\sdu}{\mathrm{sdU}} 
\newcommand{\sad}{\mathrm{saD}} 
\newcommand{\sau}{\mathrm{saU}} 
\newcommand{\sa}{\mathrm{sa}} 
\newcommand{\sd}{\mathrm{sd}} 
\newcommand{\nodeone}{i}
\newcommand{\nodetwo}{j}
\newcommand{\nodethree}{k}
\newcommand{\vstart}{\mathrm{start}}
\newcommand{\vend}{\mathrm{end}}
\newcommand{\multiline}[1]{%
  \begin{tabularx}{\dimexpr\linewidth-\ALG@thistlm}[t]{@{}X@{}}
    #1
  \end{tabularx}
}
\algrenewcommand\algorithmicrequire{\textbf{Input:}}
\algrenewcommand\algorithmicensure{\textbf{Output:}}
\renewcommand\nomgroup[1]{%
  \item[\bfseries
  \ifstrequal{#1}{B}{Trees}{%
  \ifstrequal{#1}{D}{Marked trees}{%
  \ifstrequal{#1}{R}{Right-leaning trees}{%
  \ifstrequal{#1}{A}{Compositions, partitions, Sym, NSym}{%
  \ifstrequal{#1}{C}{Labeled trees}{%
  \ifstrequal{#1}{H}{Hyperplane arrangements}{%
  \ifstrequal{#1}{E}{Set partitions}{%
  \ifstrequal{#1}{Q}{Miscellaneous}{%
  }}}}}}}}%
]}
\newcommand{\addresseshere}{%
  \enddoc@text\let\enddoc@text\relax
}
\begin{document}

\title[Labeled  binary trees and Schur positivity]{Labeled binary trees, subarrangements of the Catalan arrangements, and Schur positivity}
\author{Ira M. Gessel}
\address{Department of Mathematics, Brandeis University, Waltham, MA 02453, USA}
\email{\href{mailto:gessel@brandeis.edu}{gessel@brandeis.edu}}

\author{Sean T. Griffin}
\address{Department of Mathematics, University of Washington, Seattle, WA 98195, USA}
\email{\href{mailto:stgriff@math.washington.edu}{stgriff@math.washington.edu}}

\author{Vasu Tewari}
\address{Department of Mathematics, University of Pennsylvania, Philadelphia, PA 19104, USA}
\email{\href{mailto:vvtewari@math.upenn.edu}{vvtewari@math.upenn.edu}}
\thanks{
The first author was supported by a grant from the Simons Foundation (\#427060, Ira Gessel).
The second author was supported by the ARCS Foundation Fellowship, as well as NSF Grant DMS-1101017.
The third author was supported by an AMS-Simons Travel Grant.}

\subjclass[2010]{Primary 05A19, 05E05,  05E18 ; Secondary 05A05, 05E10, 06A07}
\keywords{labeled trees, Schur positivity, hyperplane arrangements, ascents-descents}
\date{\today}

\begin{abstract}
In 1995, the first author introduced a multivariate generating function  {$G$} that tracks the distribution of ascents and descents in labeled binary trees.
In addition to proving that $G$ is symmetric, he conjectured that $G$ is Schur positive.
We prove this conjecture by expanding $G$ positively in terms of ribbon Schur functions. We obtain this expansion using a weight-preserving bijection whose inverse is inspired by the Push-Glide algorithm of Pr\'{e}ville-Ratelle and Viennot.
In fact, this weight-preserving bijection allows us to establish a stronger version of the first author's conjecture showing that the generating function restricted to labeled binary trees with a fixed canopy is still Schur positive.

We also discuss applications in the setting of hyperplane arrangements.
We show that a certain specialization of $G$ equals the Frobenius characteristic of the natural $\mathfrak{S}_n$-action on regions of the semiorder arrangement, which we then expand in terms of  {the Frobenius characteristics} of Foulkes characters.
We also construct an $\mathfrak{S}_n$-action on regions of the Linial arrangement using a set of trees studied by Bernardi, and subsequently compute the character of this action by employing Lagrange inversion. The resulting expression generalizes Postnikov's formula for the number of regions in the Linial arrangement.
As a final application, we prove $\gamma$-nonnegativity for the distribution of the number of right edges over local binary search trees.
\end{abstract}

\maketitle
\tableofcontents

\section{Introduction}\label{sec: Introduction}
The study of permutation statistics is a classical theme in algebraic combinatorics with its genesis in work by MacMahon \cite{MacMahon}.
An important statistic introduced by MacMahon is the descent statistic on permutations.
The generating function for the distribution of this statistic gives rise to the well-known Eulerian polynomials, which show up in many areas in mathematics. The reader is referred to \cite{Petersen} for a detailed survey.
Since the work of MacMahon, the descent statistic on permutations has been studied in depth, and yet it continues to inspire new research \cite{ShareshianWachs-1, ShareshianWachs-2}.
In this article, we study ascent-descent statistics on labeled plane binary trees.
For brevity's sake, by a tree, we will always mean a  plane binary tree.
Whether the tree is labeled or not will be clear from context.
We remark that the notion of descents has  been studied with regards to other combinatorial objects before, such as in the case of standard Young tableaux ($\mathrm{SYTs}$).
However, viewing $\mathrm{SYTs}$ as $P$-partitions reveals that their descents are in fact descents of permutations in disguise.
In contrast, the ascent-descent statistics that we study here are indeed different, as they depend on the embedding of the labeled trees in the plane and take into account the orientation of the edges.

More specifically, the ascent and descent statistics on labeled trees each come in two flavors depending on whether one compares the label of the parent node to the label of its right child or its left child. {Note that in our trees, we are allowing nodes to have just a left child or just a right child, see Subsection~\ref{subsec: unlabeled trees}. We always draw our trees embedded in the plane with the root on top.}
Given a positive integer $n$, let $\pbtln$ (respectively $\pbtun$) denote the set of labeled (respectively unlabeled) plane binary trees on $n$ nodes. 
The labels on the nodes are drawn from the set of positive integers $\bP$, allowing repeats.
A \bemph{standard labeling} of a tree $T \in \pbtun$ is a labeling of its nodes with distinct labels drawn from $[n]\coloneq \{1,\ldots,n\}$, and we call a tree  with a standard labeling a \bemph{standard labeled tree}.
For a labeled node $v$ in $T$, denote by $v^\ell$ the label of $v$.
If $v$ is the left child of $w$, we say the edge between them is a \bemph{left ascent} if $v^\ell \leq w^\ell$. Otherwise, we say the edge is a \bemph{left descent}.
Similarly, if a node $v$ has a right child $w$, we say the edge between them is a \bemph{right ascent} if $v^\ell \leq w^\ell$. Otherwise, we say the edge is a \bemph{right descent}.
{One can think of these four statistics by listing the labels of the edge from left to right and then considering whether this pair is an ascent or descent.}
For any labeled tree $T\in \pbtln$, let $\lasce(T)$, $\ldes(T)$, $\rasc(T)$ and $\rdes(T)$ denote the number of left ascents, left descents, right ascents, and right descents in $T$, respectively. {See Figure~\ref{fig:Ascents-Descents} for two examples of labeled trees with $3$ left ascents, $1$ left descent, $3$ right ascents, and $1$ right descent.}

We recover the case of ascents and descents of permutations by considering labeled trees in which no node has a left child, or alternatively, by considering labeled trees in which no node has a right child.
Thus, the study of these statistics on labeled binary trees is a natural generalization of the study of ascents and descents on permutations.

The first author, in the 1990s, initiated the study of these statistics and considered the following generating function tracking their distribution over the set of standard labeled trees.
\begin{equation}
B\coloneqq B(x;\la,\ld,\ra,\rd)=
\sum_{n\geq 1}
\sum_{\substack{T\in \pbtln\\T \text{ standard}}}
\la^{\lasce(T)}\ld^{\ldes(T)} \ra^{\rasc(T)}\rd^{\rdes(T)}
 \frac{x^n}{n!}.
\end{equation}
In unpublished work, the first author showed that $B$ satisfies the functional equation
\begin{equation}\label{eq: B functional equation}
\frac{(1+\la B)(1+\ra B)}{(1+\ld B)(1+\rd B)}=e^{[(\la\ra-\ld\rd)B+\la+\ra-\ld-\rd]x}.
\end{equation}
Subsequently, different proofs of Equation \eqref{eq: B functional equation} were  given by Kalikow \cite{Kalikow} and Drake \cite{Drake}.
From the definition of $B$, we observe that $B(x;\la,\ld,\ra,\rd)=B(x;\rd,\ra,\ld,\la)$ and $B(x;\la,\ld,\ra,\rd)=B(x;\ld,\la,\rd,\ra)$.
The former is explained by reflecting a standard labeled tree across a vertical line passing through its root, while the latter follows from changing the label of a node from $i$ to $n-i+1$ in a standard labeled tree in $\pbtln$.
Equation~\eqref{eq: B functional equation} brings to light another pair of symmetries, that
\begin{align}
B(x;\la,\ld,\ra,\rd)=B(x;\ra,\ld,\la,\rd)=B(x;\la,\rd,\ra,\ld).
\end{align}
These equalities are not obvious from the definition and a simple bijective proof for them remains elusive, although a complicated bijection can be derived from the work of Kalikow \cite{Kalikow}.

One impetus to return to the study of $B$ has been fueled by connections with  enumerative aspects of the theory of hyperplane arrangements.
Let $B_n\coloneq B_n(\la,\ld,\ra,\rd)$ denote  the coefficient of $x^n/n!$ in $B$ for $n\geq 1$. The expansion for $B_n$ when $1\leq n\leq 5$ is given in Appendix~\ref{app: expansions}.
The first author observed that certain evaluations of $B_n$ coincide with the number of regions in various well-known deformations of Coxeter arrangements \cite{Gessel-Oberwolfach}.
This viewpoint has been pursued in \cite{Corteel-Forge-Ventos, Forge, Tewari}, and a complete explanation has been offered by Bernardi \cite{Bernardi}.
Given a subset $A$ of $\{-1,0,1\}$, we can consider the arrangement in $\mathbb{R}^n$ consisting of all hyperplanes $x_i-x_j=a$ where $i<j$ and   $a\in A$.
For $A=\{0\}$, $A=\{-1,0,1\}$, $A=\{-1,1\}$, $A=\{0,1\}$ and $A=\{1\}$, the corresponding hyperplane arrangements in $\mathbb{R}^n$ are the \bemph{braid arrangement} $\mathcal{B}_n$, the \bemph{Catalan arrangement} $\hypcat_n$, the \bemph{semiorder arrangement} $\hypsemi_n$, the \bemph{Shi arrangement} $\hypshi_n$, and the \bemph{Linial arrangement} $\hyplin_n$ respectively.
These arrangements are very well studied \cite{Athanasiadis-Linusson, Headley, Postnikov-Stanley,Shi-Lecture notes, Shi-JLMS,Stanley-Pnas} and are  instances of deformations of Coxeter arrangements  called \bemph{truncated affine arrangements} \cite{Postnikov-Stanley}.
Various aspects of truncated affine arrangements have been studied in great detail in \cite{Athanasiadis-survey, Athanasiadis-JACo,Athanasiadis-EuJC,Postnikov-Stanley} and we refer  the reader to them for further information.
Remarkably, we have the following equalities in which the left-hand side is an  evaluation of $B_n$ and the right-hand side is the number of regions in a Coxeter arrangement deformation,
\begin{align}
B_{n}(1,1,1,1)&=\text{ number of regions in } \hypcat_n=\frac{n!}{n+1}\binom{2n}{n},\\
B_{n}(1,0,1,1)&=\text{ number of regions in } \hypshi_n=(n+1)^{n-1},\\
B_{n}(1,1,0,0)&=\text{ number of regions in } \hypbraid_n=n!,\\
B_{n}(1,0,1,0)&=\text{ number of regions in } \hyplin_n=\frac{1}{2^n}\sum_{k=0}^n \binom{n}{k}(k+1)^{n-1},\\
B_{n}(1,\zeta_6^{-1},1,\zeta_6)&= \text{ number of regions in } \hypsemi_n\label{eq:semi}.
\end{align}
In \eqref{eq:semi}, $\zeta_6$ denotes a primitive sixth root of unity.
Section~\ref{sec: Hyperplane arrangements} of this article is devoted to a representation-theoretic understanding of these equalities.

Our primary object of study is a multivariate generalization of $B$ introduced by the first author.
Let $\alpx =\{x_1,x_2,\ldots\}$ be a {set of commuting} indeterminates.
With every $T\in \pbtln$, we associate a monomial $\alpx^T$ as follows. For a node $v\in T$ labeled $i$, let $x_v$ be $x_i$.
Then
\begin{equation}
\alpx^T\coloneq\prod_{v\in T}x_v.
\end{equation}
Now consider the formal  power series in $\alpx$ with coefficients in $\mathbb{Q}[\la,\ld,\ra,\rd]$,
\begin{equation}
G\coloneq G(\alpx;\la,\ld,\ra,\rd)=\sum_{n\geq 1}\sum_{T\in \pbtln}
\la^{\lasce(T)}\ld^{\ldes(T)}\ra^{\rasc(T)}\rd^{\rdes(T)}\,\alpx^T.
\end{equation}
It transpires that $G$ is a symmetric function\footnote{ {The second and third authors refer to $G$ as \emph{Gessel's tree symmetric function} for this reason.}} in $\alpx$ with coefficients in $\mathbb{Q}[\la,\ld,\ra,\rd]$. This non-obvious fact follows from the following functional equation satisfied by $G$.
\begin{theorem}\label{thm: Gessel functional equation}
Let $H(z)=\sum_{n\geq 0}h_nz^n$ where $h_n$ denotes the $n$th complete homogeneous symmetric function. We have
\begin{equation}
\label{eq: functional equation}
\frac{(1+\la G)(1+\ra G)}{(1+\ld G)(1+\rd G)}=H((\la\ra-\ld\rd)G+\la+\ra-\ld-\rd).
\end{equation}
\end{theorem}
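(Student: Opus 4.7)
The plan is to prove the functional equation directly from the recursive decomposition of labeled plane binary trees, converting the combinatorial recurrence into a telescoping product.

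For each positive integer $k$, let $G_k$ denote the sub-generating function of $G$ restricted to trees whose root is labeled $k$, and let $U_k = \sum_{j \leq k} G_j$, so that $G = \lim_{k \to \infty} U_k$ as a formal power series and $G_k = U_k - U_{k-1}$. Decomposing each tree counted by $G_k$ into its root (contributing $x_k$) and its left and right subtrees, and using that a child labeled $a$ contributes a left (resp.\ right) ascent iff $a \leq k$ (weight $\la$, resp.\ $\ra$) and a descent otherwise (weight $\ld$, resp.\ $\rd$), one obtains the recursion
\begin{equation*}
G_k = x_k \bigl(1 + \ld G + (\la - \ld) U_k\bigr)\bigl(1 + \rd G + (\ra - \rd) U_k\bigr).
\end{equation*}
Setting $p = 1 + \ld G$, $q = 1 + \rd G$, $\alpha = \la - \ld$, $\beta = \ra - \rd$, and $f = (\la\ra - \ld\rd) G + (\la + \ra - \ld - \rd)$, this reads $G_k = x_k (p + \alpha U_k)(q + \beta U_k)$, while the identity to be proven becomes $\Phi(G) = H(f)$, where $\Phi(u) = (1 + \la u)(1 + \ra u)/((1 + \ld u)(1 + \rd u))$.

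The next step is to identify an auxiliary series $V_k$ built from $U_k$ (and possibly the tail $\bar U_k = G - U_k$) satisfying $V_0 = 1$, $V_\infty = \Phi(G)$, and the telescoping relation $V_k(1 - x_k f) = V_{k-1}$ for all $k \geq 1$. Taking the product of these relations over $k \geq 1$ and using $H(f) = \prod_{i \geq 1}(1 - x_i f)^{-1}$ then yields $\Phi(G) = H(f)$. The verification of the telescoping at each $k$ should reduce to the simple algebraic identity $(1 + \la G)(1 + \ra G) - (1 + \ld G)(1 + \rd G) = f G$ combined with the recursion $G_k = x_k (p + \alpha U_k)(q + \beta U_k)$.

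The main obstacle will be identifying the correct $V_k$. A direct calculation shows that the naive choice $V_k = (p + \alpha U_k)(q + \beta U_k)/((1 + \ld G)(1 + \rd G))$ misses the desired telescoping relation by a term proportional to $\alpha\beta\, G_k\, (G - U_k - U_{k-1})$, so the correct $V_k$ must incorporate an explicit correction in $\bar U_k$ designed to absorb this discrepancy. An alternative route would be to recast both sides as cycle index series for an appropriate weighted species of labeled plane binary trees, which would automatically lift the univariate functional equation~\eqref{eq: B functional equation} to its multivariate counterpart, with the exponential $e^{\cdots}$ replaced by the plethystic exponential $H(\cdots)$.
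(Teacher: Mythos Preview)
Your overall strategy---decomposing by root label and telescoping---is sound and genuinely different from the paper's proof (which passes through an auxiliary species of \emph{marked trees}, strips off an increasing ``backbone,'' and then invokes MacMahon's formula for $A(\alpx;s,t)$). In fact, once corrected, your approach yields a shorter and more self-contained argument. But as written there is a real gap.

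The error is in your recursion. You treat left and right edges symmetrically (``a child labeled $a$ contributes a left (resp.\ right) ascent iff $a\le k$''), but the paper's conventions are asymmetric: a right edge from a parent labeled $k$ to a child labeled $a$ is a right \emph{ascent} when $k\le a$, not when $a\le k$. The correct recursion is therefore
\[
G_k \;=\; x_k\,P_k\,Q_{k-1},\qquad
P_k = 1+\ld G+(\la-\ld)U_k,\quad
Q_k = 1+\ra G-(\ra-\rd)U_k,
\]
with $U_k$ in the left factor but $U_{k-1}$ in the right factor. The ``discrepancy'' term you compute, proportional to $\alpha\beta\,G_k(G-U_k-U_{k-1})$, is not a genuine obstruction---it is an artifact of having written the wrong right-hand factor.

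With the corrected recursion the telescoping is immediate and no correction is needed. Take $V_k = P_k/Q_k$. A two-line calculation (expand $P_{k-1}Q_k$ in terms of $P_kQ_{k-1}$ and use $(\la-\ld)Q_{k-1}+(\ra-\rd)P_k = f + (\la-\ld)(\ra-\rd)G_k$) gives
\[
P_{k-1}Q_k \;=\; P_kQ_{k-1}\,(1-x_kf),
\]
i.e.\ $V_k = V_{k-1}/(1-x_kf)$. Since $V_0=(1+\ld G)/(1+\ra G)$ and $V_\infty=(1+\la G)/(1+\rd G)$, taking the product over $k\ge1$ and using $H(f)=\prod_k(1-x_kf)^{-1}$ yields exactly the functional equation. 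The species/plethysm route you mention at the end is unnecessary.
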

\noindent Observe that the functional equation for $B$ in~\eqref{eq: B functional equation} can be obtained from~\eqref{eq: functional equation} by applying the homomorphism sending $h_n$ to $\ifrac{x^n\!}{n!}$.
This homomorphism has the crucial feature of sending the coefficient of $x_1x_2\cdots x_n$ in any symmetric function to the coefficient of $\ifrac{x^n\!}{n!}$ in its image.

Given that $G$ is a symmetric function, it is natural to ask for its expansion in the basis of Schur functions. This brings us to our first new result, which was originally conjectured by the first author \cite{Gessel-unpublished} in 1995.
\begin{theorem}\label{conj: schur positivity}
$G$ is Schur positive.
\end{theorem}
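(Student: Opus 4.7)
The plan is to prove Schur-positivity by producing an explicit positive expansion of $G$ in the basis of ribbon Schur functions. Since every ribbon Schur function is a skew Schur function for a connected ribbon shape, the Littlewood--Richardson rule makes it Schur-positive, so a nonnegative ribbon-expansion of $G$ will immediately settle Theorem~\ref{conj: schur positivity}. The abstract already flags both the expansion in ribbons and the stronger canopy-refined version, so I would organize the whole argument around the canopy $\canopy$ of the underlying unlabeled tree.

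Concretely, I would first partition $\pbtln$ according to the canopy of the underlying unlabeled tree, writing
\begin{equation*}
G \;=\; \sum_{n\geq 1}\sum_{\canopy}\;G_{\canopy}(\alpx;\la,\ld,\ra,\rd),
\end{equation*}
and then prove the refined statement that each $G_{\canopy}$ is a nonnegative $\bZ[\la,\ld,\ra,\rd]$-linear combination of ribbon Schur functions. The natural vehicle is a weight-preserving bijection
\begin{equation*}
\Phi \colon \{T\in\pbtln \text{ with canopy }\canopy\} \;\longrightarrow\; (\text{standard ribbon tableaux}) \times (\text{edge-decoration data})
\end{equation*}
so that the ascent-descent monomial $\la^{\lasce(T)}\ld^{\ldes(T)}\ra^{\rasc(T)}\rd^{\rdes(T)}$ is tracked by the decoration, and the underlying tableau contributes the ribbon Schur function of the appropriate shape after summing over standard labelings. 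Following the hint in the abstract, I would build the inverse of $\Phi$ as a tree-growing procedure modeled on the Push-Glide algorithm $\pg$ of Pr\'eville-Ratelle and Viennot: starting from a ribbon tableau together with compatible ascent-descent data, repeatedly apply (variants of) $\ttp$ and $\ttg$ to attach a node, and show that the insertion is reversible by reading off edge orientations in the canopy.

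The most delicate step --- and what I expect to be the main obstacle --- is showing that all four statistics $\lasce,\ldes,\rasc,\rdes$ behave coherently under every Push and Glide move, uniformly for trees with a fixed canopy. The Push-Glide framework was originally developed to interact with the Tamari order and handle a single ascent/descent pairing, so one must redo the local analysis of each Push and Glide move and check that it preserves the full quadruple of statistics, while routing the ambient combinatorial invariant (the descent composition of the image tableau) in such a way that each $G_{\canopy}$ lands in the span of a single ribbon Schur function up to the $\la,\ld,\ra,\rd$-weights. A secondary technical point will be identifying the correct compositions appearing in the ribbon expansion directly from $\canopy$; I would conjecture that this assignment matches the classical bijection between canopies of binary trees and compositions, and verify it by tracking descent sets through $\Phi$.

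Once $\Phi$ is constructed and shown to be a weight-preserving bijection, the proof concludes in one line: summing the contributions $\Phi(T)$ over all standard labelings of trees with canopy $\canopy$ yields $G_{\canopy}$ as a polynomial in $\la,\ld,\ra,\rd$ with coefficients that are ribbon Schur functions, hence Schur-positive. Summing over all canopies and all $n$ establishes Theorem~\ref{conj: schur positivity}, and in fact gives the stronger canopy-refined Schur-positivity statement promised in the abstract. As a sanity check, I would confirm that the resulting expansion, upon applying the principal specialization $h_n \mapsto x^n\!/n!$ from Theorem~\ref{thm: Gessel functional equation}, recovers the known functional equation~\eqref{eq: B functional equation} for $B$.
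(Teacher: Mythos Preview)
Your high-level strategy---prove ribbon-positivity via a weight-preserving bijection whose inverse is a Push--Glide variant, then deduce Schur-positivity---matches the paper's bijective proof in Section~\ref{sec: Proof of Main Theorem}. However, the proposal contains a structural misconception about the target of the bijection that would block the argument as stated.

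You expect $\Phi$ to map labeled trees with canopy $\canopy$ to ribbon tableaux with edge-decoration data, so that ``each $G_\canopy$ lands in the span of a single ribbon Schur function up to the $\la,\ld,\ra,\rd$-weights.'' This is false. For a fixed canopy there are in general many unlabeled tree shapes (a Tamari-interval count), and the resulting ribbon expansion of $G_{n,\canopy}$ is genuinely a sum of \emph{products} of ribbons, indexed by what the paper calls augmented nested partitions (Theorem~\ref{thm: ribbon expansion for fixed canopy}). Correspondingly, the paper's bijection $\Phi_{n,\canopy}$ does not land in ribbon tableaux; it lands in \emph{path-tree sequences}, alternating sequences of labeled lattice paths and smaller labeled trees. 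The recursion in the target mirrors the recursion in Theorem~\ref{thm: ribbon functional equation}: $G$ appears on both sides. Your proposed non-recursive target cannot encode this, and the conjectured ``canopy $\mapsto$ single composition'' assignment does not exist.

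A second smaller point: you speak of summing ``over all standard labelings'' to get ribbon Schur functions. But $G$ sums over all labelings with repeats allowed, weighted by $\alpx_T$; the ribbons arise because, once the prunable structure is fixed, the remaining labels along the outermost path vary freely subject to prescribed ascent/descent positions, which is exactly the definition of $r_\alpha$ in \eqref{eq: ribbon schur}.

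Finally, note that the paper also supplies a short algebraic route you omit: Theorem~\ref{thm: ribbon functional equation} follows directly from the functional equation of Theorem~\ref{thm: Gessel functional equation} together with MacMahon's identity \eqref{eq: MacMahon eulerian}, and Schur-positivity is then immediate from Proposition~\ref{prop: ribbon into schur}. The bijection is needed only for the canopy-refined Theorem~\ref{conj: refined Schur positivity}.
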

\noindent Here we mean that $G$ may be expressed as a sum of Schur functions $s_\lambda$ with coefficients in the semiring $\bN[\la,\ld,\ra,\rd]$.
Theorem~\ref{conj: schur positivity} follows from another recursive functional equation satisfied by $G$, which is also one of our main results.
\begin{theorem}\label{thm: ribbon functional equation}
We have
  \begin{equation}
    G = \sum_{n\geq 1}\sum_{\alpha\vDash n} (\la\ra\, G+\la+\ra)^{n-\ell(\alpha)}(\ld\rd\,G+\ld+\rd)^{\ell(\alpha)-1}\,r_\alpha,
  \end{equation}
where $r_\alpha$ denotes the ribbon Schur function indexed by the composition $\alpha$ and $\ell(\alpha)$ denotes the length of $\alpha$.
\end{theorem}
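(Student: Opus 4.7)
The plan is to derive this identity algebraically from the functional equation of Theorem~\ref{thm: Gessel functional equation}. Set $u := \la\ra G + \la + \ra$ and $v := \ld\rd G + \ld + \rd$, and let $R$ denote the right-hand side of the claimed identity. The strategy is to compute a closed form for $R$, and then verify that $R = G$ is equivalent to the already-established functional equation.

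The first step is to expand each ribbon using the signed inversion $r_\alpha = \sum_{\beta \succcurlyeq \alpha} (-1)^{\ell(\alpha) - \ell(\beta)} h_\beta$ and to swap the order of summation, grouping by $\beta$. For a fixed $\beta = (\beta_1, \ldots, \beta_k) \vDash n$, refinements $\alpha \preccurlyeq \beta$ are parametrized by tuples $(\gamma^{(1)}, \ldots, \gamma^{(k)})$ with $\gamma^{(i)} \vDash \beta_i$; the inner sum therefore factors across the parts of $\beta$ into a product of copies of
\[
\sum_{\gamma \vDash m} u^{m-\ell(\gamma)}\, v^{\ell(\gamma)}\, (-1)^{\ell(\gamma)-1}.
\]
Since there are $\binom{m-1}{j-1}$ compositions of $m$ with $j$ parts, the binomial theorem collapses each such factor to $v(u-v)^{m-1}$, yielding
\[
R = \sum_{\beta} h_\beta\, v^{\ell(\beta)-1}\, (u-v)^{|\beta| - \ell(\beta)}.
\]

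Next, because $h_\beta$ factors multiplicatively as $h_{\beta_1} \cdots h_{\beta_{\ell(\beta)}}$, summing over $\beta$ as a geometric series in the number of parts yields the closed form
\[
R = \frac{H(u-v)-1}{u - v\, H(u-v)}.
\]
Setting $R = G$ and clearing denominators gives $(1 + vG)\,H(u-v) = 1 + uG$. The factorizations $(1+\la G)(1+\ra G) = 1 + uG$ and $(1+\ld G)(1+\rd G) = 1 + vG$, together with $u - v = (\la\ra - \ld\rd)G + \la + \ra - \ld - \rd$, identify this as precisely the functional equation of Theorem~\ref{thm: Gessel functional equation}. Hence $R = G$, completing the proof.

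The main obstacle will be spotting the binomial collapse of the inner sum to $v(u-v)^{m-1}$; once that identity is in hand, all remaining manipulations are routine, and the computation reduces the theorem to the already-established functional equation.
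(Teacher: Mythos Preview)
Your proof is correct and follows essentially the same route as the paper's first proof. The paper quotes MacMahon's identity $A(\alpx;s,t)=\dfrac{H(s-t)-1}{s-tH(s-t)}$ directly and then substitutes $s=\la\ra G+\la+\ra$, $t=\ld\rd G+\ld+\rd$, whereas you re-derive that same closed form for $R$ from scratch via the inversion $r_\alpha=\sum_{\beta\succcurlyeq\alpha}(-1)^{\ell(\alpha)-\ell(\beta)}h_\beta$ and the binomial collapse; after that, both proofs reduce the statement to Theorem~\ref{thm: Gessel functional equation} by the factorizations $1+uG=(1+\la G)(1+\ra G)$ and $1+vG=(1+\ld G)(1+\rd G)$.

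One small bookkeeping remark: when you say the inner sum ``factors across the parts of $\beta$ into a product of copies'' of $\sum_{\gamma\vDash m} u^{m-\ell(\gamma)}v^{\ell(\gamma)}(-1)^{\ell(\gamma)-1}$, there is an overall $v^{-1}$ left over (since the original sum carries $v^{\ell(\alpha)-1}$, not $v^{\ell(\alpha)}$). Your subsequent formula $R=\sum_\beta h_\beta\,v^{\ell(\beta)-1}(u-v)^{|\beta|-\ell(\beta)}$ is correct, so this is only a presentational slip.
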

In fact, Theorem~\ref{thm: ribbon functional equation} implies the much stronger fact that $G$ may be expressed as a sum of ribbon Schur functions with coefficients in the semiring $\bN[\la\ra,\ld\rd,\la+\ra,\ld+\rd]$.
For $n\geq 1$, let $G_n\coloneqq G_n(\alpx;\la,\ld,\ra,\rd)$ denote the sum of the terms in $G$ of total degree $n$ in $\alpx$. The expansion for $G_n$ when $1\leq n\leq 5$ is given in Appendix~\ref{app: expansions}.
We use a certain class of decorated noncrossing partitions called \bemph{marked interlacing partitions} to give an expansion of $G_n$ in terms of ribbon Schur functions, stated next. Here, $\wt{\pi}$ is a product of expressions of the form $\la\ra$, $\ld\rd$, $\la+\ra$, and $\ld+\rd$ depending on the marked interlacing partition $\pi$. See Section~\ref{sec: fixed canopy} for the relevant definitions and notation, and see Section~\ref{sec: ribbon expansion} for the proof of Theorem~\ref{cor: Ribbon Expansion}.
\begin{theorem}\label{cor: Ribbon Expansion}
The formal power series $G_n$ has the following expansion in terms of ribbon Schur functions.
\begin{equation}
G_n = \sum_{\substack{\pi \in \mnoncrossing{n}\\ \pi = B_1/\dots/B_k}}\wt{\pi} r_{c(B_1)}r_{c(B_2)}\dots r_{c(B_k)}\label{eq:ribbonexpansion}.
\end{equation}
\end{theorem}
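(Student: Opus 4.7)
The plan is to iterate the functional equation of Theorem~\ref{thm: ribbon functional equation} and to recognize the resulting combinatorial structure as a sum over marked nested partitions. I would start by expanding both binomials on the right-hand side of Theorem~\ref{thm: ribbon functional equation}: for each composition $\alpha \vDash n$, the factor $(\la\ra G + \la+\ra)^{n-\ell(\alpha)}$ expands as a sum over choices of a subset of the $n-\ell(\alpha)$ ``ascent slots'' of $\alpha$ which contribute $\la\ra G$, while the remaining slots contribute $\la+\ra$; similarly for the $\ell(\alpha)-1$ ``descent slots'' in $(\ld\rd G + \ld+\rd)^{\ell(\alpha)-1}$. Each chosen slot is decorated (``marked'') and spawns one fresh copy of $G$ multiplied by either $\la\ra$ or $\ld\rd$, and the slot positions inherit their ascent/descent type from which binomial they came from.

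The key step is to substitute Theorem~\ref{thm: ribbon functional equation} into every spawned copy of $G$ and iterate. Since each application of the functional equation to a copy of $G$ produces at least one node contributing to the degree in $\alpx$, the degree-$n$ component $G_n$ is obtained after only finitely many iterations. Unfolding the recursion yields a forest-like object: the outer composition $\alpha$ serves as the root block $B_1$, each marked ascent or descent slot of $B_1$ carries a child block whose composition is chosen by the next application of the functional equation, and so on recursively. Because each spawned $G$ is inserted at a fixed slot of its parent block, the resulting blocks are linearly nested rather than interleaved, and the decorations on the slots record the type of attachment. This is exactly the data of a marked nested partition $\pi = B_1/\cdots/B_k$ as set up in Section~\ref{sec: fixed canopy}, and each unmarked ascent (resp.\ descent) slot contributes $\la+\ra$ (resp.\ $\ld+\rd$) while each marked one contributes $\la\ra$ (resp.\ $\ld\rd$), reproducing $\wt{\pi}$. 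Because the ribbon factors $r_\alpha$ produced at each level multiply but are never combined, the final coefficient is the product $r_{c(B_1)}r_{c(B_2)}\cdots r_{c(B_k)}$ in the statement.

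The main obstacle is the bookkeeping: one must verify that the recursive unfolding is a bijection with marked nested partitions of $n$, so that no term of the iterated expansion is counted twice and none is missed. Concretely, this requires matching the number of blocks at each level of the recursion with the number of marked slots at the previous level, matching the ascent/descent type of each marked slot with the corresponding decoration of the child block, and checking that the total degree in $\alpx$ across all blocks equals $n$. Once this correspondence is pinned down, the identity~\eqref{eq:ribbonexpansion} follows by collecting, for each marked nested partition $\pi$, the weight contributions of marked and unmarked slots into $\wt{\pi}$ and the block-wise ribbon contributions into $r_{c(B_1)}\cdots r_{c(B_k)}$. A short induction on $n$ — with the base case $n=1$ reading off the single-block nested partition with composition $(1)$ — packages the iteration cleanly and confirms the claim.
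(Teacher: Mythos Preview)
Your approach is correct in spirit and genuinely different from the paper's proof. The paper proves Theorem~\ref{cor: Ribbon Expansion} by first establishing the canopy-refined Theorem~\ref{thm: ribbon expansion for fixed canopy} via the explicit weight-preserving bijection $\Phi_{n,\canopy}\colon\pbtl_{n,\canopy}\to\ptl_{n,\canopy}$ of Section~\ref{sec: Proof of Main Theorem} together with the map $\Theta_{n,\canopy}$ to augmented nested partitions, and then summing over all canopies. Your route bypasses the bijection entirely and obtains Theorem~\ref{cor: Ribbon Expansion} directly by iterating the functional equation of Theorem~\ref{thm: ribbon functional equation}; this is more elementary and the paper itself hints at it in the remark following Proof~1 of Theorem~\ref{thm: ribbon functional equation}, but does not carry it out. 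The trade-off is that your method does not obviously yield the finer statement Theorem~\ref{thm: ribbon expansion for fixed canopy}, which the paper needs for Theorem~\ref{conj: refined Schur positivity}.

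One point requires care: your use of the word ``marked'' is inverted relative to the paper's definitions in Section~\ref{sec: fixed canopy}. In the paper, a \emph{marked} node of $\pi$ is one contributing a part break to $c(B)$; thus the marked nodes of $B_1$ are exactly your ``descent slots'' (the $\ell(\alpha)-1$ positions between parts of $\alpha$), while the paper's \emph{jumpers} are what you call ``marked'' (the slots that spawn a copy of $G$). Concretely, the four cases line up as: unmarked stepper $\leftrightarrow$ your unmarked ascent slot ($\la+\ra$), marked stepper $\leftrightarrow$ your unmarked descent slot ($\ld+\rd$), unmarked jumper $\leftrightarrow$ your marked ascent slot ($\la\ra\,G$), marked jumper $\leftrightarrow$ your marked descent slot ($\ld\rd\,G$). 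Once this dictionary is stated, the bijection between the iterated expansion and $\mnoncrossing{n}$ is straightforward: the root block $B_1$ has size $|\alpha|$ with $c(B_1)=\alpha$, the jumper positions are the slots spawning $G$, and the nested condition $i\in\max(\pi)\Rightarrow i+1\notin\min(\pi)$ is automatic since every child partition sits under a long arc of its parent block. You should make this dictionary explicit and then the induction on $n$ you outline goes through cleanly.
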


A fact worth noting is that after expanding out the products of ribbon Schur functions on the right hand side of~\eqref{eq:ribbonexpansion}, the coefficient of $r_{\alpha}$ for every $\alpha\vDash n$  evaluates to the \bemph{Catalan number} $\Cat{n}\coloneqq \frac{1}{n+1}\binom{2n}{n}$ upon setting $\la=\ld=\ra=\rd=1$.
Additionally, the coefficients of both $r_{(1,\dots,1)}$ and $r_{(n)}$ upon setting $\la=\ld=t$, and $\ra=\rd=q$ respectively are the \bemph{homogenized Narayana polynomials} \[\Nar_n(q,t)\coloneqq\displaystyle\sum_{k=0}^{n-1}\frac{1}{n}\binom{n}{k}\binom{n}{k+1}q^kt^{n-1-k}.\]

We provide two proofs of Theorem \ref{thm: ribbon functional equation}, each with its own merits.
In Section~\ref{sec:first proof}, we give an  algebraic proof that follows from Theorem \ref{thm: Gessel functional equation} combined with a result of  MacMahon \cite[Vol. 1, p.~186]{MacMahon}.
Our second proof, which we postpone until Section~\ref{sec: Proof of Main Theorem}, utilizes a weight-preserving bijection whose inverse is inspired by the Push-Glide algorithm of Pr\'eville-Ratelle and Viennot \cite{Preville-Ratelle-Viennot}. We use our weight-preserving bijection to prove Theorem~\ref{cor: Ribbon Expansion} in Section~\ref{sec: ribbon expansion}. 
The weight-preserving bijection will then allow us to establish a further refinement of Theorem~\ref{conj: schur positivity}, which was conjectured by the first author \cite{Gessel-unpublished}.
\begin{theorem}\label{conj: refined Schur positivity}
Fix a positive integer $n$. Let $\canopy$ be a word of length $n-1$ in the alphabet $\{U,D\}$, and let $\pbt_{n,\canopy}^{\ell}$ denote the set of labeled trees on $n$ nodes with canopy $\canopy$.
We have that the generating function
\begin{align}\label{eq: fixed canopy sum}
G_{n,\canopy}\coloneq G_{n,\canopy}(\alpx;\la,\ld,\ra,\rd)=\sum_{T\in \pbt_{n,v}^{\ell}}\la^{\lasce(T)}\ld^{\ldes(T)}\ra^{\rasc(T)}\rd^{\rdes(T)}\,\alpx^T
\end{align}
is Schur positive.
\end{theorem}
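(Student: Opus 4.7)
My plan is to use the weight-preserving bijection of Section~\ref{sec: Proof of Main Theorem} in a canopy-preserving manner. That bijection, whose inverse is inspired by the Push-Glide algorithm of Pr\'eville-Ratelle and Viennot, sends each labeled tree $T \in \pbtln$ to a pair consisting of a marked nested partition $\pi = B_1/\dots/B_k$ of $[n]$ together with a sequence of fillings of the ribbon shapes $c(B_1),\dots,c(B_k)$. The weight $\wt{\pi}$ is a monomial in $\la\ra$, $\ld\rd$, $\la+\ra$, $\ld+\rd$ that records the ascent-descent statistics of $T$, and summing over fillings recovers Theorem~\ref{cor: Ribbon Expansion}.

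The first key step is to verify that this bijection respects the canopy. Concretely, I would show that the canopy $\canopy$ of $T$ depends only on the marked nested partition $\pi$ associated to $T$, not on the filling; equivalently, for each canopy word $\canopy \in \{U,D\}^{n-1}$ there is a subset $\mnoncrossing{n,\canopy} \subseteq \mnoncrossing{n}$ such that the bijection restricts to a weight-preserving bijection between $\pbt^{\ell}_{n,\canopy}$ and the union of fillings of the ribbon sequences indexed by $\pi \in \mnoncrossing{n,\canopy}$. The plausibility of this compatibility stems from the fact that the canopy records purely the left/right-child status of non-extreme leaves of $T$, which is also the local shape data that the Push-Glide style moves manipulate while building $\pi$.

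Granting canopy-compatibility, restricting the right-hand side of Theorem~\ref{cor: Ribbon Expansion} to trees with canopy $\canopy$ yields
\begin{equation}
G_{n,\canopy} = \sum_{\substack{\pi \in \mnoncrossing{n,\canopy}\\ \pi = B_1/\dots/B_k}} \wt{\pi}\, r_{c(B_1)}\, r_{c(B_2)} \cdots r_{c(B_k)}.
\end{equation}
Since ribbon Schur functions are Schur-positive (for instance, by the Littlewood-Richardson rule) and products of Schur-positive symmetric functions are Schur-positive, each summand is Schur-positive with coefficients in the semiring $\bN[\la\ra,\ld\rd,\la+\ra,\ld+\rd]$, which proves the theorem.

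The main obstacle will be the canopy-preservation claim itself. The canopy is a local statistic about non-extreme leaves of $T$, whereas the marked nested partition packages global information about the labeling. Establishing the compatibility will require a careful induction on the Push-Glide moves, checking that each local move either preserves the relevant canopy letter or alters it in a way that exactly mirrors the corresponding modification of the marked nested partition being built. Once this invariance is in hand, Schur-positivity of $G_{n,\canopy}$ is immediate from the ribbon expansion.
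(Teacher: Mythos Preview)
Your overall strategy—restricting the weight-preserving bijection to a fixed canopy and reading off a ribbon expansion—is exactly the right one, and it is what the paper does. However, your key claim that the canopy of $T$ depends only on the underlying marked nested partition $\pi$ is false, and with it the conclusion that the coefficients lie in $\bN[\la\ra,\ld\rd,\la+\ra,\ld+\rd]$. Already for $n=2$: the unique marked nested partition with block $\{1,2\}$ and $v_1$ unmarked has weight $\la+\ra$, but the two labeled-tree shapes it indexes have canopies $U$ and $D$ respectively, contributing $\la\, r_2$ to $G_{2,U}$ and $\ra\, r_2$ to $G_{2,D}$. Thus a single $\pi\in\mnoncrossing{n}$ typically splits across several canopies, and $G_{n,\canopy}$ cannot in general be written with coefficients in $\bN[\la\ra,\ld\rd,\la+\ra,\ld+\rd]$.

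The fix, which is what the paper carries out, is to refine marked nested partitions by decorating each short arc with a $U$ or $D$ label; these are the \emph{augmented} nested partitions $\pi^\ast$. The bijection then associates to each $T$ an augmented nested partition whose word $w(\pi^\ast)$ equals $\can{T}$, and the weight $\wt{\pi^\ast}$ is a monomial in the individual parameters $\la,\ld,\ra,\rd$ (together with $\la\ra$ and $\ld\rd$ for jumpers), not in the symmetric combinations. This yields Theorem~\ref{thm: ribbon expansion for fixed canopy}, from which Schur-positivity of $G_{n,\canopy}$ follows by Proposition~\ref{prop: ribbon into schur}. Summing over the $U/D$ decorations recovers the $(\la+\ra)$ and $(\ld+\rd)$ factors and collapses the expansion back to Theorem~\ref{cor: Ribbon Expansion}. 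So your proposed set $\mnoncrossing{n,\canopy}$ should be replaced by $\anoncrossing{n}{\canopy}$, and your displayed expansion by the one in Theorem~\ref{thm: ribbon expansion for fixed canopy}.
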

\noindent Details on the terminology used in Theorem~\ref{conj: refined Schur positivity} can be found in Section \ref{sec: preliminaries}.

In fact, using our weight-preserving bijection, we obtain an expansion of $G_{n,\canopy}$ in terms of ribbon Schur functions using a class of decorated noncrossing partitions called \bemph{augmented interlacing partitions}. See Section~\ref{sec: fixed canopy} for the relevant definitions and notation, and see Section~\ref{sec: ribbon expansion} for the proof of Theorem~\ref{thm: ribbon expansion for fixed canopy}.
\begin{theorem}\label{thm: ribbon expansion for fixed canopy}
For $n\geq 1$ and $\canopy\in \{U,D\}^{n-1}$, we have
  \[
    G_{n,\canopy} = \sum_{\substack{\pi^\ast\in \anoncrossing{n}{\canopy}\\ \pi^\ast= B_1/\dots/B_k}}\wt{\pi^\ast} r_{c(B_1)}r_{c(B_2)}\dots r_{c(B_k)}.
  \]
\end{theorem}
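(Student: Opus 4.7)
The plan is to deduce Theorem~\ref{thm: ribbon expansion for fixed canopy} by running the same weight-preserving bijection that establishes Theorem~\ref{cor: Ribbon Expansion}, but tracking the canopy carefully throughout. Specifically, I would set up a bijection
\[
  \Phi\colon \pbtln \longrightarrow \bigl\{(\pi, (R_1,\dots,R_k)) : \pi = B_1/\dots/B_k \in \mnoncrossing{n},\ R_i \text{ a ribbon filling of shape } c(B_i)\bigr\},
\]
whose inverse is modeled on the Push--Glide algorithm of Pr\'eville-Ratelle and Viennot. The bijection should be arranged so that for $T \in \pbtln$ with image $(\pi,(R_1,\dots,R_k))$, the commutative monomial $\alpx_T$ equals $\prod_i \alpx_{R_i}$ and the four-variable weight $\la^{\lasce(T)}\ld^{\ldes(T)}\ra^{\rasc(T)}\rd^{\rdes(T)}$ factors as $\wt{\pi}$, where each factor of $\wt{\pi}$ records the type of ascent/descent accounted for at one step of the inverse algorithm. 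Summing over fillings converts each $R_i$ into $r_{c(B_i)}$, which gives Theorem~\ref{cor: Ribbon Expansion}.

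Next, I would identify the canopy of $T$ as a statistic that is already visible on the image side of $\Phi$. Reading the canopy $\canopy \in \{U,D\}^{n-1}$ amounts to comparing, for each pair of consecutive nodes in the infix order of the underlying unlabeled tree, whether the local shape at that gap is ascending or descending; this is structural data that does not depend on labels. The key lemma is that under $\Phi$, the canopy of $T$ is determined entirely by the pair $(\pi, \text{augmentation data from the }R_i\text{'s})$, and this determination is independent of the actual values inserted into the ribbons. The preimage of a fixed canopy $\canopy$ will then consist of exactly those $(\pi, (R_1,\dots,R_k))$ whose underlying augmented noncrossing-partition data lies in $\anoncrossing{n}{\canopy}$, by definition of augmented nested partitions in Section~\ref{sec: fixed canopy}.

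Given this canopy-compatibility, restricting $\Phi$ to $\pbtlnv$ yields a bijection onto pairs $(\pi^\ast,(R_1,\dots,R_k))$ with $\pi^\ast \in \anoncrossing{n}{\canopy}$ and $R_i$ a ribbon filling of $c(B_i)$. Applying this in the definition \eqref{eq: fixed canopy sum} of $G_{n,\canopy}$ and summing over ribbon fillings for each block gives
\[
  G_{n,\canopy} = \sum_{\substack{\pi^\ast \in \anoncrossing{n}{\canopy}\\ \pi^\ast = B_1/\dots/B_k}} \wt{\pi^\ast}\, r_{c(B_1)}\cdots r_{c(B_k)},
\]
as desired. The Schur-positivity in Theorem~\ref{conj: refined Schur positivity} then follows because products of ribbon Schur functions are Schur-positive and $\wt{\pi^\ast} \in \bN[\la\ra,\ld\rd,\la+\ra,\ld+\rd]$.

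The main obstacle is verifying the canopy-compatibility claim: one must show that the inverse Push--Glide-type algorithm reconstructs the parent/child left-right orientations in a way that translates the $U/D$-pattern of the canopy into exactly the augmentation data used to define $\anoncrossing{n}{\canopy}$. This will require a careful local analysis at each step of the algorithm, checking that whenever the algorithm decides whether a newly inserted node becomes a left or right child, the decision is recorded by the same symbol that appears in the corresponding position of the canopy. Once this local correspondence is in place, the rest of the argument is a routine translation from the weighted bijection into the ribbon Schur expansion.
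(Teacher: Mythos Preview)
Your overall strategy matches the paper's: build a weight- and inorder-preserving bijection out of $\pbtl_{n,\canopy}$, verify it respects canopy, and sum over fillings to obtain the ribbon product. Two points deserve attention, however.

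First, the paper's construction is \emph{recursive} in a way your sketch does not indicate. The bijection $\Phi_{n,\canopy}$ of Section~\ref{sec: Proof of Main Theorem} sends a labeled tree not directly to a nested partition but to a \emph{path-tree sequence} $(\canopy^{(0)},T_1,\canopy^{(1)},\dots,T_m,\canopy^{(m)})$: the maximal prunable nodes determine subtrees $T_h$ that are pruned off, and the remaining nodes form the labeled paths $\canopy^{(h)}$. This single application of $\Phi_{n,\canopy}$ only produces the outermost block $B_1$ of the nested partition (namely the path nodes). To obtain the inner blocks one must apply $\Phi$ \emph{again} to each subtree $T_h$, and so on; this is packaged in the paper as a recursive map $\Theta_{n,\canopy}$ from path-tree sequences to $\anoncrossing{n}{\canopy}$. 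Without this iteration your proposed map would see only one level of nesting, so the identity $\sum_{S\in\Theta^{-1}(\pi^\ast)}\alpx_S = r_{c(B_1)}\cdots r_{c(B_k)}$ is proved by induction on $n$, not in one shot.

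Second, a small but real confusion: the $U/D$ augmentation on short arcs of $\pi^\ast$ is \emph{not} ``augmentation data from the $R_i$'s.'' It records whether the corresponding step of the lattice path $\canopy^{(h)}$ is an up or down step, which is pure shape data determined by the canopy and independent of the labels. Accordingly, $\wt{\pi^\ast}$ for an \emph{augmented} nested partition is a monomial in $\la,\ld,\ra,\rd$ individually, not in the symmetric combinations $\la\ra,\ld\rd,\la+\ra,\ld+\rd$; those combinations only appear after summing over all $U/D$ augmentations of a fixed marked nested partition, which is precisely how the paper derives Theorem~\ref{cor: Ribbon Expansion} from Theorem~\ref{thm: ribbon expansion for fixed canopy} rather than the other way around.
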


In Section~\ref{sec: Hyperplane arrangements}, we connect specializations of $G$ to deformations of Coxeter arrangements, focusing in particular on semiorder and Linial arrangements. Our main results in this setting are the following.
\begin{theorem}\label{thm: intro hyperplane semiorder}
The Frobenius characteristic of the natural $\mathfrak{S}_n$-action on the set of regions of the semiorder arrangement $\hypsemi_n$ is $G_n(\alpx;1,\zeta_6^{-1},1,\zeta_6)$.
\end{theorem}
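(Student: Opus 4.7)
The plan is to compute $\mathrm{ch}(\bC[\regions{\hypsemi_n}])$ directly via a fixed-point argument and then match it to the specialization of $G_n$ using the functional equations. The geometric input is the classical fixed-point principle for real hyperplane arrangements: if a finite group acts on $\bR^N$ by linear isometries preserving an arrangement $\mathcal{A}$, then for each element $w$ the number of regions of $\mathcal{A}$ fixed setwise by $w$ equals the number of regions of the restriction $\mathcal{A}^w$ to the fixed subspace $V^w$. (Any fixed region meets $V^w$, its intersection with $V^w$ is a region of $\mathcal{A}^w$, and conversely every region of $\mathcal{A}^w$ sits in a unique $w$-fixed region of $\mathcal{A}$.) Applied to $\mathfrak{S}_n \curvearrowright \hypsemi_n$ with $w$ of cycle type $\mu=(\mu_1,\dots,\mu_{\ell(\mu)})$: the fixed subspace is $V^w \cong \bR^{\ell(\mu)}$ via the cycle-coordinate map $y_a:=x_i$ for any $i$ in cycle $a$, and each hyperplane $x_i-x_j=\pm 1$ either has empty intersection with $V^w$ (when $i,j$ share a cycle) or restricts to $y_a-y_b=\pm 1$ (when $i\in a$, $j\in b$, $a\neq b$). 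Thus $\hypsemi_n^w = \hypsemi_{\ell(\mu)}$ as sets of hyperplanes, so by~\eqref{eq:semi},
\[ |\mathrm{Fix}(w)| = B_{\ell(\mu)}(1,\zeta_6^{-1},1,\zeta_6). \]

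Substituting this into the standard cycle-index formula for the Frobenius characteristic of a permutation representation and regrouping by $\ell=\ell(\mu)$ gives
\begin{align*}
\sum_{n\geq 0} \mathrm{ch}\bigl(\bC[\regions{\hypsemi_n}]\bigr)
\;&=\; \sum_{\mu}\frac{B_{\ell(\mu)}(1,\zeta_6^{-1},1,\zeta_6)}{z_\mu}\,p_\mu \\
\;&=\; \sum_{\ell\geq 0} B_\ell(1,\zeta_6^{-1},1,\zeta_6)\,\frac{y^\ell}{\ell!},
\end{align*}
where $y:=\sum_{k\geq 1}p_k/k$, so that $e^y=H(1)$. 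The second equality is the standard identity $\sum_{\mu:\,\ell(\mu)=\ell}p_\mu/z_\mu = y^\ell/\ell!$, obtained by extracting the $t^\ell$-coefficient from $\exp(ty)$.

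Finally, I identify the right-hand side with $\sum_{n\geq 0} G_n|_{\mathrm{spec}}$ (the $n=0$ summand defined as $1$). At $(\la,\ld,\ra,\rd)=(1,\zeta_6^{-1},1,\zeta_6)$ one computes $\la\ra-\ld\rd=0$, $\la+\ra=2$, and $\ld+\rd=\zeta_6+\zeta_6^{-1}=1$, so Theorem~\ref{thm: Gessel functional equation} collapses to
\[ \frac{(1+G)^2}{1+G+G^2} \;=\; H(1) \;=\; e^y, \]
while specializing~\eqref{eq: B functional equation} the same way and then formally substituting $x=y$ shows that $B(y)|_{\mathrm{spec}}$ satisfies the identical equation. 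The map $F\mapsto(1+F)^2/(1+F+F^2)$ sends $0\mapsto 1$ with derivative $1$ at $F=0$, so this equation has a unique formal-power-series solution with zero constant term (solvable degree-by-degree in the graded completion of the ring of symmetric functions). Therefore $G|_{\mathrm{spec}} = B(y)|_{\mathrm{spec}}$, and matching homogeneous degree-$n$ components proves the theorem. The main obstacle---both conceptually and in execution---is this third step: one must recognize that the coincidence $\la\ra=\ld\rd$ and $\la+\ra-\ld-\rd=1$ at $\zeta_6$ is precisely what aligns the symmetric functional equation for $G$ with the scalar equation for $B$ under $x\leftrightarrow y$.
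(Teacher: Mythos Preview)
Your proof is correct and shares the same opening structure as the paper's: both start from the fixed-point count $|\mathrm{Fix}(w)|=r(\hypsemi_{\ell(\mu)})$ (the paper cites this as Postnikov--Stanley's Lemma; you rederive it via the restriction-to-$V^w$ principle) and both then write the cycle indicator as $\sum_{\ell} r(\hypsemi_\ell)\,y^\ell/\ell!$ with $y=\sum_{k\ge1}p_k/k=\log H(1)$.

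Where you diverge is in the identification with $G_n$. The paper expands $(\log H)^k/k!$ via Stirling numbers of the first kind, invokes Stanley's formula $\sum_m r(\hypsemi_m)x^m/m!=C(1-e^{-x})$ together with the Motzkin--Catalan identity $M(e^x-1)=C(1-e^{-x})$, and then matches against the separately established expansion $G|_{\mathrm{spec}}=\sum_m \Mot{m}(H-1)^m$. Your route is more direct: you observe that at this specialization the functional equation for $G$ collapses to $(1+G)^2/(1+G+G^2)=H(1)$, that the scalar equation for $B$ collapses to the same form in $e^x$, and that substituting $x\mapsto y$ aligns them, with uniqueness settled by $\phi(F)=(1+F)^2/(1+F+F^2)$ having $\phi(0)=1$ and $\phi'(0)=1$. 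This bypasses the Stirling, Motzkin, and Catalan machinery entirely; the cost is that you are leaning on \eqref{eq:semi} as a black box (itself a consequence of Stanley's formula and the functional equation for $B$), whereas the paper's route makes the Motzkin structure explicit, which feeds into the Foulkes-character expansion in Theorem~\ref{thm: semiorder expansion}.
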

\begin{theorem}\label{thm: intro hyperplane Linial}
There exists an $\mathfrak{S}_n$-action on the set of regions of the Linial arrangement $\hyplin_n$ whose graded Frobenius characteristic is given by $G_n(\alpx; \la,0,\ra,0)$.
\end{theorem}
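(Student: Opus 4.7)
The plan is to realize regions of $\hyplin_n$ via a tree model and then compute the graded character from the functional equation for $G$. First, I would invoke Bernardi's bijection between the regions of the Linial arrangement and a class of labeled plane binary trees $\berntree_n$; under this bijection, $\berntree_n$ may be identified with the set of standard $[n]$-labelings of plane binary trees with no left and no right descents (in the sense of Section~\ref{sec: Introduction}), so that $|\berntree_n| = B_n(1,0,1,0)$ is the number of regions, and the $\lasce$, $\rasc$ statistics record the number of left and right edges of a Bernardi tree. I would then construct the $\mathfrak{S}_n$-action on $\berntree_n$ by combining vertex relabeling with a canonical reshape: the reshape is necessary because naive vertex relabeling need not preserve the no-descent condition, and it can be modeled on (the inverse of) the Push--Glide-type bijection that underlies the proofs of Theorems~\ref{thm: ribbon functional equation} and~\ref{thm: ribbon expansion for fixed canopy}. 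Transporting via Bernardi's bijection then gives the desired $\mathfrak{S}_n$-action on the regions of $\hyplin_n$.

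Next, I would show that the graded Frobenius characteristic of this permutation representation equals $G_n(\alpx;\la,0,\ra,0)$. Since this specialization is symmetric in $\alpx$ by Theorem~\ref{thm: Gessel functional equation}, and by construction it enumerates all $\bP$-labeled binary trees with no descents weighted by $\alpx_T\la^{\lasce(T)}\ra^{\rasc(T)}$, a standard orbit-counting argument identifies it with the graded Frobenius characteristic of $\bC[\berntree_n]$ under the action above. To obtain an explicit formula generalizing Postnikov's region count, I would apply Lagrange inversion to the specialization of Theorem~\ref{thm: Gessel functional equation} at $\ld=\rd=0$: the relation $(1+\la G)(1+\ra G) = H(\la\ra G+\la+\ra)$ becomes, via the substitution $u := \la\ra G + \la + \ra$, the algebraic equation $(u-\la)(u-\ra) = \la\ra H(u)$, to which Lagrange inversion applies and yields an explicit expansion of $G_n(\alpx;\la,0,\ra,0)$ in the complete homogeneous basis. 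Specializing further to $\la=\ra=1$, $\alpx=(1^n)$ recovers Postnikov's formula $\frac{1}{2^n}\sum_{k=0}^n\binom{n}{k}(k+1)^{n-1}$.

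The main obstacle is making the reshape step of the action canonical and verifying the group-action axioms (identity and associativity), together with compatibility with the bigrading by $(\lasce,\rasc)$. Once this is in place, the identification of $G_n(\alpx;\la,0,\ra,0)$ as the graded Frobenius characteristic is a routine consequence of the symmetry of $G$, and the Lagrange inversion step is mechanical once the correct algebraic form of the functional equation is in hand.
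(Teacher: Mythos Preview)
Your proposal conflates two distinct families of trees. Bernardi trees are \emph{not} the standard labeled trees with no left and no right descents; those are the local binary search (LBS) trees. A Bernardi tree is defined by the condition that every internal node's label is greater than its right child's label if it has one, and otherwise greater than its left child's label. These two sets are equinumerous (both counted by $B_n(1,0,1,0)$) but genuinely different, and the distinction is the whole point: the paper works with Bernardi trees precisely because they admit a simple, shape-preserving $\mathfrak{S}_n$-action, whereas LBS trees do not.

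Concretely, the paper first proves the $h$-expansion $G_n(\alpx;\la,0,\ra,0)=\sum_{T\in\pbt_n}\la^{\ell(T)}\ra^{\,r(T)}h_{\typ{T}}$ (from Theorem~\ref{thm: ribbon expansion for fixed canopy} via a bijection with augmented nested partitions). The action on $\berntree_n$ is then: factor the preorder reading word into decreasing blocks $W_1\cdots W_k$ determined by the terminal nodes, apply $\sigma$ letterwise, and re-sort each block to be decreasing. This fixes the underlying unlabeled tree, so the orbit through any Bernardi tree of shape $T$ is the coset space $\mathfrak{S}_n/\mathfrak{S}_{\typ{T}}$, with Frobenius characteristic $h_{\typ{T}}$. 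Summing over shapes matches the $h$-expansion. No reshape step is needed, and the group-action axioms are immediate.

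The obstacle you identify---making a reshape canonical and checking associativity---is real for your route through LBS trees, and the paper simply sidesteps it by choosing the right tree model. Your Lagrange-inversion computation is close in spirit to the paper's Theorem~\ref{thm:character Linial}, though the paper works with $F=1+G$ satisfying $F=H(1+F)^{1/2}$ and extracts the $p$-expansion directly; your substitution $u=\la\ra G+\la+\ra$ would also work for getting explicit formulas, but it is not needed for the Frobenius identification.
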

\noindent The proof of Theorem~\ref{thm: intro hyperplane semiorder} utilizes a cycle indicator computation relying on a result of Postnikov-Stanley \cite{Postnikov-Stanley}, whereas the proof of Theorem~\ref{thm: intro hyperplane Linial} utilizes crucially a recent bijection of Bernardi \cite{Bernardi} relating regions of $\hyplin_n$ to a certain class of labeled trees that we call Bernardi trees.

It is worth mentioning that the set of regions of the braid arrangement $\hypbraid_n$ and that of the Catalan arrangement $\hypcat_n$ also carry a natural $\mathfrak{S}_n$-action stemming from the fact that the set of hyperplanes defining both these arrangements is itself $\mathfrak{S}_n$-stable.
In the case of $\hypbraid_n$, the $\mathfrak{S}_n$-action gives rise to the regular representation of $\mathfrak{S}_n$, whereas in the case of $\hypcat_n$
we obtain a direct sum of $\Cat{n}$ many copies of the regular representation.
It can be seen that the Frobenius characteristics of the $\mathfrak{S}_n$-actions on the regions of $\hypbraid_n$ and $\hypcat_n$ are $ G_n(\alpx;1,1,0,0)$  and  {$G_n(\alpx;1,1,1,1)$}, respectively.

More interesting and equally well known is the fact that the regions of the Shi arrangement $\hypshi_n$ carry a $\mathfrak{S}_n$-action as well.
This can be realized, for instance, by identifying the regions of $\hypshi_n$ with parking functions via the Pak-Stanley labeling  {\cite{Stanley-Park}}.
The Frobenius characteristic of the resulting $\mathfrak{S}_n$-action on the regions of the Shi arrangement equals $\mathrm{PF}_n$, the Frobenius characteristic of the character of the well-known parking function representation \cite{Haiman}.
It is straightforward to see that the functional equation in Theorem~\ref{thm: Gessel functional equation} reduces to the functional equation satisfied by $\mathrm{PF}_n$ when we set $\la=\ld=\ra=1$ and $ \rd=0$. Thus, we obtain the following theorem.
\begin{theorem}\label{thm: intro hyperplane Shi}
The Frobenius characteristic of the $\mathfrak{S}_n$-action on the set of regions of the Shi arrangement  {$\hypshi_n$} coming from their identification with parking functions is given by $G_n(\alpx; 1,1,1,0)$.
\end{theorem}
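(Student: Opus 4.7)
The plan is to combine Theorem~\ref{thm: Gessel functional equation} with the classical functional equation for the parking-function representation and with the Pak--Stanley labeling. First, I would specialize Equation~\eqref{eq: functional equation} at $\la = \ld = \ra = 1$ and $\rd = 0$. The left-hand side collapses to $(1+G)^2/(1+G) = 1+G$, while on the right the argument of $H$ becomes $(1\cdot 1 - 1\cdot 0)\,G + (1+1-1-0) = G+1$. Consequently $G(\alpx;1,1,1,0)$ satisfies
\[
1 + G(\alpx;1,1,1,0) \;=\; H\bigl(1 + G(\alpx;1,1,1,0)\bigr).
\]

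The second step is to identify this as the functional equation already satisfied by $\mathrm{PF} \coloneq \sum_{n\geq 1}\mathrm{PF}_n$. Setting $F \coloneq 1 + \mathrm{PF}$, the claim is that $F = H(F) = \sum_{n\geq 0} h_n F^n$. This can either be cited from the literature on parking functions (for instance through Haiman's description of the parking-function representation) or deduced combinatorially by decomposing a parking function of length $n$ according to the positions carrying its maximum value, in a manner parallel to the derivation of the Cayley-type identity $T = z e^T$ underlying the count $(n+1)^{n-1}$. This step is where I expect the main obstacle: pinning down the right formulation of the parking-function identity (as opposed to a superficially similar plethystic relation) and verifying that it matches $F = H(F)$ on the nose.

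Once both symmetric functions are shown to satisfy $F = H(F)$ with $F \equiv 1$ in degree $0$, a straightforward induction extracts the degree-$n$ component of $F$ from the lower-degree components together with $h_1, \ldots, h_n$, so the functional equation determines $F$ uniquely. This forces $G_n(\alpx;1,1,1,0) = \mathrm{PF}_n$ for every $n \geq 1$.

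Finally, I would invoke the Pak--Stanley theorem \cite{Stanley-Park}, which supplies an $\mathfrak{S}_n$-equivariant bijection between the regions of $\hypshi_n$ and the parking functions of length $n$. Under this identification the natural $\mathfrak{S}_n$-action on regions becomes the parking-function representation, so its Frobenius characteristic equals $\mathrm{PF}_n = G_n(\alpx;1,1,1,0)$, which is the desired statement.
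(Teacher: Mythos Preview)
Your proposal is correct and follows essentially the same approach as the paper: specialize the functional equation of Theorem~\ref{thm: Gessel functional equation} to obtain $1+G = H(1+G)$, identify this with the known functional equation for $\mathrm{PF}$ (the paper cites Haiman \cite{Haiman} here), invoke uniqueness, and then use the identification of Shi regions with parking functions. One small wording point: the $\mathfrak{S}_n$-action on $\regions{\hypshi_n}$ is \emph{defined} via the Pak--Stanley bijection rather than being a pre-existing action for which the bijection happens to be equivariant, so your final paragraph should be phrased accordingly.
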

Since the cases of the braid, Catalan, and Shi arrangements are well-understood, we briefly touch upon them in Section~\ref{sec: back to the future} and instead focus primarily on the semiorder and Linial arrangements in this article.

This article is the full version of the extended abstract \cite{FPSAC-17}.

\smallskip
\noindent {\bf Outline of the article.} In Section \ref{sec: preliminaries}, we introduce our main combinatorial objects, and develop most of the notation we need.
Section \ref{sec:first proof} provides a generating function proof of Theorem~\ref{thm: Gessel functional equation}. We then prove Theorem \ref{thm: ribbon functional equation} using a generating function identity of MacMahon. Theorem~\ref{conj: schur positivity} will then follow as a corollary.
In Section~\ref{sec: fixed canopy}, we define all definitions and notation used in Theorem~\ref{cor: Ribbon Expansion} and Theorem~\ref{thm: ribbon expansion for fixed canopy}.

In Section~\ref{sec: Hyperplane arrangements}, we discuss applications of our results to studying actions of the symmetric group on Coxeter deformations focusing in particular on  semiorder and Linial arrangements.
In Section~\ref{sec:gamma}, we prove $\gamma$-nonnegativity for the distribution of right edges over local binary search trees.
In Section~\ref{sec: Proof of Main Theorem}, we provide a direct combinatorial proof of Theorem~\ref{thm: ribbon functional equation} using a weight-preserving bijection, without utilizing generating function machinery. 
Our Corollary~\ref{cor: noncommutative inorder} gives a natural noncommutative analogue of Theorem \ref{thm: ribbon functional equation}.
In Section~\ref{sec: ribbon expansion}, we use the weight-preserving bijection defined in Section~\ref{sec: Proof of Main Theorem} to prove the ribbon Schur expansions stated in Theorems~\ref{cor: Ribbon Expansion} and~\ref{thm: ribbon expansion for fixed canopy}.
We conclude with further avenues in Section \ref{sec: back to the future}.
Appendix~\ref{app: expansions} lists expansions for $B_n$ and $G_n$ for some small values of $n$.

\smallskip

\noindent {\bf Acknowledgements.}
We are extremely grateful to Sara Billey for numerous helpful conversations, her many pertinent suggestions at various stages of this project, as well as help with improving the exposition.
{We would also like to thank the anonymous referee for their many valuable comments and suggestions.}
We are also grateful to T. Kyle Petersen and Christos Athanasiadis for enlightening correspondence(s) and helpful feedback. Further thanks go to Eugene Gorsky, Patricia Hersh, Jia Huang, Alejandro Morales, Igor Pak and Anne Schilling for interesting discussions. We would also like to thank Jonah Ostroff and Josh Swanson for \TeX-nical assistance.

\section{Combinatorial preliminaries}\label{sec: preliminaries}
In  this section, we introduce some of the main combinatorial objects of this article. For further details on compositions, words, and binary trees, we refer the reader to \cite{Stanley-EC1}.

\subsection{Compositions}\label{subsec: compositions}
A finite ordered list of positive integers $\alpha = (\alpha _1, \ldots , \alpha _\ell)$ is called a \bemph{composition}.
If $\sum_{i=1}^{\ell} \alpha_i=n$, then we say that $\alpha$ is a composition of \bemph{size} $n$ and denote this by $\alpha \vDash n$.
We call $\alpha _i$ the \bemph{parts} of $\alpha$ and denote the number of parts of $\alpha$ by $\ell(\alpha)$, also called the \bemph{length} of $\alpha$. A \bemph{partition} $\lambda$ of size $n$, denoted by $\lambda\vdash n$, is a composition $(\lambda_1,\ldots, \lambda_k)$ of size $n$ satisfying $\lambda_1\geq \dots \geq \lambda_k$.  {Occasionally, we write $|\lambda|$ for the size of $\lambda$. The partition $\lambda$ can be identified with its \bemph{Young diagram}, given by drawing left-justified rows of boxes, with $\lambda_i$ boxes in row $i$. We adhere to the English convention, where row $1$ is drawn as the top row. }

We define two operations on compositions.
Given compositions $\alpha= (\alpha_1,\ldots ,\alpha_{\ell})$ and $\beta=(\beta_1,\ldots,\beta_m)$, we define the \bemph{concatenation} of $\alpha$ and $\beta$, denoted by $\alpha \concat \beta$, to be the composition $(\alpha_1,\ldots,\alpha_\ell,\beta_1,\ldots,\beta_m)$.
The \bemph{near-concatenation} of $\alpha$ and $\beta$, denoted by $\alpha \nconcat \beta$, is defined to be the composition $(\alpha_1,\ldots, \alpha_{\ell-1},\alpha_{\ell}+\beta_1,\beta_2,\ldots,\beta_m)$.
For example, if  $\alpha=(2,1,\tcr{3})$ and $\beta=(\tcr{4},1)$, then $\alpha\concat\beta=(2,1,\tcr{3},\tcr{4},1)$ while $\alpha \nconcat \beta =(2,1,\tcr{7},1)$.

Recall the well-known bijection between compositions $\alpha=(\alpha_1,\ldots,\alpha_k)$ of $n$ and subsets $S\subseteq [n-1]$ given by $S=\{\alpha_1,\alpha_1+\alpha_2,\ldots,\alpha_1+\cdots +\alpha_{k-1}\}$.
We denote the set corresponding to $\alpha\vDash n$ by $\set(\alpha)$, and in the opposite direction, given $S\subseteq [n-1]$, we denote the corresponding composition of size $n$ by $\comp(S)$.
The inclusion order on subsets induces a natural poset structure on the set of compositions of size $n$.
More precisely, given $\alpha,\beta \vDash n$, we say that $\alpha \preccurlyeq \beta$ if and only if $\set(\beta)\subseteq \set(\al)$, and call $\preccurlyeq$ the \bemph{refinement order} on compositions.
For instance, consider $\alpha=(\tcr{1},\tcr{2},4,\tcb{2},\tcb{3},\tcb{2},1)$ and $\beta=(\tcr{3},4,\tcb{7},1)$, both compositions of size $15$. Then $\set(\beta)=\{3,7,14\}$ and $\set(\al)=\{1,3,7,9,12,14\}$.
Clearly we have that $\set(\beta)\subseteq \set(\al)$, and therefore $\alpha \preccurlyeq \beta$.
We denote this poset on compositions of size $n$ by $\Comp_n$ and refer to it as the \bemph{composition poset}.
Given compositions $\alpha,\gamma\vDash n$ such that $\alpha\preccurlyeq \gamma$, we denote the interval in $\Comp_n$ comprising compositions $\beta$ satisfying $\alpha\preccurlyeq \beta \preccurlyeq\gamma$ by $[\alpha,\gamma]$.

\subsection{Words and symmetric functions}\label{subsec: sym}
Let $\bP$ be the set of positive integers.
Let $\bP^+$ be the set of nonempty words on $\bP$, which is the set of finite sequences of positive integers with positive length.
If $w$ is a word with letters $w_1, w_2, \ldots, w_n$, we write $w=w_1\cdots w_n$.
To $w$, we associate the monomial $\alpx^w \coloneqq x_{w_1}x_{w_2}\cdots x_{w_n}$.
We denote the set of words in $n$ letters by $\bP^n$.
An \bemph{ascent} of $w$ is an index $1\leq i\leq n-1$ such that $w_i\leq w_{i+1}$. A \bemph{descent} of $w$ is an index $1\leq i\leq n-1$ such that $w_i>w_{i+1}$.
Let the \bemph{descent set} of $w$ be $\Dsc(w) \coloneqq \{\,1\leq i\leq n-1\suchthat w_i>w_{i+1}\,\}$. An \bemph{inversion} in $w$ is a pair of indices $1\leq i<j\leq n$ such that $w_i > w_j$. We denote the set of inversions of $w$ by $\Inv(w)$. Therefore, we have $i\in \Dsc(w)$ if and only if $(i,i+1)\in \Inv(w)$.

The \bemph{standardization} of  $w$, denoted by  $\std(w)$, is  the  permutation in $\mathfrak{S}_n$ obtained by replacing the entries of $w$ with 1, 2, \dots, $n$, keeping the same relative order, where repeated letters are considered as increasing from left to right.
For example, the standardization of $112123$ is $124356$. For $w\in \bP^n$, the standardization $\sigma = \std(w)$ has the key property that $\Inv(\sigma) = \Inv(w)$.

A \bemph{symmetric function} is a formal power series in the variables $\alpx$ which is invariant under swapping any two of the variables. It is well known that the set of symmetric functions with coefficients in $\bQ$, denoted by $\Sym$, forms a ring under the usual operations of addition and multiplication. For notions related to the ring of symmetric functions that are not made explicit here, we refer the reader to \cite{Stanley-EC2}. For the connection between symmetric functions and symmetric group characters via the Frobenius characteristic map, we refer the reader to \cite{Sagan}. Occasionally, we will refer to the Frobenius characteristic of an $\mathfrak{S}_n$-module when we mean the Frobenius characteristic of the character of that module.

We denote by $h_n$ the $n$th complete homogeneous symmetric function, which is the sum over all monomials in $\alpx$ of degree $n$. The $n$th elementary symmetric function, denoted by $e_n$, is the sum over all squarefree monomials of degree $n$. For $\lambda\vdash n$ of length $k$, let $h_\lambda \coloneq h_{\lambda_1}h_{\lambda_2}\cdots h_{\lambda_k}$ be the complete homogeneous symmetric function indexed by $\lambda$. Similarly, let $e_\lambda \coloneq e_{\lambda_1}e_{\lambda_2}\cdots e_{\lambda_k}$ be the elementary symmetric function indexed by $\lambda$.
Let $\alpha \vDash n$ of length $k$. For convenience, we occasionally write $h_\alpha$ to mean $ h_{\alpha_1}h_{\alpha_2}\cdots h_{\alpha_k}$, and similarly for $e_\alpha$. Observe that if $\lambda$ is the unique partition obtained by sorting the parts of $\alpha$, then $h_\alpha=h_\lambda$, and  similarly for $e_\alpha$.
Let $H(z) = \sum_{n\geq 0} h_n z^n$ and $E(z) = \sum_{n\geq 0} e_n z^n$. Furthermore, let $\omega$ be the automorphism of $\Sym$ which sends $h_n$ to $e_n$ for each $n$.

Given a partition $\lambda\vdash n$, a \bemph{standard Young tableau} (henceforth $\SYT$) of \bemph{shape} $\lambda$ is a filling of the boxes of the Young diagram of $\lambda$ with integers from $[n]$ using each label exactly once, such that the labeling increases left-to-right along rows and top-to-bottom down columns.
We denote the set of all $\SYT$ of shape $\lambda$ by $\SYT(\lambda)$.
For $T\in \SYT(\lambda)$, we define $\Dsc(T)$ to be the set of all $1\leq i\leq n-1$ such that $i$ belongs to a row above $i+1$ in $T$.

Let $\lambda\vdash m$ and $\mu\vdash n$ such that $\ell(\mu)\leq \ell(\lambda)$ and $\mu_i\leq \lambda_i$ for all $i\leq \ell(\mu)$. In this case, the Young diagram of $\mu$ forms a subdiagram of the Young diagram of $\lambda$, situated in the upper-left corner of $\lambda$. We can then form the \bemph{skew shape} $\lambda/\mu$ by removing the Young diagram formed by $\mu$ from the Young diagram for $\lambda$. If $\mu$ is the empty partition, then $\lambda/\mu$ is just the Young diagram for $\lambda$.

A \bemph{semi-standard Young tableau} of \bemph{shape} $\lambda/\mu$ is a filling of the boxes of the Young diagram of $\lambda/\mu$ with positive integers such that the labeling weakly increases left-to-right along rows and strictly increases top-to-bottom down columns. Given such a tableau $Y$, let $\alpx^Y$ be the monomial defined as the product over all $i$ of $x_i^{n_i}$, where $n_i$ is the number of times $i$ appears in $Y$. The \bemph{skew Schur function} $s_{\lambda/\mu}$ is the sum over all $\alpx^Y$ for $Y$ a semi-standard Young tableaux of shape $\lambda/\mu$. In the case when $\mu$ is the empty partition, define $s_\lambda \coloneq s_{\lambda/\mu}$ to be the \bemph{Schur function} indexed by $\lambda$.

Most of the results in this paper will be in terms of \bemph{ribbon Schur functions}.
These symmetric functions are special instances of skew Schur functions indexed by skew shapes that are \bemph{ribbons}, which are connected skew shapes that do not contain a $2\times 2$ box.
Assuming that we draw our skew shapes following the English convention, we can associate a composition to a ribbon by counting the number of boxes in every row of the ribbon from bottom to top.
This association allows us to consider ribbon Schur functions as being indexed by compositions.
We refer to the ribbon Schur function indexed by a composition $\alpha$ as $r_{\alpha}$.
If $\alpha\vDash n$, we can alternatively define $r_\alpha$ as
\begin{equation}\label{eq: ribbon schur}
  r_\alpha = \sum_{\substack{w\in \bP^n,\\ \Dsc(w)=\set(\alpha)}} \alpx^w.
\end{equation}
A useful property of ribbon Schur functions is that the product of two ribbon Schur functions is a sum of two ribbon Schur functions,
\begin{equation}\label{eq: Product of Two Ribbons}
r_{\alpha}r_{\beta} = r_{\alpha\concat\beta}+r_{\alpha\nconcat\beta},
\end{equation}
which follows easily from \eqref{eq: ribbon schur}. More generally, suppose $\alpha^{(1)},\dots,\alpha^{(m)}$ are compositions such that $\alpha^{(i)}\vDash n_i$ and $n_1+\dots+ n_m = n$. Letting $\beta\coloneqq \alpha^{(1)}\concat\cdots\concat\alpha^{(m)}$ and $\delta \coloneqq \alpha^{(1)}\nconcat\cdots\nconcat\alpha^{(m)}$, we have that
\begin{equation}\label{eq: Product of Ribbons}
r_{\alpha^{(1)}}r_{\alpha^{(2)}}\dots r_{\alpha^{(m)}} = \sum_{\gamma\in [\beta,\delta]}r_\gamma,
\end{equation}
where $[\beta,\delta]$ is the interval between $\beta$ and $\delta$ in $\Comp_n$.

The following proposition gives a positive expansion of a ribbon Schur function in the basis of Schur functions. This is a special case of the expansion of a skew Schur function into Schur functions.
\begin{proposition}{\cite[Equation 2.2.4]{WachsPosetTopology}}\label{prop: ribbon into schur}
Given a composition $\alpha\vDash n$, we have
\begin{align*}
r_{\alpha}=\sum_{\lambda\vdash n}b_{\lambda,\alpha}s_{\lambda},
\end{align*}
where $b_{\lambda,\alpha}$ is the number of $T\in \SYT(\lambda)$ satisfying $\comp(\Dsc(T))=\alpha$. In particular, ribbon Schur functions are Schur positive.
\end{proposition}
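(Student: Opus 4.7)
The plan is to prove this proposition directly from the description of $r_\alpha$ given in equation~\eqref{eq: ribbon schur} by invoking the RSK correspondence. Starting from
\[
r_\alpha \;=\; \sum_{\substack{w\in \bP_n\\ \Dsc(w)=\set(\alpha)}} \alpx_w,
\]
I would apply the RSK bijection, which sends each word $w \in \bP_n$ to a pair $(P,Q)$ with $P \in \ssyt{\lambda}$ and $Q \in \SYT(\lambda)$ of a common shape $\lambda \vdash n$, satisfying the weight-matching property $\alpx_w = \alpx^P$. Reorganizing the sum over $w$ as a sum over the output pairs $(P,Q)$, grouped by shape $\lambda$, is the basic strategy.

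The crucial ingredient is Sch\"utzenberger's classical theorem that the RSK correspondence preserves descent sets: $\Dsc(w) = \Dsc(Q)$. Granting this, the words $w$ with $\Dsc(w) = \set(\alpha)$ are in bijection with pairs $(P,Q)$ in which the recording tableau $Q$ ranges over $\SYT(\lambda)$ subject to $\Dsc(Q) = \set(\alpha)$, i.e., $\comp(\Dsc(Q)) = \alpha$, for varying $\lambda \vdash n$. Recalling that $\sum_{P \in \ssyt{\lambda}} \alpx^P = s_\lambda$ by the definition of the Schur function, we obtain
\[
r_\alpha \;=\; \sum_{\lambda \vdash n}\ \sum_{\substack{Q \in \SYT(\lambda)\\ \comp(\Dsc(Q))=\alpha}}\ \sum_{P \in \ssyt{\lambda}} \alpx^P \;=\; \sum_{\lambda \vdash n} b_{\lambda,\alpha}\, s_\lambda,
\]
since the inner sum over $Q$ contains exactly $b_{\lambda,\alpha}$ terms by the definition of $b_{\lambda,\alpha}$. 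The Schur-positivity assertion is then immediate, since each $b_{\lambda,\alpha}$ is a cardinality, hence a nonnegative integer.

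The only nontrivial step is the descent-preservation property of RSK; this is a classical result and the main potential obstacle, so in a write-up I would simply cite a standard reference (for instance Stanley's \emph{Enumerative Combinatorics, Vol.~2}, or Sagan's symmetric group text), rather than reprove it. Since the proposition is already stated in the excerpt with a citation to Wachs' poset topology survey, quoting the result there would in fact suffice. An alternative avoiding RSK would proceed via the Jacobi--Trudi determinantal expansion $r_\alpha = \det(h_{\alpha_i - i + j})$ combined with the dual Jacobi--Trudi identity and a sign-reversing involution to produce the tableau count $b_{\lambda,\alpha}$, but the RSK route is both shorter and conceptually cleaner.
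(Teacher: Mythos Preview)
Your RSK argument is correct and is the standard way to establish this identity. Note, however, that the paper does not actually prove this proposition: it is stated with a citation to \cite[Equation 2.2.4]{WachsPosetTopology} and no proof is given. So there is nothing to compare against; you have supplied a valid proof where the paper simply quotes the result. Your remark that citing the reference would suffice is exactly what the paper does.
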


Let $\ex$ denote the homomorphism from $\Sym$ to $\bQ [[x]]$ mapping $h_n$ to $\ifrac{x^n\!}{n!}$~\cite[Section~7.8]{Stanley-EC2}.
This homomorphism is known as the \bemph{exponential specialization} and, as mentioned in the introduction, has the property that the coefficient of $\ifrac{x^n\!}{n!}$ in the image $\ex(f)$ of a symmetric function $f$ is the coefficient of $x_1\cdots x_n$ in $f$ \cite[Proposition 7.8.4]{Stanley-EC2}.

Let $\asc(w)$ and $\dsc(w)$ be the number of ascents and descents in $w$ respectively.
We use a 2-parameter weighted power series analogue of the Eulerian polynomial,
\begin{align}\label{eqn: word version of multivariate Eulerian}
  A(\alpx;s,t)\coloneqq \sum_{w\in \bP^+} s^{\asc(w)}t^{\dsc(w)} \alpx^w.
\end{align}
By Equation~\eqref{eq: ribbon schur},
\begin{equation}
\label{eq: A(x;s,t)}
  A(\alpx;s,t) = \sum_{n\geq 1}\sum_{\alpha\vDash n}s^{n-\ell(\alpha)}t^{\ell(\alpha)-1}\,r_\alpha.
\end{equation}

\subsection{Unlabeled trees and associated notions}\label{subsec: unlabeled trees}
A \bemph{plane binary tree} is a rooted tree in which every node has at most two children, of which at most one is called a \bemph{left child} and at most one is called a \bemph{right child}.
We denote the set of all  plane binary trees by $\pbt$ and the set of  plane binary trees on $n$ nodes for $n\geq 1$ by $\pbtun$.
Recall that we use the term \emph{tree} to mean a plane binary tree.
Elements of $\pbt$ will be considered \bemph{unlabeled} trees.
We denote the set of nodes of $T$ by $\node{T}$, the set of edges of $T$ by $\edge{T}$, and the root of $T$ by $\rootof{T}$. We abuse notation on occasion and write $v\in T$ when we mean $v\in \node{T}$.
The nodes of a tree can be categorized as \bemph{terminal nodes}, which are nodes with no children, and \bemph{internal nodes}, which are nodes with at least one child.

 Given a binary tree $T\in\pbtun$, let $\overline{T}$ be the binary tree obtained by appending {two children to every terminal node and one child to every node which is the parent of a single child.} We call these appended nodes \bemph{leaves}, and we call $\overline{T}$ the \bemph{completion} of $T$. In total, the completion $\overline{T}$ has $n+1$ leaves.

Given a binary tree $T\in \pbt$, we can define a partial order $\leq_T$ on $\node{T}$ by drawing the tree with its root on top and leaves below and declaring this to be the Hasse diagram of the partial order.
Precisely, we define a relation such that for $v,w\in \node{T}$ with $v$ a child of $w$, we have $v <_T w$.
Then $\leq_T$ is defined as the transitive closure of this relation.
If $v<_T w$, then we say that $v$ is a \bemph{descendant} of $w$ and that $w$ is an \bemph{ancestor} of $v$.

Additionally, we work with two different types of total orderings on the nodes of a binary tree $T$ derived from a traversal of its nodes.
The \bemph{preorder traversal} is defined recursively, where we first visit the root, then traverse the right subtree of $T$ in preorder, and finally the left subtree of $T$ in preorder. It should be noted that this is slightly different from the usual convention for preorder traversal, where one traverses the left subtree before the right subtree.
The \bemph{inorder traversal} is also defined recursively, where we first traverse the left subtree of $T$ in inorder, then visit the root of $T$, and finally traverse the right subtree of $T$ in inorder.
If we order the nodes of a given tree $T$ according to when they are visited in the preorder (respectively inorder) traversal, we obtain a total order  on $\node{T}$ which we call \bemph{preorder} (respectively \bemph{inorder}) and denote by $\preccurlyeq_p$ (respectively $\preccurlyeq_i$).
Figure \ref{fig:PreInorder} gives an example each of preorder and inorder, respectively.
\begin{figure}[H]
\centering
  \includegraphics[scale=0.4]{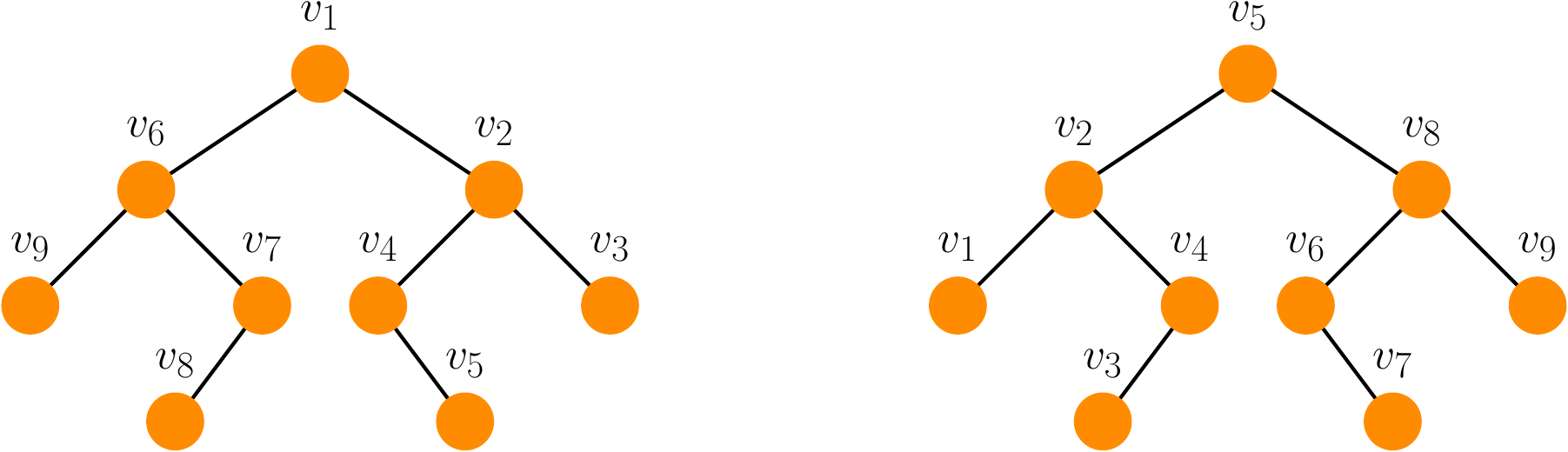}
  \captionof{figure}{The nodes are labeled according to preorder on the left and according to inorder on the right.}
  \label{fig:PreInorder}
\end{figure}

The preorder $\preccurlyeq_{p}$ on $T\in \pbtun$  allows us to associate a composition of  size $n$ with it, which we call the \bemph{composition type} of $T$ and denote by $\typ{T}$.
To compute $\typ{T}$, let $v_1 \prec_p \cdots \prec_p v_n$ be the nodes of $T$ in preorder.
Assume further that $v_{i_1}\prec_p\cdots \prec_p v_{i_k}$ are  all the terminal nodes in $T$. Note that $1\leq i_1<\cdots <i_k=n$.
We now define $\typ{T}\coloneq \comp{\{i_1,\dots, i_{k-1}\}}=(i_1,i_2-i_1,i_3-i_2,\dots, i_k-i_{k-1})$.
Clearly $\typ{T} \vDash n$.
The reader can verify that for the tree $T$ in Figure~\ref{fig:PreInorder}, we have $\typ{T}=(3,2,3,1)$.

We work under the convention that the edge $vw$ refers to the edge joining $v$ and $w$ in the tree, where $v$ comes before $w$ in inorder.
If $v$ is the left child of $w$, we say that $vw$ is the \bemph{left edge} of $w$. If $w$ is the right child of $v$, we say that $vw$ is the \bemph{right edge} of $v$.

Next, we give a definition of the canopy of a binary tree as a word on $\{U,D\}$. To translate between the canopy defined in \cite{Preville-Ratelle-Viennot} and the one defined here, simply flip the tree vertically and replace $U\leftrightarrow \bar{a}$ and $D\leftrightarrow b$ in the word $v(T)$ defined in their paper. See \cite[Proposition 2.2]{Preville-Ratelle-Viennot} for other equivalent definitions of canopy.
\begin{definition}\label{def: canopy2}
Given a binary tree $T\in \pbtun$, label each node $v$ except for the last node of $T$ in inorder with either a $D$ if $v$ has a right child or a $U$ if $v$ does not have a right child. Traverse the tree in inorder and read off the labels. The resulting word of length $n-1$ on $\{U,D\}$ is the \bemph{canopy} of $T$, denoted by $\can{T}$.
\end{definition}
Figure~\ref{fig:nodelabeling} shows a tree $T$ with its nodes labeled $U$ and $D$ according to Definition~\ref{def: canopy2}. Reading off the labels in inorder yields $\can{T}= UDUUDDUD$.

 \begin{figure}[h]
 	\centering
       \includegraphics[scale=0.4]{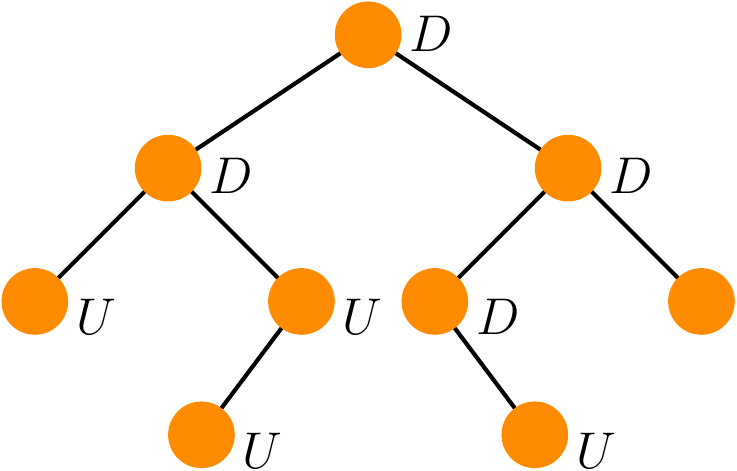}
       \caption{A tree $T$ with canopy $\can{T} = UDUUDDUD$.\label{fig:nodelabeling}}
 \end{figure}

\subsection{Labeled trees} \label{subsec: labeled trees}
A \bemph{labeled plane binary tree} (or simply a \bemph{labeled tree}) is a tree whose nodes have labels drawn from the set of positive integers $\bP$.
We denote the set of labeled trees by $\pbtl$ and the set of labeled trees on $n$ nodes for $n\geq 1$ by $\pbtln$.
Given $T\in \pbtln$, we denote by $\shape{T}$ the unlabeled tree obtained by removing the labels on the nodes of $T$.
Given a node $u$ in $T$, we refer to the label on $u$ as $u^{\ell}$.
 We associate two reading words with $T$: the \bemph{preorder reading word} denoted by $\pre{T}$, and the \bemph{inorder reading word} denoted by $\inorder(T)$. 
 If $v_1,\ldots,v_n$ are the nodes of $T$ in preorder, then $\pre{T}\coloneqq v_1^{\ell}\cdots v_n^{\ell}$.
On the other hand, if $v_1,\ldots,v_n$ are the nodes of $T$ in inorder, then $\inorder(T)\coloneqq v_1^{\ell}\cdots v_n^{\ell}$.

Let us recall from the Introduction that for a labeled tree, we have a refined classification for its edges given by left and right ascents and descents.
First, suppose that $v$ is the left child of $w$.
If $v^{\ell}\leq w^{\ell}$, then $vw$ is a \bemph{left ascent}. Otherwise it is a \bemph{left descent}.
Second, suppose that $v$ has a right child $w$.
If $v^{\ell}\leq w^{\ell}$, then $vw$ is a \bemph{right ascent}.  Otherwise it is a \bemph{right descent}.
Using this classification, we associate a weight $\weightt$ to a labeled tree $T$  as follows,
 \begin{equation}
\weightt\coloneqq\la^{\lasce(T)}\ld^{\ldes(T)}\ra^{\rasc(T)}\rd^{\rdes(T)}.
\end{equation}
Recall that $\lasce(T)$ (respectively, $\ldes(T)$, $\rasc(T)$, and $\rdes(T)$) is the number of left ascents (respectively, left descents, right ascents and right descents) in $T$. 
For the labeled tree $T$ in Figure~\ref{fig:Ascents-Descents},  each edge is labeled with $\la$, $\ld$, $\ra$, or $\rd$, corresponding to the edge's orientation and whether the labels on the edge form an ascent or descent.  {It may help the reader to remember that $\lambda$ corresponds to left edges, $\rho$ corresponds to right edges, unbarred parameters correspond to strict inequalities, and barred parameters correspond to weak inequalities.}
\begin{figure}[ht]
  \includegraphics[scale=0.5]{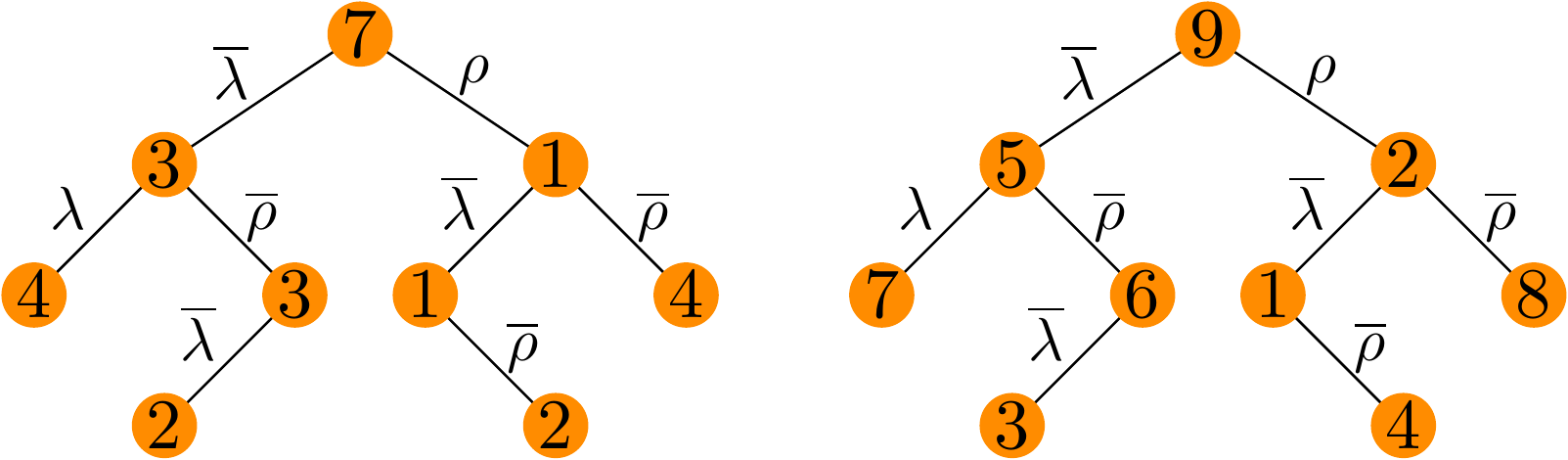}
  \captionof{figure}{On the left, a tree with weight $\weightt=\la^{3}\ld^{1}\ra^{3}\rd^{1}$. On the right, the inorder standardization  $\std(T)$.}
  \label{fig:Ascents-Descents}
\end{figure}

Letting $v_1,v_2,\dots,v_n$ be the nodes of $T$ listed in inorder, we can obtain a standard labeled tree by relabeling node $v_i$ with the $i$th letter of the permutation $\std(\inorder(T))$. We call the resulting standard labeled tree the \bemph{inorder standardization} of $T$, denoted by $\std(T)$. Given an edge $v_iv_j$ in $T$, it forms a (left or right) descent if and only if $(i,j)\in \Inv(\inorder(T))$. Since standardization preserves the inversion set of a word, we have $\wt{T} = \wt{\std(T)}$.

\newcommand{\lleaf}{\lambda_\circ}
\newcommand{\rleaf}{\rho_\circ}
\newcommand{\lmarked}{\lambda_m}
\newcommand{\rmarked}{\rho_m}
\section{Two functional equations for \texorpdfstring{$G$}{G}}\label{sec:first proof}
We begin by proving Theorem~\ref{thm: Gessel functional equation} which establishes a functional equation for $G$. We then use this identity to give our first proof of Theorem~\ref{thm: ribbon functional equation}.

A labeled tree is \bemph{increasing} if it has no left ascents or right descents. The term \emph{increasing} is motivated by the fact that if $T$ is an increasing tree, then the labels increase \textemdash{weakly toward the right and strictly toward the left}\textemdash{} along the path from the root of $T$ to any terminal node. The next lemma shows that these trees are in bijection with words on positive integers.
\begin{lemma}\label{lem: increasing trees}
Fix $n$ and $1\leq j \leq n$. Then the map $\varphi_n(T)=\inorder(T)$ from the set of increasing binary trees $T$ on $n$ nodes to $\bP^n$ is a bijection. Furthermore, $\varphi_n$ restricts to a bijection between the subset of trees whose completion has $j$ right leaves and the subset of words with $j-1$ descents.
\end{lemma}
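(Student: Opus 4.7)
The plan is to prove the bijection by constructing an explicit inverse that is recursive on $n$. Given a word $w = w_1 \cdots w_n \in \bP_n$, I will locate the root of the corresponding increasing tree as follows: since in an increasing tree every left child is strictly greater than its parent and every right child is at least its parent, the root carries the minimum label of the whole tree, and every label in its left subtree is \emph{strictly} greater than the root label. Hence the leftmost occurrence of $\min(w)$ in $w$ must correspond to the root position. Splitting $w$ at this position into a prefix $w_L$ and a suffix $w_R$, I will recursively build an increasing tree on $w_L$ for the left subtree and on $w_R$ for the right subtree. The prefix contains only entries strictly larger than the root label (so the resulting left child is strictly larger, as required), and the suffix contains only entries weakly larger than the root (so the resulting right child is weakly larger, as required); thus the constructed tree is genuinely increasing. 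Verifying that this inverse composes correctly with $\varphi_n$ in both directions is a straightforward recursion on $n$, using that inorder reads the left subtree, then the root, then the right subtree.

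For the refined statement about right leaves and descents, I will analyze consecutive entries in the inorder word. Let $v_1 \prec_i \cdots \prec_i v_n$ be the nodes of $T$ in inorder, so $\varphi_n(T) = v_1^\ell \cdots v_n^\ell$. The key claim is: for $1 \le i \le n-1$, the position $i$ is a descent of $\inorder(T)$ if and only if $v_i$ has no right child in $T$. Indeed, if $v_i$ has a right child $u$, then $v_{i+1}$ is obtained by going to $u$ and then as far left as possible; since right edges are weak ascents and left edges are strict ascents in an increasing tree, this gives $v_i^\ell \le v_{i+1}^\ell$. If $v_i$ has no right child, then $v_{i+1}$ is the nearest ancestor of $v_i$ whose left subtree contains $v_i$, and walking from that ancestor back down to $v_i$ begins with a left step followed only by further weak ascents, so $v_i^\ell > v_{i+1}^\ell$.

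Finally I will tally: the number of descents of $\inorder(T)$ equals the number of indices $i \in \{1,\dots,n-1\}$ for which $v_i$ has no right child. The node $v_n$, being rightmost in inorder, necessarily has no right child in $T$, so the total number of right-childless nodes in $T$ is $(\text{descents of }\inorder(T)) + 1$. By the definition of completion, this count is exactly the number of right leaves in $\overline{T}$. Therefore $\overline{T}$ has $j$ right leaves if and only if $\inorder(T)$ has $j-1$ descents, and the bijection $\varphi_n$ restricts as claimed.

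The most delicate step is the characterization of the root position in the inverse construction when $\min(w)$ is attained multiple times; the strictness in the left-child condition versus the weak inequality in the right-child condition is precisely what forces the root to be the \emph{leftmost} occurrence of $\min(w)$, and verifying this carefully is the main thing to get right.
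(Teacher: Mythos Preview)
Your proposal is correct and follows essentially the same approach as the paper: both construct the inverse by locating the leftmost occurrence of the minimum letter and recursing on the prefix and suffix, and both establish the refinement by showing that a node $v_i$ has no right child in an increasing tree if and only if $i$ is a descent of $\inorder(T)$ (or $i=n$). The paper leaves the latter characterization as something ``it can be checked,'' whereas you spell out the argument using the structure of the inorder successor; your treatment is therefore a bit more explicit, but the ideas are identical.
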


\begin{proof}
 {First, let $T$ be an increasing binary tree on $n$ nodes whose completion has $j$ right leaves, and let $w = \inorder(T)$. It can be checked that the $i$th node of $T$ in inorder has no right child if and only if $i\in \Des(w)$ or $i=n$.} Recalling the definition of the completion $\overline{T}$ from Subsection~\ref{subsec: unlabeled trees}, a node of $T$ has a right leaf in $\overline{T}$ if and only if it has no right child. Therefore, we see that $j = \#\Des(w) + 1$, so $w$ has $j-1$ descents.

Next, we recursively define a map $\psi_n$ from $\bP^n$ to the set of increasing binary trees on $n$ nodes which maps a word with $j-1$ descents to a tree whose completion has $j$ right leaves. Let $\psi_1$ be the map which sends a word $w_1$ with length one to the tree on one node labeled with $w_1$. Clearly, $\psi_1$ is the inverse of $\phi_1$. For $n>0$, suppose we have constructed $\psi_i$ for $i<n$ such that $\phi_i\circ\psi_i(w) = \inorder(\psi_i(w)) = w$. Fixing $w\in \bP^n$ with $j-1$ descents, let us define $\psi_n(w)$ as follows. Let $i$ be the smallest index of the smallest letter in $w$. Let $T_1 = \psi_{i-1}(w_1\dots w_{i-1})$, and let $T_2 = \psi_{n-i}(w_{i+1}\dots w_n)$. Then define $\psi_n(w)=T$ to be the tree whose root is labeled $w_i$ and whose left and right subtrees are $T_1$ and $T_2$, respectively.

 {Finally, we show that $\psi_n$ is the inverse of $\phi_n$. By our inductive assumptions on the maps $\psi_{i-1}$ and $\psi_{n-i}$, we have that $\inorder(T_1) = w_1\dots w_{i-1}$ and $\inorder(T_2) = w_{i+1}\dots w_n$. Therefore, we have $\phi_n\circ\psi_n(w) = \inorder(T) = \inorder(T_1) w_i \inorder(T_2) = w$ by the recursive definition of the inorder traversal. Furthermore, it follows from our choice of $i$ that $T$ is increasing, hence $\psi_n$ is the right inverse of $\phi_n$. We leave it to the reader to check that $\psi_n$ is also the left inverse of $\phi_n$, hence it is the inverse of $\phi_n$. This completes our recursive construction of $\psi_n$, and hence $\phi_n$ is a bijection with inverse $\psi_n$.}
\end{proof}

\begin{proof}[Proof of Theorem~\ref{thm: Gessel functional equation}]
Recall that Theorem~\ref{thm: Gessel functional equation} claims that 
\begin{equation}
\label{eq: functional equation2}
\frac{(1+\la G)(1+\ra G)}{(1+\ld G)(1+\rd G)}=H\bigl((\la\ra-\ld\rd)G+\la+\ra-\ld-\rd\bigr).
\end{equation}
Let $\leftleaves(T)$ and $\rightleaves(T)$ denote the number of left leaves and right leaves in the completion $\overline{T}$, respectively. Let
\begin{align}
D(\alpx;\la, \ld, \lleaf,\ra,\rd,\rleaf)=\sum_{n\ge1}\sum_{T\in \pbtln} \la^{\lasce(T)}\ld^{\ldes(T)}\lleaf^{\leftleaves(T)}\ra^{\rasc(T)}\rd^{\rdes(T)}\rleaf^{\rightleaves(T)}
\alpx^T.
\end{align}

In $\overline{T}$, since every node has a left child that is either a node or a leaf, we have
\begin{equation}
n=\lasce(T) + \ldes(T) + \leftleaves(T)
\end{equation}
and similarly,
\begin{equation}
n = \rasc(T) + \rdes(T) + \rightleaves(T).
\end{equation}
Thus,
\begin{equation}
\label{e-DB}
D(\alpx;\la,\ld,\lleaf,\ra,\rd,\rleaf) =  G(\lleaf \rleaf\alpx;\la/\lleaf, \ld/\lleaf,\ra/\rleaf, \rd/\rleaf),
\end{equation}
where $\lleaf \rleaf \alpx$ means that each variable $x_i$ is replaced with $\lleaf \rleaf x_i$.

Let $T$ be a labeled tree.
 {If $pq$ is a left ascent of $T$, such that $p$ is the left child of $q$, we call $p$ a \bemph{left ascent-child}. If $pq$ is a right descent of $T$, such that $q$ is the right child of $p$, we call $q$ a \bemph{right descent-child}.}
We obtain a \bemph{marked tree} from $T$  by ``marking'' some of the nodes of~$T$. We require that every left ascent-child and right descent-child must be marked, the root must not be marked, and other nodes may be either marked or unmarked. 
 {
See Figure~\ref{fig:MarkedTree} for an example of a marked tree.
}
Let $\leftmarks(M)$ be the number of marked left children in $M$, and let $\rightmarks(M)$ be the number of marked right children in $M$.

\begin{figure}[t]
\includegraphics[scale=0.4]{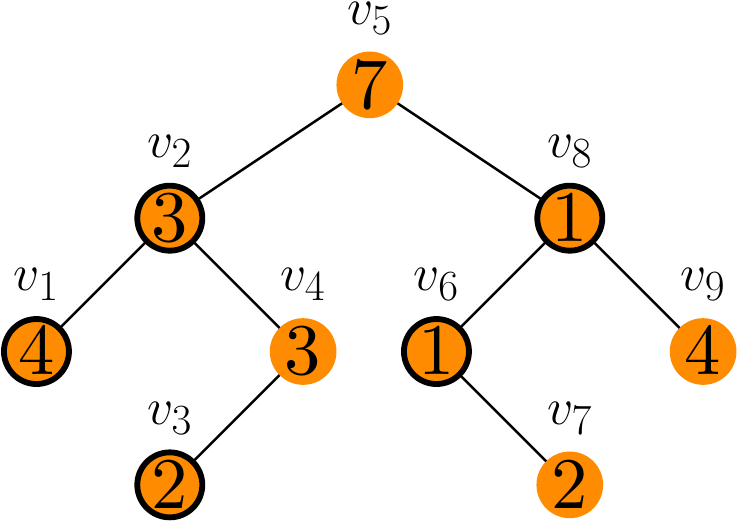}
\caption{An example of a marked tree on $9$ nodes with nodes $v_1$, $v_2$, $v_3$, $v_6$, and $v_8$ in inorder marked.\label{fig:MarkedTree}}
\end{figure}

Let
\begin{equation}
\label{e-Cn-def}
C(\alpx;\lleaf,\lmarked,\rleaf,\rmarked)=\sum_{M}
 \lleaf^{\leftleaves(M)}\lmarked^{\leftmarks(M)}\rleaf^{\rightleaves(M)}\rmarked^{\rightmarks(M)}
 \alpx^M
\end{equation}
where the sum is over all  marked  labeled  trees  $M$ {and where $\alpx^M$, $\leftleaves(M)$ and $\rightleaves(M)$ are equal to $\alpx^T$, $\leftleaves(T)$, and $\rightleaves(T)$ for $T$ the underlying tree of $M$, respectively.}
Then by the definition of marked trees we have
\begin{equation}
C(\alpx;\lleaf,\lmarked,\rleaf,\rmarked) = D(\alpx;\lmarked,1+\lmarked,\lleaf,1+\rmarked, \rmarked,\rleaf),
\end{equation}
and it follows from \eqref{e-DB} that
\begin{equation}
\label{e-CB}
C(\alpx;\lleaf,\lmarked,\rleaf,\rmarked) = G\left(\lleaf \rleaf \alpx;\frac{\lmarked}{\lleaf}, \frac{1+\lmarked}{\lleaf},\frac{1+\rmarked}{\rleaf}, \frac{\rmarked}{\rleaf}, \right).
\end{equation}
Substituting $x_i\mapsto x_i/(\lleaf\rleaf)$, $\lleaf\mapsto 1/(\ld-\la)$, $\lmarked\mapsto \la/(\ld-\la)$, $\rleaf\mapsto 1/(\ra-\rd)$ and $\rmarked\mapsto \rd/(\ra-\rd)$  into \eqref{e-CB}, we find that
\begin{equation}
\label{e-BC}
 G(\alpx;\la,\ld,\ra,\rd)
=  C\left((\ld-\la)(\ra-\rd)\alpx;\frac{1}{\ld-\la}, \frac{\la}{\ld-\la},\frac{1}{\ra-\rd},\frac{\rd}{\ra-\rd}
\right).
\end{equation}
We shall find a functional equation for
$C = C(\alpx; \lleaf,\lmarked,\rleaf,\rmarked)$ which through \eqref{e-BC} will yield \eqref{eq: functional equation2}.

Next we count increasing labeled trees by left and right leaves. Let
\begin{equation}
K(\alpx; \lleaf,\rleaf) = \sum_T \lleaf^{\leftleaves(T)}\rleaf^{\rightleaves(T)}\alpx^T,
\end{equation}
where the sum is over all increasing trees $T$.
Recall the 2-parameter weighted power series analogue of the Eulerian polynomial $A(\alpx; s,t)$ defined in \eqref{eqn: word version of multivariate Eulerian}.
By the homogenized version of a  result of MacMahon \cite[Vol. 1, p.~186]{MacMahon}, we have that
\begin{equation}\label{eq: MacMahon eulerian}
  A(\alpx;s,t) = \frac{\sum_{n\geq 1} (s-t)^{n-1}h_n}{1-t\sum_{n\geq 1}(s-t)^{n-1}h_n} = \frac{H(s-t)-1}{s-tH(s-t)}.
\end{equation}
Therefore, we have
\begin{equation}\label{e-KH}
K(\alpx; \lleaf,\rleaf)=\lleaf \rleaf A(\alpx; \lleaf,\rleaf) =  \lleaf\rleaf\frac{H(\lleaf-\rleaf)-1}{\lleaf-\rleaf H(\lleaf-\rleaf)},
\end{equation}
where the first equality follows from Lemma~\ref{lem: increasing trees}, and the second equality follows from \eqref{eq: MacMahon eulerian}.

We claim that $C$ satisfies the functional equation
\begin{equation}
\label{e-CA}
C = K(\alpx; \lleaf+\lmarked C,\rleaf+\rmarked C).
\end{equation}
To see this, given a marked  labeled tree $M$ on $[n]$, let  $P$ be the set of unmarked nodes $p$ of
$M$ with the property that no  ancestor of $p$ is marked. It is clear
that the induced subtree of $M$ on $P$ is an increasing tree $I$. Then $M$ can be recovered from $I$ by attaching marked trees to the leaves of $I$ and marking the roots of the attached trees.  Equation \eqref{e-CA} follows from this decomposition.

Using \eqref{e-KH}, we may expand \eqref{e-CA} as
\begin{equation}
\label{e-CH}
C =  (\lleaf+\lmarked C)(\rleaf+\rmarked C) \frac{H(\lleaf+\lmarked C-\rleaf-\rmarked C)-1}{(\lleaf+\lmarked C)-(\rleaf+\rmarked C) H(\lleaf+\lmarked C-\rleaf-\rmarked C)}.
\end{equation}
Solving for $H$ in \eqref{e-CH} gives
\begin{equation}
\label{e-Cfe}
\frac{(\rleaf+(\rmarked+1)C)(\lleaf+\lmarked C)}{(\lleaf+(\lmarked+1)C)(\rleaf+\rmarked C)}=H(\lleaf-\rleaf+(\lmarked-\rmarked)C).
\end{equation}
Finally, set $\lleaf=1/(\ld-\la)$, $\lmarked=\la/(\ld-\la)$, $\rleaf=1/(\ra-\rd)$ and $\rmarked=\rd/(\ra-\rd)$ in \eqref{e-Cfe}, and replace $\alpx$ with $(\ld-\la)(\ra-\rd)\alpx$. Applying \eqref{e-BC} to the resulting identity gives \eqref{eq: functional equation2}.
\end{proof}

\begin{remark}Our proof above of Theorem  \ref{thm: Gessel functional equation} is based on a proof given in \cite{gessel-forests} of a result equivalent to the case $\la=0$ of \eqref{eq: B functional equation}.
The paper \cite{gessel-forests} counts forests of labeled rooted trees by descents and leaves instead of counting binary trees, but one can convert between forests of rooted trees and binary trees by applying a simple bijection.
\end{remark}

As a corollary to Theorem~\ref{thm: Gessel functional equation}, we see that $G$ has some surprising symmetries. We state these next in Corollary~\ref{cor: symmetries}.
\begin{corollary}\label{cor: symmetries}
The following identities hold,
\begin{align}
G(\alpx;\la,\ld,\ra,\rd) &= G(\alpx;\ra,\ld,\la,\rd) = G(\alpx;\la,\rd,\ra,\ld),\label{eq: symmetries}\\
\omega(G(\alpx;\la,\ld,\ra,\rd)) &= G(\alpx;\ld,\la,\rd,\ra).\label{eq: omega}
\end{align}
\end{corollary}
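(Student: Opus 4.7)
The plan is to deduce all three identities from the functional equation~\eqref{eq: functional equation2} by noting that it is invariant under the corresponding operations and then invoking uniqueness of its solution. First I would verify this uniqueness: write $G=\sum_{n\geq 1} G_n$ with $G_n$ the homogeneous degree-$n$ component of $G$ in $\alpx$. Since $h_n$ has degree $n$ in $\alpx$, the only linear contribution of $G_n$ to the degree-$n$ part of~\eqref{eq: functional equation2} comes from the factors $(1+\la G)(1+\ra G)$ on the left and $(1+\ld G)(1+\rd G)$ on the right. Extracting the degree-$n$ component gives a relation of the form $(\la+\ra-\ld-\rd)\,G_n=\Psi_n(G_1,\ldots,G_{n-1})$ for an explicit polynomial $\Psi_n$, so $G_n$ is recursively determined. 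In particular, for any fixed choice of parameters the functional equation has at most one power series solution without constant term in $\alpx$.

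For the identities in~\eqref{eq: symmetries}, observe that the left side $\frac{(1+\la G)(1+\ra G)}{(1+\ld G)(1+\rd G)}$ is visibly symmetric in $\la,\ra$ and in $\ld,\rd$ separately, while the right side $H\bigl((\la\ra-\ld\rd)G+\la+\ra-\ld-\rd\bigr)$ depends on $\la,\ra$ only through $\la\ra$ and $\la+\ra$, and similarly for $\ld,\rd$. Hence swapping $\la\leftrightarrow\ra$ or $\ld\leftrightarrow\rd$ leaves the functional equation unchanged, and uniqueness yields
\[
G(\alpx;\la,\ld,\ra,\rd)=G(\alpx;\ra,\ld,\la,\rd)=G(\alpx;\la,\rd,\ra,\ld).
\]

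For~\eqref{eq: omega}, apply the $\bQ[\la,\ld,\ra,\rd]$-algebra involution $\omega$ to~\eqref{eq: functional equation2}. Since $\omega$ fixes the parameters and sends $h_n\mapsto e_n$, we have $\omega\bigl(H(u)\bigr)=E\bigl(\omega(u)\bigr)$ for any $u$. Combined with the classical identity $E(z)H(-z)=1$, taking reciprocals transforms the image of~\eqref{eq: functional equation2} into
\[
\frac{(1+\ld\,\omega(G))(1+\rd\,\omega(G))}{(1+\la\,\omega(G))(1+\ra\,\omega(G))}
=H\bigl((\ld\rd-\la\ra)\,\omega(G)+\ld+\rd-\la-\ra\bigr),
\]
which is precisely~\eqref{eq: functional equation2} after the simultaneous swaps $\la\leftrightarrow\ld$ and $\ra\leftrightarrow\rd$. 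Since both $\omega(G(\alpx;\la,\ld,\ra,\rd))$ and $G(\alpx;\ld,\la,\rd,\ra)$ solve this equation without constant term, uniqueness yields~\eqref{eq: omega}. The one conceptually nontrivial step is turning the $\omega$-image of an $H$-equation back into an $H$-equation; everything else is routine verification of invariances.
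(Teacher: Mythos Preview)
Your proof is correct and follows essentially the same approach as the paper: both establish uniqueness of the solution to the functional equation~\eqref{eq: functional equation2} and then observe that the equation is invariant under the relevant parameter swaps (and, for~\eqref{eq: omega}, under applying $\omega$ together with $H(-z)E(z)=1$). The only minor difference is in how uniqueness is argued: the paper rewrites~\eqref{eq: functional equation2} in the explicitly recursive form~\eqref{eq: recursive fe}, which expresses $G_n$ directly in terms of $G_1,\ldots,G_{n-1}$ without any division, whereas your extraction of the degree-$n$ part yields $(\la+\ra-\ld-\rd)\,G_n=\Psi_n(G_1,\ldots,G_{n-1})$ and so requires dividing by $\la+\ra-\ld-\rd$ (harmless over the fraction field of $\mathbb{Q}[\la,\ld,\ra,\rd]$, but worth stating).
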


\begin{proof}
We can rewrite the functional equation \eqref{eq: functional equation2} as
\begin{align}\label{eq: recursive fe}
G = (1+\ld G)(1+\rd G)\sum_{n\geq 1} ((\la\ra- \ld\rd)G + \la +\ra - \ld - \rd)^{n-1}\,h_n.
\end{align}
This gives a recursive method for computing $G_n$ in terms of $G_m$ for $m<n$. Therefore, $G$ is uniquely determined by the functional equation. Identity \eqref{eq: symmetries} then follows from the fact that the functional equation is invariant under swapping $\la$ and $\ra$ and under swapping $\ld$ and $\rd$.
To see identity \eqref{eq: omega}, apply $\omega$ to both sides of the functional equation \eqref{eq: functional equation2}, and then use the fact that $H(-z)E(z) = 1$.
\end{proof}

Letting $A\coloneqq A(\alpx;s,t)$, we may rewrite \eqref{eq: MacMahon eulerian} as
\begin{equation}\label{eq: AH identity}
  H(s-t) = \frac{1+sA}{1+tA}.
\end{equation}
At this point, we have all the tools we need to establish Theorem~\ref{thm: ribbon functional equation}, which also implies Theorem~\ref{conj: schur positivity}.
\begin{proof}[Proof 1 of Theorem \ref{thm: ribbon functional equation}]
Substituting $s= \la\ra G+\la+\ra$ and $t=\ld\rd G+\ld+\rd$ into \eqref{eq: AH identity}, then $H(s-t)$ becomes  the right-hand side of \eqref{eq: functional equation2}. Thus, we can rewrite \eqref{eq: functional equation2} as
\begin{align}\label{eq: intermediate in ribbon functional equation}
  \frac{(1+\la G)(1+\ra G)}{(1+\ld G)(1+\rd G)} = \frac{1+sA}{1+tA}.
\end{align}
Since  $(1+\la G)(1+\ra G)=1+sG$ and $(1+\ld G)(1+\rd G) = 1+tG$, the identity \eqref{eq: intermediate in ribbon functional equation} is equivalent to
\begin{equation}
\frac{1+sG}{1+tG}=\frac{1+sA}{1+tA},
\end{equation}
which implies that
\begin{equation}
G=A=A(\alpx;\la\ra \,G+\la+\ra,\ld\rd \,G+\ld+\rd),
\end{equation}
and Theorem \ref{thm: ribbon functional equation} follows by \eqref{eq: A(x;s,t)}.
\end{proof}

\begin{remark}
Theorem~\ref{thm: ribbon functional equation} together with~\eqref{eq: Product of Two Ribbons} gives a way to recursively expand each $G_n$ positively in terms of ribbon Schur functions.
Therefore, the Schur positivity of $G$ follows immediately from Theorem~\ref{thm: ribbon functional equation} by using Proposition~\ref{prop: ribbon into schur}.
However, it is not clear how to prove the Schur positivity of the more refined generating functions $G_{n,\canopy}$ using these same techniques. Instead, we prove Theorem~\ref{thm: ribbon expansion for fixed canopy} in Section~\ref{sec: ribbon expansion} after developing our weight-preserving bijection in Section~\ref{sec: Proof of Main Theorem}. Theorem~\ref{conj: refined Schur positivity} then follows from Theorem~\ref{thm: ribbon expansion for fixed canopy} by Proposition~\ref{prop: ribbon into schur}.
\end{remark} 

\section{Marked and augmented interlacing partitions}\label{sec: fixed canopy}
In this section, we present the definitions and notation used in the statements of Theorems~\ref{cor: Ribbon Expansion} and~\ref{thm: ribbon expansion for fixed canopy}.
We delay the proofs of these two theorems until Section~\ref{sec: ribbon expansion} after developing all of the key concepts in Section~\ref{sec: Proof of Main Theorem}.

\subsection{Noncrossing partitions}
We need some notions concerning the lattice of noncrossing partitions, so we recall the relevant definitions briefly. A \bemph{partition} $\pi$ of $[n]$ is a collection of pairwise disjoint nonempty subsets $B_1,\dots,B_k$ whose union is $[n]$.
We write this as $\pi\coloneqq B_1/\cdots/B_k$ where the $B_i$ are ordered in increasing order of their minimal elements.
The subsets $B_1,\dots,B_k$ are the \bemph{blocks} of $\pi$, and the number of blocks is denoted by $\blk(\pi)$.
The set of partitions of $[n]$, denoted by $\Pi_n$, can be endowed with the structure of a \bemph{graded lattice} by defining a partial order as follows.
Given partitions $\sigma$ and $\tau$, we say that $\sigma \preccurlyeq_{\Pi_n} \tau$ if each block in $\sigma$ is contained in a block in $\tau$.
In particular, a partition $\tau$ covers a partition $\sigma$ in $\Pi_n$ if $\tau$ is obtained by merging two distinct blocks in $\sigma$.
If $\sigma \preccurlyeq_{\Pi_n}\tau$, we say that $\sigma$ is \bemph{finer} than $\tau$ or equivalently that $\tau$ is \bemph{coarser} than $\sigma$.
The rank of $\pi\in \Pi_n$ is given by $n-\blk(\pi)$.
The partition of $[n]$ into singleton sets gives the unique minimal element in $\Pi_n$, and the partition of $[n]$ consisting of a single block gives the unique maximal element.

We identify a  partition of $[n]$ with its \bemph{arc diagram}, which is defined as follows.
Consider $n$ nodes $v_1,\dots,v_n$ representing  the integers $1$ through $n$ from left to right, and connect two nodes $v_i$ and $v_j$ with $i<j$ by an undirected arc if $i$ and $j$ belong to the same block and if there is no $k$ in that block such that $i<k<j$.
If $i$ and $j$ belong to the same block in  $\pi$, then we denote this equivalence by $i\sim_{\pi} j$.

A partition $\pi$ of $[n]$ is said to be \bemph{noncrossing} if there do not exist $1\leq a<b<c<d\leq n$ such that $a\sim_\pi c$, $b\sim_\pi d$ and $a\nsim_\pi b$.
The set of noncrossing partitions of $n$, denoted by $\noncrossing{n}$, inherits a  graded lattice structure from that on $\Pi_n$.
For the many interesting properties of $\noncrossing{n}$, the reader is referred to the beautiful survey by Simion \cite{Simion} and references therein.
For a more recent survey on the relevance of $\noncrossing{n}$ in various areas of mathematics, the reader is referred to McCammond \cite{McCammond}.

\subsection{Interlacing partitions}
We now introduce a special type of noncrossing partition, called an interlacing partition, and decorated generalizations of interlacing partitions. 

Let $\pi=B_1/\dots/B_k \in \noncrossing{n}$.
If $i\sim_{\pi} i+1$, then we say the node $v_i$ is a \bemph{stepper} and the arc connecting $v_i$ and $v_{i+1}$ is a \bemph{short arc}.
If $i\sim_{\pi} j$ with $j\geq i+2$ and there is an arc connecting $v_i$ and $v_j$ in the arc diagram, then we say the node $v_i$ is a \bemph{jumper} and the arc between $v_i$ and $v_j$ is a \bemph{long arc}.
Let $\max(\pi)\coloneqq\{\max(B_1),\ldots, \max(B_k)\}$ and $\min(\pi)\coloneqq\{\min(B_1),\ldots,\min(B_k)\}$.
We say that $\pi$ is \bemph{interlacing} if $i\in \max(\pi)$ implies that $i+1\notin \min(\pi)$ for all $i$.
Finally, an interlacing partition $\pi\in \noncrossing{n}$ is said to be \bemph{marked} if a subset of nodes in $\{v_1,\dots,v_n\} \setminus \{v_i \suchthat i\in \max(\pi)\}$ is marked.
Let $\mnoncrossing{n}$ denote the set of all marked interlacing partitions whose underlying interlacing partition is in $\noncrossing{n}$.
For the marked interlacing partition $\pi$ in Figure~\ref{fig: Marked NC}, the nodes $v_2$, $v_4$, and $v_8$ are marked.
Furthermore, the nodes $v_1$, $v_2$, $v_7$, and $v_9$ are steppers, and the nodes $v_3$, $v_4$, and $v_8$ are jumpers.

\begin{figure}[H]
  \includegraphics[scale=0.3]{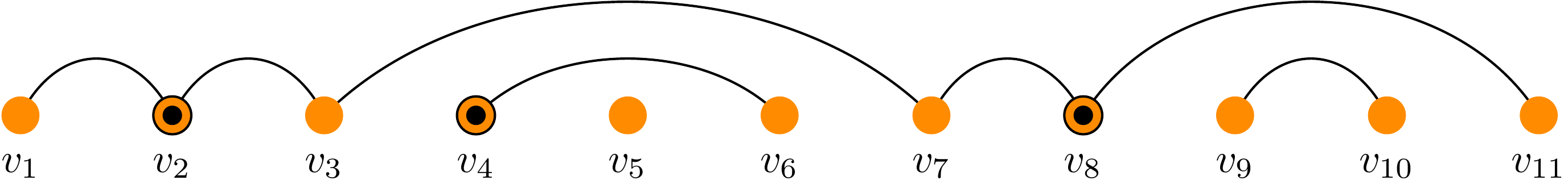}
  \caption{The arc diagram of the interlacing partition \\$\pi = 1\,2\,3\,7\,8\,11/4\,6/5/9\,10$, with nodes $v_2,v_4$, and $v_8$ marked.}
  \label{fig: Marked NC}
\end{figure}

\begin{remark}
  The cardinality of the set of interlacing partitions for $n\geq 1$ is given by the sequence of Motzkin numbers \cite[A001006]{OEIS}, which are well known to enumerate Motzkin paths, which are lattice paths from $(0,0)$  to $(n,0)$ where the steps allowed are up, down and level.
  The cardinality of the set of marked interlacing partitions for $n\geq 1$ is given by \cite[A071356]{OEIS}, which counts Motzkin paths where the up and level steps are bicolored.
  In theory we could have phrased our results in the language of Motzkin paths but it is (marked) interlacing partitions that arise naturally in our context.
\end{remark}

To each $\pi\in \mnoncrossing{n}$, we associate a sequence of compositions as follows.
Partition each block $B$ of $\pi$ into disjoint subsets by breaking $B$ after each marked node.
Define $c(B)$ to be the composition obtained by recording the sizes of these subsets.
For $\pi = B_1/B_2/B_3/B_4$ in Figure~\ref{fig: Marked NC}, we have
\begin{align*}
B_1 &= \{1,2,3,7,8,11\} & c(B_1) & = (2,3,1), \\
B_2 &= \{4,6\}  & c(B_2) & = (1,1),\\
B_3 &= \{5\} & c(B_3) & = (1),\\
B_4 &= \{9,10\}  & c(B_4) & = (2).
\end{align*}
Consider the following four statistics associated to a marked interlacing partition $\pi$. Let
\begin{align}
\sa(\pi) &= \# \text{ Unmarked steppers in }\pi \label{eq: step stat1}\\
\sd(\pi) &= \# \text{ Marked steppers in }\pi\\
\ja(\pi) &= \# \text{ Unmarked jumpers in }\pi\\
\jd(\pi) &= \# \text{ Marked jumpers in }\pi.\label{eq: step stat4}
\end{align}
Let the weight of a marked interlacing partition be
\begin{equation}
\wt{\pi} = (\la+\ra)^{\,\sa(\pi)} (\ld+\rd)^{\sd(\pi)} (\la\ra)^{\ja(\pi)} (\ld\rd)^{\jd(\pi)}.
\end{equation}
\begin{remark}
The letters $a$ and $d$ in the names of the statistics in \eqref{eq: step stat1}--\eqref{eq: step stat4} correspond to \emph{ascent} and \emph{descent}. We will see in Section~\ref{sec: ribbon expansion} how each marked interlacing partition $\pi$ corresponds to a set of labeled binary trees, where each marked node in $\pi$ represents a descent in the labeling, and each unmarked node represents an ascent in the labeling.
\end{remark}

We define an \bemph{augmented interlacing partition}  $\pi^\ast$ to be a marked interlacing partition such that each short arc in its arc diagram is labeled either $U$ or $D$.
To each augmented interlacing partition $\pi^\ast$, we associate the following six statistics. Let
\begin{align*}
\sau(\pi^\ast) &= \#\text{ Unmarked steppers in $\pi^\ast$ whose corresponding short arc is labeled $U$,}\\
\sdu(\pi^\ast) &= \#\text{ Marked steppers in $\pi^\ast$ whose corresponding short arc is labeled $U$,}\\
\sad(\pi^\ast) &= \#\text{ Unmarked steppers in $\pi^\ast$ whose corresponding short arc is labeled $D$,}\\
\sdd(\pi^\ast) &= \#\text{ Marked steppers in $\pi^\ast$ whose corresponding short arc is labeled $D$,}\\
\ja(\pi^\ast) &= \# \text{ Unmarked jumpers in }\pi^\ast,\\
\jd(\pi^\ast) &= \# \text{ Marked jumpers in }\pi^\ast.
\end{align*}
 Combining all six statistics, we define
\begin{equation}\label{eq: wt of pi ast}
\wt{\pi^\ast}=\la^{\sau(\pi^\ast)} \ld^{\sdu(\pi^\ast)} \ra^{\,\sad(\pi^\ast)} \rd^{\,\sdd(\pi^\ast)} (\la\ra)^{\,\ja(\pi^\ast)} (\ld\rd)^{\,\jd(\pi^\ast)}.
\end{equation}
For $\pi^\ast$ in Figure~\ref{fig: Augmented NC}, we have {$\wt{\pi^\ast} = \ra\ld(\la\ra)(\ld\rd)\ra(\ld\rd)\la = \la^2\ld^3\ra^3\rd^2$}.

To each augmented interlacing partition $\pi^\ast$, associate words $w(\pi^\ast)$ and $\hat{w}(\pi^\ast)$ defined recursively as follows.
If $\pi^\ast$ contains a single block, define $w(\pi^\ast)$ to be the word obtained by recording the labels on the short arcs from left to right, and define $\hat{w}(\pi^\ast)$ to be $Dw(\pi^\ast)U$.
If $\pi^\ast$ contains more than 1 block, let $B_1$ be the block of $\pi^\ast$ which contains $v_1$ and $v_n$.
Partition $B_1$ into blocks $C_1,\dots, C_p$ that are maximal under connectedness by short arcs.
 For $1\leq i\leq p-1$, let $\pi^\ast_i$ denote the augmented interlacing partition induced by $\pi^\ast$ on the nodes $v_j$ for $j\in (\max(C_i),\min(C_{i+1}))$.
Define
 \begin{equation}\label{eq: def of w}
 w(\pi^\ast)\coloneqq w(C_1)\hat{w}(\pi^\ast_1)\cdots w(C_{p-1})\hat{w}(\pi^\ast_{p-1})w(C_p)\in \{U,D\}^{n-1},
 \end{equation}
 where the dots signify that the concatenation continues and $n$ is the number of nodes of $\pi$.
Given $\canopy\in \{U,D\}^{n-1}$, let $\anoncrossing{n}{\canopy}$ be the set of augmented interlacing partitions $\pi^\ast$ such that $w(\pi^\ast)=\canopy$. For the augmented interlacing partition $\pi^\ast$ in Figure~\ref{fig: Augmented NC}, we have that $\pi^\ast \in \anoncrossing{11}{DUDDUUDDUU}$.

\begin{figure}[H]
  \includegraphics[scale=0.3]{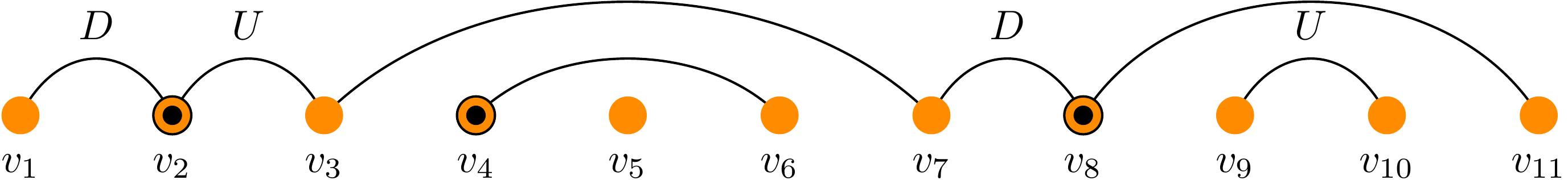}
  \caption{An augmented interlacing partition $\pi^\ast$ with $w(\pi^\ast) = DUDDUUDDUU$.}
  \label{fig: Augmented NC}
\end{figure}

\section{\texorpdfstring{$\mathfrak{S}_n$}{Sn}-modules from deformations of Coxeter arrangements}\label{sec: Hyperplane arrangements}
In this section, we provide a representation-theoretic meaning to some of the equalities in the introduction relating specializations of $B_n(\alpx;\la,\ld,\ra,\rd)$  to the number of regions in hyperplane arrangements related to the Coxeter arrangement. In particular, we focus on the semiorder and Linial arrangements.
While there is an obvious symmetric group action on regions of the semiorder arrangement $\hypsemi_n$ by permutation of the coordinates, the question of finding one on regions of the Linial arrangement $\hyplin_n$ is a bit more subtle.
Throughout this section, we assume the ribbon Schur expansions in Theorem~\ref{cor: Ribbon Expansion} and Theorem~\ref{thm: ribbon expansion for fixed canopy}, which are proven in Section~\ref{sec: ribbon expansion}.

We briefly introduce our notation pertaining to hyperplane arrangements.
For a detailed introduction, we refer the reader to \cite{Orlik-Terao, Stanley-notes}.
A \bemph{hyperplane arrangement} is a finite collection of affine hyperplanes in a vector space.
Let $\mathcal{A}$ be a hyperplane arrangement in a finite-dimensional vector space $V$ over $\mathbb{R}$.
A \bemph{region} of $\mathcal{A}$ is a connected component of $V-\bigcup_{H\in \mathcal{A}}H$.
We denote the set of regions of $\mathcal{A}$ by $\regions{\mathcal{A}}$ and the number of regions of $\mathcal{A}$ by $r(\mathcal{A}) = |\regions{\mathcal{A}}|$.

\subsection{Semiorder arrangements}\label{subsec: Semiorders}
Recall that the semiorder arrangement $\hypsemi_n$ is  the hyperplane arrangement in $\mathbb{R}^n$ given by  the hyperplanes $H_{ij}: x_i-x_j=1$ for $1\leq i\neq j\leq n$.
Note that this set of hyperplanes is stable under the natural action of the symmetric group $\mathfrak{S}_n$.
This implies that $\regions{\hypsemi_n}$ inherits an action of $\mathfrak{S}_n$.
We first expand a certain specialization of $G_n$ in terms of the Motzkin numbers and the Frobenius characteristics of Foulkes characters.
We then use this formula to show in Theorem~\ref{thm: Frobenius semiorder} that the Frobenius characteristic of the action on $\regions{\hypsemi_n}$ is a specialization of  $G_n$.

Figure~\ref{fig:semiorder action} demonstrates the action of $\mathfrak{S}_3$ on $\regions{\hypsemi_3}$.
In this case, we can compute the Frobenius characteristic of this action to be $h_3+2h_{21}+2h_{111}$.
In terms of ribbon Schur functions, this may be written as $5r_{3}+3(r_{21}+r_{12})+2r_{111}$.
Note that since ribbon Schur functions do not form a basis for the ring of symmetric functions, this  expansion in terms of ribbon Schur functions is not unique.
\begin{figure}[ht]
\centering
\begin{minipage}{.48\textwidth}
  \centering
  \includegraphics[scale=0.6]{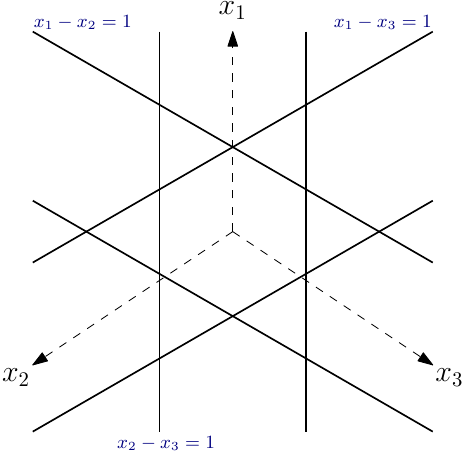}
\end{minipage}
\begin{minipage}{.48\textwidth}
  \centering
  \includegraphics[scale=0.6]{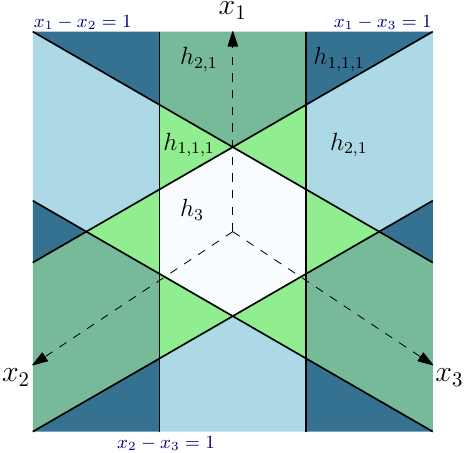}
\end{minipage}
\captionof{figure}{On the left is the arrangement $\hypsemi_3$ (projected onto $x_1+x_2+x_3=0$).
On the right is a depiction of the $\mathfrak{S}_3$-action on $\regions{\hypsemi_3}$ with regions of the same color belonging to the same orbit (see the electronic version).}
\label{fig:semiorder action}
\end{figure}

By Theorem~\ref{cor: Ribbon Expansion}, we know $G_n$ expands in terms of ribbon Schur functions with coefficients which are polynomials in $\la\ra$, $\la+\ra$, $\ld\rd$, and $\ld+\rd$.
Consider the case   $\la=\ra=1$, $\ld=\zeta_6^{-1}$, and $\rd=\zeta_6$, where $\zeta_6$ is a primitive sixth root of unity. 
Then we have that {$\la\ra=\ld+\rd=\ld\rd=1$}, whereas $\la+\ra=2$.
Thus, for this specialization we have $\wt{\pi}=2^{\sa(\pi)}$ for  $\pi\in \mnoncrossing{n}$.
From Theorem~\ref{cor: Ribbon Expansion} and~\eqref{eq: Product of Ribbons}, we obtain
\begin{equation}\label{eqn: semiorder ribbon expansion}
G_n(\alpx;1,\zeta_6^{-1},1,\zeta_6)=\sum_{\pi\in \mnoncrossing{n}  }
 2^{\sa(\pi)}\biggl(\,\sum_{\delta\in
  [\ltyp{\pi},\utyp{\pi}]}r_{\delta}\biggr),
\end{equation}
where if $\pi = B_1/\dots/B_k$, then $\ltyp{\pi} = c(B_1)\concat\cdots\concat c(B_k)$ and $\utyp{\pi} = c(B_1)\nconcat\cdots\nconcat c(B_k)$.

For $0\leq k\leq n-1$, let
\begin{align}
\foulkes_{n,k}\coloneq\sum_{\substack{\alpha\vDash n\\ \ell(\alpha)=k+1}} r_{\alpha}.
\end{align}
It can be shown that the dimension of the $\mathfrak{S}_n$-module corresponding to $F_{n,k}$ is the Eulerian number  $A_{n,k}$ enumerating the number of permutations in $\mathfrak{S}_n$ with $k$ descents \cite{Foulkes}.
 {The symmetric group character corresponding to $F_{n,k}$ under the Frobenius characteristic map} is known as the \bemph{Foulkes character}, introduced by Foulkes in his study of descents in permutations. They show up in various areas such as counting permutations by descents and cycle types \cite{Gessel-Reutenauer}, enumerating alternating permutations according to cycle type \cite{Stanley-Alternating}, and the analysis of the carrying process \cite{Diaconis-Fulman, Holte,Novelli-Thibon-Amazing}.
Foulkes characters have been generalized to other reflection groups in \cite{Miller-Foulkes}.

Throughout this section, we encode most of our formulas in terms of marked and augmented interlacing partitions. However, it should be noted that all of these formulas could just as easily be encoded in terms of binary trees via the following bijection between $\pbtun$ and $\noncrossing{n}$, which is a special case of a bijection due to Edelman. Figure~\ref{fig: Edelman example} shows a binary tree $T$ and its corresponding noncrossing partition.
\begin{theorem}[Edelman \cite{Edelman}]
Given $T\in \pbtun$, let $v_1,\ldots, v_n$ be its vertices listed in preorder.
Define $\noncross{T}$ to be the finest partition of $[n]$ with the property that distinct positive integers $1\leq i<j \leq n$ are in the same block if $v_j$ is the left child of $v_i$. The map $T\mapsto \noncross{T}$ defines a bijection between $\pbtun$ and $\noncrossing{n}$.
\end{theorem}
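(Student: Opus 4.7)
The plan is to prove this by induction on $n$, simultaneously verifying that $\noncross{T}$ is noncrossing and exhibiting an inverse map. The base cases $n=0$ and $n=1$ are immediate.

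For the noncrossing property, the crucial observation is that in the paper's preorder convention (root, then right subtree, then left subtree), the preorder positions of the nodes in the subtree rooted at any node $v_i$ form a contiguous interval of integers starting at $i$. If $v_i$ has left child $v_j$, then $v_j$ is visited after all of $v_i$'s right subtree, so the indices $i+1, \dots, j-1$ are precisely the preorder positions of the right subtree of $v_i$. Following the left-child chain $v_1, v_{j_2}, \dots, v_{j_m}$ from the root $v_1$ recursively yields the block of $1$ in $\noncross{T}$, with successive right subtrees occupying the disjoint intervals $(j_1, j_2), (j_2, j_3), \dots, (j_{m-1}, j_m), (j_m, n]$. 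Any other block of $\noncross{T}$ arises from some left-child chain living entirely inside one such right subtree, hence is confined to one of these intervals. This gives the noncrossing property.

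For bijectivity, I would exhibit the explicit inverse. Given $\pi \in \noncrossing{n}$, let $B = \{1 = j_1 < j_2 < \cdots < j_m\}$ be the block containing $1$. The noncrossing property applied to $B$ forces every other block of $\pi$ to lie entirely inside one of the intervals $(j_1, j_2), (j_2, j_3), \dots, (j_{m-1}, j_m), (j_m, n]$. By induction, the restriction of $\pi$ to each such interval (after standardizing to $\{1, 2, \dots\}$) corresponds to a plane binary tree. Build $T$ with root $v_1$, left child $v_{j_2}$ (if $m \geq 2$), and with the right subtree of each $v_{j_i}$ set equal to the tree built from the restriction of $\pi$ to the interval immediately following $j_i$. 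By construction, the maximal left-child chain from $v_1$ in $T$ is exactly $B$, and by the inductive hypothesis the blocks of $\noncross{T}$ other than $B$ match those of $\pi$, so the two maps are mutually inverse.

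The main obstacle, really a careful bookkeeping exercise, is tracking preorder indices through the recursive decomposition: one must verify that the preorder index of a node inside a right subtree, after reassembling into the full tree $T$, matches the index it has in $\pi$. Alternatively, once $\noncross{T}$ is known to be noncrossing, one can invoke $|\pbtun| = |\noncrossing{n}| = \Cat{n}$, so that either injectivity or surjectivity suffices; the recursive construction above supplies surjectivity directly.
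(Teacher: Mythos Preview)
Your proof is correct. Note, however, that the paper does not supply its own proof of this statement: it is quoted as a known result due to Edelman \cite{Edelman} and stated without proof. Your inductive argument---decomposing a tree along the left-child chain from the root, identifying the right subtrees with the nested intervals between consecutive elements of the block containing $1$, and reversing this to recover the tree from a noncrossing partition---is the standard proof of Edelman's bijection, and your bookkeeping remark about preorder indices (or the alternative appeal to $|\pbtun|=|\noncrossing{n}|=\Cat{n}$ once the map is shown to land in $\noncrossing{n}$) is exactly what is needed to finish it.
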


\begin{figure}[ht]
\centering
  \includegraphics[scale=0.33]{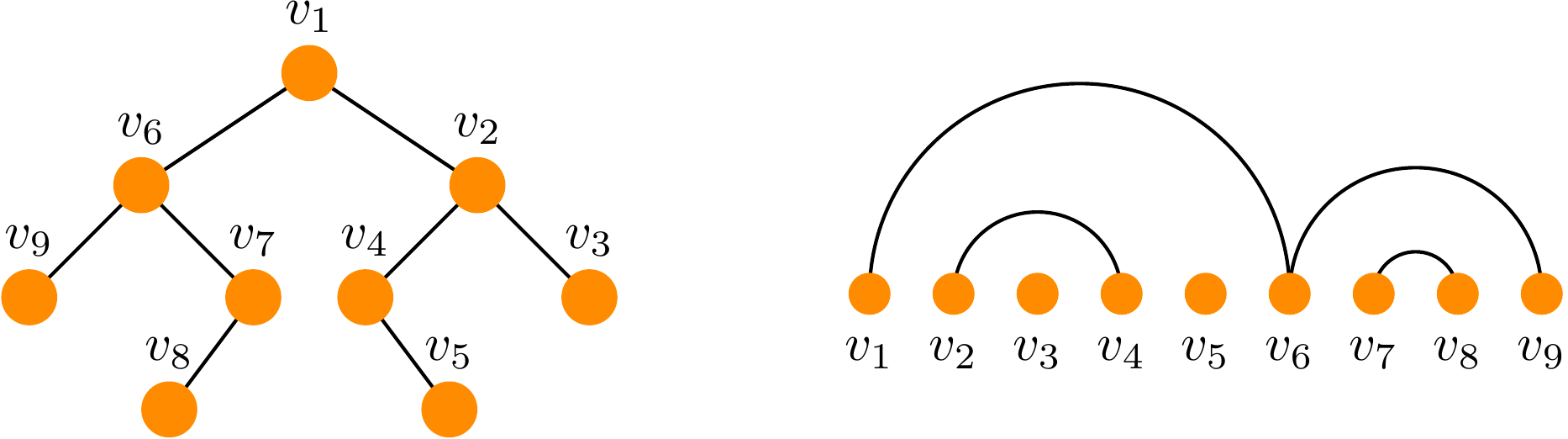}
\caption{On the left is a binary tree $T$, and on the right is its corresponding noncrossing partition $\noncross{T}$.\label{fig: Edelman example}}
\end{figure}
Given a binary tree $T\in\pbtun$, we say that it is \bemph{left-leaning} if for every node $v$ of $T$, whenever $v$ has a right child, then it also has a left child.  We denote the set of left-leaning trees on $n$ nodes by $\lpbt_n$. From the definition of $\noncross{T}$, we see that if $T\in \lpbt_n$, then $\noncross{T}$ is an interlacing partition of $[n]$. In fact, Edelman's bijection restricts to a bijection between $\lpbt_n$ and the set of interlacing partitions of $[n]$.

For $n\geq 1$, let $\Mot{n}$  denote the $n$th \bemph{Motzkin number} \cite[A001006]{OEIS} which is defined to be the number of interlacing partitions on $[n]$, or equivalently the number of left-leaning binary trees on $n$ nodes.
Let $M(x)$ denote the  generating function for the Motzkin numbers,
\begin{align}
M(x)\coloneq\sum_{n\geq 1}\Mot{n}\,x^n = x + x^2 + 2x^3 + 4x^4 + \cdots.
\end{align}

Let $\Cat{n} = |\pbt_n|$ be the $n$th \bemph{Catalan number} \cite[A000108]{OEIS}. Let $C(x)$ denote the generating function for the Catalan numbers,
\begin{align}
C(x)\coloneq\sum_{n\geq 1}\Cat{n}\,x^n = x + 2x^2 + 5x^3 + 14x^4 + \cdots.
\end{align}
Then it can be checked that $M(x)$ and $C(x)$ are related by the following identity,
\begin{align}\label{eqn: Motzkin to Catalan}
M\left(\frac{x}{1-x}\right)=C(x).
\end{align}
Equation~\eqref{eqn: Motzkin to Catalan} implies the following identity that will come in handy later, \begin{align}\label{eqn: Motzkin and Catalan exponential}
M(e^x-1)=C(1-e^{-x}).
\end{align}
Decomposing a tree into its root with two subtrees, each of which is potentially empty, yields the identity $C=x(1+C)^2$.
By using~\eqref{eqn: Motzkin to Catalan} again, we see that $M(x)$ satisfies the functional equation
\begin{align}\label{eq: M functional equation}
M=x(1+M+M^2).
\end{align}
Compare this to \cite[Equation 1]{Donaghey-Shapiro}, for example, where our indexing differs from theirs by $1$.

\begin{theorem}\label{thm: semiorder expansion}
For $n\geq 1$, we have the following expansion of $G_n(\alpx;1,\zeta_6^{-1},1,\zeta_6)$ in terms of the Frobenius characteristics of Foulkes characters,
\begin{align*}
G_n(\alpx;1,\zeta_6^{-1},1,\zeta_6)= \sum_{\alpha\vDash n}\Mot{\ell(\alpha)}\,h_{\alpha}=\sum_{j=1}^{n}\left(\sum_{k=0}^{n-j}\binom{n-j}{k}\Mot{n-k}\right)F_{n,j-1}.
\end{align*}
\end{theorem}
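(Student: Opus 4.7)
The proof naturally breaks into two independent steps, one for each equality.

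\emph{First equality.} The plan is to specialize the functional equation of Theorem~\ref{thm: Gessel functional equation} at $\la=\ra=1$, $\ld=\zeta_6^{-1}$, $\rd=\zeta_6$. Here $\la\ra = \ld\rd = 1$ and $\la + \ra - \ld - \rd = 2 - 1 = 1$, so the argument of $H$ on the right-hand side collapses to $1$. Moreover $(1+\ld G)(1+\rd G) = 1 + (\ld+\rd) G + \ld\rd\, G^2 = 1 + G + G^2$, so the functional equation becomes $(1+G)^2/(1 + G + G^2) = H(1)$. Writing $u := H(1) - 1 = \sum_{m\geq 1} h_m$ and clearing denominators yields
\[
G = u\,(1 + G + G^2),
\]
which is exactly the Motzkin functional equation~\eqref{eq: M functional equation}. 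Since $u$ has no degree-$0$ component, the solution is unique degree by degree, so $G(\alpx; 1, \zeta_6^{-1}, 1, \zeta_6) = M(u) = \sum_{k \geq 1} \Mot{k}\, u^k$. Expanding $u^k = \sum_{\alpha,\,\ell(\alpha) = k} h_\alpha$ and extracting the degree-$n$ part gives $G_n(\alpx;1,\zeta_6^{-1},1,\zeta_6) = \sum_{\alpha \vDash n} \Mot{\ell(\alpha)}\, h_\alpha$.

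\emph{Second equality.} The plan is to convert everything into the ribbon Schur basis. By repeated application of~\eqref{eq: Product of Ribbons} to the product $h_\alpha = r_{(\alpha_1)} \cdots r_{(\alpha_k)}$, one obtains $h_\alpha = \sum_{\gamma \succcurlyeq \alpha} r_\gamma$. Interchanging the order of summation,
\[
\sum_{\alpha \vDash n} \Mot{\ell(\alpha)}\, h_\alpha = \sum_{\gamma \vDash n} r_\gamma \sum_{\alpha \preccurlyeq \gamma} \Mot{\ell(\alpha)},
\]
so the task reduces to computing the inner sum for a fixed $\gamma$ of length $j$. Refinements $\alpha$ of $\gamma = (\gamma_1, \dots, \gamma_j)$ are in bijection with tuples $(\alpha^{(1)}, \dots, \alpha^{(j)})$ with $\alpha^{(i)} \vDash \gamma_i$, and $\ell(\alpha) = \sum_i \ell(\alpha^{(i)})$. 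Since there are $\binom{\gamma_i - 1}{k_i - 1}$ compositions of $\gamma_i$ of length $k_i$, the Vandermonde identity
\[
\sum_{\substack{m_1 + \cdots + m_j = K - j \\ m_i \geq 0}} \prod_{i=1}^{j} \binom{\gamma_i - 1}{m_i} = [z^{K-j}] \prod_{i=1}^{j} (1+z)^{\gamma_i - 1} = \binom{n-j}{K-j}
\]
gives $\sum_{\alpha \preccurlyeq \gamma} \Mot{\ell(\alpha)} = \sum_{K=j}^n \binom{n-j}{K-j}\Mot{K}$, which depends only on $j$ and $n$. Reindexing via $k = n - K$ converts this to $\sum_{k=0}^{n-j} \binom{n-j}{k}\Mot{n-k}$, and grouping ribbons of common length into $F_{n, j-1} = \sum_{\gamma \vDash n,\,\ell(\gamma) = j} r_\gamma$ completes the identity.

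\emph{Main obstacle.} Neither step poses a serious obstacle. The crucial ingredient in the first step is the ``coincidence'' $\zeta_6 + \zeta_6^{-1} = 1$ together with $\zeta_6 \cdot \zeta_6^{-1} = 1$, which simultaneously eliminates the $G$-coefficient in the argument of $H$ and symmetrizes the denominator; this is precisely what allows Theorem~\ref{thm: Gessel functional equation} to reduce to the Motzkin functional equation, and it is the algebraic reason the Motzkin numbers appear in the semiorder context. The second step is a routine manipulation once one recognizes that refinements of $\gamma$ decompose block by block.
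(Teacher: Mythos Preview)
Your proof is correct and follows essentially the same approach as the paper: specialize the functional equation of Theorem~\ref{thm: Gessel functional equation} to obtain $G=(H(1)-1)(1+G+G^2)$, recognize this as the Motzkin functional equation~\eqref{eq: M functional equation} to get the $h$-expansion, then convert to ribbon Schur functions via $h_\alpha=\sum_{\gamma\succcurlyeq\alpha}r_\gamma$ and count refinements by length. Your explicit Vandermonde argument for the inner sum $\sum_{\alpha\preccurlyeq\gamma}\Mot{\ell(\alpha)}$ spells out a step the paper leaves implicit, but the route is the same.
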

\begin{proof}
Just for this proof, we denote $G(\alpx;1,\zeta_6^{-1},1,\zeta_6)$ by $G$. Similarly, let $H = H(1)=\sum_{i\geq 0}h_i$. To obtain the $h$-expansion for $G_n(\alpx;1,\zeta_6^{-1},1,\zeta_6)$, we use the functional equation for $G$ from Theorem~\ref{thm: Gessel functional equation}.  When $\la=\ra=1$, $\ld=\zeta_6^{-1}$, and $\rd=\zeta_6$, we get that
\begin{align}\label{eqn: functional equation semiorder}
G=(1+G+G^2)(H-1).
\end{align}
A comparison of the functional equation in~\eqref{eqn: functional equation semiorder} with the functional equation satisfied by $M(x)$ in~\eqref{eq: M functional equation} reveals that
\begin{align}
G&=\sum_{m\geq 1}\Mot {m}\,(H-1)^{m}\\
&=\sum_{m\geq 1}\sum_{\ell(\alpha)=m}\Mot {m}\,h_{\alpha}\label{eqn: h-expansion semiorder},
\end{align}
where the above sum is over all compositions $\alpha$ of any size with length $m$.
Focusing on terms indexed by compositions of size $n$ in~\eqref{eqn: h-expansion semiorder} yields the first equality in the statement of the theorem.

To obtain the expansion in terms of the Frobenius characteristics of Foulkes characters, we utilize the fact that $h_{\alpha}=\sum_{\beta \succcurlyeq \alpha}r_{\beta}$ and~\eqref{eqn: h-expansion semiorder} to obtain
\begin{align}
G&=\sum_{m\geq 1}\sum_{\ell(\alpha)=m}\Mot {m}\sum_{\beta \succcurlyeq \alpha}r_{\beta}\\
&= \sum_{m\geq 1}\sum_{\beta\vDash m}r_{\beta}\biggl(\,\sum_{\beta\succcurlyeq\alpha}\Mot{\ell(\alpha)}\biggr)\\
&=\sum_{m\geq 1}\sum_{\beta\vDash m}r_{\beta}\biggl(\,\sum_{k=0}^{m-\ell(\beta)} \binom{m-\ell(\beta)}{k}\Mot{m-k}\biggr).
\end{align}
Note that the innermost sum is only dependent on $\ell(\beta)$.
Collecting terms corresponding to compositions of a fixed size and grouping them according to their lengths gives us the second equality in the statement of the theorem.
\end{proof}
For example, $G_3(\alpx;1,\zeta_6^{-1},1,\zeta_6)=\Mot{1}h_3+\Mot{2}(h_{12}+h_{21})+\Mot{3}h_{111}=h_3+2h_{21}+2h_{111}$, which is the Frobenius characteristic of the $\mathfrak{S}_3$-action on $\hypsemi_3$ as shown in Figure~\ref{fig:semiorder action}.
To compute the Frobenius characteristic in the general case we need the following result.
\begin{lemma}[{\cite[Lemma 7.6]{Postnikov-Stanley}}]\label{lem: Postnikov-Stanley on semiorders}
Let $\sigma\in \mathfrak{S}_n$ be a permutation with $k$ cycles. Then the number of regions in $\hypsemi_n$ fixed by $\sigma$ is equal to the  number of regions in $\hypsemi_{k}$.
\end{lemma}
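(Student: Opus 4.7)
The plan is to invoke the general principle that when a group element $\sigma$ preserves a real hyperplane arrangement $\mathcal{A}$, the $\sigma$-fixed regions of $\mathcal{A}$ correspond bijectively, via the map $R \mapsto R \cap V^\sigma$, to the regions of the arrangement induced on the $\sigma$-fixed subspace $V^\sigma$. Here $\mathfrak{S}_n$ acts on $\mathbb{R}^n$ by permuting coordinates, $\hypsemi_n$ is manifestly $\mathfrak{S}_n$-stable, and the fixed subspace $V^\sigma = \{\,x \in \mathbb{R}^n : x_a = x_b \text{ whenever } a, b \text{ lie in a common cycle of } \sigma\,\}$ has dimension equal to the number $k$ of cycles of $\sigma$.

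The crux is to identify the induced arrangement on $V^\sigma$ with $\hypsemi_k$. Parametrize $V^\sigma$ by coordinates $y_1, \dots, y_k$ indexed by the cycles of $\sigma$, so that $x_i = y_{c(i)}$ where $c(i)$ denotes the cycle containing $i$. Restricting the hyperplane $H_{ij}: x_i - x_j = 1$ to $V^\sigma$ yields $0 = 1$ when $c(i) = c(j)$, in which case $H_{ij}$ misses $V^\sigma$ entirely, and yields the hyperplane $y_{c(i)} - y_{c(j)} = 1$ otherwise. As $(i,j)$ ranges over all ordered pairs with $c(i) \neq c(j)$, every ordered pair of distinct cycles is realized exactly, so the induced arrangement on $V^\sigma \cong \mathbb{R}^k$ is precisely $\hypsemi_k$.

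With this identification in hand, the bijection $\Phi: R \mapsto R \cap V^\sigma$ is routine. Nonemptiness of $\Phi(R)$ follows by averaging: if $\sigma$ has order $m$ and $x \in R$, then $\tfrac{1}{m}\sum_{j=0}^{m-1} \sigma^j x \in R \cap V^\sigma$ by convexity of $R$ together with $\sigma R = R$. Any point $q$ in a region $Q$ of $\hypsemi_k$ avoids every hyperplane of $\hypsemi_n$ (same-cycle hyperplanes already miss $V^\sigma$ entirely), so $q$ lies in a unique region $R$ of $\hypsemi_n$, and this $R$ is $\sigma$-fixed because $\sigma$ fixes $q$. This yields surjectivity of $\Phi$ and, since distinct regions of $\hypsemi_n$ are pairwise disjoint, immediately implies both that $\Phi$ is injective and that $\Phi(R)$ fills out the entire region $Q$ of $\hypsemi_k$ containing it rather than a proper subset. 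The main point that needs verification is the explicit match between the restriction of $\hypsemi_n$ to $V^\sigma$ and $\hypsemi_k$ in the second step; once that is settled, the bijection and the resulting count follow with no further work.
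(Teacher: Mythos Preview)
The paper does not give its own proof of this lemma; it is imported verbatim from Postnikov--Stanley \cite[Lemma 7.6]{Postnikov-Stanley}, with only a remark that the indexing differs harmlessly from theirs. Your argument is correct and is essentially the standard proof: restrict the arrangement to the $\sigma$-fixed subspace $V^\sigma\cong\mathbb{R}^k$, observe that the induced arrangement is exactly $\hypsemi_k$ (hyperplanes $H_{ij}$ with $i,j$ in the same cycle miss $V^\sigma$ entirely, while the others cut out each hyperplane $y_p-y_q=1$ of $\hypsemi_k$, possibly with repetition), and then use convexity plus the averaging trick to set up the bijection $R\mapsto R\cap V^\sigma$ between $\sigma$-fixed regions of $\hypsemi_n$ and regions of $\hypsemi_k$. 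One small point you could make more explicit: the reason $\Phi(R)=R\cap V^\sigma$ lands in a \emph{single} region of $\hypsemi_k$ is that $R\cap V^\sigma$ is convex (hence connected) and avoids every hyperplane of $\hypsemi_k$; you rely on this implicitly when asserting that $\Phi$ is well-defined.
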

We remark here that the statement in Lemma~\ref{lem: Postnikov-Stanley on semiorders} differs slightly from that in \cite{Postnikov-Stanley} as Postnikov-Stanley consider the hyperplane arrangement $\hypsemi_n$ projected onto the hyperplane $x_1+\cdots +x_n=0$ as their definition of the semiorder arrangement. This  induces a harmless shift in indices and does not affect the mathematical content.
\begin{theorem}\label{thm: Frobenius semiorder}
The symmetric function $G_n(\alpx; 1,\zeta_6^{-1},1,\zeta_6)$ is the Frobenius characteristic of the action of $\mathfrak{S}_n$ on  $\regions{\hypsemi_{n}}$.
\end{theorem}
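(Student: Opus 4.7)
The plan is to compute the Frobenius characteristic of the permutation representation $\regions{\hypsemi_n}$ directly and match it with $G_n(\alpx;1,\zeta_6^{-1},1,\zeta_6)$ via a generating-function identity that combines Theorem~\ref{thm: Gessel functional equation} with the Motzkin functional equation~\eqref{eq: M functional equation}. First, since $\mathfrak{S}_n$ acts on the finite set $\regions{\hypsemi_n}$, its Frobenius characteristic equals $\sum_{\mu\vdash n}\frac{|\mathrm{Fix}(\sigma_\mu)|}{z_\mu}p_\mu$, where $\sigma_\mu\in\mathfrak{S}_n$ is any permutation of cycle type $\mu$. Applying Lemma~\ref{lem: Postnikov-Stanley on semiorders} reduces the theorem to proving the identity
\[
G_n(\alpx;1,\zeta_6^{-1},1,\zeta_6) = \sum_{\mu\vdash n}\frac{r(\hypsemi_{\ell(\mu)})}{z_\mu}p_\mu.
\]

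Second, I would establish this as the $t^n$-coefficient of a single generating-function equality, where $t$ is introduced via the substitution $x_i\mapsto tx_i$ (sending $h_m\mapsto t^m h_m$ and $p_j\mapsto t^j p_j$). Let $\tilde G := \sum_{n\geq 1}G_n(\alpx;1,\zeta_6^{-1},1,\zeta_6)\,t^n$. Specializing Theorem~\ref{thm: Gessel functional equation} at $\la=\ra=1$, $\ld=\zeta_6^{-1}$, $\rd=\zeta_6$ uses $\la\ra-\ld\rd=0$ and $\la+\ra-\ld-\rd=1$; the resulting equation $\tilde G=(1+\tilde G+\tilde G^2)(H(t)-1)$ matches the Motzkin functional equation~\eqref{eq: M functional equation} and identifies $\tilde G=M(H(t)-1)$, exactly as in the proof of Theorem~\ref{thm: semiorder expansion}. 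On the character side, the standard plethystic identity $\sum_\mu\frac{p_\mu}{z_\mu}y^{\ell(\mu)}t^{|\mu|}=\exp(y\log H(t))$ gives $\sum_{\mu:\ell(\mu)=k}\frac{p_\mu t^{|\mu|}}{z_\mu}=\frac{(\log H(t))^k}{k!}$, so
\[
\sum_{n\geq 1}t^n\sum_{\mu\vdash n}\frac{r(\hypsemi_{\ell(\mu)})}{z_\mu}p_\mu = \sum_{k\geq 1}r(\hypsemi_k)\frac{(\log H(t))^k}{k!} = B(\log H(t);1,\zeta_6^{-1},1,\zeta_6).
\]
The specialization of~\eqref{eq: B functional equation} reads $(1+B)^2=(1+B+B^2)e^x$, which matches the Motzkin equation and identifies $B(x;1,\zeta_6^{-1},1,\zeta_6)=M(e^x-1)$; substituting $x=\log H(t)$ yields $M(H(t)-1)$, matching $\tilde G$.

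The main obstacle is bridging the symmetric-function side, where the Motzkin functional equation arises via $G$ and $H(t)-1$, with the univariate-character side, where it arises via $B$ and $e^x-1$. Both routes are instances of the same Motzkin equation~\eqref{eq: M functional equation}, which is exactly what forces $B(\log H(t)) = M(H(t)-1) = \tilde G$ and completes the identification.
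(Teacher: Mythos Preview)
Your proof is correct and follows essentially the same route as the paper: both compute the cycle indicator via Lemma~\ref{lem: Postnikov-Stanley on semiorders} as $\sum_{k\geq 1}r(\hypsemi_k)(\log H)^k/k!$ and match it with $M(H-1)=G$ coming from Theorem~\ref{thm: Gessel functional equation}. The only difference is in how you bridge the character side to $M(H-1)$: the paper expands $(\log H)^k/k!$ in powers of $(H-1)$ via Stirling numbers and then invokes Stanley's formula~\eqref{eqn: generating function for semiorders}, whereas you package the sum as $B(\log H;1,\zeta_6^{-1},1,\zeta_6)$ using~\eqref{eq:semi} and then apply the $B$ functional equation~\eqref{eq: B functional equation}; since~\eqref{eq:semi} is itself established from Stanley's formula (combined with~\eqref{eqn: Motzkin and Catalan exponential}), the two arguments have the same external input and your organization simply avoids the explicit Stirling-number step.
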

\begin{proof}
Our proof uses the fact that the cycle indicator $Z_n$ of the $\mathfrak{S}_n$-action  on $\regions{\hypsemi_n}$,  is also the Frobenius characteristic of the character of this action. See~\cite[Section 7.24]{Stanley-EC2} for more background on the cycle indicator.
In view of Lemma~\ref{lem: Postnikov-Stanley on semiorders}, we find that the cycle indicator  is given by
\begin{align}
Z_n&=\sum_{\lambda\vdash n} r(\hypsemi_{\ell(\lambda)})\, \frac{p_{\lambda}}{z_{\lambda}}\\
&=\sum_{k=1}^{n} r(\hypsemi_{k}) \sum_{\substack{\lambda\vdash n\\\ell(\lambda)=k}} \frac{p_{\lambda}}{z_{\lambda}}.
\end{align}
Let $Z\coloneq \sum_{n\geq 1}Z_n$. Then we have that
\begin{align}
Z &=\sum_{n\geq 1}\sum_{k=1}^{n} r(\hypsemi_{k}) \sum_{\substack{\lambda\vdash n\\\ell(\lambda)=k}} \frac{p_{\lambda}}{z_{\lambda}}\\
&=
\sum_{k\geq 1} \frac{r(\hypsemi_{k})}{k!} \,\biggl(\sum_{j\geq 1}\frac{p_j}{j}\biggr)^k.\label{eqn: cycle index in terms of power sums}
\end{align}

Now let $H = \sum_{i\geq 0}h_i$ again.
Then we have  $\sum_{j\geq 1}{p_j}/{j}=\log {H}$. Using this in~\eqref{eqn: cycle index in terms of power sums} gives
\begin{align}\label{eqn: cycle index with H}
Z=\sum_{k\geq 1} r(\hypsemi_{k})\,\frac{(\log{H})^k}{k!}.
\end{align}
We have the expansion
\begin{align}
\frac{(\log{H})^k}{k!}=\sum_{m\geq 0} (-1)^{m-k}\,\Stir(m,k)\,\frac{(H-1)^m}{m!}
\end{align}
where $(-1)^{m-k}\,\Stir(m,k)$ is the \bemph{signed Stirling number of the first kind} \cite[A008275]{OEIS} enumerating  permutations in $\mathfrak{S}_m$ having exactly $k$ cycles in their cycle factorization.
Using this equality, we can rephrase \eqref{eqn: cycle index with H} as
\begin{align}
Z&=\sum_{k\geq 1} r(\hypsemi_{k})\sum_{m\geq 0} (-1)^{m-k}\,\Stir(m,k)\,\frac{(H-1)^m}{m!}\\
&= \sum_{m\geq 1} \frac{(H-1)^m}{m!}\sum_{k= 1}^{m} (-1)^{m-k}\, r(\hypsemi_{k})\, \Stir(m,k),\label{eqn: interchange order of m and k}
\end{align}
where, in changing the order of summation, we have used the fact that $\Stir(0,k)=0$ for $k\geq 1$.  By \cite[Theorem 2.3]{Stanley-Pnas}, the following equality holds
\begin{align}\label{eqn: generating function for semiorders}
\sum_{m\geq 1} r(\hypsemi_m)\,\frac{x^m}{m!}=C(1-e^{-x}).
\end{align}
Recall~\eqref{eqn: Motzkin and Catalan exponential} states that $M(e^x-1)=C(1-e^{-x})$, and hence the left hand side of~\eqref{eqn: generating function for semiorders} also equals $M(e^x-1)$.
This implies that
\begin{align}\label{eq: Motzkin Stirling}
m!\,\Mot{m}=\sum_{k= 1}^{m} (-1)^{m-k}\, r(\hypsemi_{k})\, \Stir(m,k).
\end{align}
Substituting~\eqref{eq: Motzkin Stirling} into~\eqref{eqn: interchange order of m and k}, we conclude that
\begin{align}\label{eqn: Z equals S}
Z=\sum_{m\geq 1} \Mot{m}\,(H-1)^m=G(\alpx;1,\zeta_6^{-1},1,\zeta_6).
\end{align}
The second equality in~\eqref{eqn: Z equals S} follows from~\eqref{eqn: h-expansion semiorder}. Then \eqref{eqn: Z equals S} implies that the cycle indicator $Z_n$ is equal to $G_n(\alpx;1,\zeta_6^{-1},1,\zeta_6)$, which completes the proof.
\end{proof}

We conclude this subsection with a generalization of Theorem \ref{thm: Frobenius semiorder}.
Given a positive integer $p$, define the \bemph{$p$-semiorder arrangement} $\hypsemi_{n,p}$ to be the hyperplane arrangement in $\mathbb{R}^n$ defined by the hyperplanes $x_i-x_j=\pm 1, \pm 2, \cdots,\pm p$ for $1\leq i<j\leq n$.
As before, the symmetric group $\mathfrak{S}_n$ acts on  $\regions{\hypsemi_{n,p}}$ and one can ask  about the corresponding Frobenius characteristic.
We provide a brief description of its computation next, omitting details.

Let $C_{p}(x)\coloneqq \sum_{n\geq 1}\Cat{n,p}\,x^n$ denote the generating function for the \bemph{Fuss-Catalan} numbers $\Cat{n,p}\coloneqq\frac{1}{pn+1}\binom{(p+1)n}{n}$.
Let $\pbtun^p$ be the set of rooted plane $(p+1)$-ary trees $T$ such that each node of $T$ has at most one $i$th child for each $1\leq i\leq p+1$. Then $\Cat{n,p}$ is the cardinality of the set $\pbtun^p$.
Let $\Mot{n,p}$ denote the cardinality of the set $\lpbt_n^p$ consisting of rooted plane $(p+1)$-ary trees on $n$ nodes such that every internal node which has a $(p+1)$th child also has at least one other child.

Let $M_p(x)\coloneq \sum \Mot{n,p}\,x^n$.
As was the case earlier, we have that
\begin{align}
M_p\left(\frac{x}{1-x}\right)=C_p(x),
\end{align}
which implies $M_p(e^x-1)=C_p(1-e^{-x})$.

Crucially for us, Lemma \ref{lem: Postnikov-Stanley on semiorders} continues to hold for $\hypsemi_{n,p}$ as well. Thus, we can repeat the cycle indicator computation of Theorem~\ref{thm: Frobenius semiorder}.
We only need the relation analogous to~\eqref{eqn: generating function for semiorders}.
Indeed, by \cite[Theorem 2.3]{Stanley-Pnas}, we have that
\begin{align}
\sum_{m\geq 1} r(\hypsemi_{m,p})\,\frac{x^m}{m!}=C_p(1-e^{-x}).
\end{align}
In arriving at the above equality we have used the fact that the number of regions in the \bemph{p-Catalan arrangement} defined by the hyperplanes $x_i-x_j=0,\pm 1,\ldots,\pm p$ for $1\leq i< j\leq n$ is given by $\Cat{n,p}$ (see \cite[Section 2]{Stanley-Pnas} or \cite[Section 5]{Athanasiadis-Advances}). Thus, we obtain the following theorem whose $p=1$ case is Theorem~\ref{thm: Frobenius semiorder}.
\begin{theorem}\label{thm:p-semiorder Frobenius}
 For $p\geq 1$, the Frobenius characteristic of the action of $\mathfrak{S}_n$ on  $\regions{\hypsemi_{n,p}}$ is given by $M_p(H-1)$, where $H=\sum_{i\geq 0}h_i$. Additionally, the Frobenius characteristic expands positively in terms of the $F_{n,k}$.
\end{theorem}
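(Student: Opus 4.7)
The plan is to adapt the cycle-indicator argument used in the proof of Theorem~\ref{thm: Frobenius semiorder} almost verbatim, since all the ingredients have been generalized to the $p$-semiorder setting in the paragraphs preceding the statement. Let $Z_n^{(p)}$ denote the cycle indicator of the $\mathfrak{S}_n$-action on $\regions{\hypsemi_{n,p}}$, which equals the Frobenius characteristic of the character of the action. Using the $p$-semiorder extension of Lemma~\ref{lem: Postnikov-Stanley on semiorders} (which the paper explicitly notes continues to hold), the number of regions of $\hypsemi_{n,p}$ fixed by a permutation with $k$ cycles equals $r(\hypsemi_{k,p})$, so
\[
Z_n^{(p)} = \sum_{k=1}^{n} r(\hypsemi_{k,p}) \sum_{\substack{\lambda\vdash n\\ \ell(\lambda)=k}} \frac{p_\lambda}{z_\lambda}.
\]

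Next, I would sum over $n$ and use $\sum_{j\geq 1} p_j/j = \log H$, where $H = \sum_{i\geq 0} h_i$, to obtain
\[
Z^{(p)} := \sum_{n\geq 1} Z_n^{(p)} = \sum_{k\geq 1} r(\hypsemi_{k,p})\,\frac{(\log H)^k}{k!}.
\]
Expanding $(\log H)^k/k!$ in terms of $(H-1)^m/m!$ with signed Stirling numbers $(-1)^{m-k}\Stir(m,k)$ and interchanging the order of summation gives
\[
Z^{(p)} = \sum_{m\geq 1} \frac{(H-1)^m}{m!} \sum_{k=1}^{m} (-1)^{m-k} r(\hypsemi_{k,p}) \Stir(m,k).
\]
The inner sum is identified via the exponential generating function $\sum_{m\geq 1} r(\hypsemi_{m,p})\, x^m/m! = C_p(1-e^{-x}) = M_p(e^x-1)$, which was recorded just above the statement. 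Extracting coefficients of $x^m/m!$ in $M_p(e^x-1)$ in two ways yields
\[
m!\,\Mot{m,p} = \sum_{k=1}^{m} (-1)^{m-k} r(\hypsemi_{k,p}) \Stir(m,k),
\]
and substituting back produces $Z^{(p)} = \sum_{m\geq 1} \Mot{m,p}(H-1)^m = M_p(H-1)$, proving the first assertion. The main technical step here is the Stirling number manipulation, but it is identical to the one carried out in the proof of Theorem~\ref{thm: Frobenius semiorder}, so no new obstacle arises.

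For the Foulkes-positivity, I would mirror the second half of the proof of Theorem~\ref{thm: semiorder expansion}. Expanding $(H-1)^m = \sum_{\ell(\alpha)=m} h_\alpha$ (where the sum is over compositions of arbitrary size with length $m$) gives
\[
M_p(H-1) = \sum_{m\geq 1}\sum_{\ell(\alpha)=m} \Mot{m,p}\, h_\alpha,
\]
and then using $h_\alpha = \sum_{\alpha \preccurlyeq \beta} r_\beta$ and collecting terms indexed by compositions $\beta\vDash n$ of a fixed length yields, for each $n$,
\[
M_p(H-1)\big|_{\text{degree }n} = \sum_{j=1}^{n} \Bigl(\sum_{k=0}^{n-j}\binom{n-j}{k}\Mot{n-k,\,p}\Bigr) F_{n,j-1}.
\]
Since the coefficients $\sum_{k=0}^{n-j}\binom{n-j}{k}\Mot{n-k,p}$ are manifestly nonnegative integers, this establishes the claimed positive expansion in the $F_{n,k}$. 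The whole argument is a routine generalization once one has the two $p$-analogue inputs (the extension of Lemma~\ref{lem: Postnikov-Stanley on semiorders} and the generating function identity of Stanley), so the only ``hard part'' is really verifying that Lemma~\ref{lem: Postnikov-Stanley on semiorders} genuinely extends to $p$-semiorder arrangements, which the paper asserts without proof.
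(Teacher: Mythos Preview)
Your proposal is correct and follows exactly the approach the paper intends: the paper's own ``proof'' is just the paragraph before the theorem stating that Lemma~\ref{lem: Postnikov-Stanley on semiorders} holds for $\hypsemi_{n,p}$, that the cycle-indicator computation of Theorem~\ref{thm: Frobenius semiorder} may be repeated, and that the needed generating function is $\sum_{m\geq 1} r(\hypsemi_{m,p})\,x^m/m!=C_p(1-e^{-x})$. You have simply written out the details the paper omits, including the Foulkes expansion via the argument of Theorem~\ref{thm: semiorder expansion}.
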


\subsection{Linial arrangements and local binary search trees}\label{subsec: Linial}
We turn our attention to studying the Linial arrangement $\hyplin_n$ and defining an $\mathfrak{S}_n$-action on its regions.
Observe that, unlike in the case of the semiorder arrangement, the symmetric group $\mathfrak{S}_n$ does not stabilize the set of hyperplanes defining $\hyplin_n$.
Hence it is not immediate how to construct an $\mathfrak{S}_n$-action on $\regions{\hyplin_n}$.
Another well-studied arrangement with the property that the set consisting of its defining hyperplanes is not stable under the $\mathfrak{S}_n$-action is the Shi arrangement $\hypshi_n$.
In spite of this limitation, one can define an $\mathfrak{S}_n$-action on  $\regions{\hypshi_n}$ by using one of the many ways to index its regions by parking functions of length $n$, and then using the natural $\mathfrak{S}_n$-action on them.
Drawing inspiration from this, we use certain labeled trees that we call Bernardi trees instead of parking functions to index regions of $\hyplin_n$, and then construct a natural $\mathfrak{S}_n$-action on Bernardi trees to derive one on  $\regions{\hyplin_n}$.
First, we need to understand how the symmetric function $G_n$ relates to $\regions{\hyplin_n}$.

The problem of enumerating  $r(\hyplin_n)$ was first considered by Postnikov \cite{Postnikov-JCTA}, inspired by a question of Linial and Ravid. He showed that $r(\hyplin_n)$ equals the number of intransitive trees and gave a bijection between intransitive trees and local binary search trees. It is the latter that carries a description in terms of ascents and descents in labeled  trees.

Consider the case where $\ld=\rd=0$.
In the setting of the introduction, this corresponds to considering labeled trees  that only have  left ascents and right ascents. 
We refer to such  trees  as \bemph{local binary search trees} (henceforth \bemph{LBS trees}).  {See Figure~\ref{fig:LBS tree} for an example of a local binary search tree on $9$ nodes.}
We construct another subset of $\pbtln$ that is equinumerous with standard LBS trees on $n$ nodes and use it to define an  $\mathfrak{S}_n$-action on  $\regions{\mathcal{L}_n}$.
It is worth emphasizing that the same subset of trees has been considered by Bernardi \cite{Bernardi} to solve the long-standing problem of finding a bijection between Linial regions and standard LBS trees.
\begin{figure}[h]
\centering
\includegraphics[scale=0.33]{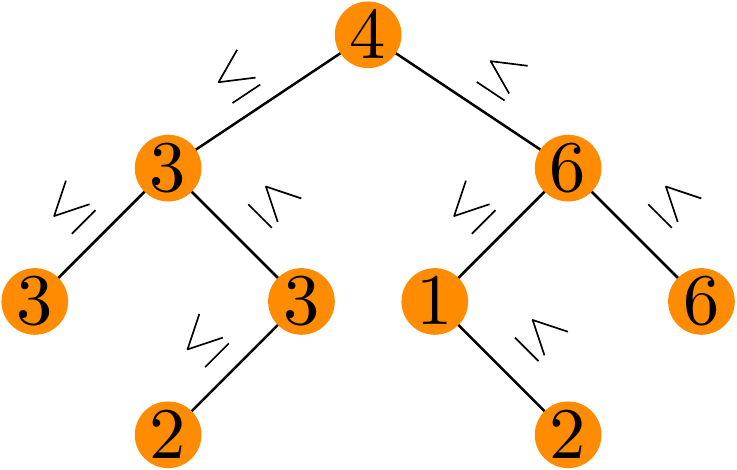}
\caption{An example of a local binary search (LBS) tree on $9$ nodes.\label{fig:LBS tree}}
\end{figure}

By writing $G_n$ as a sum of the $G_{n,\canopy}$ where $\canopy$ runs over all possible canopies of length $n-1$ and using Theorem~\ref{thm: ribbon expansion for fixed canopy}, we have
\begin{align}
G_n(\alpx; \la,0,\ra,0)&=\sum_{\pi^\ast = B_1/\dots/B_k}\la^{\sau(\pi^\ast)}\ra^{\,\sad(\pi^\ast)}(\la\ra)^{\ja(\pi^\ast)}r_{c(B_1)}\dots r_{c(B_k)}\label{eqn: r-expansion lbs}\\
&= \sum_{\pi^\ast = B_1/\dots/B_k}\la^{\sau(\pi^\ast)}\ra^{\,\sad(\pi^\ast)}(\la\ra)^{\ja(\pi^\ast)}h_{(|B_1|,\dots,|B_k|)}\label{eqn: h-expansion lbs nc}
\end{align}
where the sum runs over all augmented interlacing partitions $\pi^\ast$ on $n$ nodes such that all nodes are unmarked. The second equality comes from the fact that, for a block $B_i$ in $\pi^\ast$, since all steppers and jumpers are {unmarked}, then $r_{c(B_i)} = r_{(m_i)}=h_{m_i}$ where $m_i=|B_i|$.

Next,  we use a bijection between the set of augmented interlacing partitions on $n$ nodes such that all nodes are unmarked and the set $\pbtun$ in order to get an expansion in terms of binary trees.
Given an augmented interlacing partition $\pi^\ast$ on $n$ nodes such that all nodes are unmarked, delete all short arcs in $\pi^\ast$ which are labeled with a $U$, and then remove all $U$ and $D$ labels. This provides a bijection between these objects and $\noncrossing{n}$. Then compose this bijection with the inverse of Edelman's bijection from $\noncrossing{n}$ to $\pbtun$. {Finally, apply the bijection from $\pbtun$ to itself which flips a tree across the vertical line passing through its root.}
 {See Figure~\ref{fig:Composition Example} for an example of an augmented interlacing partition on $8$ nodes with all nodes unmarked, together with its image under the composition of these bijections.
\begin{figure}[h]
\centering
\includegraphics[scale=0.3]{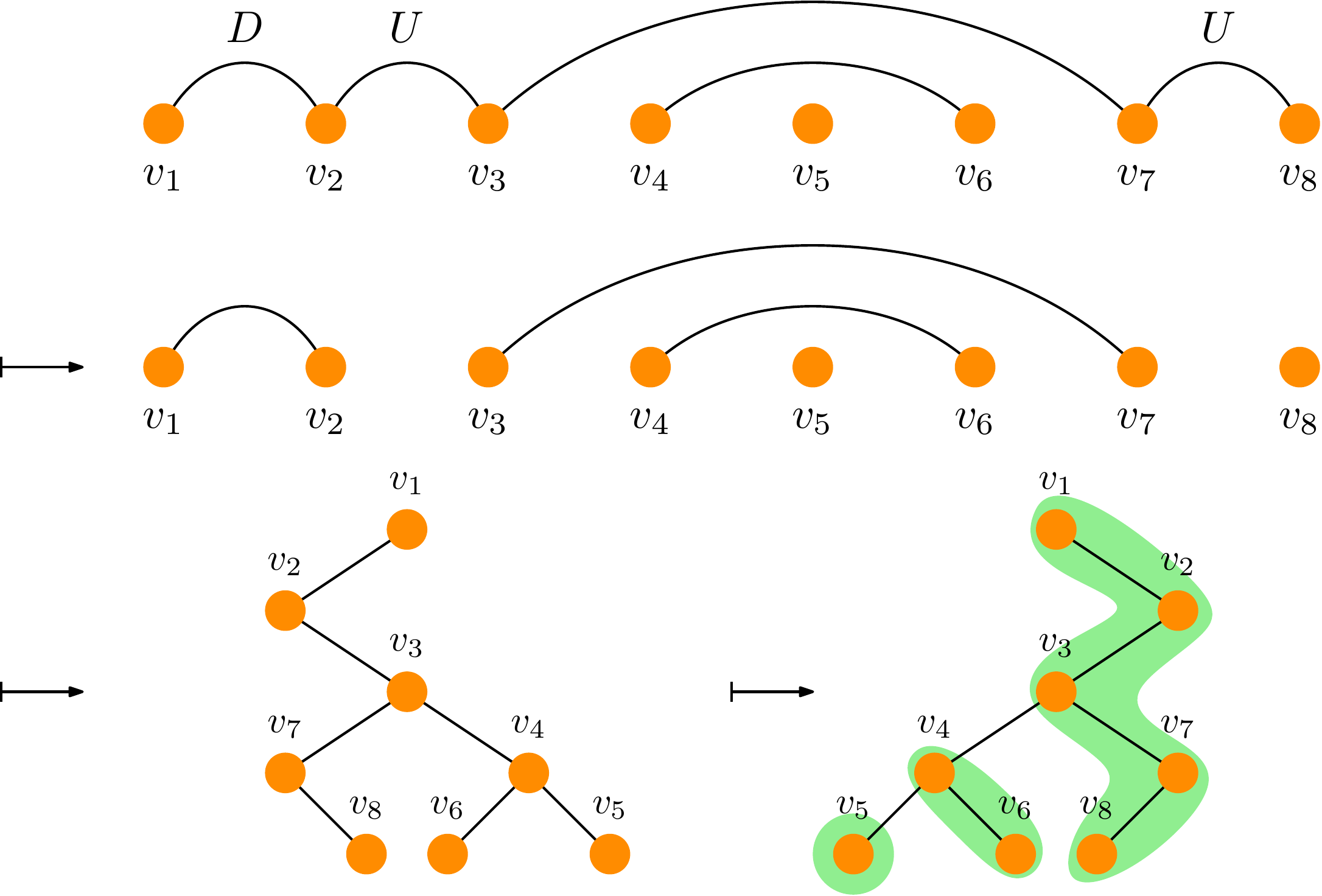}
\caption{An example of an augmented interlacing partition on $8$ nodes with all nodes unmarked, and its corresponding elements of $\noncrossing{8}$ and $\pbt_8$.\label{fig:Composition Example}}
\end{figure}
}

{We claim that $\mathrm{sort}(\typ{T})=\mathrm{sort}(|B_1|,\dots,|B_k|)$, where $\mathrm{sort}(\alpha)$ is the partition underlying a composition $\alpha$ and $\typ{T}$ denotes the composition type of $T$ defined in Subsection~\ref{subsec: unlabeled trees}. We make note of two pertinent aspects of this composition of three bijections.
Let $\pi^\ast = B_1/\dots/B_k$ be an augmented interlacing partition on $n$ nodes such that all nodes are unmarked, and let $T\in\pbt_n$ be  the corresponding binary tree. 
\begin{enumerate}
\item If $v_i$ and $v_{i+1}$ are connected by a short arc labeled $U$ (respectively $D$) in $\pi^{\ast}$, then in $T$ the node $v_i$ has only a left child (respectively right child) which is $v_{i+1}$.
\item If nodes $v_i$ and $v_j$ where $i<j$ are connected by a long arc in $\pi^{\ast}$, then $v_i$ has two children in $T$. In particular, the right child is $v_j$ and the left child is $v_{i+1}$.
\end{enumerate}
It follows that any two nodes in $T$ connected by an edge which are visited in succession in the preorder traversal correspond to nodes in $\pi^{\ast}$ connected by an arc. Hence, we have $\mathrm{sort}(\typ{T})=\mathrm{sort}(|B_1|,\dots,|B_k|)$.
Additionally, we see that $\sad(\pi^\ast)+\ja(\pi^\ast) = \rightt$, $\sau(\pi^\ast)+\ja(\pi^\ast) = \leftt$, where $\leftt$ and $\rightt$ denote the number of {left edges and right edges} in $T$, respectively.
}

Hence, from \eqref{eqn: h-expansion lbs nc} we obtain the following expansion,
\begin{align}\label{eqn: h-expansion lbs}
G_n(\alpx; \la,0,\ra,0)&=\sum_{T\in \pbt_n}\la^{\leftt}\ra^{\,\rightt} h_{\typ{T}}.
\end{align}
Since $\leftt+\rightt=n-1$ for any $T\in \pbt_n$, we set $\la=1$ and $\ra$ equal to an indeterminate $q$.
In order to give a representation-theoretic interpretation of $G_n(\alpx; \la,0,\ra,0)$, it suffices to find a $\mathfrak{S}_n$-module whose graded Frobenius characteristic is $G_n(\alpx;1,0,q,0)$, which we give next.

A \bemph{Bernardi tree} is a standard labeled binary tree satisfying the condition that every internal node has a label that is greater than the label of its right child provided it exists, otherwise it is greater than the label of its left child.
Let $\berntree_n$ denote the set of Bernardi trees on $n$ nodes.
It can be checked that the image of the homomorphism $\ex$ applied to the right-hand side of~\eqref{eqn: h-expansion lbs} is a generating function enumerating Bernardi trees by the number of left and right edges.
Since $G_n(\alpx;\la,0,\ra,0)$ is the generating function corresponding to LBS trees on $n$ nodes, applying $\ex$ to the left-hand side of \eqref{eqn: h-expansion lbs} establishes that the number of standard LBS trees on $n$ nodes is equal to the cardinality of $\berntree_n$.
Figure \ref{fig:B-trees} shows all trees in $\berntree_3$.
\begin{figure}[ht]
\centering
  \includegraphics[scale=0.8]{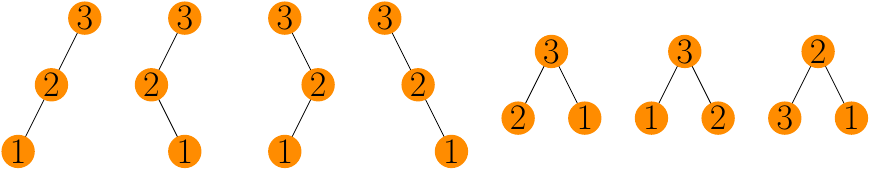}
  \captionof{figure}{The Bernardi trees on 3 nodes.}
  \vspace*{-2mm}
  \label{fig:B-trees}
\end{figure}
\begin{remark}\label{rem:why_bernardi}
For the reason behind the name Bernardi trees, we refer the reader to \cite[Example 1.1]{Bernardi} where Bernardi gives a bijection between $\berntree_n$ and $\regions{\hyplin_n}$. In the notation of \cite{Bernardi}, the bijection associates to $T\in \berntree_n$ the region of $\hyplin_n$ defined by the inequalities $ x_i-x_j<1$ where $1\leq i<j\leq n$ and either $\mathrm{drift}(i) \leq \mathrm{drift}(j)$, or $\mathrm{drift}(i)=\mathrm{drift}(j)+1$ and $i \prec_p j$. Here, $\mathrm{drift}(v)$ is the number of ancestors of $v$ (including $v$) that are right children. Figure~\ref{fig:Linial regions and trees} shows the regions of $\hyplin_3$ indexed by the corresponding Bernardi trees according to this bijection.

An equivalent characterization of Bernardi trees is as follows. 
Let $v_{1}\prec_p \cdots \prec_p v_n$ be the nodes of a standard labeled binary tree $T$ listed in preorder. If for all cover relations $v_{i}\prec_p v_{i+1}$  where $v_i$ is the parent of $v_{i+1}$, we have that $v_i^{\ell}>v_{i+1}^{\ell}$, then $T$ is a Bernardi tree.
\end{remark}
\begin{figure}[ht]
\centering
  \includegraphics[scale=0.3]{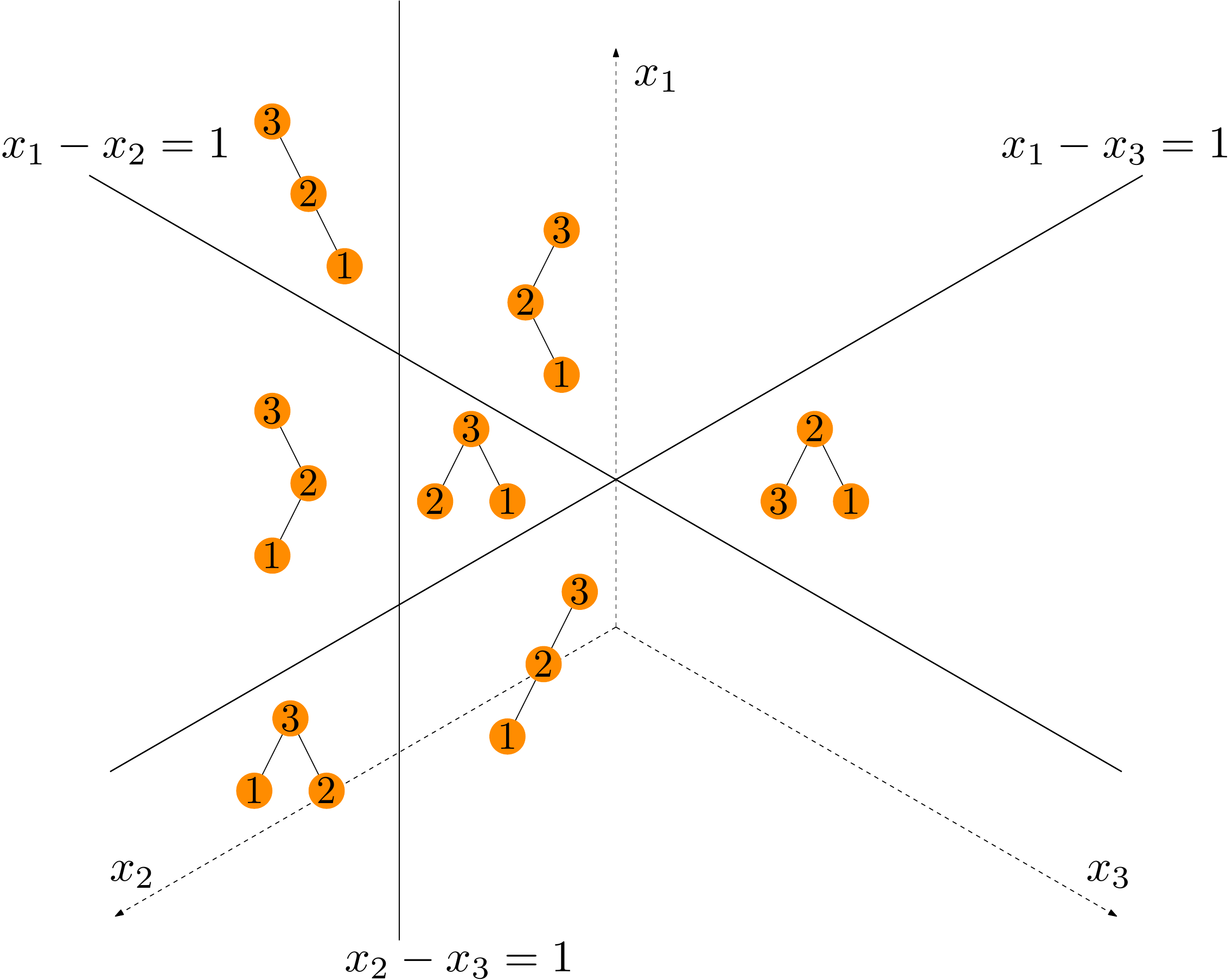}
  \captionof{figure}{Regions of $\hyplin_3$ indexed by trees in $\berntree_3$.}
  \label{fig:Linial regions and trees}

\end{figure}

For $T\in \berntree_n$, let $v_{1}\prec_p \cdots \prec_p v_n$ be the nodes of $T$ in preorder.
Among these nodes, let $v_{i_1} \prec_p \cdots \prec_p v_{i_k}$ be all the terminal nodes and set $i_0\coloneqq 1$.
Observe that the preorder reading word
$\pre{T}=v_1^{\ell}\cdots v_n^{\ell}$
can be factorized as $W_1\cdots W_k$, where for $j\geq 1$ we have
 \begin{align}\label{eqn:preorder factorization}
 W_{j}=v_{i_{j-1}+1}^{\ell}v_{i_{j-1}+2}^{\ell}\cdots  v_{i_j-1}^{\ell}v_{i_j}^{\ell},
 \end{align}
Given the definition of $\berntree_n$, we  know that each $W_j$ is strictly decreasing when read from left to right.
For the tree $T\in \berntree_9$ in Figure~\ref{fig:Linial action} on the left, the  shaded regions give us the words $W_1=731$, $W_2=86$, $W_3=952$ and $W_4=4$ and their concatenation gives us $\pre{T}= 731 \hspace{2mm}86\hspace{2mm} 952 \hspace{2mm} 4$.

The factorization of $\pre{T}$ as $W = W_1\cdots W_k$ as described earlier allows us to define an obvious $\mathfrak{S}_n$-action on $\berntree_n$ as follows.
 Given $\sigma\in \mathfrak{S}_n$, define $\widetilde{\sigma}(W_i)$ to be the word obtained by replacing every letter in $W_i$ by its image under $\sigma$, and then sorting the resulting word so that it is strictly decreasing when read from left to right.
Now define $\widetilde{\sigma}(W)$ to be $\widetilde{\sigma}(W_1)\cdots \widetilde{\sigma}(W_k)$, and let $\sigma(T)$ be the unique labeled tree such that $\shape{\sigma(T)}=\shape{T}$ and $\pre{\sigma(T)}=\widetilde{\sigma}(W)$.
{By the alternative characterization for Bernardi trees in Remark~\ref{rem:why_bernardi}, these two conditions ensure that $\sigma(T)\in \berntree_n$. }

{This is a well-defined $\mathfrak{S}_n$-action on $\berntree_n$. One way to see this is to observe that Bernardi trees $T$ with a fixed $\shape{T}$ are in bijection with tabloids \cite{Sagan} of partition shape $\mathrm{sort}(\typ{T})$. Indeed, given the factorization $W_1\cdots W_k$ of $\pre{T}$ defined above, form $k$ rows of boxes with $i_j-i_{j-1}$ many boxes in row $j\leq k$, where the boxes in row $j$ are labeled with the letters of $W_j$ in order from left to right. Then sort the rows to get a tabloid on the partition shape $\mathrm{sort}(\typ{T})$. This map defines the desired bijection and the action of $\mathfrak{S}_n$ on Bernardi trees with fixed $\shape{T}$ corresponds to the usual action of $\mathfrak{S}_n$ on tabloids under the bijection.}

In Figure~\ref{fig:Linial action}, for the tree $T$ on the left, we have $\sigma(T)$ on the right where $\sigma=(38)$ in cycle notation.
Note that for the instance under discussion, we have $\widetilde{\sigma}(W)=871 \hspace{2mm}63\hspace{2mm} 952 \hspace{2mm} 4$. Thus, $\sigma(T)$ is the unique tree whose preorder reading word is $871639524$ and whose underlying shape is that of $T$.  Had we chosen $\sigma=(37)$, then $\sigma(T)$ would be $T$ itself.
\begin{figure}[ht]
\centering
  \includegraphics[scale=0.4]{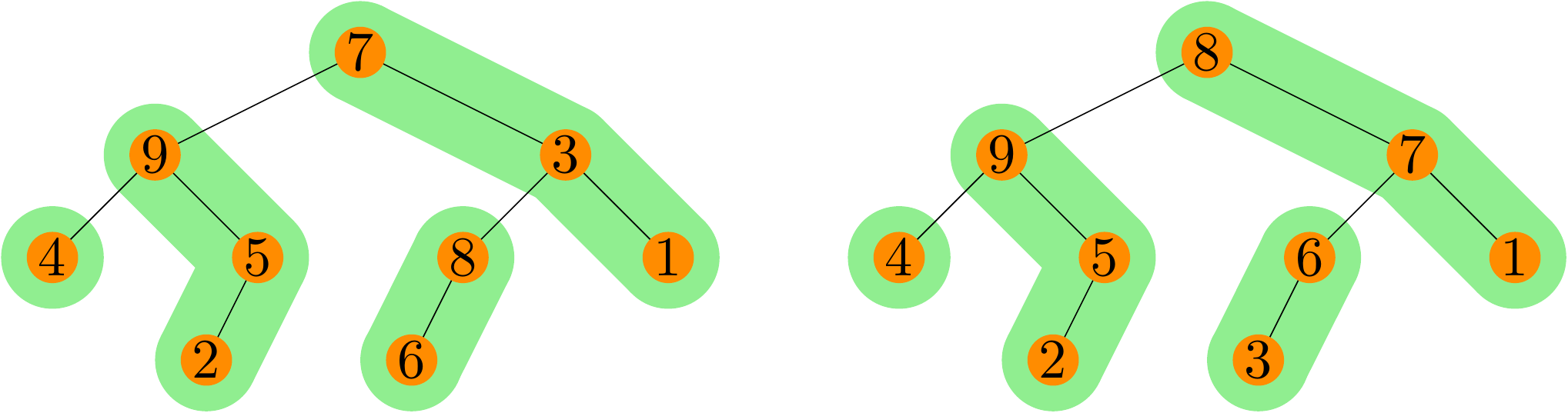}
  \captionof{figure}{On the left, a tree $T\in \berntree_9$. On the right, $\sigma(T)$ where $\sigma= (38)\in \mathfrak{S}_9$.}
  \label{fig:Linial action}
\end{figure}

Let $\bC \berntree_n$ denote the $\mathfrak{S}_n$-module whose underlying vector space is generated by formal linear combinations of trees in $\berntree_n$. Denote by $\berntree_{n,k}$ the set of Bernardi trees with exactly $k$ right edges. Let $\bC\berntree_{n,k}\subseteq\bC\berntree_n$ be the $\mathfrak{S}_n$-submodule spanned by Bernardi trees in $\berntree_{n,k}$.
We have the following equality of $\mathfrak{S}_n$-modules
\begin{align}
\bC\berntree_n=\bigoplus_{k=0}^{n-1}\bC\berntree_{n,k}.
\end{align}
Thus, we can think of $\bC\berntree_n$ as being graded by the number of right edges.
\begin{theorem}\label{thm:frob_lbs}
The graded Frobenius characteristic of the $\mathfrak{S}_n$-module $\bC\berntree_n$ is given by
\begin{align}\label{eq: LBS frob2}
G_n(\alpx;1,0,q,0) = \sum_{T\in \pbt_n}q^{\rightt}h_{\typ{T}}.
\end{align}
Using Bernardi's bijection between  $\berntree_n$ and $\regions{\hyplin_n}$, our $\mathfrak{S}_n$-action lifts to an action on $\regions{\hyplin_n}$ whose graded Frobenius characteristic is also $G_n(\alpx;1,0,q,0)$.
\end{theorem}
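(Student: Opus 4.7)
The plan is to decompose $\bC\berntree_n$ as a direct sum of $\mathfrak{S}_n$-submodules, one for each underlying shape $T \in \pbt_n$, identify each summand with a well-known permutation module, and then invoke~\eqref{eqn: h-expansion lbs}.

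First, I would observe that the $\mathfrak{S}_n$-action defined above preserves the underlying shape of a Bernardi tree, because $\sigma(T)$ is constructed to have $\shape{\sigma(T)} = \shape{T}$ and the factorization $\pre{T} = W_1 \cdots W_k$ depends only on the shape (it is determined by the positions of terminal nodes in preorder). Letting $\berntree_{n,T}$ denote the set of Bernardi trees with shape $T$, it follows that
\[
\bC\berntree_n = \bigoplus_{T \in \pbt_n} \bC\berntree_{n,T}
\]
as $\mathfrak{S}_n$-modules. Since $\rightt$ is a shape invariant, this decomposition refines the grading by number of right edges.

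Next, I would identify each piece $\bC\berntree_{n,T}$ with a standard permutation module. The crucial observation is that the Bernardi condition on $T$ is equivalent to the statement that each block $W_j$ is strictly decreasing: by the preorder convention (root, then right subtree, then left subtree), the successor in preorder of an internal node $v$ that lies in the interior of a block is exactly $v$'s right child if it exists and otherwise its left child, which is precisely the child constrained by the Bernardi inequality; conversely, transitions between consecutive blocks cross non-parent/child edges and impose no Bernardi constraint. Consequently, if $\typ{T} = (m_1, \ldots, m_k)$, then the map sending a Bernardi tree of shape $T$ to the ordered set partition $(S_1, \ldots, S_k)$ of $[n]$ given by the letter sets of $W_1, \ldots, W_k$ is a bijection, and by construction it intertwines the $\mathfrak{S}_n$-action on $\berntree_{n,T}$ with the natural $\mathfrak{S}_n$-action on ordered set partitions of $[n]$ with block-size sequence $(m_1, \ldots, m_k)$.

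Hence $\bC\berntree_{n,T}$ is isomorphic to the induced module from the trivial representation of $\mathfrak{S}_{m_1} \times \cdots \times \mathfrak{S}_{m_k}$, whose Frobenius characteristic is $h_{m_1}\cdots h_{m_k} = h_{\typ{T}}$. Reinstating the grading and summing over shapes yields a graded Frobenius characteristic of $\sum_{T \in \pbt_n} q^{\rightt}\, h_{\typ{T}}$, which equals $G_n(\alpx; 1, 0, q, 0)$ by~\eqref{eqn: h-expansion lbs}. For the second assertion, pushing the $\mathfrak{S}_n$-action along Bernardi's bijection $\berntree_n \leftrightarrow \regions{\hyplin_n}$ is an isomorphism of $\mathfrak{S}_n$-sets, so the graded Frobenius characteristic is unchanged. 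The main obstacle is the equivalence between the Bernardi condition and the decreasing property of each $W_j$; this reduces to tracking which edges of $T$ appear as successive-preorder edges within a block versus cross-branch jumps between blocks, and is a direct consequence of the preorder convention adopted in Subsection~\ref{subsec: unlabeled trees} together with the definition~\eqref{eqn:preorder factorization} of the $W_j$ as runs ending at terminal nodes.
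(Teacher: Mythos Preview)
Your proposal is correct and follows essentially the same approach as the paper: decompose $\bC\berntree_n$ by underlying shape, identify each summand $\bC\berntree_{n,T}$ with the induced module $\mathbf{1}\uparrow_{\mathfrak{S}_{\typ{T}}}^{\mathfrak{S}_n}$ (whose Frobenius characteristic is $h_{\typ{T}}$), and then invoke~\eqref{eqn: h-expansion lbs}. The paper's proof is slightly terser because the equivalence between the Bernardi condition and each $W_j$ being strictly decreasing was already noted in the text preceding the theorem, but your more explicit justification via the preorder successor analysis is correct and helpful.
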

\begin{proof}
Observe that the identity~\eqref{eq: LBS frob2} follows from~\eqref{eqn: h-expansion lbs}. Therefore, it suffices to prove that the Frobenius characteristic of $\bC\berntree_{n,k}$ is the $q^k$ coefficient of the right-hand side of~\eqref{eq: LBS frob2}.
Given $\alpha \vDash n$ of length $m$, let $\mathfrak{S}_\alpha\coloneq\mathfrak{S}_{\alpha_1}\times \cdots \times \mathfrak{S}_{\alpha_m}$ be the corresponding Young subgroup of $\mathfrak{S}_n$. It is well known (see, e.g.\ \cite{Sagan}) that $h_\alpha$ is the Frobenius characteristic of $\mathbf{1}\uparrow_{\mathfrak{S}_\alpha}^{\mathfrak{S}_n}$, the trivial $\mathfrak{S}_\alpha$-module induced up to $\mathfrak{S}_n$.

Given an unlabeled tree $T\in \pbtun$, the Bernardi trees $T'\in \berntree_{n,k}$ such that $\shape{T'}=T$ span a $\mathfrak{S}_n$-submodule of $\bC\berntree_{n,k}$ isomorphic to $\mathbf{1}\uparrow_{\mathfrak{S}_{\typ{T}}}^{\mathfrak{S}_n}$.
Hence, we have an isomorphism of $\mathfrak{S}_n$-modules,
\begin{align}
\bC\berntree_{n,k} \cong \bigoplus_{\substack{T\in \pbtun\\ k\text{ right edges}}} \mathbf{1}\uparrow_{\mathfrak{S}_{\typ{T}}}^{\mathfrak{S}_n}.
\end{align}
The proof is then completed by using the fact that the Frobenius characteristic of a direct sum of submodules is the sum of the Frobenius characteristics.
\end{proof}
Observe that by applying the homomorphism $\ex$ to $G_n(\alpx;1,0,1,0)$, we can compute the cardinality of $\berntree_n$. This gives another formula for $r(\mathcal{L}_n)$, which is not in any way more concise than the known formula and hence is omitted.

In fact, we can recover Postnikov's formula \cite{Postnikov-JCTA} for $r(\mathcal{L}_n)$ by computing the character of our action on Bernardi trees.
This requires us to write $ G_n(\alpx;1,0,1,0)$ in terms of power sum symmetric functions.  {See~\cite{Stanley-EC2} for definitions and background pertaining to power sum symmetric functions.}
We proceed by a generating function argument involving Lagrange inversion. It would be interesting to establish the same by a combinatorial argument.

Set $q=1$ for the remainder of this subsection.
By Theorem~\ref{thm: Gessel functional equation}, we know that $G(\alpx;1,0,1,0)$ satisfies the functional equation
\begin{align}\label{eqn:func_eqn_linial}
(1+G(\alpx;1,0,1,0))^2=H(2+G(\alpx;1,0,1,0)).
\end{align}
In order to expand $G(\alpx;1,0,1,0)$ in terms of power sum symmetric functions, we need the following general result.
\begin{theorem} \label{thm:li}
Let $Q(t)$ be a polynomial, let $y$ be an indeterminate, and let $F$ be the solution of
\begin{equation*}
F = H(Q(F))^{y}.
\end{equation*}
{
For every positive integer $k$, we have
\begin{align}
F^k=\sum_{\lambda} \frac{p_{\lambda}}{z_{\lambda}}  y^{\ell(\lambda)}\sum_{m=1}^\infty   km^{\ell(\lambda)-1} [t^{m-k}] Q(t)^{|\lambda|},\label{eq:FkExpansion}
\end{align}
}
\end{theorem}
\begin{proof}
 {For $k > 0$,} by Lagrange inversion (see, e.g., \cite[equation (2.4.4)]{li}) we have
\begin{align}\label{eqn:lagrange}
F^k = \sum_{m=1}^\infty \frac{k}{m}[t^{m-k}]H(Q(t))^{my}.
\end{align}
We can write $H(Q(t))^{my}$ as
\begin{align}
H(Q(t))^{my} &= \exp\biggl(my\sum_{i=1}^\infty \frac{p_i}{i} Q(t)^i\biggr)\\
  &=\sum_{\lambda} \frac{p_{\lambda}}{z_{\lambda}} (my)^{\ell(\lambda)}Q(t)^{|\lambda|},\label{eq:SumPowerQ}
\end{align}
 {where the sum in \eqref{eq:SumPowerQ} is over all partitions $\lambda$ with $|\lambda|\geq 0$.
Substituting \eqref{eq:SumPowerQ} into the right-hand side of} \eqref{eqn:lagrange} gives
\begin{align}
F^k &= \sum_{m=1}^\infty \frac km [t^{m-k}] \sum_{\lambda} \frac{p_{\lambda}}{z_{\lambda}}  (my)^{\ell(\lambda)}Q(t)^{|\lambda|}\\
  &=\sum_{\lambda} \frac{p_{\lambda}}{z_{\lambda}}  y^{\ell(\lambda)}\sum_{m=1}^\infty   km^{\ell(\lambda)-1} [t^{m-k}] Q(t)^{|\lambda|},
\end{align}
and the theorem follows.
\end{proof}

\begin{corollary}
{Under the same assumptions as Theorem~\ref{thm:li}, if we expand $F^k$ as $\sum_{\lambda} c_{\lambda}p_{\lambda}/z_{\lambda}$ then $c_{\lambda}$ depends only on $\ell(\lambda)$, $|\lambda|$, $y$, $k$ and $Q$.}
\end{corollary}
Using Theorem~\ref{thm:li}, we have the following result.
\begin{theorem}\label{thm:character Linial}
 {We have \[G_{n}(\alpx;1,0,1,0)=\sum_{\lambda\vdash n}c_{\lambda}p_{\lambda}/z_{\lambda},\] where}
\begin{align}\label{eqn:explicit_linial}
c_{\lambda}=\frac{1}{2^{\ell(\lambda)}}\sum_{m=1}^{n+1}{m^{\ell(\lambda)-1}}\binom{n}{m-1}.
\end{align}
\end{theorem}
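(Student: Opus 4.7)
The plan is to reduce the claim to a direct application of Theorem~\ref{thm:li} via the functional equation \eqref{eqn:func_eqn_linial}. Set $F \coloneqq 1 + G(\alpx;1,0,1,0)$, so that $F^2 = H(2+G) = H(F+1)$. Taking the square root that satisfies $F|_{\alpx=0}=1$ (the other sign is spurious), we get
\[
F = H(F+1)^{1/2} = H(Q(F))^{\alpha},
\]
with $Q(t) = t+1$ and $\alpha = 1/2$. This is exactly the form required by Theorem~\ref{thm:li}.

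Next, I would apply Theorem~\ref{thm:li} with $k=1$, using formula \eqref{eq:FkExpansion} from the proof of that theorem. With our choices of $Q$ and $\alpha$, the expression $[t^{m-k}]Q(t)^{|\lambda|}$ becomes $[t^{m-1}](t+1)^{|\lambda|} = \binom{|\lambda|}{m-1}$, which vanishes unless $1 \leq m \leq |\lambda|+1$. Substituting $\alpha = 1/2$ into \eqref{eq:FkExpansion} and isolating the homogeneous component of degree $n$ therefore yields
\[
F_n = \sum_{\lambda \vdash n}\frac{p_{\lambda}}{z_{\lambda}} \cdot \frac{1}{2^{\ell(\lambda)}}\sum_{m=1}^{n+1} m^{\ell(\lambda)-1}\binom{n}{m-1}.
\]

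Finally, since $G$ has no constant term (the coefficient of the empty partition in $F$ equals $1$ and is stripped off when passing from $F$ to $G = F - 1$), the homogeneous components $G_n$ and $F_n$ agree for every $n \geq 1$. Reading off the coefficients gives the stated formula for $c_\lambda$.

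There is no real obstacle: the only potentially delicate step is the branch choice in $F = H(F+1)^{1/2}$, which is dictated by the initial condition $G|_{\alpx=0}=0$, and the small bookkeeping that the sum in \eqref{eq:FkExpansion} can equivalently be taken over all partitions (including the empty one, which contributes only to $F_0$) since the $|\lambda|=0$ term produces no contribution in positive degree. Everything else is mechanical extraction of coefficients from Theorem~\ref{thm:li}.
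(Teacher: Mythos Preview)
Your proposal is correct and follows essentially the same route as the paper: set $F=1+G(\alpx;1,0,1,0)$, deduce $F=H(1+F)^{1/2}$ from \eqref{eqn:func_eqn_linial}, apply Theorem~\ref{thm:li} with $Q(t)=1+t$, $\alpha=1/2$, $k=1$, and read off the coefficient of $p_\lambda/z_\lambda$ in degree $n$ using $[t^{m-1}](1+t)^n=\binom{n}{m-1}$. Your explicit remark about the branch of the square root is a nice touch that the paper leaves implicit.
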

\begin{proof}
 {Let $F \coloneq 1+G(\alpx;1,0,1,0)$.} 
 From~\eqref{eqn:func_eqn_linial} we know that
\begin{align}
F=(H(1+F))^{1/2}.
\end{align}
Using $y=1/2$, $k=1$, and $Q(t)=1+t$ in Theorem~\ref{thm:li},  {then \eqref{eq:FkExpansion} gives}
\begin{align}
F=\sum_{\lambda} \frac{p_{\lambda}}{2^{\ell(\lambda)}z_{\lambda}}  \sum_{m=1}^\infty   m^{\ell(\lambda)-1} [t^{m-1}] (1+t)^{|\lambda|},
\end{align}
 For each $\lambda\vdash n$ with $n\geq 1$, the coefficient $c_\lambda$ of $p_\lambda/z_\lambda$ in $G_n(\alpx;1,0,1,0)$ is equal to the coefficient of $p_\lambda/z_\lambda$ in $F$. Therefore, we have
\begin{align}
c_{\lambda}=\frac{1}{2^{\ell(\lambda)}}\sum_{m=1}^{n+1}{m^{\ell(\lambda)-1}}\binom{n}{m-1},
\end{align}
{which completes the proof.}
\end{proof}
As a corollary of Theorem~\ref{thm:character Linial}, we obtain the following generalization of Postnikov's formula for $r(\mathcal{L}_n)$  {\cite{Postnikov-JCTA}}.
The reader is invited to compare it with the statement of Lemma~\ref{lem: Postnikov-Stanley on semiorders}.
\begin{corollary}\label{cor:gen_Postnikov}
Given a permutation $\sigma\in \mathfrak{S}_n$ with $k$ cycles, the number of Bernardi trees fixed by $\sigma$ equals
\[
\frac{1}{2^{k}}\sum_{m=1}^{n+1}{m^{k-1}}\binom{n}{m-1}.
\]
\end{corollary}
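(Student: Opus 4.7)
The plan is to read the corollary off directly from Theorem~\ref{thm:character Linial} using the standard dictionary between Frobenius characteristic and character values, together with the fact that our representation is a permutation representation.

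First, I would recall from Theorem~\ref{thm:frob_lbs} that $G_n(\alpx;1,0,1,0)$ is the Frobenius characteristic of the $\mathfrak{S}_n$-module $\bC\berntree_n$, in which $\mathfrak{S}_n$ acts by permuting the basis $\berntree_n$. For such a permutation representation, the character value $\chi_{\bC\berntree_n}(\sigma)$ equals the number of basis elements fixed by $\sigma$, i.e.\ the number of Bernardi trees fixed by $\sigma$.

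Next, I would invoke the general fact that under the Frobenius characteristic map, if a class function $\chi$ on $\mathfrak{S}_n$ has Frobenius characteristic $\mathrm{ch}(\chi) = \sum_{\lambda \vdash n} c_\lambda\, p_\lambda/z_\lambda$, then $\chi(\sigma) = c_\lambda$ for any $\sigma \in \mathfrak{S}_n$ of cycle type $\lambda$. Applying this to the expansion provided by Theorem~\ref{thm:character Linial} gives
\[
\#\{T \in \berntree_n : \sigma \cdot T = T\} = c_\lambda = \frac{1}{2^{\ell(\lambda)}}\sum_{m=1}^{n+1} m^{\ell(\lambda)-1}\binom{n}{m-1}.
\]

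Finally, if $\sigma$ has $k$ cycles, then its cycle type $\lambda$ satisfies $\ell(\lambda) = k$, so the right-hand side depends on $\sigma$ only through $k$, and we obtain exactly the claimed formula. There is essentially no obstacle: all the real work is already done in Theorem~\ref{thm:character Linial}, and the remaining step is simply invoking the definition of the Frobenius characteristic map applied to a permutation character.
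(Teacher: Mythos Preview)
Your proposal is correct and matches the paper's intended argument: the paper simply states the result as an immediate corollary of Theorem~\ref{thm:character Linial}, and your write-up spells out precisely the standard passage from the power-sum expansion of the Frobenius characteristic to character values of a permutation representation.
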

\noindent Observe that Postnikov's formula is obtained by setting $k=n$ in Corollary~\ref{cor:gen_Postnikov}.

\begin{remark}\label{rem:Linial character Foulkes-positive}
Given the similarity of Lemma~\ref{lem: Postnikov-Stanley on semiorders} and Corollary~\ref{cor:gen_Postnikov}, the reader may wonder if $ G_n(\alpx;1,0,1,0)$  expands as a positive integer linear combination of the Frobenius characteristics of Foulkes characters. This is not the case. Indeed, $G_3(\alpx;1,0,1,0)$ serves as a counterexample.
\end{remark}
We conclude this subsection with a curious relation between $G_{n}(\alpx;1,0,1,0)$ and certain symmetric functions that arise as special cases of Jack symmetric functions when one sets the Jack parameter $\alpha=2$.
Following Macdonald \cite[Page 407, Equation 2.20]{Macdonald}, consider the symmetric function $g_m$ defined by
\begin{align}
g_m=\sum_{\lambda\vdash m}\frac{p_{\lambda}}{2^{\ell(\lambda)}z_{\lambda}}.
\end{align}
The key property of $g_m$ for our purposes is expressed in the following relation
\begin{align}
H(y)^{\frac{1}{2}}=\prod_{i\geq 1}(1-x_iy)^{-\frac{1}{2}}=\sum_{m\geq 0}g_my^m.
\end{align}
Instead of solving \eqref{eqn:func_eqn_linial} using Lagrange inversion, one may alternative proceed by `taking square roots' on both sides and then rewrite the resulting functional equation in terms of the $g_m$.
Thus, the functional equation \eqref{eqn:func_eqn_linial} translates to
\begin{align}
  \label{eqn:zonal}
  1+G=\sum_{m\geq 0}g_m(2+G)^m.
\end{align}
One can solve this functional equation in terms of Dyck paths, similar to the case of the $h$-expansion of the parking function representation.
We keep our exposition brief.

For $\lambda=(\lambda_1,\dots,\lambda_k)$, set $g_{\lambda}\coloneqq g_{\lambda_1}\cdots g_{\lambda_k}$.
For $n\geq 1$, let $\mathcal{D}_n$ be the set of Dyck paths, which are lattice paths  {that start at $(0,0)$, end at $(n,n)$, take North and East steps, and stay weakly above the diagonal $y=x$.}
Given $D\in \mathcal{D}_n$, let $\lambda(D)$ be the partition of $n$ obtained by sorting the lengths of the vertical runs of $D$ in decreasing order.
Recall that a vertical run in a Dyck path is any maximal contiguous sequence of North steps.
Let $\mathrm{peak}(D)$ denote the number of peaks in $D$.
Solving \eqref{eqn:zonal} for $G_n$ yields the expansion
\begin{align}\label{eqn:linial_zonal}
G_n=\sum_{D\in \mathcal{D}_n}2^{n+1-\mathrm{peak}(D)}g_{\lambda(D)}.
\end{align}
Stanley (see \cite[Equation 10]{Stanley} and \cite[Proposition 2.4]{Stanley}) describes the expansion of $g_{\mu}$ in terms of Jack symmetric functions $J_{\lambda}$ at $\alpha=2$ (also called \emph{zonal symmetric functions}).
Note that Stanley uses scaled versions of our $g_{\mu}$, which he denotes by $\mathscr{J}_{\mu}$.
Thus, we may expand $G_n$ in terms of zonal symmetric functions. Since we already know by Theorem~\ref{thm:frob_lbs} that $G_n$ is $h$-positive, we arrive indirectly upon a curious combination of zonal symmetric functions that is $h$-positive.

We conclude this section by mentioning that Frobenius characteristics of $\mathfrak{S}_n$-actions on the Shi arrangement and the braid arrangement can also be interpreted in terms of specializations of $G_n(\alpx;\la,\ld,\ra,\rd)$.
Since both these cases are quite well-studied, we postpone our discussion on them to Section~\ref{sec: back to the future}.

\section{Local binary search trees and $\gamma$-nonnegativity}\label{sec:gamma}
{Theorem~\ref{cor: Ribbon Expansion}} implies that $G_n$ can be written in terms of ribbon Schur functions with coefficients in $\mathbb{N}[\la+\ra,\la\ra,\ld+\rd,\ld\rd]$.
Suppose that $G_n(\alpx;\la,\ld,\ra,\rd)=\sum_{\alpha\vDash n} c_{\alpha}r_{\alpha}$ where the $c_{\alpha}$ belong to $\mathbb{N}[\la+\ra,\la\ra,\ld+\rd,\ld\rd]$.
Applying the homomorphism $\ex$ yields
\begin{align}\label{eqn:double gamma-nonnegativity}
B_n(\la,\ld,\ra,\rd)=\sum_{\alpha\vDash n}c_{\alpha}|\{\pi\in \mathfrak{S}_n \suchthat \Dsc(\pi)=\set(\alpha)\}|{\frac{x^n}{n!}}.
\end{align}
Note that the functional equation for $B$ in \eqref{eq: B functional equation} does not immediately imply an expansion of the form in \eqref{eqn:double gamma-nonnegativity}. We use this expansion to turn our discussion to another notion of importance both in algebraic combinatorics and discrete geometry, that of $\gamma$-nonnegativity.

We say that a polynomial $P(t)$ of degree $n\geq 0$ is \bemph{$\gamma$-nonnegative} if it has an expansion of the form
\begin{align}
P(t)=\sum_{j=0}^{\lfloor \frac{n}{2}\rfloor}\gamma_{n,j}t^j(1+t)^{n-2j},
\end{align}
where $\gamma_{n,j}\geq 0$.
{If such an expansion exists, then $P(t)$ is also palindromic and unimodal.}
We refer the reader to \cite[Chapter 4]{Petersen} for a book exposition and \cite{Athanasiadis-slides} for a detailed exhaustive survey on $\gamma$-nonnegativity.
For another recent survey on the relevance and prevalence of  $\gamma$-nonnegativity and  real-rootedness of polynomials arising naturally in combinatorics, the reader is referred to \cite{Branden}.
Our focus here is the connection between intransitive trees of Postnikov \cite{Postnikov-JCTA} and regions of Linial arrangements.

Following Postnikov \cite{Postnikov-JCTA}, an \bemph{intransitive tree} on $n$ nodes is a tree whose nodes are labeled with distinct positive integers from $[n]$ such that the label of a node is either greater than labels of its neighbors, in which case we call it a \bemph{right vertex}, or is less than the labels of its neighbors, in which case we call it a \bemph{left vertex}.
Note that the trees considered by Postnikov are neither plane nor rooted, and they do not have to be binary.
We refer the reader to \cite{Postnikov-JCTA} for further details on the terminology.
Let $f_n(t)\coloneq \sum_{k\geq 1} f_{nk}t^k$ where $f_{nk}$  is the number of intransitive trees on $[n+1]$ with $k$ right vertices.
Consider the generating function
\begin{align}\label{eqn:Postnikov's generating function}
F(t,x)=\sum_{n\geq 0}f_{n}(t)\frac{x^n}{n!}.
\end{align}
By \cite[Theorem 3]{Postnikov-JCTA}, we have that $F\coloneq F(t,x)$ satisfies the functional equation
\begin{align}\label{eqn:Postnikov functional equation}
F(F+t-1)=te^{x(F+t)}.
\end{align}
We note that in Postnikov's statement of the above functional equation, the roles of $x$ and $t$ are switched.
Consider the functional equation satisfied by $\widetilde{B}\coloneq 1+\ra B(x;1,0,\ra,0)$. From~\eqref{eq: B functional equation}, it can be seen that
\begin{align}\label{eqn:Linial functional equation}
\widetilde{B}(\widetilde{B}+\ra-1)=\ra e^{x(\widetilde{B}+\ra)}.
\end{align}
By comparing \eqref{eqn:Postnikov functional equation} and \eqref{eqn:Linial functional equation}, we obtain the following proposition.
\begin{proposition}\label{prop:intransitive and lbs}
For $n\geq 1$, the number of intransitive trees on $[n+1]$ with $k$ right vertices equals the number of standard LBS trees on $[n]$ with $k-1$ right edges.
\end{proposition}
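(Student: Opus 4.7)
The proposition is really a coincidence of functional equations, so the plan is to derive the functional equation that governs a suitable generating function for standard LBS trees by right edges, and to observe that it agrees with Postnikov's functional equation for intransitive trees after an obvious change of variable. Concretely, I will let $\widetilde{B}\coloneqq 1+\ra B(x;1,0,\ra,0)$ and show directly from \eqref{eq: B functional equation} that $\widetilde{B}$ satisfies \eqref{eqn:Linial functional equation}, namely $\widetilde{B}(\widetilde{B}+\ra-1)=\ra e^{x(\widetilde{B}+\ra)}$.

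Substituting $\la=1$, $\ld=0$, $\rd=0$ into \eqref{eq: B functional equation} gives $(1+B)(1+\ra B)=e^{x(\ra B+1+\ra)}$, where $B=B(x;1,0,\ra,0)$. Writing $B=(\widetilde{B}-1)/\ra$, a short calculation yields $1+B=(\widetilde{B}+\ra-1)/\ra$ and $\ra B+1+\ra=\widetilde{B}+\ra$, so the relation becomes $\widetilde{B}(\widetilde{B}+\ra-1)/\ra=e^{x(\widetilde{B}+\ra)}$, which is precisely \eqref{eqn:Linial functional equation}. Comparing this with Postnikov's \eqref{eqn:Postnikov functional equation}, the two equations are literally the same after the substitution $t\leftrightarrow\ra$ and $F\leftrightarrow\widetilde{B}$.

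Next I need to conclude that $F(\ra,x)=\widetilde{B}(x;\ra)$ as formal power series in $x$ with coefficients in $\bQ[\ra]$. For this I will argue that the functional equation $\Phi(\ra,x)\bigl(\Phi(\ra,x)+\ra-1\bigr)=\ra e^{x(\Phi(\ra,x)+\ra)}$ has a unique power series solution $\Phi\in\bQ[\ra][[x]]$ with $\Phi(\ra,0)=1$: at $x=0$ the equation forces $\Phi(\ra,0)\in\{1,-\ra\}$, so $\Phi(\ra,0)=1$ fixes the branch, and differentiating $n$ times and setting $x=0$ lets one solve recursively for $[x^n]\Phi$ in terms of lower coefficients (the recursion is nondegenerate since the coefficient of the new unknown is $2\Phi(\ra,0)+\ra-1=\ra+1\ne 0$ as an element of $\bQ[\ra]$). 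Both $F(\ra,x)$ and $\widetilde{B}(x;\ra)$ have constant term $1$ (for $\widetilde{B}$ this is immediate; for $F$ it follows from $f_{00}=1$, the unique single-vertex tree on $[1]$ having no right vertices), hence they coincide.

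Finally, comparing the coefficients of $x^n/n!$ on both sides for $n\geq 1$ gives
\begin{equation*}
\sum_{k\geq 0} f_{n,k}\,\ra^{k} \;=\; \ra\cdot B_n(1,0,\ra,0)\;=\;\sum_{k\geq 0} b_{n,k}\,\ra^{k+1},
\end{equation*}
where $b_{n,k}$ denotes the number of standard LBS trees on $[n]$ with $k$ right edges (using that $\ld=\rd=0$ forces every edge of a contributing tree to be an ascent, i.e.\ the tree to be LBS, and that $\la=1$). Equating coefficients of $\ra^k$ yields $f_{n,k}=b_{n,k-1}$, which is the claim. The only mild technical point, and hence the likeliest place to stumble, is the uniqueness argument for the power series solution, but the nonvanishing of $\ra+1$ in $\bQ[\ra]$ makes the recursion unambiguous, so no serious obstacle is expected.
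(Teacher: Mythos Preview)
Your proof is correct and follows essentially the same route as the paper: specialize \eqref{eq: B functional equation} at $\la=1$, $\ld=\rd=0$, rewrite in terms of $\widetilde{B}=1+\ra B$, and observe that the resulting equation coincides with Postnikov's \eqref{eqn:Postnikov functional equation} under $t\leftrightarrow\ra$, $F\leftrightarrow\widetilde{B}$. The paper simply asserts the comparison; you have filled in the substitution, the uniqueness of the power series solution with constant term $1$, and the coefficient extraction, all of which are sound.
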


Setting $\la=1$, $\ld=0$, $\ra=t$, and $\rd=0$ in Theorem~\ref{cor: Ribbon Expansion}, we have
\begin{align}\label{eqn:gamma-nonnegativity of LBS}
G_n(\alpx;1,0,t,0) &=\sum_{\text{LBS }T\in \pbtl_n} t^{\rightt}\alpx^T=\sum_{\pi = B_1/\dots/B_k}t^{\ja(\pi)}(t+1)^{\sa(\pi)}h_{(|B_1|,\dots,|B_k|)},
\end{align}
where the last sum is over $\pi\in \mnoncrossing{n}$ such that all nodes are unmarked. Hence, it is a sum over all interlacing noncrossing partitions on $n$ nodes.
Applying the homomorphism $\ex$ to the second and third expressions in \eqref{eqn:gamma-nonnegativity of LBS}, we obtain
\begin{align}\label{eq: gamma-nonneg2}
\sum_{\substack{T \in \pbtl_n \\ \text{standard LBS }}} t^{\rightt}=\sum_{\substack{\pi = B_1/\dots/B_k\\ \text{interlacing on $[n]$}}}t^{\ja(\pi)}(1+t)^{\sa(\pi)}\binom{n}{|B_1|,\dots,|B_k|},
\end{align}
where $\binom{n}{m_1,\dots,m_k} = \frac{n!}{m_1!\cdots m_k!}$ for a composition $(m_1,\ldots,m_k)\vDash n$.
Thus, we have established that the distribution of right edges over standard LBS trees is $\gamma$-nonnegative.

In fact, we can obtain an explicit combinatorial description for the coefficients in the $\gamma$-nonnegative expansion.
For $0\leq j\leq \frac{n-1}{2}$, let $\gamma_{n,j}$ denote the number of left-leaning Bernardi trees on $n$ nodes such that exactly $j$ nodes have two children. Using Edelman's bijection between $NC(n)$ and $\pbt_n$, one can show that \eqref{eq: gamma-nonneg2} implies
\begin{align}
\sum_{\substack{T \in \pbtl_n\\ \text{standard LBS}}} t^{\rightt}=\sum_{0\leq j\leq \frac{n-1}{2}}\gamma_{n,j}t^j(1+t)^{n-1-2j}.
\end{align}
Combining this with Proposition~\ref{prop:intransitive and lbs}, we have the following theorem.
\begin{theorem}
For $n\geq 1$, the distribution of right edges over the set of standard LBS trees on $n$ nodes is $\gamma$-nonnegative. Equivalently, the polynomials $f_{n}(t)$ in~\eqref{eqn:Postnikov's generating function} considered by Postnikov are $\gamma$-nonnegative. As a corollary, we have that the sequence of coefficients of $f_n(t)$  is unimodal.
\end{theorem}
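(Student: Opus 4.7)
The plan is to leverage identity~\eqref{eq: gamma-nonneg2}, already derived from the ribbon Schur expansion, and to reorganize its right-hand side into a manifestly $\gamma$-nonnegative form. The central technical step is to show that
\[
  \sa(\pi) + 2\ja(\pi) = n - 1
\]
for every nested noncrossing partition $\pi$ of $[n]$. Since each block $B_i$ contributes exactly $|B_i| - 1$ non-maximum elements (the steppers and jumpers of that block), one has $\sa(\pi) + \ja(\pi) = n - k$ where $k = \blk(\pi)$, so the displayed identity reduces to showing $\ja(\pi) = k-1$. For this, I would invoke Edelman's bijection $T \mapsto \noncross{T}$, which restricts to a bijection between $\lpbt_n$ and nested noncrossing partitions of $[n]$. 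Under this bijection, blocks of $\pi$ correspond to maximal left-chains of the associated tree, so $k = 1 + \#\{\text{right edges of } T\}$, while jumpers of $\pi$ correspond precisely to bivalent internal nodes of $T$. Since $T$ is left-leaning, every right edge has a sibling left edge at its parent, so the number of right edges of $T$ equals the number of bivalent nodes of $T$, yielding $\ja(\pi) = k - 1$.

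With this identity in hand, substituting $\sa(\pi) = n - 1 - 2\ja(\pi)$ into~\eqref{eq: gamma-nonneg2} and collecting terms by $j = \ja(\pi)$ gives
\[
  \sum_{\substack{T \in \pbtln \\ \text{standard LBS}}} t^{\rightt} = \sum_{j=0}^{\lfloor (n-1)/2 \rfloor} \gamma_{n,j}\, t^j (1+t)^{n-1-2j},
\]
where $\gamma_{n,j} = \sum_{\pi \text{ nested on }[n],\ \ja(\pi) = j}\binom{n}{|B_1|,\dots,|B_k|}$ is a nonnegative integer; this establishes $\gamma$-nonnegativity of the right-edge distribution over standard LBS trees. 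Proposition~\ref{prop:intransitive and lbs} yields the relation $f_n(t) = t\sum_{T \text{ std LBS}}t^{\rightt}$, so the same coefficients $\gamma_{n,j}$ witness the $\gamma$-nonnegativity of $f_n(t)$ (in the appropriately shifted sense). Unimodality of the coefficient sequence of $f_n(t)$ is then a standard consequence of $\gamma$-nonnegativity; see~\cite[Chapter~4]{Petersen}.

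The main obstacle I anticipate is verifying cleanly that Edelman's bijection sends jumpers of a nested noncrossing partition to bivalent nodes of the associated left-leaning tree. This requires unpacking the preorder traversal convention (root, right, left) and tracing how short versus long arcs in the arc diagram of $\pi$ encode the local tree structure (left-univalent versus bivalent) at each node. Once that correspondence is in place, everything else is a mechanical regrouping of~\eqref{eq: gamma-nonneg2} combined with the routine passage from $\gamma$-nonnegativity to unimodality.
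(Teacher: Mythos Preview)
Your proposal is correct and follows essentially the same approach as the paper. The paper likewise starts from \eqref{eq: gamma-nonneg2}, invokes Edelman's bijection between $\noncrossing{n}$ and $\pbt_n$ (restricted to nested partitions and left-leaning trees) to recast the right-hand side in the form $\sum_{j}\gamma_{n,j}t^j(1+t)^{n-1-2j}$, and then appeals to Proposition~\ref{prop:intransitive and lbs}; your identity $\sa(\pi)+2\ja(\pi)=n-1$ via $\ja(\pi)=\blk(\pi)-1$ is exactly the content the paper leaves implicit in the phrase ``one can show.''
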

\noindent In the spirit of the theme in \cite{Branden}, we offer the following stronger conjecture.
\begin{conjecture}
 The polynomials $f_{n}(t)$ are real-rooted with all roots negative for all $n\geq 1$.
 In particular, the coefficients of $f_{n}(t)$ form a log-concave sequence.
 \end{conjecture}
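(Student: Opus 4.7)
The plan is to reduce real-rootedness of $f_n(t)$ to real-rootedness of the associated $\gamma$-polynomial, and then to establish the latter by a combinatorially driven interlacing argument. Recall that by combining Proposition~\ref{prop:intransitive and lbs} with the $\gamma$-expansion in \eqref{eq: gamma-nonneg2}, we have
\[
f_n(t) \;=\; t\,(1+t)^{n-1}\,\gamma_n\!\left(\frac{t}{(1+t)^2}\right),\qquad \gamma_n(u):=\sum_{j\geq 0}\gamma_{n,j}\,u^{j},
\]
where $\gamma_{n,j}$ counts left-leaning Bernardi trees on $n$ nodes with exactly $j$ bivalent nodes. A short computation shows that the substitution $u=t/(1+t)^2$ sends each non-positive real $u$-root of $\gamma_n$ to a pair of negative real $t$-roots of $f_n$, and conversely a positive (or non-real) root of $\gamma_n$ produces a positive (or non-real) root of $f_n$. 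Consequently it suffices to prove:

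\smallskip
\noindent\textbf{Reduced claim.} \emph{For each $n\geq 1$ the polynomial $\gamma_n(u)$ has only real, non-positive roots.}

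\smallskip
The first concrete step is to extract a linear recursion for $\gamma_n(u)$ from the combinatorics of left-leaning Bernardi trees. Via Edelman's bijection, these correspond to nested noncrossing partitions, which in turn biject with Motzkin paths; bivalent nodes translate into the ``level" steps of the path. Decomposing a Motzkin path according to its first return to the $x$-axis yields a quadratic functional equation for the bivariate generating function $\Gamma(x,u):=\sum_{n\geq 1}\gamma_n(u)\,x^n$ of the shape
\[
\Gamma \;=\; x\bigl(1+u\,\Gamma\bigr)\bigl(1+\Gamma\bigr),
\]
which after Lagrange inversion produces an explicit expression for $\gamma_n(u)$ and a two-term linear recurrence
\[
\gamma_n(u) \;=\; A_n(u)\,\gamma_{n-1}(u) \;+\; u\,B_n(u)\,\gamma_{n-2}(u)
\]
with polynomial coefficients $A_n,B_n\in \mathbb{N}[u]$ of controllable degree. (Alternatively, grafting a new rightmost node of the Motzkin path gives the recursion directly, without passing through Lagrange inversion.)

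Once such a two-term recurrence is in hand, the plan is to apply a standard preservation theorem of Liu--Wang / Br\"and\'en for compatible polynomials: verifying that (i) $\gamma_{n-1}(u)$ and $\gamma_{n-2}(u)$ interlace on $(-\infty,0]$, (ii) $A_n,B_n$ have non-negative coefficients, and (iii) the leading coefficients line up correctly, the recurrence propagates the interlacing property and forces $\gamma_n(u)$ to be real-rooted with all roots in $(-\infty,0]$. Real-rootedness of $f_n(t)$ then follows from the reduction, and the log-concavity corollary is immediate by Newton's inequalities.

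The main obstacle is step (i)--(iii): one must produce the recurrence in a form where the coefficient polynomials $A_n,B_n$ have degrees compatible with the interlacing, and one must verify the initial interlacing for small $n$. The natural two-subtree decomposition of a binary tree gives a \emph{quadratic} convolution recurrence rather than a linear one, so the decomposition at the \emph{last} Motzkin step (i.e., using the bijection with nested noncrossing partitions) is more likely to yield a recurrence of the correct form. If a direct two-term recurrence is not forthcoming, the backup strategy is to express $\gamma_n(u)$ via Lagrange inversion in closed form (analogous to Theorem~\ref{thm:character Linial}), rescale the coefficients, and verify that the resulting sequence $(\gamma_{n,j})_{j}$ is a P\'olya frequency sequence by checking total non-negativity of the associated Toeplitz matrix through its combinatorial interpretation.
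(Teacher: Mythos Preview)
The statement you are attempting to prove is presented in the paper as an \emph{open conjecture}; the paper offers no proof, so there is nothing to compare your argument against. What you have written is a plan rather than a proof, and you say so yourself (``The main obstacle is step (i)--(iii)''). That is fine as a research outline, but it should not be labeled a proof.

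More importantly, the central computation in your plan is incorrect. You assert that the ordinary generating function $\Gamma(x,u)=\sum_{n\ge1}\gamma_n(u)x^n$ satisfies $\Gamma=x(1+u\Gamma)(1+\Gamma)$, arguing that $\gamma_{n,j}$ counts Motzkin-type objects. But $\gamma_{n,j}$ counts \emph{labeled} objects: left-leaning Bernardi trees on $[n]$ with $j$ bivalent nodes (equivalently, the multinomial-weighted sum in \eqref{eq: gamma-nonneg2}). A direct check already fails at $n=2$: the unique left-leaning shape on two nodes is the left edge, which has $0$ bivalent nodes, and the unique Bernardi labeling gives $\gamma_2(u)=1$, whereas your functional equation yields $[x^2]\Gamma=(1+u)$. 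The mismatch is not a normalization issue; the objects you are counting are simply not Motzkin paths, and the ``first return'' decomposition you invoke does not respect the standard labeling. Any recurrence for $\gamma_n(u)$ must come from an exponential-generating-function argument (e.g., via $B(x;1,0,t,0)$ and the substitution $u=t/(1+t)^2$), and the resulting recursion will not have the clean bilinear form you need for a direct Liu--Wang style interlacing.

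Two smaller points. First, your reduction step is essentially correct, but note that $f_n(0)=0$ for all $n$, so ``all roots negative'' in the conjecture should be read as ``all nonzero roots negative''; you should also account for the $(1+t)^{n-1-2d}$ factor that appears when $\deg\gamma_n<\lfloor (n-1)/2\rfloor$, which contributes extra roots at $t=-1$. Second, your backup plan (closed form via Lagrange inversion, then P\'olya frequency verification) is not obviously easier: the Toeplitz total-positivity condition for $(\gamma_{n,j})_j$ is equivalent to real-rootedness of $\gamma_n$, so you are restating the problem rather than reducing it.
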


 \section{A bijective proof of Theorem~\ref{thm: ribbon functional equation}}
 \label{sec: Proof of Main Theorem}

 In this section, after setting up the necessary notation, we present a bijective proof of Theorem~\ref{thm: ribbon functional equation}.
We begin by restating the identity that we seek to establish for the convenience of the reader,
 \begin{equation}\label{eqn:main_eqn_restated}
   G = \sum_{n\geq 1}\sum_{\alpha\vDash n} (\la\ra\, G+\la+\ra)^{n-\ell(\alpha)}(\ld\rd\,G+\ld+\rd)^{\ell(\alpha)-1}\,r_\alpha.
 \end{equation}
Since our proof is intricate, we present a broad outline of this section.
 \begin{enumerate}
  \item We first interpret the right-hand side of \eqref{eqn:main_eqn_restated}  as the multivariate generating function of the set of alternating sequences of labeled trees and lattice paths.
  This is accomplished in Lemma~\ref{lem: PT equation}.
  \item We then discuss in detail the case of labeled trees that contribute to $G_{3,\nu}$ for all possible canopies $\nu\in \{UU,UD,DD,DU\}$. To each standard labeled tree on $3$ nodes we associate an alternating sequence of labeled trees and lattice paths.
  \item The insight gained from understanding the $n=3$ case leads us to the crucial notions of distinguished triples, life-sustaining nodes, and prunable nodes in Subsection~\ref{subsec: prunable}.
  \item The prunable nodes in a labeled tree determine a particular partition of the nodes of the tree, which in turn determines an alternating sequence of labeled trees and lattice paths.
  In Subsection~\ref{subsec: Construction of Phi}, we show that this correspondence is in fact a weight-preserving bijection between the set of labeled binary trees and the set of alternating sequences of labeled trees and lattice paths.
  \item Finally, in Section~\ref{sec: ribbon expansion}, we use this weight-preserving bijection to prove Theorem~\ref{conj: refined Schur positivity}. The notions of (augmented and marked) interlacing partitions defined in Section~\ref{sec: fixed canopy} can be seen to arise naturally from our act of dismantling a tree into an alternating sequence of labeled trees and lattice paths.
\end{enumerate}

 A \bemph{lattice path} is a sequence $\canopy = (\canopy_1,\dots,\canopy_k)$ of points in the plane starting at $\canopy_1=(0,0)$ such that $\canopy_{i+1}$ is either $\canopy_i+(1,1)$ or $\canopy_i+(1,-1)$. We identify the lattice path $\canopy$ with its corresponding path graph where points are nodes and for each $i<k$, we have an edge joining $\canopy_i$ and $\canopy_{i+1}$.
 Let $\vstart(\canopy)\coloneq \canopy_1$ and $\vend(\canopy)\coloneq \canopy_k$.
 If $\canopy_{i+1}=\canopy_i+(1,1)$, we say that the edge between $\canopy_i$ and $\canopy_{i+1}$ is an \bemph{up step} in the path, and denote it by $U$.
 Otherwise, if $\canopy_{i+1}=\canopy_i+(1,-1)$, we say that the edge between the nodes is a \bemph{down step} in the path and denote it by $D$.
 A lattice path consisting of $k$ nodes is said to be of \bemph{length} $k-1$.
 Let $w_\canopy$ be the length $k-1$ word on the alphabet $\{U,D\}$ recording the up and down steps of $\canopy$ from left to right.

 Given a positive integer $k$, a \bemph{labeled lattice path} of \bemph{length} $k-1$ is the data of a lattice path $\canopy$ of length $k-1$ and a labeling of the nodes of $\canopy$ with positive integers. If $\canopy_1,\dots,\canopy_k$ are the nodes of $\canopy$, we let $\canopy_i^\ell$ denote the label of $\canopy_i$.
 If $\canopy$ is a labeled lattice path, its \bemph{inorder reading word} $\inorder(\canopy)$  is defined to be $\canopy_1^\ell \canopy_2^\ell\dots  \canopy_{k}^\ell$.

 To the edge joining the labeled nodes $\canopy_i$ and $\canopy_{i+1}$ in $\canopy$, we associate an \bemph{edge weight} based on its orientation as indicated in Figure~\ref{fig:weightdiagram}. Precisely, if $\canopy_i\canopy_{i+1}$ form a $U$ step, then the edge is assigned the weight $\la$ if $\canopy_i^\ell\leq \canopy_{i+1}^\ell$, or the weight $\ld$ if $\canopy_i^\ell>\canopy_{i+1}^\ell$. On the other hand, if $\canopy_i\canopy_{i+1}$ form a $D$ step, then the edge is assigned the weight $\ra$ if $\canopy_i^\ell\leq \canopy_{i+1}^\ell$, or the weight $\rd$ if $\canopy_i^\ell > \canopy_{i+1}^\ell$. Note that the barred parameters $\la$ and $\ra$ correspond to weak inequalities, while the unbarred parameters $\ld$ and $\rd$ correspond to strict inequalities.
 \begin{figure}[H]
 \includegraphics[scale=0.5]{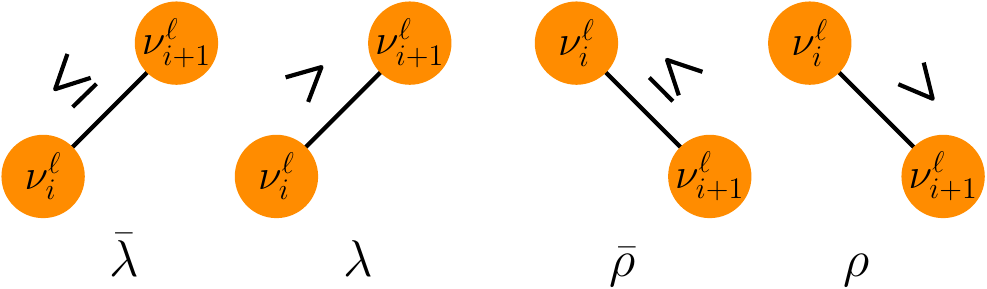}
   \caption{Determining the weight of a labeled edge.}
   \label{fig:weightdiagram}
 \end{figure}
 We define the \bemph{weight} of the labeled lattice path $\canopy$ to be the product of the weights on its edges, and denote it by $\wt{\canopy}$. 
 Observe that the weights in Figure~\ref{fig:weightdiagram} are consistent with the weights assigned to the edges of a labeled tree based on their orientation.
{Suppose we draw a labeled (plane binary) tree with the root on top, left edges going Southwest to Northeast, and right edges going Northwest to Southeast. If we defined the weight of a labeled {(plane binary)} tree to be the product of the weights along its edges according to the rules in Figure~\ref{fig:weightdiagram}, then this new weight in fact coincides with the one in Subsection~\ref{subsec: labeled trees}.}

 We define a \bemph{path-tree sequence} to be an alternating sequence
 \begin{align}
 S=(\canopy^{(0)},T_1,\canopy^{(1)},\dots,\canopy^{(m-1)},T_{m},\canopy^{(m)})
 \end{align}
 of labeled lattice paths $\canopy^{(h)}$ and labeled trees $T_h$, with $m\geq 0$, such that the sequence starts and ends with a lattice path.
 We define the \bemph{inorder reading word} of  $S$, denoted by $\inorder(S)$, to be the concatenation of the inorder reading words of the labeled lattice paths and labeled trees in the order in which they appear in $S$ from left to right,
 \begin{align}
 \inorder(S) \coloneq \inorder(\canopy^{(0)})\inorder(T_1)\inorder(\canopy^{(1)})\cdots\inorder(\canopy^{(m-1)})\inorder(T_{m})\inorder(\canopy^{(m)}),
 \end{align}
 where the ellipsis here denotes that the concatenation continues.
 Define a monomial $\alpx^S$ associated to $S$ by
 \begin{align}
 \alpx^S\coloneq \alpx^{\inorder(S)} = \alpx^{\inorder(\canopy^{(0)})} \prod_{h=1}^{m} \alpx^{T_h}\,\alpx^{\inorder(\canopy^{(h)})}.
 \end{align}
 Let the \bemph{weight} of $S$ be
 \begin{equation}\label{eq: S weight}
   \wt{S} \coloneq \wt{\canopy^{(0)}} \prod_{h=1}^m \gamma_h\,\wt{T_h}\,\wt{\canopy^{(h)}},
 \end{equation}
 where for each $1\leq h\leq m$,
 \begin{align}\label{eq: def of gamma}
   \gamma_h \coloneq \begin{cases}
     \la \ra & \text{ if } \vend(\canopy^{(h-1)})^\ell \leq \vstart(\canopy^{(h)})^\ell,\\
     \ld\rd & \text{ if } \vend(\canopy^{(h-1)})^\ell > \vstart(\canopy^{(h)})^\ell.
   \end{cases}
 \end{align}
 We define the \bemph{canopy} of $S$ to be the word on $\{U,D\}$ given by the concatenation
 \begin{align}\label{eq: canopy of S}
 \can{S} = w_{\canopy^{(0)}} \,D\, \can{T_1} \,U\, w_{\canopy^{(1)}} \cdots w_{\canopy^{(m-1)}} D\,\can{T_{m}}\, U \,w_{\canopy^{(m)}}.
 \end{align}
 Given $\canopy$ a word on $\{U,D\}$ of length $n-1$, let $\ptl_{n,\canopy}$ be the set of all path-tree sequences with canopy $\canopy$. See Figure~\ref{fig: PT example} for a path-tree sequence $S$ with $\inorder(S) = 314275434222$, $\alpx^S = x_1 x_2^4 x_3^2 x_4^3 x_5 x_7$, $\wt{S} = (\rd\ra)(\la\ra)(\ld\ra)(\ld)(\ld\rd)(\ld\ra)(1) = \la \ld^4 \ra^4 \rd^2$ and $\can{S} = DDDDUUUDUDU$.

 \begin{figure}[h]
 \includegraphics[scale=0.75]{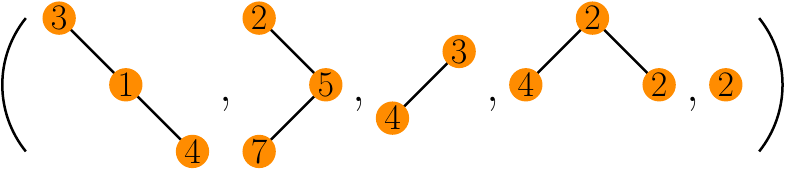}
 \caption{A path-tree sequence $S$.\label{fig: PT example}}
 \end{figure}

 We denote the set of all path-tree sequences of any finite length by $\ptl$. Recall that by our convention, the $\ell$ in this notation refers to the fact that all path-tree sequences have labeled nodes.
 The weighted generating function of $\ptl$ is equal to the right-hand side of the equation in Theorem~\ref{thm: ribbon functional equation} by the following lemma.

 \begin{lemma}\label{lem: PT equation}
 We have that
   \begin{align}\label{eq: PT equation}
     \sum_{S\in \ptl}\wt{S}\,\alpx^{S} =\sum_{n\geq 1}\sum_{\alpha\vDash n}(\la\ra\, G+\la+\ra)^{n-\ell(\alpha)}(\ld\rd\, G+\ld+\rd)^{\ell(\alpha)-1}\,r_\alpha.
   \end{align}
 \end{lemma}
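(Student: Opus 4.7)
The plan is to begin with the right-hand side and apply identity~\eqref{eq: A(x;s,t)}, which together with~\eqref{eqn: word version of multivariate Eulerian} yields
\[
\sum_{n\geq 1}\sum_{\alpha\vDash n}s^{n-\ell(\alpha)}t^{\ell(\alpha)-1}\,r_\alpha \;=\; \sum_{w\in \bP_+}s^{\asc(w)}t^{\des(w)}\alpx_w
\]
for any indeterminates $s,t$. Substituting $s=\la\ra G+\la+\ra$ and $t=\ld\rd G+\ld+\rd$ rewrites the right-hand side of~\eqref{eq: PT equation} as
\[
\sum_{w\in \bP_+}(\la\ra\,G+\la+\ra)^{\asc(w)}(\ld\rd\,G+\ld+\rd)^{\des(w)}\alpx_w,
\]
reducing the claim to a statement indexed by ordinary words.

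Next I would expand the products over the $n-1$ gaps of a word $w\in\bP_n$, where gap $i$ is an ascent gap if $w_i\leq w_{i+1}$ and a descent gap otherwise. Each ascent gap contributes a summand drawn from $\{\la,\,\ra,\,\la\ra\,G\}$, and each descent gap contributes one from $\{\ld,\,\rd,\,\ld\rd\,G\}$. Since $G=\sum_{T\in\pbtl}\wt{T}\alpx_T$, each tree summand at a gap encodes the choice of a labeled tree together with a scalar ($\la\ra$ at an ascent gap, $\ld\rd$ at a descent gap). Thus the expanded right-hand side becomes a sum over quadruples $(w,\mathcal{I},(T_h),\text{tags})$, where $\mathcal{I}=\{i_1<\cdots<i_m\}\subseteq[n-1]$ is the set of tree-chosen gaps, $(T_h)_{h=1}^{m}$ is a sequence of labeled trees, and the remaining gaps carry a binary tag recording which of the two scalar summands was picked.

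The central step is the bijection between these quadruples and path-tree sequences $S\in\ptl$. Given $(w,\mathcal{I},(T_h),\text{tags})$, cut $w$ into subwords $w^{(0)},\dots,w^{(m)}$ at the gaps in $\mathcal{I}$ and let $\canopy^{(h)}$ be the labeled lattice path whose inorder reading word is $w^{(h)}$, with $U$/$D$ structure prescribed by the non-tree tags: a $\la$ or $\ld$ tag produces an up step, while a $\ra$ or $\rd$ tag produces a down step. Set $S=(\canopy^{(0)},T_1,\canopy^{(1)},\dots,T_m,\canopy^{(m)})$. The inverse map recovers $w$ by concatenating the path reading words, reconstructs $\mathcal{I}$ as the gaps between consecutive paths, reads off each $T_h$, and recovers the two-valued tags from the edge orientations inside the $\canopy^{(h)}$. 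This is manifestly a bijection, and the canopy word of $S$ plays no role here and can simply be ignored.

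The final verification is that weights and monomials are preserved, which reduces to the definitions. The monomial identity $\alpx_S=\alpx_w\prod_h\alpx_{T_h}$ is immediate from the inorder decomposition. For weights, the conventions in Figure~\ref{fig:weightdiagram} ensure that each non-tree tag contributes exactly the weight of the corresponding edge in its path, so the product of non-tree tags inside $\canopy^{(h)}$ equals $\wt{\canopy^{(h)}}$. Each tree tag at $i_h$ contributes $\wt{T_h}\alpx_{T_h}$ times $\la\ra$ or $\ld\rd$ according to whether $w_{i_h}\leq w_{i_h+1}$; since $w_{i_h}=\vend(\canopy^{(h-1)})^\ell$ and $w_{i_h+1}=\vstart(\canopy^{(h)})^\ell$, this scalar coincides with $\gamma_h$ from~\eqref{eq: def of gamma}, so the total weight assembles into~\eqref{eq: S weight}. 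The main obstacle is none of the individual steps but rather keeping the three-way case split at each gap consistent on both sides of the bijection; once the tags are tracked carefully, the weight match is immediate.
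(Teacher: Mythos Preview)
Your proposal is correct and follows essentially the same approach as the paper's proof: both rewrite the right-hand side as a sum over words using~\eqref{eq: A(x;s,t)} (equivalently the definition~\eqref{eq: ribbon schur} of $r_\alpha$), expand the product over the $n-1$ gaps into a choice of $\la$, $\ra$, or a tree term at each ascent gap (and $\ld$, $\rd$, or a tree term at each descent gap), and then set up the obvious bijection to $\ptl$ by turning non-tree tags into $U$/$D$ steps and tree tags into the inserted $T_h$'s. Your handling of the weight match via $\gamma_h$ and of the monomial via the inorder decomposition is exactly what the paper does; the only cosmetic difference is that the paper records the expansion as a tuple $(A_1,\dots,A_{n-1})$ and spells out the edge cases of consecutive trees or trees at the ends (which your ``cut $w$ at the gaps in $\mathcal{I}$'' description handles automatically, since every resulting subword is nonempty).
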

 \begin{proof}
 By the definition of $r_\alpha$ in \eqref{eq: ribbon schur}, we can rewrite the right-hand side of~\eqref{eq: PT equation} as
 \begin{align}\label{eq: right side}
 \phantom{=}\sum_{n\geq 1}\sum_{\alpha\vDash n}&(\la\ra\, G+\la+\ra)^{n-\ell(\alpha)}(\ld\rd\, G+\ld+\rd)^{\ell(\alpha)-1}\,r_\alpha \\
 &= \sum_{n\geq 1}\sum_{w\in \bP^n} (\la\ra\,G+\la+\ra)^{\asc(w)}(\ld\rd\, G+\ld+\rd)^{\des(w)}\,\alpx^w.
 \end{align}
 For $w\in \bP^n$, consider the summand
 \begin{align}\label{eq: factor}
 (\la\ra\, G+\la+\ra)^{\asc(w)}(\ld\rd\, G+\ld+\rd)^{\des(w)}\,\alpx^w,
 \end{align}
 which can be rewritten as
 \begin{align}
 \left\lbrack\prod_{i\in \Asc(w)}(\la\ra\, G + \la + \ra) \prod_{j\in \Des(w)} (\ld\rd\, G + \ld + \rd)\right\rbrack\alpx^w
 =\sum_{(A_1,\dots,A_{n-1})}A_1\dots A_{n-1}\,\alpx^w,\label{eq: product form}
 \end{align}
 where the sum in the right-hand side of~\eqref{eq: product form} is over all tuples $(A_1,\dots,A_{n-1})$ such that
 \begin{align}\label{eq: asc-des condition}
 A_i\in
 \begin{cases}
  \{\la\ra\,\wt{T}\,\alpx^T \,|\, T\in \pbtl\}\cup\{\la,\ra\} & \text{if }i\in \Asc(w),\\
 \{\ld\rd\,\wt{T}\,\alpx^T \,|\, T\in \pbtl\}\cup \{\ld,\rd\} & \text{if }i\in \Des(w).
 \end{cases}
 \end{align}

 For each $w\in\bP^n$, define a function $\mathfrak{f}_w$ from the collection of tuples $(A_1,\dots,A_{n-1})$ satisfying \eqref{eq: asc-des condition} to $\ptl$ as follows.
 First, define a sequence on $\{U,D\}\cup \pbtl$ by replacing each $\la\ra\, \wt{T}\,\alpx^T$ and $\ld\rd\,\wt{T}\,\alpx^T$ with $T$ in $(A_1,\dots,A_{n-1})$ and then replacing each remaining $\la$ and $\ld$ with a $U$ and each remaining $\ra$ and $\rd$ with a $D$.
 Then replace each maximum consecutive subsequence of $U$s and $D$s with its corresponding lattice path.
 If two trees appear consecutively in the sequence, then we insert a lattice path consisting of a single node between the two trees in the sequence.
 If the sequence begins with a tree, we insert a lattice path consisting of a single node at the beginning of the sequence. Likewise, if the sequence ends with a tree, we insert a lattice path consisting of a single node at the end of the sequence.
 Observe that the total number of nodes in the disjoint union
 $\canopy^{(0)}\sqcup\canopy^{(1)}\sqcup\dots\sqcup\canopy^{(m)}$
  is $n$.
 Finally, label the nodes of the lattice paths we have just constructed from left to right with the letters in $w$ to obtain a path-tree sequence
 \begin{align}
 S = (\canopy^{(0)},T_1,\canopy^{(1)},\dots,\canopy^{(m-1)},T_{m},\canopy^{(m)})\in \ptl
 \end{align}
 with
 \begin{align}\label{eq: inorder equation}
 \inorder(\canopy^{(0)})\inorder(\canopy^{(1)})\cdots \inorder(\canopy^{(m)}) = w.
 \end{align}
 Observe that
 \begin{align}
 \wt{S}\,\alpx^S = A_1\dots A_{n-1}\,\alpx^w.
 \end{align}

 For example, in the case when $w = 314432$, one possible tuple is
   \begin{align}
 (A_1,A_2,A_3,A_4,A_5) =   (\rd,\ra,\la\ra \,\wt{T_1}\,\alpx^{T_1},\ld,\ld\rd\,\wt{T_2}\,\alpx^{T_2}),
   \end{align}
       for some labeled trees $T_1$ and $T_2$. Then $\mathfrak{f}_w(A_1,\dots,A_5) = $
       \begin{center}
 \includegraphics[scale=0.75]{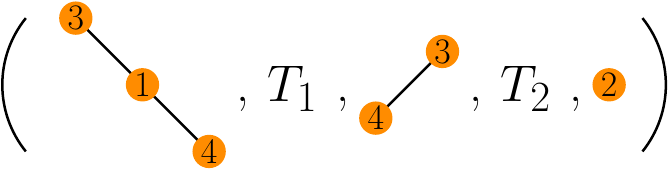},
 \end{center}
 which is an element of $\ptl$.

 The map $\mathfrak{f}_w$ is a bijection onto the subset of $\ptl$ of alternating sequences $S = (\canopy^{(0)},T_1,\canopy^{(1)},\dots,\canopy^{(m-1)},T_{m},\canopy^{(m)})$ which satisfy \eqref{eq: inorder equation}. Indeed, given a sequence $S = (\canopy^{(0)},T_1,\canopy^{(1)},\dots,\canopy^{(m-1)},T_{m},\canopy^{(m)})\in \ptl$, we can recover $w$ by reading off the labels of the lattice paths in $S$ from left to right. Furthermore, the tuple $(A_1,\dots,A_{n-1})$ can be recovered by recording the weights of the edges of the lattice paths together with $\gamma_h \wt{T_h}\alpx^{T_h}$ in the order in which the edges and trees appear in $S$. Therefore,
 \begin{align}
 \sum_{S\in \ptl}\wt{S}\,\alpx^{S} &= \sum_{n\geq 1}\sum_{w\in \bP^n} \sum_{(A_1,\dots,A_{n-1})} A_1\dots A_{n-1}\,\alpx^w\label{eq: first equality of Prop 4.1}\\
 &=\sum_{n\geq 1}\sum_{\alpha\vDash n}(\la\ra\, G+\la+\ra)^{n-\ell(\alpha)}(\ld\rd\, G+\ld+\rd)^{\ell(\alpha)-1}\,r_\alpha,
 \end{align}
 where the inner sum in the right side of \eqref{eq: first equality of Prop 4.1} is the right-hand side of \eqref{eq: product form}.
 \end{proof}

   By Lemma~\ref{lem: PT equation}, in order to prove Theorem~\ref{thm: ribbon functional equation} it suffices to show that
   \begin{align}\label{eq: GS equality}
   G = \sum_{S\in \ptl} \wt{S}\,\alpx^S.
   \end{align}
   Before we proceed to prove this identity, we first verify that $G_3$ is equal to the degree 3 homogeneous component of the right-hand side of \eqref{eq: GS equality}. This example will give us insight into how to prove the identity and will motivate many of the ideas in the rest of this section.

   \begin{example}\label{ex: 3 Nodes}
   Recall from \eqref{eq: fixed canopy sum} that for $\canopy$ a word on $\{U,D\}$ of length $n-1$, we have $G_{n,\canopy}$ is the weighted generating function summing over trees with canopy $\canopy$. We have that
   \[
   G_3 = G_{3,UU} + G_{3,DD} + G_{3,UD} + G_{3,DU}.
   \]
  Recall from \eqref{eq: canopy of S} the definition of the canopy of a path-tree sequence.  We show that for each canopy $\canopy$, we have that
   \begin{align}\label{eq: canopy sum}
   G_{3,\canopy} = \sum_{S\in \ptl_{3,\canopy}} \wt{S}\, \alpx^S.
   \end{align}
   In the case $\canopy = UU$, it is not hard to verify that
   \begin{align}
   G_{3,UU} &= \la^2r_{(3)} + \la\ld r_{(2,1)} + \ld\la r_{(1,2)} + \ld^2 r_{(1,1,1)}\\
   &= \sum_{S\in \ptl_{3,UU}} \wt{S}\, \alpx^{S}.\label{eq: UU sum}
   \end{align}
 Indeed, the path-tree sequences $S$ with $\can{S} = UU$ are those consisting of a single lattice path whose steps are given by $UU$. On the other hand, the only binary tree $T$ with $\can{T} = UU$ is a path graph starting from the root and going down and to the left. Therefore, we can define a bijection $\Phi_{3,UU}:\pbtl_{3,UU}\to\ptl_{3,UU}$ which simply converts the two left edges of $T$ into $U$ steps in a lattice path. Furthermore, the map $\Phi_{3,UU}$ preserves weight, meaning that $\wt{T} = \wt{\Phi_{3,UU}(T)}$.
 Similar statements hold with $UU$ replaced by $UD$ and $DD$.

 In the case $\canopy=DU$, there is no obvious way to convert a labeled tree $T$ with $\can{T} = DU$ into a labeled path with steps $DU$ while preserving weight. Recall from Subsection \ref{subsec: labeled trees} that the weight of a labeled tree $T$ is the same as the weight of its inorder standardization. Let us partition the set of labeled trees $T$ with canopy $DU$ according to their inorder standardization $\std(T)$. The 12 possibilities for $\std(T)$ are listed in Figure~\ref{fig: standardizations}.
 \begin{figure}[H]
 \includegraphics[width=\textwidth]{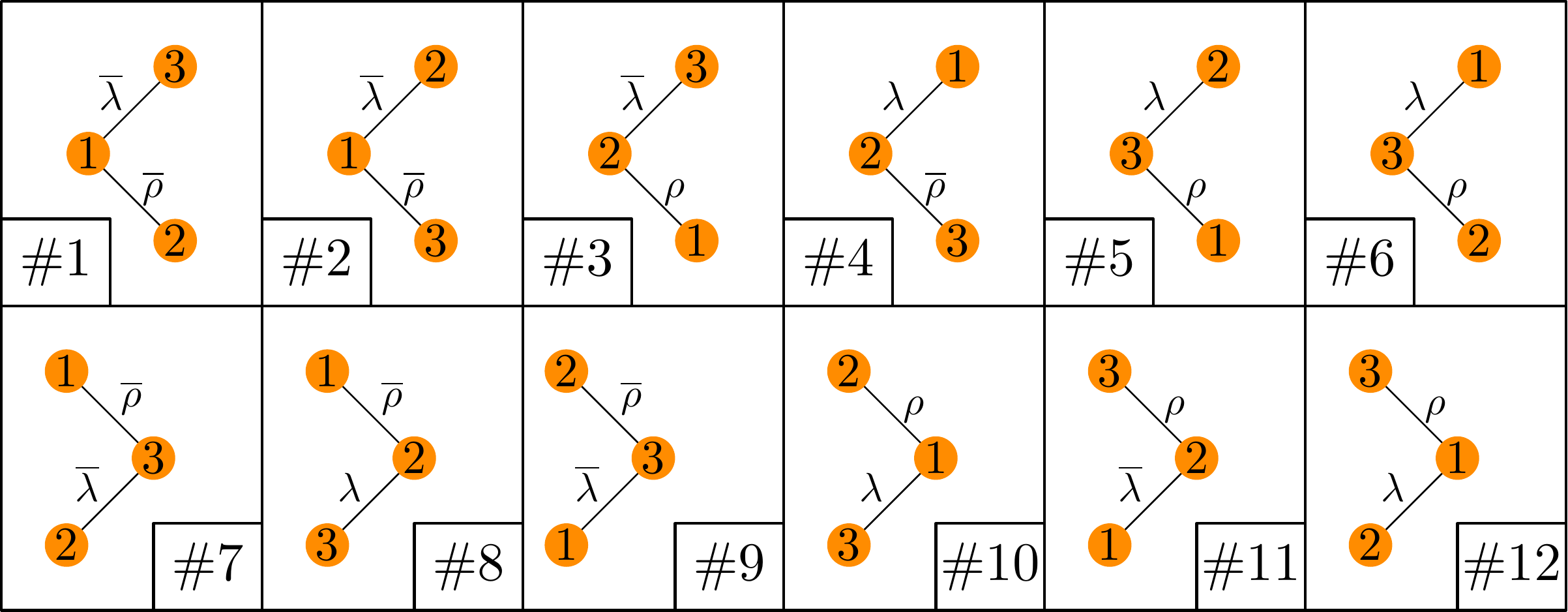}
 \caption{The 12 possibilities for $\std(T)$ for $T$ with $\can{T} = DU$.\label{fig: standardizations}}
 \end{figure}
 One can verify that if we restrict $G_3$ to summing over labelings of the trees in the first row of Figure~\ref{fig: standardizations}, the resulting multivariate formal power series is \emph{quasisymmetric} but not symmetric. Therefore, if we wish to expand $G_3$ in terms of ribbon Schur functions, we must group terms corresponding to labeled trees in the first row of Figure~\ref{fig: standardizations} with terms corresponding to labeled trees in the second row of Figure~\ref{fig: standardizations}.

 It remains to show that \eqref{eq: canopy sum} holds for $\canopy=DU$.
 Observe that if $S$ has canopy $DU$, then either $S=(\canopy^{(0)})$ where $w_{\canopy^{(0)}} = DU$, or $S=(\canopy^{(0)},T_1,\canopy^{(1)})$ where $\canopy^{(0)}$, $T_1$ and $\canopy^{(1)}$ each consist of a single node.
 Define the \bemph{inorder standardization} of a path-tree sequence $S$, denoted by $\std(S)$ to be the unique path-tree sequence whose underlying unlabeled paths and trees are the same as those of $S$, and whose inorder reading word is $\std(\inorder(S))$. Figure~\ref{fig: PT standardizations} lists the possible standardizations of the path-tree sequences with canopy $DU$.
 \begin{figure}[H]
 \includegraphics[width=\textwidth]{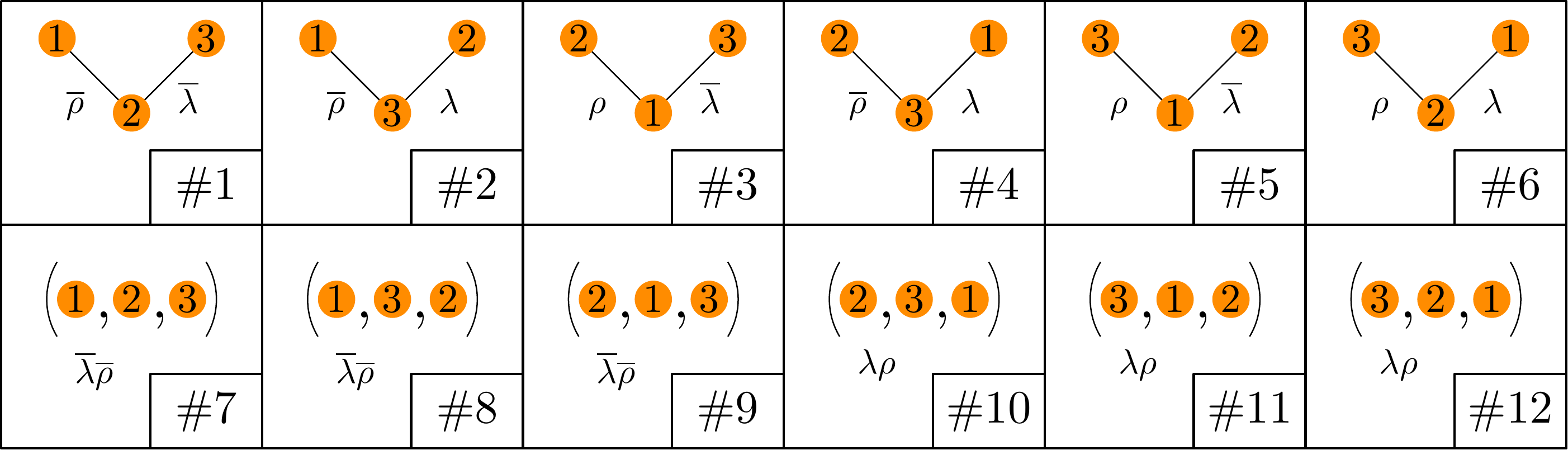}
 \caption{The 12 possible standardizations of the path-tree sequences with canopy $DU$.\label{fig: PT standardizations}}
 \end{figure}
 Our goal is to find a bijection $\Phi_{3,DU}$ between the 12 labeled trees $T$ whose inorder standardization is listed in Figure~\ref{fig: standardizations} and the 12 path-tree sequences $S$ whose inorder standardization is listed in Figure~\ref{fig: PT standardizations}. We would also like to construct the map so that $\Phi_{3,DU}$ preserves weight and inorder reading word.
 See Figure~\ref{fig: summary Phi} for a summary of the map $\Phi_{3,DU}$ we defined on standard trees with canopy $DU$. The reader is encouraged to verify that this map preserves weight and inorder reading word using Figure~\ref{fig: summary Phi}.

 Observe that for some trees the image is uniquely determined by the weight and inorder reading word requirements, while for other trees, there was some choice for the image. For example, looking at tree $\#2$ in Figure~\ref{fig: standardizations}, we see that its inorder reading word is $132$ and its weight is $\la\ra$. In order to preserve both its inorder reading word and weight, $\Phi_{3,DU}$ must map it to path-tree sequence $\#8$ in Figure~\ref{fig: PT standardizations}.
 On the other hand, both trees $\#1$ and $\#7$ have inorder reading word $123$ and weight $\la\ra$. Similarly, both path-tree sequences $\#1$ and $\#7$ have inorder reading word $123$ and weight $\la\ra$. We make the arbitrary choice to map tree $\#1$ to path-tree sequence $\#1$ and tree $\#7$ to path-tree sequence $\#7$.

 \begin{figure}[h]
 \includegraphics[width=\textwidth]{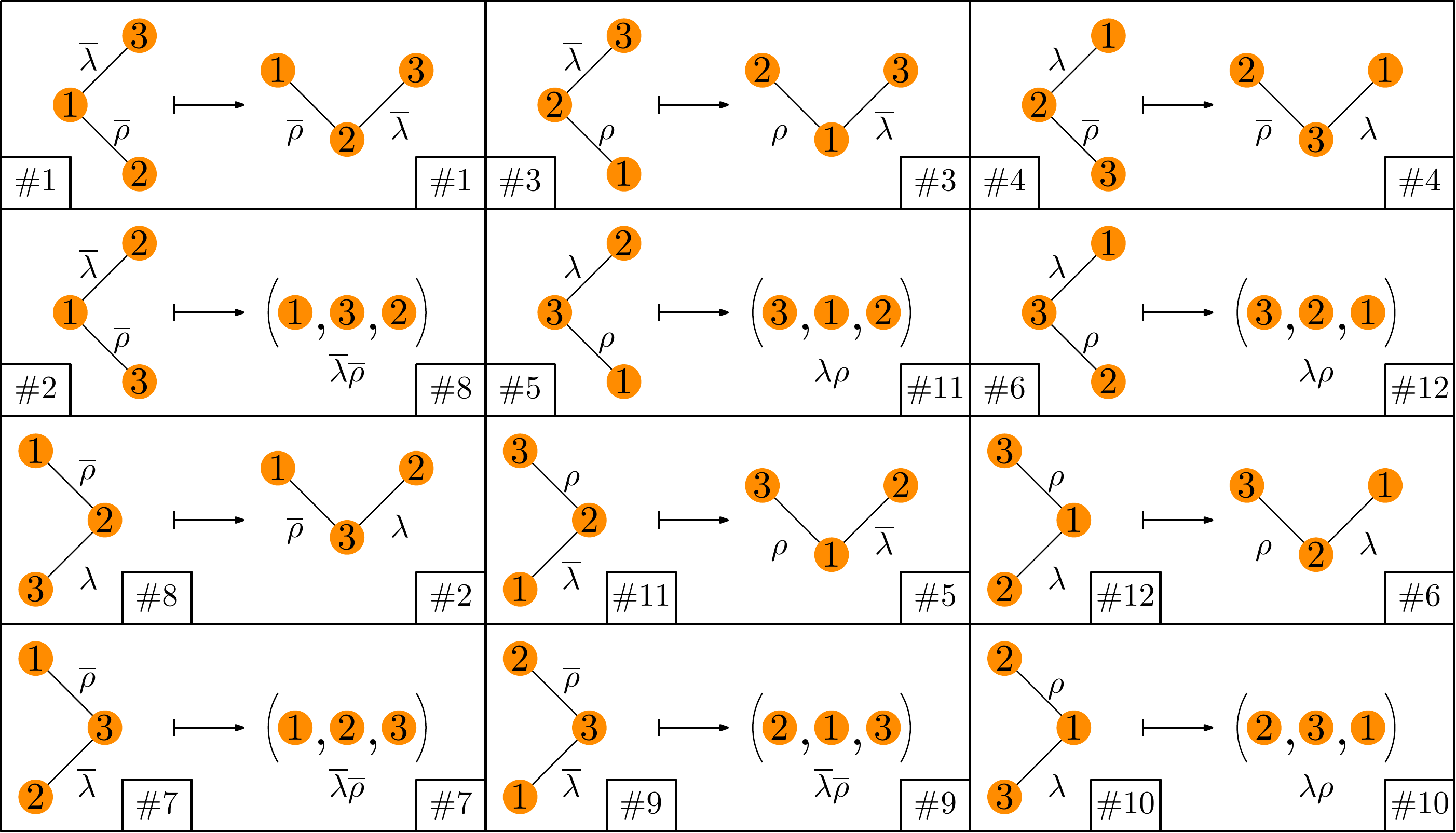}.
 \caption{A summary of the map $\Phi_{3,DU}$ defined on standard trees.\label{fig: summary Phi}}
 \end{figure}

 We extend the definition of $\Phi_{3,DU}$ to a map $\Phi_{3,DU}: \pbtl_{3,DU}\to\ptl_{3,DU}$ as follows. Given $T\in \pbtl_{3,DU}$, suppose that $\std(T)$ maps to the path-tree sequence $S$ in Figure~\ref{fig: summary Phi}. Then define $\Phi_{3,DU}(T)$ to be the unique path-tree sequence whose underlying unlabeled paths and trees are the same as those in $S$, such that the inorder reading word is $\inorder(T)$. Therefore, $\Phi_{3,DU}$ is a bijection which preserves weight, inorder reading word, and canopy. Then we have
 \begin{align}
 G_{3,DU} &= \sum_{T\in \pbtl_{3,DU}} \wt{T}\,\alpx^{T}\\
 &= \sum_{T\in \pbtl_{3,DU}} \wt{\Phi_{3,DU}(T)}\,\alpx^{\Phi_{3,DU}(T)}\\
   &= \sum_{S\in \ptl_{3,DU}} \wt{S}\, \alpx^{S},
 \end{align}
 so \eqref{eq: canopy sum} holds for $\canopy=DU$.

 We make one more observation from this example. Let $T$ be a tree whose inorder standardization is a tree listed in either the second or fourth row of Figure~\ref{fig: summary Phi}.
 Let $v_2$ be the second node of $T$ in inorder, which is the lowest node drawn in each of the trees in these rows. We can think of the map $\Phi_{3,DU}$ as ``pruning'' off $v_2$ from $T$ and mapping $T$ to $(v_1,v_2,v_3)$, where we think of $v_2$ as a single node tree sandwiched between $v_1$ and $v_3$.
   \end{example}

 The outline of the proof of Theorem~\ref{thm: ribbon functional equation} is organized as follows. In Subsection~\ref{subsec: prunable}, we define the notion of a ``prunable node'' in a tree and show that we can identify the weights of a corresponding set of edges in the tree. In Subsection~\ref{subsec: Construction of Phi}, we use the antichain of maximal prunable nodes in a labeled tree to partition the node set of a labeled tree. We then use this partition to define a map $\Phi_{n,\canopy}: \pbtl_{n,\canopy}\to \ptl_{n,\canopy}$ which generalizes the map $\Phi_{3,DU}$ in Example~\ref{ex: 3 Nodes}. Finally, we use $\Phi_{n,\canopy}$ to prove the identity
 \begin{align*}
 G = \sum_{S\in\ptl} \wt{S}\,\alpx^S
 \end{align*}
 from~\eqref{eq: GS equality}, completing the proof of Theorem~\ref{thm: ribbon functional equation}.

 \subsection{Distinguished triples and prunable nodes}\label{subsec: prunable}
 Throughout this section, let us fix a labeled tree $T\in \pbtl_n$. Let the nodes of $T$ be $v_1,v_2,\dots,v_n$ listed in inorder, as in Figure~\ref{fig:PreInorder}. After setting up some notation, we  identify a special subset of nodes of $T$ as ``prunable''. This set of nodes will allow us to give a simple description of the weights of a corresponding set of edges in $T$.

 Let the \bemph{roof} of $T$ be the set of nodes of $T$ which can be reached from the root by either traversing only left edges or traversing only right edges. Let the set of nodes which are not in the roof be the \bemph{attic}.

 \begin{definition}\label{def: distinguished ancestor}
 Let $v_j\in \node{T}$ be a node in the attic of $T$.
 \begin{itemize}
 \item If $v_j$ is the left child of $v_{k}$ for some $k>j$, let $i$ be the greatest index smaller than $j$ such that $v_i$ is an ancestor of $v_j$. Define the \bemph{distinguished triple} corresponding to $v_j$ to be $(v_i,v_j,v_k)$, and define the \bemph{distinguished ancestor} of $v_j$ to be $v_i=\da(v_j)$.
 \item If $v_j$ is the right child of $v_i$ for some $i<j$, let $k$ be the smallest index greater than $j$ such that $v_k$ is an ancestor of $v_j$. Define the \bemph{distinguished triple} of $v_j$ to be $(v_i,v_j,v_k)$, and define the \bemph{distinguished ancestor} of $v_j$ to be $v_k=\da(v_j)$.
 \end{itemize}
 \end{definition}
 {Observe that if $v_j\in \node{T}$ is in the attic of $T$, then its distinguished ancestor always exists. Indeed, assuming $v_j$ is the left child of $v_k$ for some $k>j$, if there exists no $i<j$ such that $v_i$ is an ancestor of $v_j$, then $v_j$ must be in the roof of $T$, a contradiction, hence the distinguished ancestor of $v_j$ exists. Similar reasoning holds when $v_j$ is the right child of $v_i$ for some $i<j$. See Figure~\ref{fig: distinguished ancestor schematic} for a schematic diagram of the two cases in Definition~\ref{def: distinguished ancestor}, where the blue triangles indicate arbitrary subtrees of $T$.}
 
  \begin{figure}[ht]
 \centering
 \includegraphics[scale=0.4]{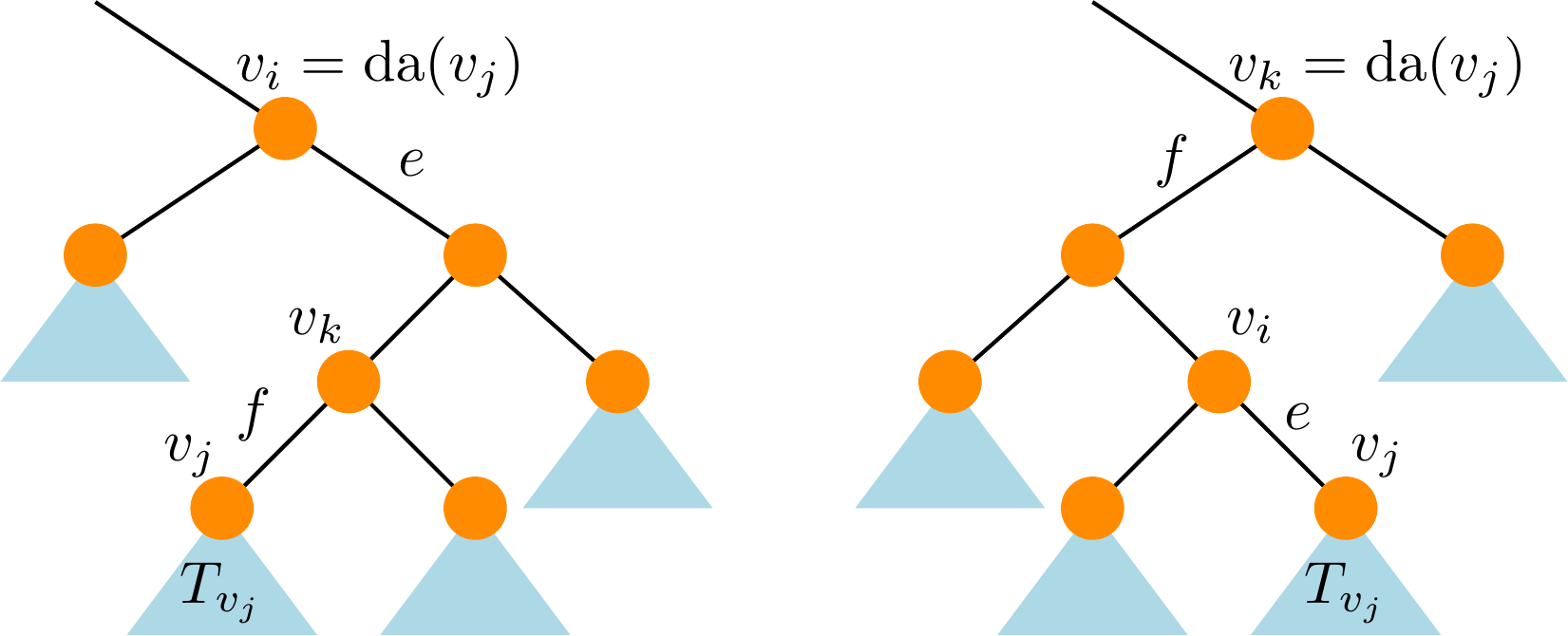}
 \caption{A schematic diagram for distinguished triples $(v_i,v_j,v_k)$.\label{fig: distinguished ancestor schematic}}
 \end{figure}
 
  \begin{figure}[t]
 \centering
 \begin{minipage}{.45\textwidth}
 \centering
 \includegraphics[scale=0.4]{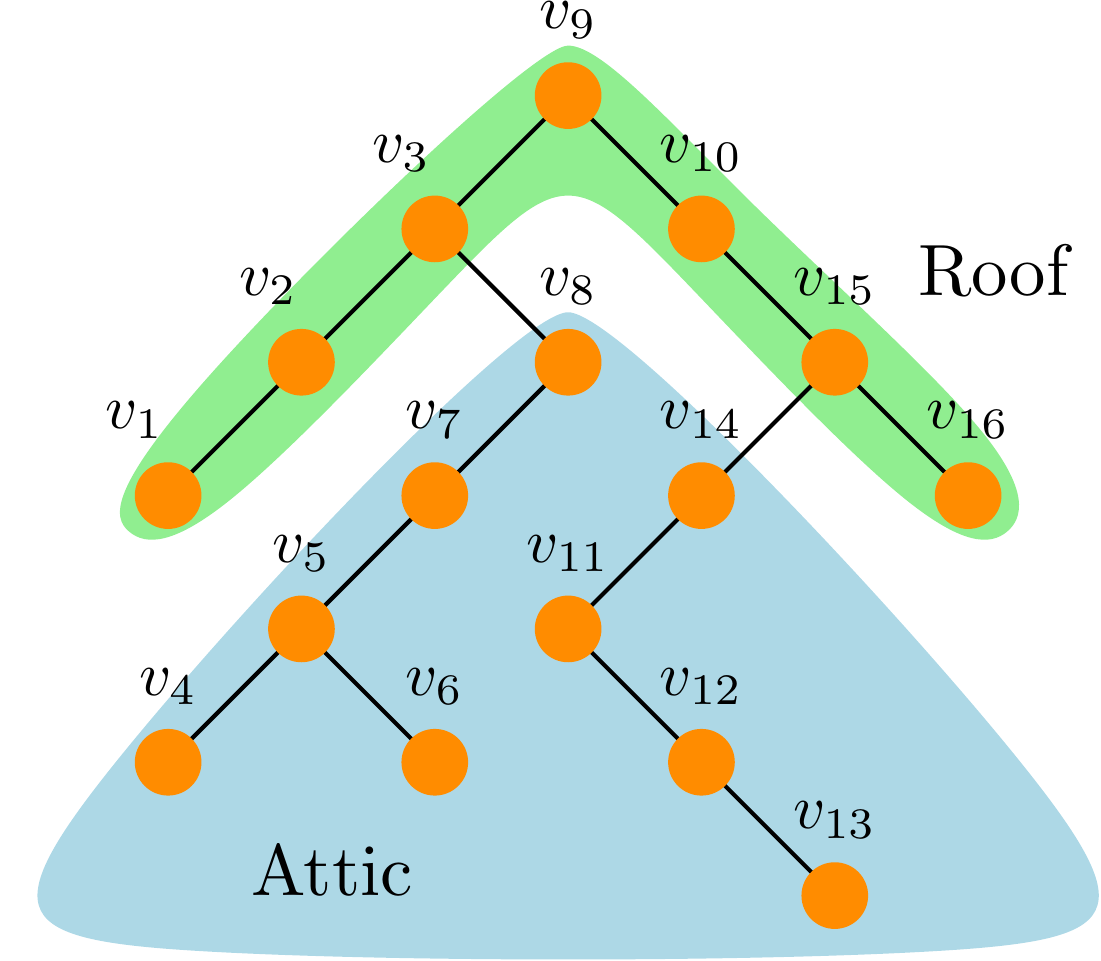}
 \end{minipage}\hfill
 \begin{minipage}{.45\textwidth}
 \centering
 $\begin{tabu}{|c|c|c|}\hline
 v_i & v_j & v_k\\\hline\hline
 v_3^* & v_4 & v_5\\\hline
 v_3^* & v_5 & v_7\\\hline
 v_5 & v_6 & v_7^*\\\hline
 v_3^* & v_7 & v_8\\\hline
 v_3 & v_8 & v_9^*\\\hline
 v_{10}^* & v_{11} & v_{14}\\\hline
 v_{11} & v_{12} & v_{14}^*\\\hline
 v_{12} & v_{13} & v_{14}^*\\\hline
 v_{10}^* & v_{14} & v_{15}\\\hline
 \end{tabu}$
 \end{minipage}
   \caption{On the left, a tree $T\in \pbt_{16}$ with its roof and attic highlighted. On the right, a table listing the distinguished triples $(v_i,v_j,v_k)$, where in each row the distinguished ancestor of the node $v_j$ is starred.}
   \label{fig: roof-attic}
 \end{figure}
 
 We call the unique path from $v_j$ to $\da(v_j)$, including both $v_j$ and $\da(v_j)$, the \bemph{distinguished path} of the node $v_j$.
 Figure~\ref{fig: roof-attic} shows a tree $T$ with its roof and attic highlighted, with a list of the distinguished triples for each $v_j$ in the attic of $T$.

 Observe that in each case of Definition \ref{def: distinguished ancestor}, we have $i<j<k$. Furthermore, $v_i$ has a right edge $e$, and $v_k$ has a left edge $f$.
{Further observe that 
 \begin{align}\label{eq: Nodes of Tvj}
 \node{T_{v_j}} = \{v_h\,\suchthat\, i+1\leq h\leq k-1\},
 \end{align}
 where $T_{v_j}$ is the subtree of $T$ consisting of $v_j$ and all of its descendants. Indeed, it can be seen from Figure~\ref{fig: distinguished ancestor schematic} that $i$ is maximum such that $i<j$ and $v_i$ is not in $T_{v_j}$, and $k$ is minimum such that $k>j$ and $v_k$ is not in $T_{v_j}$.
 }

   Next we prove a lemma relating the canopy of $T$ to distinguished ancestors. Recall from Definition~\ref{def: canopy2} that $\can{T}$ is the word on $\{U,D\}$ whose $j$th letter is a $D$ if and only if $v_j$ has a right child.
   \begin{lemma}\label{lem: steps}
 Consider a node $v_j$ in $T$ for $j\leq n-1$. Then the following hold.
  \begin{enumerate}[(a)]
  \item Suppose the $j$th letter of $\can{T}$ is a $U$. Then either $v_{j}$ is the left child of $v_{j+1}$, or $v_j$ is in the attic and $v_{j+1} = \da(v_j)$.
  \item Suppose the $j$th letter of $\can{T}$ is a $D$. Then either $v_{j+1}$ is the right child of $v_j$, or $v_{j+1}$ is in the attic and $v_j = \da(v_{j+1})$.
  \end{enumerate}
   \end{lemma}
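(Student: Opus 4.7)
The plan is to derive both parts directly from the recursive description of the inorder traversal. The key observation I would use is that for $j \leq n-1$, the node $v_{j+1}$ equals the leftmost descendant of $v_j$'s right subtree if $v_j$ has a right child, and otherwise equals the nearest ancestor $v_a$ of $v_j$ in the tree for which $v_j$ lies in $v_a$'s left subtree. By Definition~\ref{def: canopy2}, these two cases correspond precisely to the $j$th letter of $\can{T}$ being $D$ or $U$ respectively. The existence of a suitable $v_a$ in the latter case is automatic since $j \leq n-1$ prevents $v_j$ from being the last node in inorder.

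For part~(a), I would assume the $j$th letter is $U$ and set $v_a = v_{j+1}$ as above. The argument splits according to whether $v_j$ is the left child of $v_a$, giving the first alternative directly, or $v_j$ is a proper descendant of $v_a$'s left child. In the latter case, minimality of $v_a$ forces every strict ancestor of $v_j$ lying strictly below $v_a$ in the tree to have $v_j$ in its right subtree; hence $v_j$ is itself a right child, placing it in Case~B of Definition~\ref{def: distinguished ancestor}. The root-to-$v_j$ path then contains both a left step (into $v_a$'s left subtree) and a final right step, so $v_j$ lies in the attic, and the remaining task is to identify $v_a$ with $\da(v_j)$. For this I would note that any ancestor $w$ of $v_j$ with inorder index exceeding $j$ must have $v_j$ in its left subtree; if $w$ is strictly further from $v_j$ in the tree than $v_a$ is, then $v_a$ itself lies in $w$'s left subtree, so $v_a$ precedes $w$ in inorder. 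Therefore $v_a$ has the smallest inorder index among ancestors of $v_j$ exceeding $j$, which by Case~B of Definition~\ref{def: distinguished ancestor} is $\da(v_j)$.

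Part~(b) I would prove by an entirely symmetric argument, exchanging the roles of left and right. If $v_j$ has a right child $v_r$ with no left child, then $v_{j+1} = v_r$ is the first alternative; otherwise $v_{j+1}$ is reached from $v_r$ by at least one left step, so it is a left child lying in the attic, and the analogous inorder-index comparison identifies $v_j$ as the largest-index ancestor of $v_{j+1}$ with index less than $j+1$, matching Case~A of Definition~\ref{def: distinguished ancestor} to give $\da(v_{j+1}) = v_j$. The only delicate point I anticipate is the monotonicity argument identifying the tree-nearest ancestor of the appropriate type with the inorder-index-nearest such ancestor; once this is in place, everything else is routine casework on the inorder structure near $v_j$.
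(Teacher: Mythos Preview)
Your proposal is correct and follows essentially the same approach as the paper's proof: both split part~(a) on whether $v_j$ is a left child or a right child (your split on whether $v_j$ is the left child of $v_a$ versus a proper descendant of $v_a$'s left child is the same dichotomy), then show in the right-child case that $v_j$ is in the attic and that $v_{j+1}=\da(v_j)$.

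One remark: the point you flag as ``delicate''---the monotonicity argument matching the tree-nearest ancestor with the inorder-index-nearest one---is in fact unnecessary. Once you have established that $v_{j+1}$ is an ancestor of $v_j$, the index $j+1$ is trivially the smallest integer exceeding $j$, so it is automatically the minimal $k>j$ with $v_k$ an ancestor, and hence $\da(v_j)=v_{j+1}$ by Definition~\ref{def: distinguished ancestor}. The paper's proof uses exactly this shortcut. Your longer argument via tree-distance is correct but can be replaced by this one-line observation.
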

   \begin{proof}
   If the $j$th letter of $\can{T}$ is a $U$, then $v_j$ does not have a right child. Therefore, $v_{j+1}$ must be an ancestor of $v_j$. In this case, $v_j$ cannot be the root, so $v_j$ must have a parent. If $v_j$ is the left child of a node, then $v_{j+1}$ must be its parent by the definition of inorder and the fact that $v_j$ has no right child. In this case, the condition in (a) is satisfied, so we can assume that $v_j$ is the right child of some node. By the definition of inorder and the fact that $v_{j+1}$ is its ancestor, $v_j$ must be in the attic. By Definition~\ref{def: distinguished ancestor}, $\da(v_j)=v_k$ for the minimal $k>j$ such that $v_k$ is an ancestor of $v_j$. Since $v_{j+1}$ is an ancestor of $v_j$, then we have $j+1=k$. Hence, $v_{j+1} = \da(v_j)$. This concludes the proof of part (a). The proof of part (b) follows similarly.
   \end{proof}
The reader may verify the claim in Lemma~\ref{lem: steps} in the case of the tree in Figure~\ref{fig: roof-attic}  whose canopy equals $UUDUDUUUDDDDUUD$.

 \begin{definition}
 Given $v_j$ in the attic of $T$, let $(v_i,v_j,v_k)$ be its distinguished triple. We call the node $v_j$ \bemph{life-sustaining} if and only if one of the following criteria holds:
 \begin{enumerate}
 \item[(S1)] The node $v_j$ is a left child, and
 \[\std (v_i^{\ell}v_j^{\ell}v_k^{\ell}) \in \{132,312,321\}.\]
 \item[(S2)] The node $v_{j}$ is a right child, and
 \[\std (v_i^{\ell}v_j^{\ell}v_k^{\ell}) \in \\\{123,213,231\}.\]
 \end{enumerate}
 If $v_j$ is in the attic of $T$ and it is not life-sustaining, we call it \bemph{prunable}.
 \end{definition}
 See Figure~\ref{fig: Noninfection Cases} for a summary of the six cases where $v_j$ is life-sustaining and Figure~\ref{fig: Infection Cases} for a summary of the six cases where $v_j$ is prunable. Observe that if $T$ is a labeled tree on 3 nodes with $\can{T} = DU$, then $v_2$ is life-sustaining if and only if the standardization of $T$ appears in either rows 1 or 3 of Figure~\ref{fig: summary Phi}. These are exactly the cases where $\Phi_{3,DU}(T)$ consists of a single lattice path in Example~\ref{ex: 3 Nodes}. On the other hand, $v_2$ is prunable if and only if the standardization of $T$ appears in either rows 2 or 4 of Figure~\ref{fig: summary Phi}. These are exactly the cases where $\Phi_{3,DU}(T)$ is a path-tree sequence of length 3 in Example~\ref{ex: 3 Nodes}.
 
  \begin{figure}[H]
 \includegraphics[width=\textwidth]{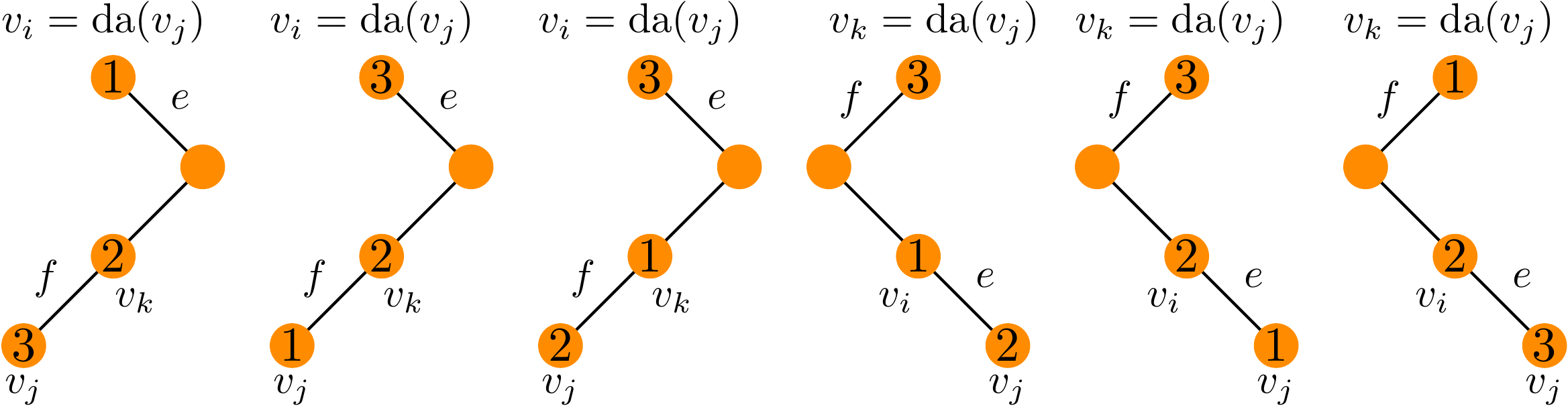}
   \caption{The six cases where $v_j$ is life-sustaining.}
   \label{fig: Noninfection Cases}
 \end{figure}
 \begin{figure}[H]
 \includegraphics[width=\textwidth]{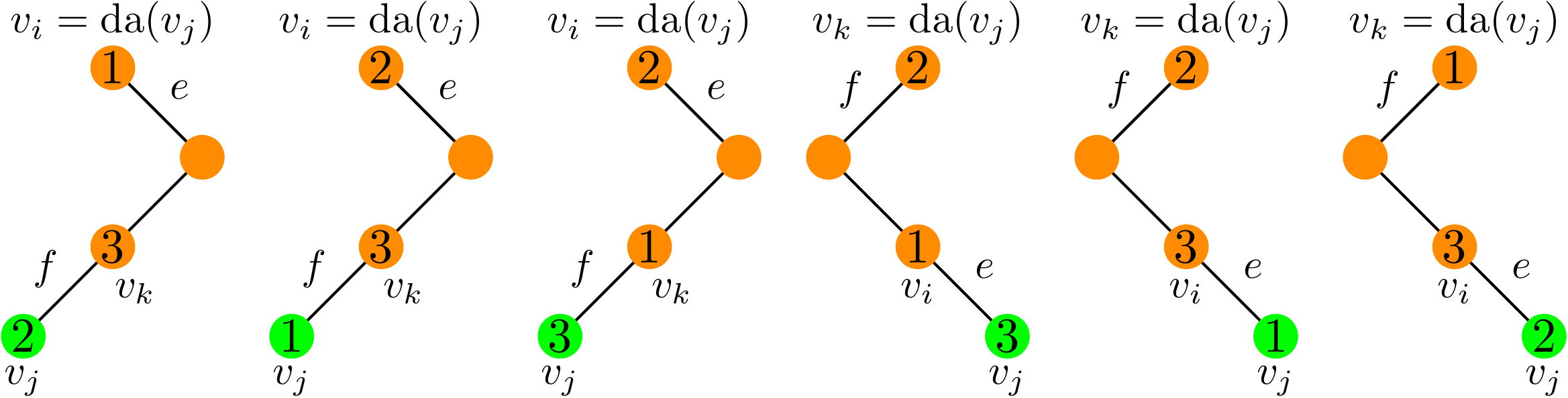}
   \caption{The six cases where $v_j$ is prunable.}
   \label{fig: Infection Cases}
   \end{figure}

 Next we prove a lemma which identifies certain edge weights in the tree associated to life-sustaining nodes. We then use it to prove Lemma~\ref{lem: infection lemma}, which identifies products of edge weight of the form $\la\ra$ and $\ld\rd$. Lemma~\ref{lem: infection lemma} will help explain the appearance of the $\gamma_h$ weights in the definition of the weight of a path-tree sequence~\eqref{eq: S weight}.
 \begin{lemma}\label{lem: Technical Lemma}
   Let $v_j$ be a node in the attic of $T$ such that all of the nodes in the distinguished path of $v_j$ are life-sustaining nodes of $T$. Let $(v_i,v_j,v_k)$ be the distinguished triple of $v_j$, let $e$ be the right edge of $v_i$, and let $f$ be the left edge of $v_k$. Then the following hold.
   \begin{enumerate}[(a)]
    \item If $v_j$ is the left child of $v_k$, then
     \[
     \wt{e} = \begin{cases} \ra & \text{if } v_i^\ell \leq v_j^\ell,\\ \rd & \text{if } v_i^\ell > v_j^\ell.\end{cases}
     \]
     \item If $v_j$ is the right child of $v_i$, then
     \[
     \wt{f} = \begin{cases} \la & \text{if }v_j^\ell\leq v_k^\ell,\\ \ld &\text{if }v_j^\ell > v_k^\ell.\end{cases}
     \]
       \end{enumerate}
 \end{lemma}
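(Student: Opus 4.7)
The plan is to treat case (a) in detail, since case (b) will follow by the mirror argument with the roles of left and right swapped. Throughout, I label the distinguished path as $v_j = u_0, u_1 = v_k, u_2, \ldots, u_m = v_i$ and write $s_h$ for the inorder index of $u_h$, so $s_0 = j$, $s_1 = k$, and $s_m = i$. The proof proceeds in three steps: a structural description of the distinguished path, identification of the distinguished ancestor of each intermediate node, and a short propagation argument driven by the life-sustaining hypothesis.

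For the structural step, I plan to show that $j < s_1 < s_2 < \cdots < s_{m-1}$ while $s_m = i < j$, that $u_0, u_1, \ldots, u_{m-2}$ are each the left child of the next node in the path, and that $u_{m-1}$ is the right child of $v_i = u_m$. The key observation is that every intermediate $u_h$ (for $0 < h < m$) is simultaneously a descendant of $v_i$ and a proper ancestor of $v_j$, so it lies in $v_i$'s right subtree (giving $s_h > i$) and, by maximality of $i$ among ancestors of $v_j$ with index below $j$, it also satisfies $s_h > j$. The monotonicity follows by noting that if $h < h'$ are both intermediate indices then $s_{h'} > j$ forces $v_j$ to lie in $u_{h'}$'s left subtree, so the entire downward path from $u_{h'}$ to $v_j$, including $u_h$, lies in that left subtree and thus $s_h < s_{h'}$. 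The type assertions then read off the sign of $s_{h+1} - s_h$, and $u_{m-1}$ being the right child of $v_i$ comes from $v_j$'s lying in $v_i$'s right subtree.

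Step two is to establish that $\da(u_h) = v_i$ for every $0 \leq h \leq m-2$; this is immediate from Definition 6.2 when $h = 0$. For $1 \leq h \leq m-2$, the structural step shows that none of $u_{h+1}, \ldots, u_{m-1}$ can be $\da(u_h)$ since their indices all exceed $s_h$, so the only candidates are $v_i$ and ancestors of $v_i$. I plan to rule out any ancestor $v_p$ of $v_i$ with $i < p < s_h$ by a subtree-containment contradiction: such a $v_p$ would have $v_i$ in its left subtree (since $p > i$), hence the whole subtree rooted at $v_i$, including $u_h$, would lie in $v_p$'s left subtree, forcing $s_h < p$, which contradicts $p < s_h$. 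Hence $\da(u_h) = v_i$.

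The propagation is then a direct application of the life-sustaining condition. For each $0 \leq h \leq m-2$, the node $u_h$ is a left child of $u_{h+1}$ with distinguished triple $(v_i, u_h, u_{h+1})$ by step two, so hypothesis (S1) gives $\std(v_i^\ell, u_h^\ell, u_{h+1}^\ell) \in \{132, 312, 321\}$. A direct inspection of these three standardizations shows that each forces the equivalence $v_i^\ell \leq u_h^\ell \iff v_i^\ell \leq u_{h+1}^\ell$ (in each of the three cases, $v_i^\ell$ stands strictly on one fixed side of both $u_h^\ell$ and $u_{h+1}^\ell$). Chaining this equivalence from $h = 0$ to $h = m-2$ yields $v_i^\ell \leq v_j^\ell \iff v_i^\ell \leq u_{m-1}^\ell$, and since $u_{m-1}$ is the right child of $v_i$ the edge $e$ is exactly $v_i u_{m-1}$, so the weight convention immediately gives the claim. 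Case (b) is then handled by the symmetric argument, using (S2), the reversed inorder monotonicity $j > s_1 > \cdots > s_{m-1} < s_m = k$, and the identification $\da(u_h) = v_k$ for right-child intermediate nodes. The main obstacle is really step one, where I must carefully use the subtree-containment argument to rule out unexpected intermediate-index ancestors; once that is in place, steps two and three are essentially formal.
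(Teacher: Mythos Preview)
Your proof is correct and follows essentially the same idea as the paper's, though you supply considerably more detail. The paper's argument is terse: after checking the base case where the distinguished path has exactly three nodes $v_i, v_j, v_k$ by direct inspection of the three (S1) patterns, it simply asserts that for longer paths the life-sustaining hypothesis forces $v_i^\ell$ to be either weakly below \emph{all} other labels on the distinguished path or strictly above all of them, and then reads off $\wt{e}$ from the comparison of $v_i^\ell$ with the label of its right child. Your Steps~1--3 are precisely the justification the paper leaves implicit: you make explicit that each intermediate node $u_h$ (for $0\le h\le m-2$) is a left child whose distinguished ancestor is again $v_i$, so (S1) applies with triple $(v_i,u_h,u_{h+1})$ and the sign of the comparison $v_i^\ell$ versus $u_h^\ell$ propagates along the chain to $u_{m-1}$, the right child of $v_i$. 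The mechanism is identical; your version is a rigorous unpacking of the paper's one-sentence claim.
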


 \begin{proof}
 \begin{enumerate}[(a)]
 \item The proof is by case analysis. First, suppose that the distinguished path of $v_j$ consists of only nodes $v_i$, $v_j$ and $v_k$. Looking at Figure~\ref{fig: Noninfection Cases}, we see that if $\std(v_i^\ell v_j^\ell v_k^\ell) = 132$, then $v_i^\ell$ is weakly smaller than both $v_j^\ell$ and $v_k^\ell$. Therefore, we are in the case where $v_i^\ell\leq v_j^\ell$, and $\wt{e} = \wt{v_i v_k} = \ra$. On the other hand, if $\std(v_i^\ell v_j^\ell v_k^\ell) \in \{312,321\}$, then $v_i^\ell$ is strictly greater than both $v_j^\ell$ and $v_k^\ell$. Therefore, we are in the case where $v_i^\ell > v_j^\ell$, and $\wt{e} = \wt{v_i v_k} = \rd$.

 Now, suppose that the distinguished path of $v_j$ consists of more than three nodes. Since all of the nodes along the distinguished path are life-sustaining, there are only two cases. Either $v_i^\ell$ is weakly smaller than all of the other labels along the distinguished path, or $v_i^\ell$ is strictly greater than all of the other labels along the distinguished path. In the first case, $v_i^\ell \leq v_j^\ell$ and $\wt{e} = \ra$. In the second case, $v_i^\ell > v_j^\ell$ and $\wt{e} = \rd$.
 \item A similar case analysis shows that either $v_k^\ell$ is weakly greater than all of the other labels along the distinguished path of $v_j$, or $v_k^\ell$ is strictly smaller than all of the other labels along the distinguished path. In the first case, $v_j^\ell\leq v_k^\ell$ and $\wt{f} = \la$. In the second case, $v_j^\ell > v_k^\ell$ and $\wt{f} = \ld$.\qedhere
 \end{enumerate}
 \end{proof}

 \begin{lemma}\label{lem: infection lemma}
 Let $v_j$ be a node in the attic of $T$. Suppose that $v_j$ is prunable and that all other nodes in the distinguished path of $v_j$ are life-sustaining. 
 Let $(v_i,v_j,v_k)$ be the distinguished triple of $v_j$, let $e$ be the right edge of $v_i$, and let $f$ be the left edge of $v_k$.
 Then
 \begin{align}
 \wt{e}\wt{f} = \begin{cases} \la\ra & \text{if }v_i^\ell \leq v_k^\ell,\\ \ld\rd & \text{if }v_i^\ell > v_k^\ell.\end{cases}
 \end{align}
 \end{lemma}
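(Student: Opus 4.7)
The plan is to carry out a case analysis according to Definition~\ref{def: distinguished ancestor}; I describe Case~A, where $v_j$ is the left child of $v_k$, and Case~B (where $v_j$ is the right child of $v_i$) follows by a completely symmetric argument. In Case~A I first establish the structure of the distinguished path: writing the sequence of ancestors from $v_j$ up to $v_i$ as $v_j = u_0, u_1 = v_k, u_2, \ldots, u_m, u_{m+1} = v_i$, the intermediate ancestors $u_1, \ldots, u_m$ all have inorder index greater than $j$ (else they would contradict the maximality of $v_i$ among ancestors of index less than $j$), so $v_j$ lies in each of their left subtrees. A direct inorder-index comparison then forces $u_t$ to be the left child of $u_{t+1}$ for $0 \le t \le m-1$, and $u_m$ to be the right child of $u_{m+1} = v_i$. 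In particular $e = v_i u_m$ with $u_m = v_{k'}$, and $f = u_0 u_1 = v_j v_k$.

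Next I identify the distinguished ancestor of each interior node and apply the life-sustaining hypothesis. A similar inorder-index argument shows that for $1 \le t \le m-1$, no ancestor of $u_t$ has index strictly between $v_i$'s and $u_t$'s: candidates of index less than $j$ would contradict the maximality of $v_i$, and candidates of index greater than $j$ would have to lie above $u_t$ in the tree yet equal some $u_s$ with $s<t$, which is a descendant of $u_t$ rather than an ancestor. Hence $\da(u_t) = v_i$, so the distinguished triple of $u_t$ is $(v_i, u_t, u_{t+1})$, and the life-sustaining hypothesis yields $\std(v_i^\ell u_t^\ell u_{t+1}^\ell) \in \{132, 312, 321\}$. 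Inspecting these three patterns shows that $v_i^\ell$ is either weakly $\le$ both $u_t^\ell$ and $u_{t+1}^\ell$ (pattern $132$) or strictly greater than both (patterns $312$ and $321$), so $(v_i^\ell \le u_t^\ell) \iff (v_i^\ell \le u_{t+1}^\ell)$. Chaining these equivalences (trivial when $m=1$) gives $v_i^\ell \le v_k^\ell \iff v_i^\ell \le v_{k'}^\ell$, which determines whether $\wt{e} = \ra$ or $\wt{e} = \rd$.

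The prunable hypothesis on $v_j$ gives $\std(v_i^\ell v_j^\ell v_k^\ell) \in \{123, 213, 231\}$; in each of these patterns $v_k^\ell$ is either weakly greater than both $v_i^\ell$ and $v_j^\ell$ (patterns $123$ and $213$) or strictly less than both (pattern $231$), so $(v_i^\ell \le v_k^\ell) \iff (v_j^\ell \le v_k^\ell)$, determining whether $\wt{f} = \la$ or $\wt{f} = \ld$. Combining the two equivalences yields the dichotomy $\wt{e}\wt{f} \in \{\la\ra, \ld\rd\}$ governed by $v_i^\ell$ versus $v_k^\ell$, as claimed. Case~B proceeds analogously with the distinguished path taking the form $v_j = u_0, u_1 = v_i, \ldots, u_m = v_{i'}, u_{m+1} = v_k$, each interior $u_t$ being a right child with $\da(u_t) = v_k$, and the patterns for right-child life-sustaining and prunable nodes used in place of their left-child counterparts; the resulting chain compares labels to $v_k^\ell$ instead of $v_i^\ell$, and the prunable hypothesis supplies $(v_i^\ell \le v_k^\ell) \iff (v_i^\ell \le v_j^\ell)$. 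The main obstacle is establishing the structural claim about the shape of the distinguished path (one right-child step at the top and only left-child steps below in Case~A) and verifying that every interior node shares the same distinguished ancestor; once these are in place, the argument reduces to a mechanical check of the six standardization patterns defining life-sustaining and prunable nodes.
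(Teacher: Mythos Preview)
Your proof is correct and follows essentially the same strategy as the paper's: split into the two cases depending on whether $v_j$ is a left or right child, handle the edge $f$ (respectively $e$) adjacent to $v_j$ by a direct check of the three prunable patterns, and handle the far edge $e$ (respectively $f$) by propagating the inequality with $v_i^\ell$ (respectively $v_k^\ell$) along the chain of life-sustaining ancestors. The only difference is organizational: the paper has already packaged the chaining step as Lemma~\ref{lem: Technical Lemma}, and in its proof of Lemma~\ref{lem: infection lemma} it simply observes that $v_k$ lies in the attic with $\da(v_k)=v_i$ and invokes Lemma~\ref{lem: Technical Lemma}(a) to obtain $\wt{e}$ directly in terms of $v_i^\ell$ versus $v_k^\ell$. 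You instead carry out that lemma's content inline, explicitly identifying the distinguished triple $(v_i,u_t,u_{t+1})$ of each intermediate node and chaining the equivalences one link at a time. Both arguments rest on the same observation about the patterns $\{132,312,321\}$: $v_i^\ell$ is either weakly below both other labels or strictly above both, so the comparison propagates.

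One small remark: your justification that $\da(u_t)=v_i$ is slightly muddled where you speak of candidates of index greater than $j$ ``equalling some $u_s$ with $s<t$''; those $u_s$ are descendants of $u_t$, not ancestors, so they are simply not candidates at all. The clean way to finish is to note that the ancestors of $u_t$ are exactly $u_{t+1},\dots,u_m,v_i$ together with the ancestors of $v_i$; the $u_s$ with $s>t$ all have larger inorder index than $u_t$ (left-child chain), and any ancestor $w$ of $v_i$ with index exceeding $i$ would have $v_i$, hence $u_t$, in its left subtree, forcing $w$'s index above $u_t$'s. Thus $v_i$ is indeed the nearest ancestor of smaller index. This does not affect the validity of your argument, only its phrasing.
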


 \begin{proof}It suffices to show that
 \begin{align}
 \wt{e} &= \begin{cases} \ra & \text{if }v_i^\ell \leq v_k^\ell, \\ \rd &\text{if }v_i^\ell > v_k^\ell,\end{cases}\label{eq: e weight}\\
 \wt{f} & = \begin{cases} \la & \text{if }v_i^\ell \leq v_k^\ell, \\ \ld &\text{if }v_i^\ell > v_k^\ell.\end{cases}\label{eq: f weight}
 \end{align}
 First, suppose $v_j$ is the left child of $v_k$, and let $v_i = \da(v_j)$. If the distinguished path of $v_j$ consists of only the nodes $v_i$, $v_j$ and $v_k$, then $v_k$ is the right child of $v_i$. Therefore, $e = v_iv_k$, so $\wt{e} = \wt{v_iv_k}$, and \eqref{eq: e weight} holds. Otherwise, $v_k$ is in the attic of $T$, and $v_i = \da(v_k)$. By hypothesis, we know that every node on the distinguished path of $v_k$ is life-sustaining. Therefore, applying Lemma~\ref{lem: Technical Lemma}(a) to the distinguished path of $v_k$, {we have that \eqref{eq: e weight} continues to hold}.

 Since $v_j$ is the left child of $v_k$, then \eqref{eq: f weight} follows from a case analysis. Indeed, looking at Figure~\ref{fig: Infection Cases}, then $v_i^\ell\leq v_k^\ell$ exactly when $\std(v_i^\ell v_j^\ell v_k^\ell)\in \{213,123\}$. In both of these cases we have that $v_j^\ell \leq v_k^\ell$, which means exactly that $\wt{f} = \wt{v_j v_k} = \la$, so \eqref{eq: f weight} holds. On the other hand, $v_i^\ell > v_k^\ell$ exactly when $\std(v_i^\ell v_j^\ell v_k^\ell) = 231$. In this case, we have $v_j^\ell > v_k^\ell$, so \eqref{eq: f weight} continues to hold.

 This concludes our proof of the lemma in the case when $v_j$ is the left child of $v_k$. The case when $v_j$ is the right child of $v_i$ follows by a similar application of Lemma~\ref{lem: Technical Lemma}(b) and a case analysis.
 \end{proof}

 \begin{lemma}\label{lem: path edge weight}
 Consider a node $v_j$ in $T$ for $j\leq n-1$. Suppose $v_j$ and all its ancestors of $v_j$ are life-sustaining. Then the following hold.
 \begin{enumerate}[(a)]
 \item If the $j$th letter of $\can{T}$ is a $U$, then {$v_{j+1}$ has a left edge which has} the same weight and orientation as a $U$ step in a lattice path labeled $v_j^\ell v_{j+1}^\ell$.
 \item If the $j$th letter of $\can{T}$ is a $D$, then {$v_j$ has a right edge which has} the same weight and orientation as a $D$ step in a lattice path labeled $v_ j^\ell v_{j+1}^\ell$.
 \end{enumerate}
 \end{lemma}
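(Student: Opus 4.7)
The proof is a case analysis driven by Lemma~\ref{lem: steps}, handling the direct case by the definition of edge weights and the harder case by an appeal to Lemma~\ref{lem: Technical Lemma}. For part (a), assume the $j$th letter of $\can{T}$ is $U$. Lemma~\ref{lem: steps}(a) yields two alternatives. In the first, $v_j$ is the left child of $v_{j+1}$; then the left edge of $v_{j+1}$ is the edge $v_jv_{j+1}$ itself, and comparing the tree edge weights from Section~\ref{subsec: labeled trees} with the lattice-path edge weights in Figure~\ref{fig:weightdiagram} shows its weight coincides with that of a $U$ step labeled $v_j^\ell v_{j+1}^\ell$. In the second, $v_j$ lies in the attic with $v_{j+1}=\da(v_j)$; since $j+1>j$, Definition~\ref{def: distinguished ancestor} forces $v_j$ to be a right child with distinguished triple $(v_i,v_j,v_{j+1})$, and the distinguished path of $v_j$ is a chain consisting of $v_j$ and certain of its ancestors terminating at $v_{j+1}$. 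The life-sustaining hypothesis ensures every node on this path is life-sustaining, so Lemma~\ref{lem: Technical Lemma}(b) applies and yields the desired weight for the left edge of $v_{j+1}$.

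Part (b) is symmetric. Invoke Lemma~\ref{lem: steps}(b); in the case that $v_{j+1}$ is the right child of $v_j$, the weight of the right edge of $v_j$ can be read off directly from Figure~\ref{fig:weightdiagram}. Otherwise $v_{j+1}$ lies in the attic with $v_j=\da(v_{j+1})$, so $v_{j+1}$ is a left child with distinguished triple $(v_j,v_{j+1},v_k)$, and one applies Lemma~\ref{lem: Technical Lemma}(a) to the distinguished path of $v_{j+1}$ to conclude.

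The main obstacle lies in the second case of part (b): the intermediate nodes on the distinguished path of $v_{j+1}$ are descendants of $v_j$ rather than ancestors, so they do not visibly fall under the stated hypothesis. To justify applying Lemma~\ref{lem: Technical Lemma}(a) one must observe that these intermediate nodes automatically inherit attic status from $v_j$---since the root-to-$v_j$ path is already mixed, extending first into the right subtree of $v_j$ and then down its left spine leaves the root-to-node path mixed---and then combine this with the standing context in which Lemma~\ref{lem: path edge weight} will be invoked (where every relevant attic node is life-sustaining) to conclude that these intermediate nodes are also life-sustaining. Once this verification is in place, both parts reduce to direct invocations of Lemma~\ref{lem: Technical Lemma}.
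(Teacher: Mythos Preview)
Your approach is the same as the paper's: split via Lemma~\ref{lem: steps}, handle the adjacent-edge case by inspection, and in the remaining case invoke Lemma~\ref{lem: Technical Lemma}(b) for part~(a) and Lemma~\ref{lem: Technical Lemma}(a) for part~(b). The paper disposes of part~(b) with a one-line ``similar application'' and does not engage with the issue you raise.

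Your observation about part~(b) is sharper than the paper's treatment and is genuinely well-founded: the distinguished path of $v_{j+1}$ runs from $v_{j+1}$ up through proper descendants of $v_j$ to $v_j$, and the stated hypothesis (about $v_j$ and \emph{its} ancestors) says nothing about those intermediate nodes or about $v_{j+1}$ itself. Your proposed fix---appealing to the ambient context in which the lemma is actually invoked, where every attic node outside the pruned subtrees is life-sustaining---is exactly the right way to salvage the application in Lemma~\ref{lem: Weight-Preserving}, but it is not a proof of the lemma as stated. Indeed the stated hypothesis is genuinely too weak for part~(b): one can build a five-node tree in which $v_j$ sits in the roof (hence trivially ``not prunable''), $v_{j+1}$ is in the attic with $\da(v_{j+1})=v_j$, and $v_{j+1}$ itself is prunable, chosen so that the right edge of $v_j$ and the $D$-step labeled $v_j^\ell v_{j+1}^\ell$ carry different weights. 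The clean fix is to strengthen the hypothesis to cover $v_{j+1}$ and its ancestors as well (equivalently, to assume that no prunable node lies weakly above $v_j$ or $v_{j+1}$); with that, your case analysis goes through verbatim. Also note that your attic-inheritance argument assumes $v_j$ is already in the attic, which under the paper's loose use of ``life-sustaining'' to include roof nodes need not hold.
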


 \begin{proof}
 If the $j$th letter of $\can{T}$ is a $U$, then by Lemma~\ref{lem: steps}(a), either $v_j$ is the left child of $v_{j+1}$, or $v_j$ is in the attic and $v_{j+1} = \da(v_j)$. In the first case, simply observe that the left edge $v_jv_{j+1}$ has the same weight and orientation as a $U$ step labeled $v_j^\ell v_{j+1}^\ell$. In the second case, apply Lemma~\ref{lem: Technical Lemma}(b) where $k=j+1$ and $f$ is the left edge of $v_{j+1}$. Then we have
 \begin{align}
 \wt{f} = \begin{cases} \la & \text{if }v_j^\ell \leq v_{j+1}^\ell,\\ \ld & \text{if }v_j^\ell > v_{j+1}^\ell,\end{cases}
 \end{align}
 which is exactly the weight of a $U$ step labeled with $v_j^\ell v_{j+1}^\ell$. This concludes the proof of part (a).
 Part (b) follows from a similar application of Lemma~\ref{lem: steps}(b) and Lemma~\ref{lem: Technical Lemma}(a) using the edge labeled $e$.
 \end{proof}

 \subsection{Partition of a tree}\label{subsec: partition}
 Throughout this subsection, let us continue to fix a labeled tree $T\in \pbtl_n$. Let the nodes of $T$ be $v_1,v_2,\dots,v_n$ listed in inorder.
 Let us define some notation which we use throughout the rest of Section~\ref{sec: Proof of Main Theorem}.
 \begin{definition}\label{def: Tree Decomp}
 Let $T\in \pbtun$, and let $\mathcal{A}=\{v_{\nodetwo_1},v_{\nodetwo_2},\dots,v_{\nodetwo_{m}}\}$ be an antichain under the partial ordering $\leq_T$ of nodes in the attic such that $j_1<j_2<\cdots <j_{m}$. For each $1\leq h\leq m$, define the following notation.
 \begin{itemize}
 \item Let $T_h$ be the subtree of $T$ consisting of $v_{\nodetwo_h}$ and all of its descendants.
 \item Let $(v_{\nodeone_h},v_{\nodetwo_h},v_{\nodethree_h})$ be the distinguished triple of the node $v_{j_h}$.
 \item Let $e_h$ be the right edge of $v_{\nodeone_h}$, and let $f_h$ be the left edge of $v_{\nodethree_h}$.
 \item Let $k_0 \coloneq 1$ and $i_{m+1} \coloneq n$.
 \item Let $w^{(h)}$ be the word on $\{U,D\}$ given by traversing the nodes in the interval $[v_{k_h},v_{i_{h+1}})$ of $T$ in inorder and recording a $D$ if the node has a right child, and a $U$ otherwise.
 \end{itemize}
 \end{definition}
 Observe that $\nodeone_h<\nodetwo_h<\nodethree_h\leq i_{h+1}$ for all $h$. {Indeed, let $v_m$ be the lowest common ancestor of $v_{j_h}$ and $v_{j_{h+1}}$. Then we have $j_h < m < j_{h+1}$ by the definition of inorder. Hence, by the definition of $k_h$ and $i_{h+1}$, we have $k_h\leq m \leq i_{h+1}$.} Therefore, we have partitioned the set $[n]$ into a disjoint union of intervals,
 \begin{align}\label{eq: partition of nodes}
 {\color{blue}[1,i_1]}\sqcup {\color{mygreen}[i_1+1,k_1-1]}\sqcup {\color{blue}[k_1,i_2]} \sqcup \cdots\sqcup {\color{blue}[k_{m-1},i_m]}\sqcup {\color{mygreen}[i_m+1,k_m-1]}\sqcup {\color{blue}[k_m,n]}.
 \end{align}
Further observe that $[i_h+1,k_h-1] = \{p \,\suchthat\, v_p\in \node{T_h}\}$ by \eqref{eq: Nodes of Tvj}. 

 \begin{figure}[H]
   \includegraphics[scale=0.5]{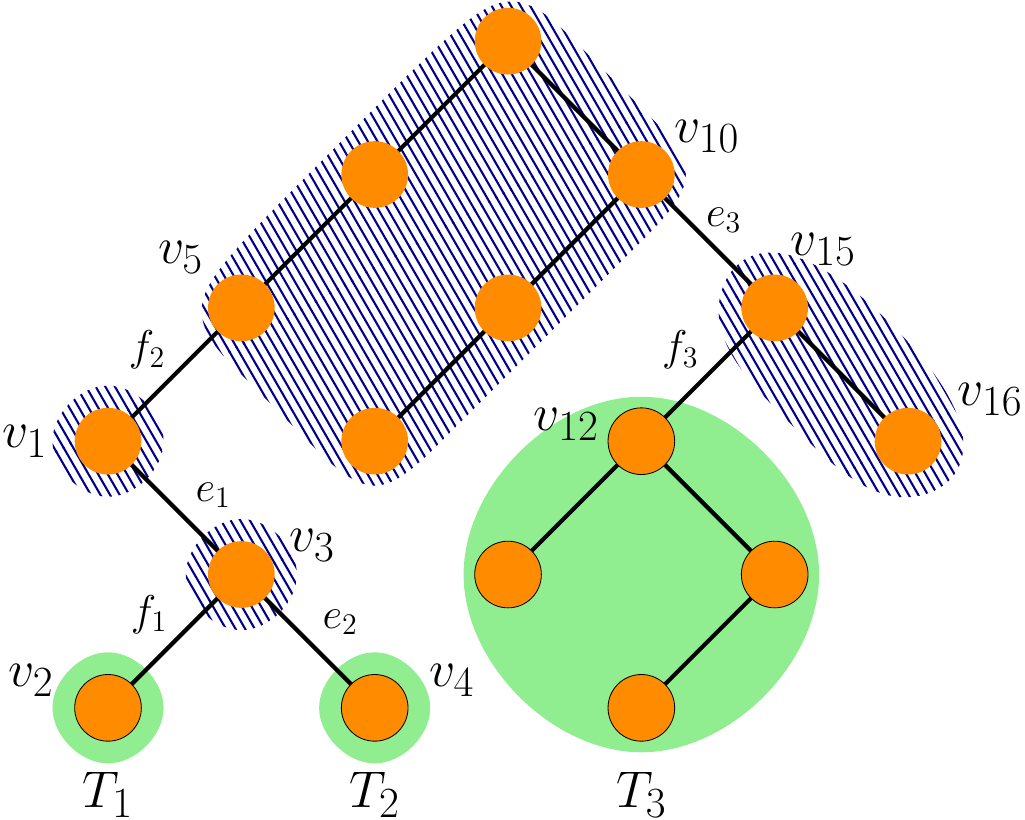}
   \caption{A tree $T\in \pbt_{16}$ with canopy $DUDUUUDUUDUDUUD$, partitioned using the antichain $\{v_2,v_4,v_{12}\}$.}
   \label{fig: Tree partition}
 \end{figure}

 \begin{lemma}\label{lem: partition}
 Let $T\in \pbtun$, and let $\mathcal{A}=\{v_{\nodetwo_1},v_{\nodetwo_2},\dots,v_{\nodetwo_{m}}\}$ be an antichain in the attic such that $j_1<j_2<\cdots <j_{m}$. Using the notation in Definition~\ref{def: Tree Decomp}, the canopy of $T$ is equal to the following concatenation
 \begin{equation}\label{eq: partition of canopy}
   \can{T} = {\color{blue}w^{(0)}}\,{\color{mygreen}(D\,\can{T_1}\,U)}\,{\color{blue}w^{(1)}}\cdots {\color{blue}w^{(m-1)}}\,{\color{mygreen}(D\,\can{T_{m}}\,U)}\,{\color{blue}w^{(m)}}.
 \end{equation}
 \end{lemma}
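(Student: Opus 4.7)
The plan is to verify the decomposition of $\can{T}$ position-by-position. Recall that $\can{T}$ has length $n-1$ and its $p$-th letter is $D$ if $v_p$ has a right child in $T$ and $U$ otherwise. The key preparatory fact is that $\node{T_h} = \{v_{i_h+1}, \ldots, v_{k_h-1}\}$ for each $h$, which underlies the partition in \eqref{eq: partition of nodes}.

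First I would establish this identification via case analysis on Definition~\ref{def: distinguished ancestor}. In Case~(a), where $v_{j_h}$ is the left child of $v_{k_h}$, the subtree $T_h$ is the left subtree of $v_{k_h}$ in $T$, so its nodes appear contiguously immediately before $v_{k_h}$ in inorder and the last is $v_{k_h-1}$. For the first node of $T_h$: the leftmost descendant $\ell$ of $v_{j_h}$ has no left child in $T$ (any such child would lie in $T_h$, contradicting leftmost-ness), so the inorder predecessor of $\ell$ in $T$ is the nearest ancestor $u$ of $v_{j_h}$ having $v_{j_h}$ in $u$'s right subtree. Walking up from $v_{j_h}$, such ``R-ancestors'' have strictly decreasing inorder indices, so the nearest R-ancestor is the one of maximum index among ancestors with index below $j_h$, which is exactly $v_{i_h}$ by the maximality clause of Definition~\ref{def: distinguished ancestor}(a). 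Case~(b) is symmetric. Disjointness of the $T_h$ follows from the antichain hypothesis, yielding \eqref{eq: partition of nodes}.

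With the partition in hand, I would match canopy letters against \eqref{eq: partition of canopy} interval by interval. For $p \in [k_{h-1}, i_h - 1]$, the $p$-th letter of $\can{T}$ is the corresponding letter of $w^{(h-1)}$ by the very definition of $w^{(h-1)}$. At $p = i_h$, the node $v_{i_h}$ has $v_{j_h}$ in its right subtree and therefore has a right child, contributing the leading $D$ of $D\can{T_h}U$. For $i_h+1 \le p \le k_h-2$, the node $v_p$ lies in $T_h$ and any right child of $v_p$ must also lie in $T_h$, so the $p$-th letter of $\can{T}$ matches the $(p-i_h)$-th letter of $\can{T_h}$. Finally at $p = k_h-1$, the node $v_{k_h-1}$ is the inorder-rightmost node of $T_h$ and hence has no right child in $T$, giving the trailing $U$. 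The main obstacle is the first step, specifically identifying $v_{i_h}$ as the inorder predecessor of the leftmost node of $T_h$ (and analogously $v_{k_h}$ as the inorder successor of the rightmost); once this is in place, the canopy matching is entirely mechanical.
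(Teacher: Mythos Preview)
Your proposal is correct and follows essentially the same approach as the paper: a position-by-position matching of canopy letters based on which nodes have right children, using the interval identification $\node{T_h}=\{v_{i_h+1},\ldots,v_{k_h-1}\}$. The only difference is that you supply a careful argument for this identification, whereas the paper records it as an observation prior to the lemma and then gives the same terse letter-by-letter verification.
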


 \begin{proof}
 Recalling the definition of the canopy of a tree, Definition~\ref{def: canopy2}, we label each node in $\node{T}\setminus\{v_n\}$ with a $D$ if it has a right child, or with a $U$ if it does not have a right child. {Recall the partitioning \eqref{eq: partition of nodes} of the set $[n]$, which corresponds to a partition of the nodes of $T$, where the interval $[i_h+1,k_h-1]$ corresponds to the set of nodes of $T_h$. By Definition~\ref{def: Tree Decomp}, we have that $w^{(h)}$ is the word read from the labels of the nodes in $[v_{\nodethree_{h}},v_{\nodeone_{h+1}})$ in inorder.} For $h<m$, the node $v_{\nodeone_h}$ has a right edge, so it is labeled with a $D$. Let $t_h$ be the final node of $T_h$ in inorder. Then the word read from the labels of the nodes in $\node{T_h}\setminus\{t_h\}$ in inorder is exactly $\can{T_h}$. Since $t_h$ is the last node of $T_h$ in inorder, it does not have a right child, so it is labeled with a $U$.  By definition, $\can{T}$ is the word read from the labels of the nodes in $\node{T}\setminus\{v_n\}$ in inorder, which is exactly the right-hand side of \eqref{eq: partition of canopy}.
 \end{proof}

 Figure~\ref{fig: Tree partition} shows a binary tree $T$, together with a choice of antichain $\{v_{\nodetwo_1},v_{\nodetwo_2},v_{\nodetwo_3}\}$. Each blue striped region corresponds to an interval of nodes ${\color{blue}[v_{\nodethree_{h}},v_{\nodeone_{h+1}}]}$, and each solid green region corresponds to a subtree ${\color{mygreen}T_h}$. In this case, we have
 \begin{align*}
   \can{T} &= {\color{blue}()}{\color{mygreen}(DU)}{\color{blue}()}{\color{mygreen}(DU)}{\color{blue}(UUDUU)}{\color{mygreen}(DUDUU)}{\color{blue}(D)}\\
   &= DUDUUUDUUDUDUUD,
 \end{align*}
 where $\can{T_1}=\can{T_2}$ are the empty word and $\can{T_3} = U\!DU$. The reader can check that Lemma~\ref{lem: partition}(b) holds in this case.
 \begin{remark}
 Note that even though the subgraph formed by each interval $[v_{\nodethree_{h}},v_{\nodeone_{h+1}}]$ is connected in Figure~\ref{fig: Tree partition}, these subgraphs may not be connected in general.
 \end{remark}

 \subsection{Construction of the map \texorpdfstring{$\Phi_{n,\canopy}$}{phin}}\label{subsec: Construction of Phi}
 In this subsection, we define a map $\Phi_{n,\canopy}: \pbtl_{n,\canopy}\to \ptl_{n,\canopy}$ which extends the map $\Phi_{3,\canopy}$ defined in Example~\ref{ex: 3 Nodes}. We then prove that it is a weight-preserving bijection and use it to prove Theorem~\ref{thm: ribbon functional equation}.

 \begin{definition}\label{def: Phi}
   We define the map $\Phi_{n,\canopy} : \pbtl_{n,\canopy}\to \ptl_{n,\canopy}$ as follows. Given $T\in\pbtl_{n,\canopy}$, let $\mathcal{A} = \{v_{\nodetwo_1},v_{\nodetwo_2},\dots,v_{\nodetwo_m}\}$, ordered so that $\nodetwo_1 < \nodetwo_2 < \dots < \nodetwo_m$, be the subset of prunable nodes of $T$ which are maximal among prunable nodes under the partial order $\leq_T$. By definition, $\mathcal{A}$ is an antichain, and all of its elements are in the attic of $T$. Recall the notation defined in Definition~\ref{def: Tree Decomp}.

    Let $\canopy^{(h)}$ be the unique labeled lattice path whose steps are $w_{\canopy^{(h)}} = w^{(h)}$ and whose inorder reading word is $\inorder(\canopy^{(h)}) = v_{\nodethree_{h}}^\ell v_{\nodethree_h+1}^\ell \cdots v_{\nodeone_{h+1}}^\ell$. Define
   \[
     \Phi_{n,\canopy}(T) \coloneq (\canopy^{(0)},T_1,\canopy^{(1)},\dots,\canopy^{(m-1)},T_{m},\canopy^{(m)}),
   \]
   which lies in $\ptl_{n,\canopy}$ by Lemma~\ref{lem: partition}.
 \end{definition}

 Figure~\ref{fig: Infected Tree} shows a tree $T$ with its prunable nodes highlighted in green. 
 In this labeled tree, we have $\mathcal{A}=\{v_{2}, v_{4}, v_{12}\}$. 
 Figure~\ref{fig: Phi Decomp} shows the image of  $T$ under  $\Phi_{n,\canopy}$.
 The reader can check that in this instance, 
 \begin{align*}
 \wt{T} &= \la^3\ld^6\ra^3\rd^3= \la\ld^3\ra\rd(\la\ra\,\wt{T_1})(\la\ra\,\wt{T_2})(\ld\rd\,\wt{T_3})= \wt{\Phi_{n,\canopy}(T)},\\ \inorder(T) &= 3235461765768131 = \inorder(\Phi_{n,\canopy}(T)).
 \end{align*}

 \begin{figure}[H]
   \includegraphics[scale=0.5]{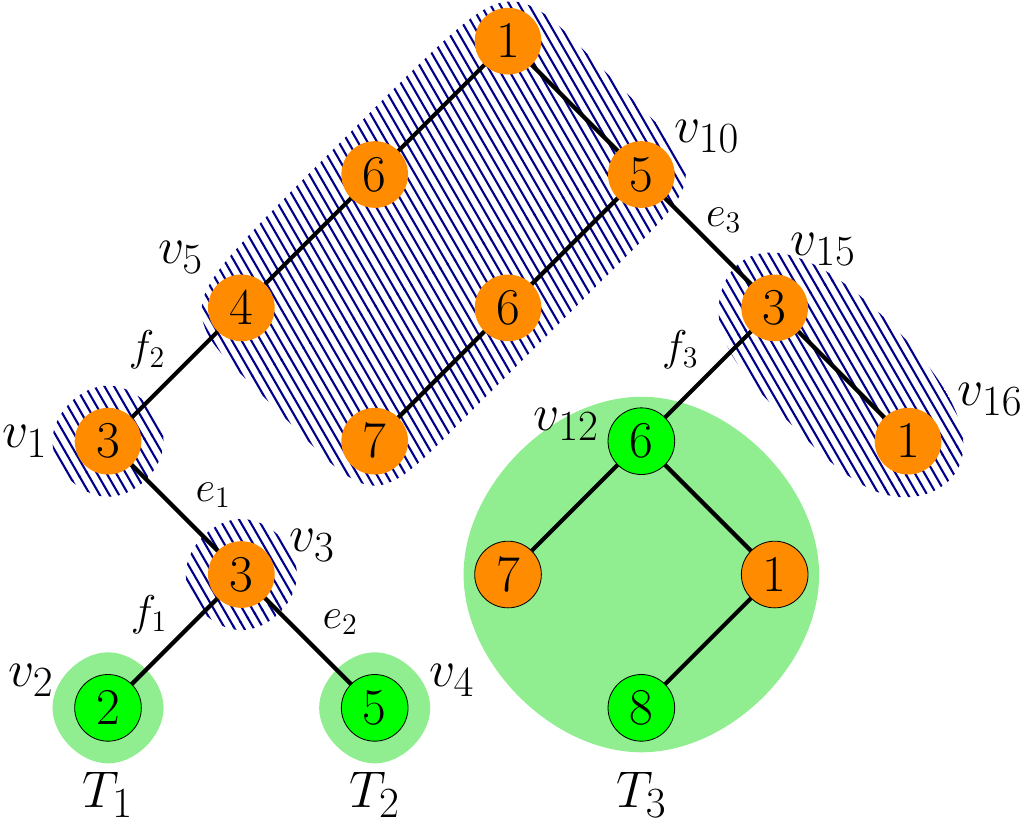}
   \caption{A tree $T\in \pbtl_{16}$ whose prunable nodes are highlighted in green with $\can{T} = DUDUUUDUUDUDUUD$.}
   \label{fig: Infected Tree}
 \end{figure}

   \begin{figure}[H]
     \includegraphics[scale=0.8]{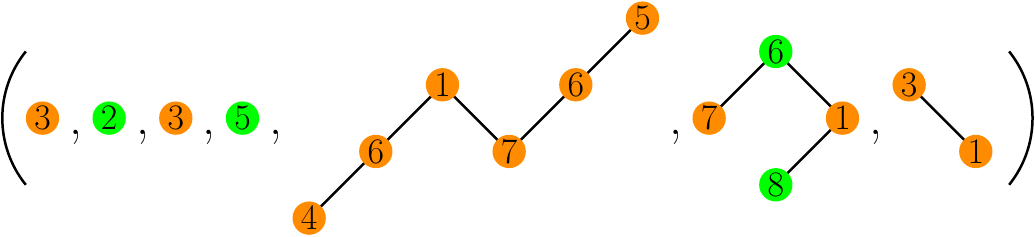}
     \caption{The image under $\Phi_{n,\canopy}$ of the tree in Figure~\ref{fig: Infected Tree}.}
     \label{fig: Phi Decomp}
   \end{figure}
 \begin{lemma}\label{lem: Weight-Preserving}
   For all $T\in\pbtl_{n,\canopy}$, we have that $\wt{T} = \wt{\Phi_{n,\canopy}(T)}$ and $\inorder(T)=\inorder(\Phi_{n,\canopy}(T))$.
 \end{lemma}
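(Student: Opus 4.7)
The plan is to verify both equalities separately, beginning with the inorder reading word claim, which is essentially bookkeeping. By Lemma~\ref{lem: partition} together with Definition~\ref{def: Phi}, the node set $[n]$ partitions according to \eqref{eq: partition of nodes} into intervals that correspond, in order, to the nodes of the lattice paths $\canopy^{(h)}$ and subtrees $T_h$ making up $\Phi_{n,\canopy}(T)$. Each $T_h$ inherits the inorder of $T$ restricted to $\node{T_h}$, and each $\canopy^{(h)}$ is defined in Definition~\ref{def: Phi} to have inorder reading word $v_{k_h}^\ell v_{k_h+1}^\ell \cdots v_{i_{h+1}}^\ell$. Concatenating these in the prescribed order reproduces $v_1^\ell \cdots v_n^\ell = \inorder(T)$.

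For the weight equality, my plan is to partition the $n-1$ edges of $T$ into three classes and account for each in $\wt{\Phi_{n,\canopy}(T)}$: (i) edges internal to some $T_h$, which contribute the factor $\wt{T_h}$; (ii) the two edges $e_h$ and $f_h$ from the distinguished triple of $v_{j_h}$ for each $h\in[m]$; and (iii) all remaining edges. For class (ii), maximality of $v_{j_h}$ in $\leq_T$ among prunable nodes forces every other node on the distinguished path of $v_{j_h}$ to be life-sustaining, so Lemma~\ref{lem: infection lemma} gives $\wt{e_h}\wt{f_h}=\gamma_h$, matching the factor $\gamma_h$ in \eqref{eq: S weight}. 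For class (iii), each remaining edge is (by Lemma~\ref{lem: steps}) the left edge of $v_{p+1}$ or the right edge of $v_p$ for some $p$ with $k_h\le p<i_{h+1}$, whose lower endpoint $v_p$ lies outside every subtree $T_{h'}$ by \eqref{eq: partition of nodes}. Maximality of $\mathcal{A}$ then ensures that $v_p$ and every attic node on its distinguished path are life-sustaining, so Lemma~\ref{lem: path edge weight} identifies the weight of this edge with the weight of the corresponding step of $\canopy^{(h)}$.

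The main obstacle will be the bookkeeping in class (iii): verifying that every lattice-path step is matched with a unique edge of $T$ and that the hypothesis of Lemma~\ref{lem: path edge weight} is applicable even when some ancestors of $v_p$ lie in the roof rather than the attic. A counting argument using the interval lengths in \eqref{eq: partition of nodes} confirms that the three classes partition $E(T)$ exactly (the sizes $|T_h|-1$, $2m$, and $i_{h+1}-k_h$ telescope to $n-1$), so every edge weight on both sides is used precisely once, completing the proof.
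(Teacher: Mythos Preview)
Your proposal is correct and follows essentially the same approach as the paper. Both arguments use the partition \eqref{eq: partition of nodes} for the inorder claim, invoke Lemma~\ref{lem: infection lemma} (via maximality of the $v_{j_h}$ among prunable nodes) to identify $\wt{e_h}\wt{f_h}=\gamma_h$, and use Lemma~\ref{lem: path edge weight} to match the remaining tree edges with lattice-path steps; the paper packages this last step as an explicit map $\Lambda$ and asserts it is a bijection ``by construction,'' whereas you verify this by the telescoping count, which is a perfectly good substitute. One minor phrasing slip: in class (iii) you refer to ``the lower endpoint $v_p$'' of the edge, but for the left edge of $v_{p+1}$ the child need not be $v_p$; what you actually need (and what Lemma~\ref{lem: path edge weight} uses) is that the index $p$ lies in $[k_h,i_{h+1})$, hence outside every $T_{h'}$, which you have.
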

 \begin{proof}
   Definition~\ref{def: Phi} states that $\inorder(\canopy^{(h)}) = v_{\nodethree_h}^\ell v_{\nodethree_h+1}^\ell\cdots v_{\nodeone_{h+1}}^\ell$. Therefore, the fact that $\inorder(T) = \inorder(\Phi_{n,\canopy}(T))$ follows from the partition of $[n]$ in \eqref{eq: partition of nodes}.

  Recall from \eqref{eq: S weight} that
   \begin{equation}\label{eq: Phi weight}
     \wt{\Phi_{n,\canopy}(T)} = \wt{\canopy^{(0)}} \prod_{h=1}^m \gamma_h\,\wt{T_h}\,\wt{\canopy^{(h)}},
   \end{equation}
   where
   \begin{align}
     \gamma_h &= \begin{cases}
     \la \ra & \text{ if }\vend(\canopy^{(h-1)})^\ell \leq \vstart(\canopy^{(h)})^\ell,\\
     \ld\rd & \text{ if }\vend(\canopy^{(h-1)})^\ell > \vstart(\canopy^{(h)})^\ell.
   \end{cases}
 \end{align}
 Observe that $v_{\nodeone_h}^\ell = \vend(\canopy^{(h-1)})^\ell$ and $v_{\nodethree_h}^\ell = \vstart(\canopy^{(h)})^\ell$. Since $v_{\nodetwo_h}$ is maximal among prunable nodes in $T$, then all ancestors of $v_{\nodetwo_h}$ are life-sustaining. Applying Lemma~\ref{lem: infection lemma} with $i = i_h$, $j=j_h$, $k=k_h$, $e=e_h$ and $f=f_h$, we have
 \begin{align}
 \wt{e_h}\wt{f_h} &= \begin{cases}
 \la\ra &\text{if }v_{\nodeone_h}^\ell \leq v_{\nodethree_h}^\ell,\\
 \ld\rd &\text{if }v_{\nodeone_h}^\ell > v_{\nodethree_h}^\ell,
 \end{cases}\\
 &= \begin{cases}
     \la \ra & \text{ if }\vend(\canopy^{(h-1)})^\ell \leq \vstart(\canopy^{(h)})^\ell,\\
     \ld\rd & \text{ if }\vend(\canopy^{(h-1)})^\ell > \vstart(\canopy^{(h)})^\ell,
   \end{cases}\\
   &= \gamma_h.
 \end{align}

  Let us define a map on the disjoint union of the sets of edges of the paths $\canopy^{(h)}$,
  \begin{align}
  \Lambda : \bigsqcup_{h=0}^m \edge{\canopy^{(h)}}\to \edge{T}\setminus\left(\bigcup_{h=1}^{m}\{e_h,f_h\}\cup \edge{T_h})\right)
  \end{align}
 with the property that $\wt{e} = \wt{\Lambda(e)}$.
   Given $e\in\edge{\canopy^{(h)}}$, let its endpoints be the $j$th and $(j+1)$th nodes of $S$ in inorder. First, suppose that $e$ is a $U$ step. Since $\canopy^{(h)}$ is defined so that $w_{\canopy^{(h)}} = w^{(h)}$, then the $j$th letter of $\can{T}$ is a $U$. It follows from Lemma~\ref{lem: steps}(a) that $v_{j+1}$ has a left edge, which we then define to be $\Lambda(e)$.
   Similarly, if $e$ is a $D$ step, then the $j$th letter of $\can{T}$ must be a $D$. It follows from Lemma~\ref{lem: steps}(b) that $v_j$ must have a right edge, which we then define to be $\Lambda(e)$.
   By construction, we have that $\Lambda$ is a bijection.

    In the case that $e$ is a $U$ step, then by Lemma~\ref{lem: path edge weight}(a) we have $\wt{e} = \wt{\Lambda(e)}$.  In the case that $e$ is a $D$ step, then by Lemma~\ref{lem: path edge weight}(b) we have $\wt{e} = \wt{\Lambda(e)}$. Therefore, $\Lambda$ is a weight-preserving bijection.
  Hence,
  \begin{align}
     \wt{\Phi_{n,\canopy}(T)} &= \prod_{h=0}^m\wt{\canopy^{(h)}} \prod_{h=1}^m \gamma_h\wt{T_h}\\
     &= \prod_{h=0}^m \wt{\canopy^{(h)}} \prod_{h=1}^m \wt{e_h}\wt{f_h} \wt{T_h}\\
     &= \wt{T}.\qedhere
   \end{align}
  \end{proof}
 Next, we prove that the map $\Phi_{n,\canopy}$ is a bijection by constructing an inverse map. Our inverse map to $\Phi_{n,\canopy}$ involves an algorithm inspired by the Push-Gliding algorithm of Pr\'eville-Ratelle and Viennot~\cite{Preville-Ratelle-Viennot}, which gives a bijection between binary trees and certain pairs of lattice paths.

 Define $\Psi_{n,\canopy} : \ptl_{n,\canopy}\to\pbtl_{n,\canopy}$ as follows.
   Let
   \[
   S = (\canopy^{(0)},T_1,\canopy^{(1)},\dots,\canopy^{(m-1)},T_{m},\canopy^{(m)})
   \]
   be an element of $\ptl_{n,\canopy}$. Define $\hat{\canopy}$ to be the unique labeled lattice path such that
   \begin{equation}\label{eq: starting canopy}
     w_{\hat{\canopy}} = w_{\canopy^{(0)}}\,DU\,w_{\canopy^{(1)}}\,DU\cdots DU\,w_{\canopy^{(m)}}
   \end{equation}
   and
   \begin{equation}
     \inorder(\hat{\canopy}) = \inorder(\canopy^{(0)})\,\rt(T_1)^\ell\,\inorder(\canopy^{(1)})\cdots \inorder(\canopy^{(m-1)}) \rt(T_m)^\ell \inorder(\canopy^{(m)}).
   \end{equation}
   For easy comparison with~\cite{Preville-Ratelle-Viennot}, we switch momentarily to drawing our lattice paths with North and East steps instead of $U$ and $D$ steps, respectively. Similarly, we draw our trees so that right edges are horizontal and left edges are vertical.

  Recall from Pr\'eville-Ratelle and Viennot~\cite{Preville-Ratelle-Viennot} that an $\Omega$-tree $T$ is defined to be a lattice path with some binary trees rooted at some nodes of the path. Define the \bemph{inorder traversal} of $T$ by traversing the lattice path Southwest to Northeast, except that whenever a node is reached which is the root of a binary tree, traverse the binary tree in inorder and continue on with the rest of the path.
  If the highest node of $T$ is at height $y=h$, let $R$ denote the subset of points $(x,y)$ on the integer lattice strictly above the lattice path with $x\geq 0$ and $y\leq h$.

 Iteratively construct a tree from $\hat{\canopy}$ and the trees $T_h$ as follows.  Initialize the $\Omega$-tree $T$ to be the path $\hat{\canopy}$ together with each tree $T_h$ rooted at the node of $\canopy$ labeled $v_{\nodetwo_h}^\ell$.
 Identify the $j$th node of $T$ in inorder with the $j$th node of $S$ in inorder. Both nodes will simultaneously be denoted by $v_j$.
 Draw $T$ in the plane with the first node inorder situated at the origin $(0,0)$ in Cartesian coordinates.
 For example, if $S$ is the element of $\ptl$ in Figure~\ref{fig: Phi Decomp}, then the initial $\Omega$-tree $T$ and set of lattice points $R$ are shown in Figure~\ref{fig: Omega-Tree}.
 At any step, we keep track of an $\Omega$-tree $T$, all of whose nodes are labeled.
  Suppose $\rt(T_h) = v_{j_h}$ for each $h$.

  \begin{figure}
   \includegraphics[scale=0.8]{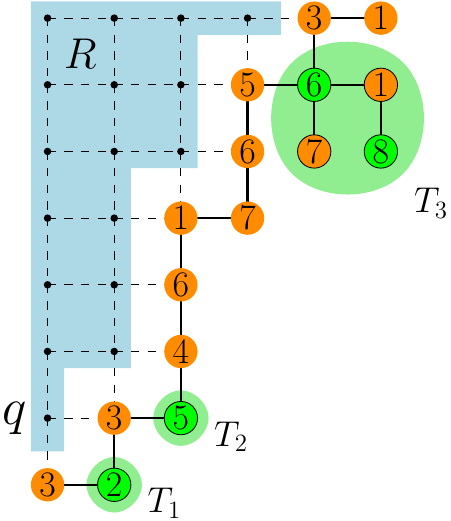}
   \caption{The $\Omega$-tree corresponding to the path-tree sequence in Figure~\ref{fig: Phi Decomp}.\label{fig: Omega-Tree}}
 \end{figure}

 If $R$ is nonempty, let $q$ be the Eastern-most point of the bottom row of $R$. Let $v_\nodeone$, $v_\nodetwo$ and $v_\nodethree$ be the nodes of the path which are immediately South, Southeast, and East of $q$, respectively.
 We say the node $v_j$ of $T$ is \bemph{prunable} if $v_j = \rt(T_h)$ for some $T_h$ and $v_j$ is \bemph{life-sustaining} otherwise.

 \begin{itemize}
 \item[(C1)] If $v_\nodetwo$ is prunable and $\std(v_\nodeone^\ell v_\nodetwo^\ell v_\nodethree^\ell)\in \{132,312,321\}$, or if $v_\nodetwo$ is life-sustaining and ${\std(v_\nodeone^\ell v_\nodetwo^\ell v_\nodethree^\ell)\in \{123,213,231\}}$, then delete the edge $v_\nodetwo v_\nodethree$, shift the part of $T$ which is weakly Northeast of the node $v_\nodethree$ $1$ unit West, and draw a new edge between $v_\nodeone$ and the new location of $v_\nodethree$. Update the $\Omega$-tree $T$ accordingly.

 \item[(C2)] If $v_\nodetwo$ is prunable and $\std(v_\nodeone^\ell v_\nodetwo^\ell v_\nodethree^\ell)\in \{123,213,231\}$, or if $v_\nodetwo$ is life-sustaining and $\std(v_\nodeone^\ell v_\nodetwo^\ell v_\nodethree^\ell)\in \{132,312,321\}$, then delete the edge $v_\nodeone v_\nodetwo$, shift the part of the $\Omega$-tree which is weakly Northeast of the node $v_\nodetwo$ $1$ unit South, and create a new edge between $v_\nodeone$ and the new location of $v_\nodethree$. Update the $\Omega$-tree $T$ accordingly.
 \end{itemize}

 \begin{figure}[t]
   \includegraphics[scale=0.6]{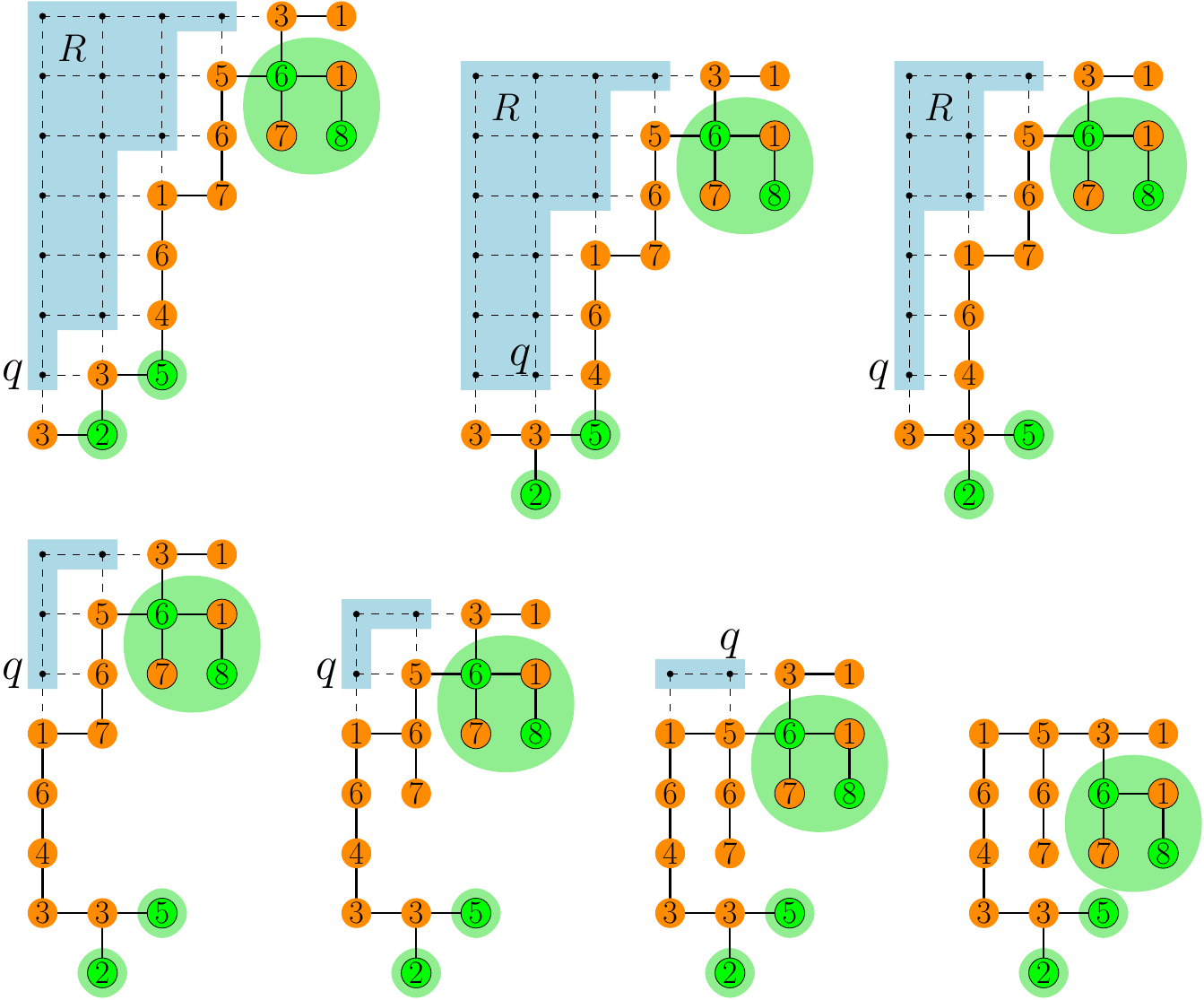}
   \caption{The inverse bijection $\Psi_{n,\canopy}$ applied to the path-tree sequence $S$ from Figure~\ref{fig: Phi Decomp}.}
   \label{fig: Inverse Bijection}
 \end{figure}

 Repeat this procedure until the set $R$ is empty. At the end of this algorithm, we obtain a labeled plane binary tree $T$ rooted at the Northwestern-most node.
 
 \begin{remark}
 This procedure of converting $S$ into $\Psi_{n,\canopy}(S)$ is essentially a modified version of the Push-Gliding algorithm of Pr\'eville-Ratelle and Viennot \cite{Preville-Ratelle-Viennot}, where translating part of the path West by 1 unit is a Glide operation and translating part of the path South by 1 unit is part of a Push operation.
 \end{remark}

{Define the canopy of an $\Omega$-tree to be the word on $\{U,D\}$ obtained by recording the $U$ and $D$ steps of $\hat{\canopy}$ from left to right, except that whenever a node is reached which is the root of a binary tree, we record the canopy of that binary tree and continue on with the rest of the path.}
 By~\eqref{eq: starting canopy}, the canopy of the initial $\Omega$-tree is $\can{S}$. From~\cite{Preville-Ratelle-Viennot}, Push and Glide operations preserve the canopy of the $\Omega$-tree $T$.
 Define $\Psi_{n,\canopy}(S) \coloneq T$, which is in $\ptl_{n,\canopy}$.

 Figure~\ref{fig: Inverse Bijection} shows the construction of $\Psi_{n,\canopy}(S)$ where $S = \Phi_{n,\canopy}(T)$ shown in Figure~\ref{fig: Phi Decomp} and $T$ is the labeled tree in Figure~\ref{fig: Infected Tree}. Note that we do indeed reconstruct the original tree in this example.

 Note that throughout this procedure, we need to avoid creating cycles when drawing $T$. Therefore, some of the edge lengths may have to be drawn lengthened in order to avoid collisions in the grid (see the footnote at the bottom of~\cite[page 5230]{Preville-Ratelle-Viennot}).

 \begin{lemma}\label{lem: Bijection}
 The map $\Phi_{n,\canopy}$ is a bijection.
 \end{lemma}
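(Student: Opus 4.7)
The plan is to show that $\Psi_{n,\canopy}$ is a two-sided inverse of $\Phi_{n,\canopy}$. Since both maps are defined between finite sets of the same cardinality (once we know one is injective, surjectivity is automatic), it would even suffice to show that one composition is the identity; however, we expect the cleanest argument to establish both $\Psi_{n,\canopy}\circ\Phi_{n,\canopy}=\mathrm{id}$ and $\Phi_{n,\canopy}\circ\Psi_{n,\canopy}=\mathrm{id}$ directly, because the two directions illuminate different aspects of the construction.

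First I would check that $\Psi_{n,\canopy}$ is well-defined: at each iteration, the set $R_T$ strictly decreases in cardinality (the point $q$ is removed from $R_T$, since one of the adjacent edges is shortened by a unit in the appropriate direction), so the algorithm terminates after exactly $|R_T^{\mathrm{initial}}|$ steps. One must verify that exactly one of (C1) and (C2) is applicable at each step, that the resulting graph remains a plane binary tree with the node labels intact, and that the canopy of the underlying $\Omega$-tree is preserved by each Push/Glide step, so that the final tree indeed lies in $\pbtl_{n,\canopy}$ with inorder reading word equal to $\inorder(S)$. These are straightforward checks modeled on \cite{Preville-Ratelle-Viennot}.

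For $\Psi_{n,\canopy}\circ\Phi_{n,\canopy}=\mathrm{id}$, I would argue as follows. Fix $T\in\pbtl_{n,\canopy}$ with maximal prunable antichain $\mathcal{A}=\{v_{j_1},\dots,v_{j_m}\}$ and $S=\Phi_{n,\canopy}(T)$. The nodes labeled prunable in the $\Omega$-tree at the start of the $\Psi_{n,\canopy}$ algorithm are precisely the roots of the trees $T_h$, matching the prunable nodes of $T$ inside $\mathcal{A}$. I would then argue by induction on the steps of $\Psi_{n,\canopy}$ that at each stage, the local triple $(v_i,v_j,v_k)$ read off from the $\Omega$-tree matches the distinguished triple of some node $v_j$ in $T$ that has not yet been ``reattached'', with $v_j$ being prunable or life-sustaining according to the same classification as in $T$. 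The case split (C1)/(C2) on $\std(v_i^\ell v_j^\ell v_k^\ell)$ precisely mirrors the left-child/right-child determination in Figures~\ref{fig: Noninfection Cases} and~\ref{fig: Infection Cases}, so the reconstructed edge coincides with the edge of $T$ incident to $v_j$. Thus after termination we recover $T$ exactly.

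For $\Phi_{n,\canopy}\circ\Psi_{n,\canopy}=\mathrm{id}$, the heart of the matter is to identify the maximal prunable antichain in $T'\coloneq\Psi_{n,\canopy}(S)$ with the set of roots $\{\rt(T_h)\}$ of the embedded trees in the initial $\Omega$-tree. I would prove that a node $v$ in $T'$ is prunable (as defined in Subsection~\ref{subsec: prunable}) if and only if it descends from some $\rt(T_h)$ in the initial $\Omega$-tree, and moreover that the maximal such nodes under $\leq_{T'}$ are exactly the $\rt(T_h)$ themselves. Granted this identification, peeling off the subtrees rooted at the $\rt(T_h)$ from $T'$ leaves precisely the lattice paths $\canopy^{(h)}$ with their original labels (using Lemma~\ref{lem: partition} to match up the canopy words $w^{(h)}$), so $\Phi_{n,\canopy}(T')=S$.

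The main obstacle I expect is the compatibility claim just described between the \emph{static} notion of prunable node in a labeled tree (defined via the standardization of the labels at a distinguished triple) and the \emph{dynamic} notion used inside the $\Psi_{n,\canopy}$ algorithm (where prunability is declared at the outset as ``is the root of some $T_h$''). The technical content is to show that if $v=\rt(T_h)$, then after all Push/Glide moves have been completed, the distinguished triple of $v$ in the final tree $T'$ has standardization falling in the prunable cases of Figures~\ref{fig: Infection Cases}, while no node of $T'$ outside $\bigcup_h\node{T_h}$ is prunable. This will hinge on a careful bookkeeping argument tracking how (C1) and (C2) alter the distinguished triple of a node as one processes successive lattice points $q\in R_T$, together with Lemma~\ref{lem: infection lemma} and Lemma~\ref{lem: path edge weight} to identify the edge weights and hence the relevant inequalities among the labels $v_i^\ell,v_j^\ell,v_k^\ell$.
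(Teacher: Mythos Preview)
Your plan is sound and identifies the correct crux: showing that the roots $\rt(T_h)$ in the initial $\Omega$-tree become precisely the maximal prunable antichain of $\Psi_{n,\canopy}(S)$, so that $\Phi_{n,\canopy}\circ\Psi_{n,\canopy}=\mathrm{id}$. This is exactly what the paper asserts, and with essentially the same justification (the (C1)/(C2) case split is designed so that after each step the node $v_j$ is life-sustaining or prunable in the resulting tree according to whether it was life-sustaining or prunable in the $\Omega$-tree).

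Where you diverge from the paper is in the other direction. You propose to prove $\Psi_{n,\canopy}\circ\Phi_{n,\canopy}=\mathrm{id}$ by an inductive tracking argument on the Push/Glide steps. The paper instead bypasses this entirely with a cardinality count: once $\Phi_{n,\canopy}\circ\Psi_{n,\canopy}=\mathrm{id}$ is known, $\Phi_{n,\canopy}$ is surjective; then one observes that for each fixed inorder word $w\in\bP_n$, the fibers $\{T\in\pbtl_{n,\canopy}:\inorder(T)=w\}$ and $\{S\in\ptl_{n,\canopy}:\inorder(S)=w\}$ are both in bijection with the set of \emph{unlabeled} binary trees of canopy $\canopy$ (the latter via the unlabeled Push--Glide bijection of Pr\'eville-Ratelle and Viennot), hence have the same finite cardinality, and a surjection between equinumerous finite sets is a bijection. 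This is the shortcut you mention in your first paragraph but set aside as less clean; in fact it is considerably shorter than the step-by-step reconstruction you outline, and it spares you the bookkeeping you flag as the ``main obstacle''. Your route would work and is more self-contained, but the paper's is the more economical one.
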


 \begin{proof}
 The choice of either applying (C1) or (C2) at each step guarantees that the node $v_{\nodetwo_h}$ of the tree $\Psi_{n,\canopy}(S)$ is prunable and that it is maximal among prunable nodes in $\Psi_{n,\canopy}(S)$. Therefore, the subtrees in the path-tree sequence $\Phi_{n,\canopy}(\Psi_{n,\canopy}(S))$ will be exactly $T_1$,\dots,$T_{m}$.

 From~\cite{Preville-Ratelle-Viennot}, Push and Glide operations preserve the canopy and inorder reading word of an $\Omega$-tree. Therefore, the sequences of $U$ and $D$ steps in the $m+1$ lattice paths of $\Phi_{n,\canopy}(\Psi_{n,\canopy}(S))$ must be $w_{\canopy^{(0)}},w_{\canopy^{(1)}},\dots,w_{\canopy^{(m)}}$, respectively. It is easy to see that their inorder reading words are $\inorder(\canopy^{(0)}),\inorder(\canopy^{(1)}),\dots,\inorder(\canopy^{(m)})$, respectively, so the lattice paths of $\Phi_{n,\canopy}(\Psi_{n,\canopy}(S))$ must be $\canopy^{(0)},\dots,\canopy^{(m)}$. Therefore, $\Phi_{n,\canopy}(\Psi_{n,\canopy}(S))=S$ so $\Phi_{n,\canopy}$ is surjective.

{
 Given an integer $n$, a word $w\in \bP^n$, and a sequence $\canopy\in \{U,D\}^{n-1}$, let $\pbtl_{n,\canopy,w}$ be the subset of $\pbtl_{n,\canopy}$ of trees with inorder reading word $w$, and let $\ptl_{n,\canopy,w}$ be the subset of $\ptl_{n,\canopy}$ with inorder reading word $w$. Observe that $\Phi_{n,\canopy}$ maps $\pbtl_{n,\canopy,w}$ surjectively onto $\ptl_{n,\canopy,w}$ by the argument above, so $|\pbtl_{n,\canopy,w}| \geq |\ptl_{n,\canopy,w}|$. Furthermore, observe that $\pbtl_{n,\canopy,w}$ is in bijection with the set of unlabeled trees on $n$ nodes with canopy $\canopy$. Similarly, the set $\ptl_{n,\canopy,w}$ is in bijection with unlabeled path-tree sequences on $n$ nodes with canopy $\canopy$. 
 }

{
 A simple generating function argument shows that the number of unlabeled path-tree sequences on $n$ nodes is $|\pbt_n| = \Cat{n}$, the $n$th Catalan number. 
 Indeed, the ordinary generating function for unlabeled path-tree sequences is given by 
 \begin{align}\label{eqn:ogf_unlabeled_path_tree}
 \sum_{i\geq 0}\left(\frac{x}{1-2x}\right)^{i+1}C(x)^i=\frac{x}{1-2x-xC(x)}.
 \end{align}
Recall from Subsection~\ref{subsec: Semiorders} that $C(x)=\sum_{n\geq 1}\Cat{n}x^n$ and that $C\coloneqq C(x)$ satisfies the relation $C=x(1+C)^2$. 
This relation implies in turn that the right hand side of \eqref{eqn:ogf_unlabeled_path_tree} equals $C$. Therefore, for fixed $n$ and $w$ we have
 \begin{align}\label{eq:StringOfInequalities}
\Cat{n} = |\pbt_{n}| = \sum_{\canopy} |\pbtl_{n,\canopy,w}| \geq \sum_{\canopy} |\ptl_{n,\canopy,w}| = \Cat{n}.
 \end{align}
 Hence, the inequality in \eqref{eq:StringOfInequalities} must be an equality, and in fact each inequality $|\pbtl_{n,\canopy,w}| \geq |\ptl_{n,\canopy,w}|$ must be an equality, so $|\pbtl_{n,\canopy,w}| = |\ptl_{n,\canopy,w}|$.
 }
 
 {
 Since $\Phi_{n,\canopy}$ maps $\pbtl_{n,\canopy,w}$ surjectively onto $\ptl_{n,\canopy,w}$, and these two sets have the same cardinality, then $\Phi_{n,\canopy}$ must restrict to a bijection between $\pbtl_{n,\canopy,w}$ and $\ptl_{n,\canopy,w}$. As we run over $w$, the sets $\pbtl_{n,\canopy,w}$ and $\ptl_{n,\canopy,w}$ partition the domain and codomain, respectively, of the map $\Phi_{n,\canopy}$. Therefore, $\Phi_{n,\canopy}$ is a bijection.
 }
 \end{proof}

 \begin{proof}[Proof 2 of Theorem~\ref{thm: ribbon functional equation}]
 By Lemmas~\ref{lem: Weight-Preserving} and~\ref{lem: Bijection}, we have that $\Phi_{n,\canopy} : \pbtl_{n,\canopy}\to\ptl_{n,\canopy}$ is a bijection such that $\wt{T} = \wt{\Phi_{n,\canopy}(T)}$ and $\inorder(T)=\inorder(\Phi_{n,\canopy}(T))$. Therefore,
 \begin{align}
 \sum_{T\in\pbtl} \wt{T}\alpx^T &= \sum_{n\geq 1}\sum_{\canopy\in \{U,D\}^{n-1}} \sum_{T\in \pbtl_{n,\canopy}}\wt{\Phi_{n,\canopy}(T)}\alpx^{\Phi_{n,\canopy}(T)}\\
 &= \sum_{S\in \ptl} \wt{S}\,\alpx^S\\
 &=\sum_{n\geq 1}\sum_{\alpha\vDash n}(\la\ra\, G+\la+\ra)^{n-\ell(\alpha)}(\ld\rd\, G+\ld+\rd)^{\ell(\alpha)-1}\,r_\alpha,
 \end{align}
 where the last equality holds by Lemma~\ref{lem: PT equation}.
 \end{proof}
 In addition to proving Theorem~\ref{thm: ribbon functional equation}, the same proof allows us to obtain a noncommutative version of Theorem~\ref{thm: ribbon functional equation}.

 \sout{\begin{remark}
 \emph{Note that our bijection $\Phi_{n,\canopy}$ has an interesting unlabeled analogue. If we apply our bijection above to labeled trees on $n$ nodes with a fixed inorder reading word, say the identity permutation in $\mathfrak{S}_n$, then we get a bijection between $\pbtun$ and $\mathscr{P\!T}_n$, the set of sequences of interlacing paths and trees where both are unlabeled. Of course, one can establish that $|\mathscr{P\!T}_n|=\Cat{n}$ by using an argument relying on generating functions.}
 \end{remark}}

 \subsection{Noncommutative version}
 Let $\mathbf{x} = \{\mbxs{1},\mbxs{2},\dots\}$ be a set of noncommuting variables. Given a word $w = w_1\dots w_n\in \bP^n$, define the noncommutative monomial $\mbx{w}$ to be $\mbxs{w_1}\dots\mbxs{w_n}$.
 Let
 \[
 \mathbf{G} \coloneq \mathbf{G}(\mathbf{x};\la,\ld,\ra,\rd) = \sum_{n\geq 1} \sum_{T\in \pbtl_n} \wt{T} \mbx{\inorder(T)}.
 \]
 Let $\alps$ and $\alpt$ be parameters which do not commute with the $\mbxs{i}$.
 For $w\in \bP^+$, let $\mbx{w}(\alps,\alpt)$ be the \emph{extended noncommutative monomial} given by inserting $\alps$ between $\mbxs{w_i}$ and $\mbxs{w_{i+1}}$ in the monomial $\mbx{w}$ for each ascent $w_i\leq w_{i+1}$ and inserting $\alpt$ for each  descent $w_i>w_{i+1}$.
 For instance, if $w=21131$,  then $\mbx{w}(\alps,\alpt)=\mbxs{2}\alpt\mbxs{1}\alps\mbxs{1}\alps\mbxs{3}\alpt\mbxs{1}$.

 As a corollary of Proof 2 of Theorem~\ref{thm: ribbon functional equation}, we obtain the following noncommutative analogue of the functional equation in Theorem~\ref{thm: ribbon functional equation}.

 \begin{corollary}\label{cor: noncommutative inorder}
 We have the following functional equation in terms of substitutions into the extended noncommutative monomials,
 \[
     \mathbf{G} = \sum_{w\in \bP^+} \mbx{w}(\la\ra\,\mathbf{G}+\la+\ra,\ld\rd\,\mathbf{G}+\ld+\rd).
 \]
 \end{corollary}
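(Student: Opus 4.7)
The plan is to run exactly the argument of Proof~2 of Theorem~\ref{thm: ribbon functional equation}, but tracking the noncommutative monomial $\mbx{\inorder(T)}$ in place of the commutative monomial $\alpx_T$. The key observation, which is essentially part of Lemma~\ref{lem: Weight-Preserving}, is that the bijection $\Phi_{n,\canopy}\colon \pbtl_{n,\canopy}\to\ptl_{n,\canopy}$ preserves not only $\wt{T}$ but also the full inorder reading word $\inorder(T)$. Since the noncommutative monomial $\mbx{w}$ is determined by $w$, together with Lemma~\ref{lem: Bijection} this immediately upgrades the commutative identity $G=\sum_{S\in\ptl}\wt{S}\,\alpx_S$ to the noncommutative identity
\[
\mathbf{G}=\sum_{T\in\pbtl}\wt{T}\,\mbx{\inorder(T)}=\sum_{S\in\ptl}\wt{S}\,\mbx{\inorder(S)}.
\]

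The remaining task is to establish the noncommutative analogue of Lemma~\ref{lem: PT equation}, namely
\[
\sum_{S\in\ptl}\wt{S}\,\mbx{\inorder(S)}=\sum_{w\in\bP_+}\mbx{w}(\la\ra\,\mathbf{G}+\la+\ra,\ld\rd\,\mathbf{G}+\ld+\rd).
\]
I would prove this by expanding the right-hand side exactly as in the proof of Lemma~\ref{lem: PT equation}. For $w=w_1\cdots w_n\in\bP_n$, the extended monomial is a product of the factors $\mbx{w_1},\ldots,\mbx{w_n}$ interleaved with $n-1$ inserted factors of the form $\la\ra\,\mathbf{G}+\la+\ra$ (at ascent positions) or $\ld\rd\,\mathbf{G}+\ld+\rd$ (at descent positions). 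Distributing, each term corresponds to a tuple $(A_1,\ldots,A_{n-1})$ satisfying the conditions in~\eqref{eq: asc-des condition}, and the map $f_w$ constructed in the proof of Lemma~\ref{lem: PT equation} sends such tuples bijectively to path-tree sequences with inorder reading word $w$, while matching all weight contributions. The essential noncommutative point is that reading the product from left to right reproduces $\mbx{\inorder(S)}$, because the concatenation defining $\inorder(S)$ interleaves $\inorder(\canopy^{(h)})$ with $\inorder(T_h)$ in exactly the same left-to-right order as the inserted factors sit in the extended monomial.

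The main thing to verify carefully is that nothing unexpected happens with commutation: the parameters $\la,\ld,\ra,\rd$ and the auxiliary scalars $\gamma_h\in\{\la\ra,\ld\rd\}$ belong to the (commutative) coefficient ring, so they can be collected as scalar prefactors without affecting the noncommutative variable order, and the $\mathbf{G}$-factor substituted at position $i$ contributes a block $\mbx{\inorder(T_h)}$ precisely between $\mbx{w_i}$ and $\mbx{w_{i+1}}$. Once this analogue of Lemma~\ref{lem: PT equation} is in hand, the corollary follows by combining it with the noncommutative form of Lemma~\ref{lem: Weight-Preserving} displayed above, summed over $n$ and $\canopy$.
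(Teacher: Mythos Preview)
Your proposal is correct and is precisely the approach the paper has in mind: the paper states the corollary as an immediate consequence of Proof~2 of Theorem~\ref{thm: ribbon functional equation}, and you have spelled out exactly the two ingredients the paper leaves implicit, namely that $\Phi_{n,\canopy}$ preserves the full inorder reading word (Lemma~\ref{lem: Weight-Preserving}) and that the bijection $f_w$ in Lemma~\ref{lem: PT equation} respects the left-to-right order of factors, so both steps upgrade directly to the noncommutative setting.
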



 \section{Proofs of Theorem~\ref{cor: Ribbon Expansion}~and Theorem~\ref{thm: ribbon expansion for fixed canopy}}
 \label{sec: ribbon expansion}

In this section, we use the weight-preserving bijection defined in Definition~\ref{def: Phi} to prove Theorem~\ref{thm: ribbon expansion for fixed canopy}. 
We then show how Theorem~\ref{cor: Ribbon Expansion} follows from Theorem~\ref{thm: ribbon expansion for fixed canopy}.

{
Our key tool in this section is a weight-preserving surjective map $\Theta_{n,\canopy}$ from $\ptl_{n,\canopy}$ onto $\anoncrossing{n}{\canopy}$ with the following crucial property: The preimage of $\pi^{\ast}\in \anoncrossing{n}{\canopy}$ consists of certain $S\in \ptl_{n,\canopy}$ such that the sum over the monomials $\alpx^{S}$ is a product of ribbon Schur functions.
The map $\Theta_{n,\canopy}$ is defined recursively. 
Informally put, all nodes that belong to paths in a path-tree sequence $S$ contribute the block $B_1$ in our augmented interlacing partition $\pi^{\ast}$. 
Furthermore, individual paths in $S$ correspond to sub-blocks in $B_1$ that are maximal under connectedness by short arcs. 
The labels of the short arcs record the sequences of up and down steps in individual paths.
To determine the remaining blocks in $\pi^{\ast}$, we first apply $\Phi_{n',\canopy'}$ for the appropriate $n'$ and $\canopy'$ to each tree in $S$, and subsequently apply $\Theta_{n',\canopy'}$ to the resulting path-tree sequence. In this manner we recursively compute all blocks in $\pi^{\ast}$.
Finally, in view of the definitions of weights of elements of $\ptl_{n,\canopy}$ and $\anoncrossing{n}{\canopy}$, the choice of which nodes get marked is essentially `forced' by our demand that $\Theta_{n,\canopy}$ be weight-preserving.
With this informal description serving as a crutch, we  give precise details.
}

 Given $S = (\canopy^{(0)},T_1,\dots,T_m,\canopy^{(m)}) \in \ptl_{n,\canopy}$, let $v_1,\dots,v_n$ be the nodes of $S$ listed in inorder. For $0\leq h\leq m$, let $[k_h,i_{h+1}]$ be the set of indices of nodes in $\canopy^{(h)}$ and $[i_h+1,k_h-1]$ be the set of indices of nodes in $T_h$. 

 Define a map $\Theta_{n,\canopy}:\ptl_{n,\canopy}\to \anoncrossing{n}{\canopy}$ recursively as follows.
 If $n=1$, define $\Theta_{1,\emptyset}$ to be the map which sends each sequence $(\canopy^{(0)})$ consisting of a single labeled node to the unique augmented interlacing partition on a single node. Now, given $n>1$ and $S\in \ptl_{n,\canopy}$, define the arc diagram of an augmented interlacing partition as follows.
 \begin{enumerate}
 \item Initialize a set of nodes $u_1,\dots,u_n$, and draw arcs so that all of the nodes $u_p$ with $p\in [1,i_1]\sqcup [k_1,i_{2}]\sqcup\dots\sqcup [k_{m},n]$ are in the single block $B_1$.
 \item For $0\leq h\leq m$ and $p\in[\nodethree_h,\nodeone_{h+1})$, observe that $v_pv_{p+1}$ is an edge $e$ of $\canopy^{(h)}$. If $e$ is an up step, then label the short arc $u_p u_{p+1}$ drawn in step (1) with a $U$. Otherwise, label the short arc $u_pu_{p+1}$ with a $D$. In addition, if $v_p^\ell>v_{p+1}^\ell$, then mark the node $u_p$.
 \item For $1\leq h\leq m$, mark the node $u_{\nodeone_h}$ if and only if $v_{\nodeone_h}^\ell > v_{\nodethree_h}^\ell$.
 \item For $1\leq h \leq m$, let $n_h=|\node{T_h}|$ and $\nu_h = \can{T_h}$. Recursively construct the augmented interlacing partition $\pi^\ast_h \coloneq \Theta_{n_h,\nu_h}(\Phi_{n_h,\nu_h}(T_h))$ on the set of nodes $u_{\nodeone_h+1},u_{\nodeone_h+2},\dots,u_{\nodethree_h-1}$.
 \item Define $\Theta_{n,\canopy}({S}) \coloneq B_1/\pi^\ast_1/\cdots/\pi^\ast_{m}$, i.e.\ the augmented interlacing partition whose blocks are $B_1$ together with the blocks in $\pi^\ast_{1}$,\dots,$\pi^\ast_{m}$.
 \end{enumerate}
  For the path-tree sequence $S$ in Figure~\ref{fig: Phi Decomp}, its image $\Theta_{n,\canopy}(S)$ is shown in Figure~\ref{fig: Theta Image}.
  
   \begin{figure}[t]
   \includegraphics[scale=0.35]{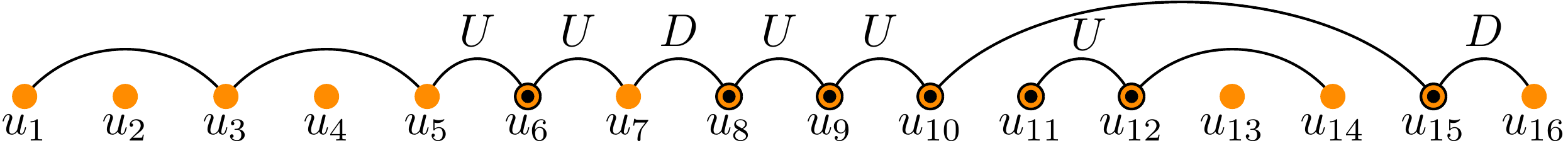}
   \caption{The augmented interlacing partition {$\Theta_{n,\canopy}(S)$ for $S$ in Figure~\ref{fig: Phi Decomp}.}}
   \label{fig: Theta Image}
 \end{figure}

  \begin{lemma}\label{lem: Theta Canopy lemma}
{ We have that $\Theta_{n,\canopy}$ is a well-defined map to $\anoncrossing{n}{\canopy}$ which is weight-preserving and surjective.}
  \end{lemma}

  \begin{proof}
We prove that $\Theta_{n,\canopy}$ is well-defined, weight-preserving, and surjective by induction on $n$. The $n=1$ case of each part is trivial, because in this case the only element of $\ptl_{n,\canopy}$ is a path-tree sequence $(\canopy^{(0)})$ consisting of a single node, which maps to the augmented interlacing partition consisting of a single node. Let us assume by way of induction that the lemma holds for all $n<N$ and all $\canopy$ and $S$. 

\indent To show $\Theta_{N,\canopy}$ is well-defined, we must show $w(\Theta_{N,\canopy}(S)) = \can{S}$ for all $S = (\canopy^{(0)},T_1,\dots,T_m,\canopy^{(m)}) \in \ptl_{N,\canopy}$. Let the nodes of $S$ be $v_1,\dots,v_N$ listed in inorder, let $n_h = |\node{T_h}|$, and let $\nu_h = \can{T_h}$. By our inductive hypothesis, we have 
\begin{align}
w(\Theta_{n_h,\nu_h}(\Phi_{n_h,\nu_h}(T_h))) = \can{\Phi_{n_h,\nu_h}(T_h)} = \can{T_h}.
\end{align} 
Letting $B_1$ be the block of $\Theta_{N,\canopy}(S)$, partition $B_1$ into blocks $C_0, C_1,\dots,C_p$ that are maximal under connectedness by short arcs. Then $w(C_i) = w_{\canopy^{(i)}}$ for $0\leq i\leq m$ by step (2) of the definition of $\Theta_{N,\canopy}$. Therefore, by the definition of $w(\Theta_{N,\canopy}(S))$, see \eqref{eq: def of w}, and the definition of $\can{S}$, see \eqref{eq: canopy of S}, we have $w(\Theta_{N,\canopy}(S)) = \can{S}$.

 To show $\Theta_{N,\canopy}$ is weight-preserving, we must show $\wt{\Theta_{N,\canopy}(S)} = \wt{S}$ for all $S\in \ptl_{N,\canopy}$. Using the same notation as above, by our inductive hypothesis we have
\begin{align}\label{eq: theta wt induction}
\wt{\Theta_{n_h,\nu_h}(\Phi_{n_h,\nu_h}(T_h))} = \wt{\Phi_{n_h,\nu_h}(T_h)} = \wt{T_h}.
\end{align}
Recall the definition of $\wt{\Theta_{n,\canopy}(S)}$, see \eqref{eq: wt of pi ast}. Observe that by step (2) of the construction of $\Theta_{N,\canopy}$, the number of unmarked steppers whose short arc is labeled $U$ in $C_i$ is equal to the number of up steps in $\canopy^{(i)}$ which form a descent. Similar equalities hold for the three other types of steppers. Furthermore, the contribution of the marked and unmarked jumpers to $\wt{\Theta_{n,\canopy}(S)}$ is equal to the contribution of the $\gamma_i$ to $\wt{S}$ by step (3). From these observations together with \eqref{eq: theta wt induction}, we conclude $\wt{\Theta_{N,\canopy}(S)} = \wt{S}$.

To show $\Theta_{N,\canopy}$ is surjective, let $\pi^\ast\in \anoncrossing{N}{\canopy}$ an augmented interlacing partition on nodes $u_1,\dots,u_N$. Letting $B_1$ be the part of $\pi^\ast$ containing $u_1$ and $u_N$, partition $B_1$ into blocks $C_0,\dots, C_m$ that are maximal under connectedness by short arcs, and let $\pi_h^\ast$ be the augmented interlacing partition induced by $\pi^\ast$ on the nodes $u_j$ for $j\in (\max(C_{h-1}),\min(C_{h}))$. By our inductive hypothesis, we have $\Theta_{n_h,\canopy_h}$ is surjective, where $n_h$ is the number of nodes of $\pi_h^\ast$ and $\canopy_h = w(\pi_h^\ast)$. Hence, there exists $S_h$ such that $\Theta_{n_h,\canopy_h}(S_h) = \pi_h^\ast$. For $0\leq i\leq m$, let $\canopy^{(i)}$ be the unlabeled lattice path on $|C_i|$ many nodes such that $w_{\canopy^{(i)}}$ is given by the labels of the short arcs in $C_i$. Furthermore, label the nodes of the $\canopy^{(i)}$ so that if we define $S = (\canopy^{(0)},\Phi_{n_1,\canopy_1}^{-1}(S_1),\dots,\Phi_{n_m,\canopy_m}^{-1}(S_m),\canopy^{(m)})$, then steps (2) and (3) would give us the marking of $\pi^\ast$ back under applying $\Theta_{n,\canopy}$. Finally, by step (4) we see that $S$ has been chosen exactly so that we recover the $\pi_h^\ast$. Hence, we have $\Theta_{N,\canopy}(S) = \pi^\ast$, so $\Theta_{N,\canopy}$ is surjective, which completes the induction.
  \end{proof}
  The reader can verify that for the path-tree sequence $S$ in Figure~\ref{fig: Phi Decomp} and $\Theta_{n,\canopy}(S)$ in  Figure~\ref{fig: Theta Image}, we have $\wt{\Theta_{n,\canopy}(S)} = \la\ld^4\ra\rd(\la\ra)^2(\ld\rd)^2 = \wt{S}$.
  
 \begin{proof}[Proof of Theorem~\ref{thm: ribbon expansion for fixed canopy}]
By Lemma~\ref{lem: Theta Canopy lemma}, we have
 \begin{align}
 G_{n,\canopy} & = \sum_{T\in \pbtl_{n,\canopy}} \wt{T}\,\alpx^T\\
 &= \sum_{S\in \ptl_{n,\canopy}} \wt{S} \,\alpx^{S}\label{eq: weight pres}\\
 &= \sum_{S \in \ptl_{n,\canopy}} \wt{\Theta_{n,\canopy}(S)} \,\alpx^{S}\\
 & = \sum_{\pi^\ast\in \anoncrossing{n}{\canopy}} \wt{\pi^\ast}\sum_{S\in \Theta_{n,\canopy}^{-1}(\pi^\ast)}\alpx^S,
 \end{align}
 where \eqref{eq: weight pres} follows from the fact that $\Phi_{n,\canopy}$ is a weight-preserving bijection.

 It remains to show that for an augmented interlacing partition $\pi^\ast= B_1/\dots/B_k\in \mnoncrossing{n,\canopy}$, we have
 \begin{align}\label{eq: ribbon comp}
 \sum_{S\in\Theta_{n,\canopy}^{-1}(\pi^\ast)} \alpx^S = r_{c(B_1)}r_{c(B_2)}\dots r_{c(B_k)}.
 \end{align}
 We prove this by induction on $n$. The case of $n=1$ is immediate since $G_{n,\emptyset} = r_1$.
 Let $n>1$, and let $\pi^\ast\in \anoncrossing{n}{\canopy}$. Letting $B_1$ be the block of $\pi^\ast$ containing $1$ and $n$, decompose $B_1$ into blocks $C_0,\dots,C_m$  maximal under connectedness by short arcs, as in \eqref{eq: def of w}. Let $\pi^\ast_1,\dots,\pi^\ast_m$ be the induced augmented interlacing partitions on the remaining nodes. Let $n_h$ be the number of nodes in $\pi^\ast_h$, and let $\canopy_h = w(\pi^\ast_h)$.
 Then $S = (\canopy^{(0)},T_1,\dots,T_{m},\canopy^{(m)})\in \Theta_{n,\canopy}^{-1}(\pi^\ast)$ if and only if
 \begin{enumerate}[(a)]
 \item The word obtained by concatenating the inorder reading words of $\canopy^{(0)},\dots,\canopy^{(m)}$ has strict descents in positions corresponding to marked nodes of $C_0,\dots,C_m$ and weak ascents in positions corresponding to unmarked nodes,
 \item For $0\leq h\leq m$, $w_{\canopy^{(h)}}$ is equal to the word read from the labeling of the short arcs connecting the nodes in $C_h$,
 \item For $1\leq h\leq m$, we have $\Phi_{n_h,\canopy_h}(T_h)\in \Theta_{n_h,\canopy_h}^{-1}(\pi^\ast_h)$.
 \end{enumerate}
 For fixed $\pi^\ast$, the steps of $\canopy^{(h)}$ are determined by (b). If $T_1,\dots,T_m$ are fixed trees which satisfy (c), then by (a) and the definition of $c(B_1)$ in Section~\ref{sec: fixed canopy}, we have that
 \begin{equation}\label{eq: sum to ribbon}
 \sum_{S} \alpx^{\inorder(\canopy^{(0)})}\alpx^{\inorder(\canopy^{(1)})}\dots\alpx^{\inorder(\canopy^{(m)})} = r_{c(B_1)},
 \end{equation}
 where the sum is over all $S = (\canopy^{(0)},T_1,\dots,T_m,\canopy^{(m)}) \in \Theta_{n,\canopy}^{-1}(\pi^\ast)$.

 Finally, by~\eqref{eq: sum to ribbon} and our induction hypothesis applied to each $\pi^\ast_h$, we have
 \begin{align}
 \sum_{S\in \Theta_{n,\canopy}^{-1}(\pi^\ast)} \alpx^{S} &= r_{c(B_1)}\prod_{h=1}^{m}\left(\sum_{T_h\in \Phi_{n_h,\canopy_h}^{-1}(\Theta_{n_h,\canopy_h}^{-1}(\pi^\ast_h))}\alpx^{T_h}\right)\\
 & = r_{c(B_1)}\prod_{h=1}^{m}\left(\sum_{S_h\in \Theta_{n_h,\canopy_h}^{-1}(\pi^\ast_h)}\alpx^{S_h}\right) \\
 &= r_{c(B_1)}r_{c(B_2)}\dots r_{c(B_k)},
 \end{align}
 which completes the proof.
 \end{proof}

 \begin{proof}[Proof of Theorem~\ref{cor: Ribbon Expansion}]
 We have that $G_n = \sum_{\canopy\in \{U,D\}^{n-1}} G_{n,\canopy}$. Furthermore, for each $\pi \in \mnoncrossing{n}$, then summing over all augmented interlacing partitions $\pi^\ast$ whose underlying element of $\mnoncrossing{n}$ is $\pi$, we get
 \begin{align}
 \sum_{\pi^\ast}\wt{\pi^\ast} &= \sum_{\pi^\ast} \la^{\sau(\pi^\ast)} \ld^{\sdu(\pi^\ast)} \ra^{\,\sad(\pi^\ast)} \rd^{\,\sdd(\pi^\ast)} (\la\ra)^{\,\ja(\pi^\ast)} (\ld\rd)^{\,\jd(\pi^\ast)}\\
 	&= (\la+\ra)^{\,\sa(\pi)} (\ld+\rd)^{\sd(\pi)} (\la\ra)^{\ja(\pi)} (\ld\rd)^{\jd(\pi)}\\
 	&= \wt{\pi}.
 \end{align}
 Then the result follows by summing both sides of the equation in Theorem~\ref{thm: ribbon expansion for fixed canopy} over all $\canopy\in \{U,D\}^{n-1}$.
 \end{proof}

 \section{Final remarks}\label{sec: back to the future}
 We conclude this article with some avenues to pursue and various remarks containing isolated results.

 \begin{enumerate}
 \item We briefly consider some special cases that were not covered in Section~\ref{sec: Hyperplane arrangements}. Our first specialization concerns the Shi arrangement $\hypshi_n$.
 Set $\la=\ld=\ra=1$ and $\rd=0$.
 This corresponds to considering  trees that do not have any right descents.
 If we restrict our attention to such trees on $n$ nodes such that the labeling is a standard labeling, then there are $(n+1)^{n-1}$ of them (see \cite{PakPostnikov} or \cite[p.~7]{Pak-slides}).
  Additionally, the functional equation in Theorem~\ref{thm: Gessel functional equation} is equivalent to
 \begin{align}
 (1+G)=\sum_{i\geq 0}h_i (1+G)^{i}.
 \end{align}
 Let $\mathrm{PF}_n$ be the Frobenius characteristic of the $\mathfrak{S}_n$-action on parking functions of length $n$ given by permuting coordinates. Solving this functional equation \cite[Section 4.1]{Haiman} implies that $G_n(\alpx;1,1,1,0)$ is the Frobenius characteristic of the natural $\mathfrak{S}_n$-action on parking functions of length $n$ (see also \cite[Proposition 2.2]{Stanley- Noncrossing}).
 Using one of the many bijections between $\regions{\hypshi_n}$ and parking functions of length $n$, we may obtain an $\mathfrak{S}_n$-action on $\regions{\hypshi_n}$.
 Thus, we can think of $G_n(\alpx;1,1,1,0)$ as the Frobenius characteristic of this action.
 
 We also consider the specialization $G=G(\alpx;1,1,q,0)$.
 Theorem~\ref{thm: Gessel functional equation} translates to
 \begin{align}
   1+qG=\sum_{i\geq 0}h_iq^{i}(1+G)^i.
 \end{align}
 Solving this functional equation yields
 \begin{align}\label{eqn:pf_n_with_q}
   G_n(\alpx;1,1,q,0)=\sum_{D\in\mathcal{D}_n}q^{n-\mathrm{peak}(D)}h_{\lambda(D)}.
 \end{align}
Recall that $\mathcal{D}_n$ is the set of Dyck paths with $2n$ steps, $\lambda(D)\vdash n$ is obtained by sorting the lengths of the vertical runs of $D\in \mathcal{D}_n$, and $\mathrm{peak}(D)$ is the number of peaks in $D$. The reader should compare the expansion in ~\eqref{eqn:pf_n_with_q} with \eqref{eqn:linial_zonal}.

 Another case of  interest  is when $\ld=\ra=1$ and $\la=\rd=0$. We have that $G_{n,v}(\alpx;0,1,1,0)$ is the generating function over increasing binary trees on $n$ nodes with canopy $\canopy$.
 We leave it to the reader to verify that Theorem~\ref{thm: ribbon expansion for fixed canopy} implies that $G_{n,\canopy}(\alpx;0,1,1,0)$ is a single ribbon Schur function $r_{\alpha}$ where $\alpha$ is the composition associated with the canopy $\canopy=\canopy_1\cdots \canopy_{n-1}$ define as follows. Consider the subset $S$ of $[n-1]$ where $i\in S$ if and only if $\canopy_i=U$, and define $\alpha\coloneq\comp(S)$. That the fact that there are $n!$ standard increasing binary trees on $n$ nodes is simply a reflection of the fact that $\sum_{\alpha\vDash n } r_{\alpha}$ is the Frobenius characteristic of the regular representation of $\mathfrak{S}_n$.

 Another specialization is when $\la=\ld=0$ and $\ra=\rd=1$. We have $G_n(\alpx;0,0,1,1)$ is the generating function over words of length $n$. It is well known that this is the Frobenius characteristic of the regular representation of $\mathfrak{S}_n$.
 It is also the Frobenius characteristic of the coinvariant algebra of the polynomial ring on $n$ generators, which is isomorphic to the regular representation as a symmetric group module.
 If we keep the $\rd$ parameter, this is equivalent to recording descents in the generating function over words.
 In \cite{Gnedin-Gorin-Kerov}, Gnedin-Gorin-Kerov construct a filtration of symmetric group submodules of the coinvariant algebra, defined in terms of its Garsia-Stanton descent basis \cite{Garsia-Stanton}.
 They show that $G_n(\alpx;0,0,1,q)$ is the Frobenius characteristic of the associated graded module corresponding to this filtration.
 For general results pertaining to multigraded Frobenius characteristics of the coinvariant algebra according the descents, the reader is referred to \cite{Huang}.
 It would be interesting to draw connections between this work and ours.

 \item We return to the topic of $\gamma$-nonnegativity. Note that our expansion for $B_n$ in \eqref{eqn:double gamma-nonnegativity} involves a sort of ``double" $\gamma$-nonnegativity: one in terms of $\la\ra$ and $\la+\ra$, and the other in terms of $\ld\rd$ and $\ld+\rd$.
 It would be interesting to identify other instances of such a phenomenon.
 There has been a great amount of interest in $\gamma$-nonnegativity recently, in no small measure due to Gal's conjecture \cite{Gal} which is essentially a statement in topology, and the implications it has for the Charney-Davis conjecture \cite{Charney-Davis}.
 For more work in this direction, see \cite{Postnikov-Reiner-Williams}.
 We do not know of any topological motivation for our ``double'' $\gamma$-nonnegativity, and we intend to explore this in the future.

 \item Another natural question to consider is that of extending our results to labeled $k$-ary trees for $k\geq 3$.
 Note that while there is an analogue of $B(x;\la,\ld,\ra,\rd)$ for $k$-ary trees where $k\geq 3$, it is unclear how to obtain an analogue of $G_n(\alpx;\la,\ld,\ra,\rd)$ for $k$-ary trees, as there does not appear to be natural definition for ascent-descent statistics in this generalized setting.
 \end{enumerate}

\appendix
\section{The values of \texorpdfstring{$B_n$}{bn} and \texorpdfstring{$G_n$}{gn} for small \texorpdfstring{$n$}{en}}\label{app: expansions}
For the sake of brevity, the expansions below assume that $\alpha=\la\ra$, $\beta=\la+\ra$, $\gamma=\ld\rd$ and $\delta=\ld+\rd$.

\begin{table}[h]\label{tab: B_n}
\begin{tabular}{ l|l }
\hline
\multicolumn{2}{ c }{Table of values for $B_n$} \\
\hline
\\
$n=1$ & $1$ \\[0.2em]
\hline
\\[-0.7em]
$n=2$ & $\delta + \beta$\\[0.2em]
\hline
\\[-0.7em]
$n=3$ & $\delta^{2} + 4\beta\delta+ \beta^2  +3\alpha+3\gamma$\\[0.2em]
\hline
\\[-0.7em]
$n=4$ & $\beta^3 + 11\beta^2\delta + 11\beta\delta^2 + \delta^3 + 14\alpha\beta + 22\beta \gamma + 22\alpha\delta + 14\gamma\delta$\\[0.2em]
\hline
\\[-0.7em]
\multirow{2}{*}{$n=5$} & $\beta^4 + 26\beta^3\delta + 66\beta^2\delta^2 + 26\beta\delta^3 + \delta^4 + 45\alpha\beta^2 + 105\beta^2\gamma +
210\alpha\beta\delta + 210\beta \gamma\delta $\\ &$+ 105\alpha\delta^2 + 45\gamma\delta^2 + 50\alpha^2 + 140\alpha \gamma + 50\gamma^2$ \\[0.2em]
\hline
\end{tabular}
\end{table}

\medskip

\begin{table}[h]\label{tab:G_n}
\begin{tabular}{ l|l }
\hline
\multicolumn{2}{ c }{Expansion for $G_n$ in terms of ribbon Schurs} \\
\hline
\\[-0.7em]
$n=1$ & $r_{1}$ \\[0.2em]
\hline
\\[-0.7em]
$n=2$ & $\delta r_{11} + \beta r_{2}$\\[0.2em]
\hline
\\[-0.7em]
$n=3$ & $(\delta^{2} + \gamma)r_{111} + (\beta \delta + \gamma)r_{12} +
(\beta\delta + \alpha)r_{21} + (\beta^{2} + \alpha)r_{3}$\\[0.2em]
\hline
\\[-0.7em]
\multirow{2}{*}{$n=4$} & $(\delta^{3} + 3 \gamma \delta)r_{1111} + (\beta \delta^{2} + \beta \gamma + 2 \gamma
\delta)r_{112} + (\beta \delta^{2} + \beta \gamma + \alpha \delta + \gamma \delta)r_{1 2
1}$\\ &$ + (\beta^{2} \delta + 2 \beta \gamma + \alpha \delta)r_{1 3}+ (\beta \delta^{2} + \beta \gamma
+ 2 \alpha \delta)r_{2 1 1} + (\beta^{2} \delta + \alpha \beta + \beta \gamma + \alpha
\delta)r_{2 2} $\\ &$+ (\beta^{2} \delta + 2 \alpha \beta + \alpha \delta)r_{3 1} +
(\beta^{3} + 3 \alpha \beta)r_{4}$\\[0.2em]
\hline
\\[-0.7em]
\multirow{7}{*}{$n=5$} & $(\delta^{4} + 6 \gamma \delta^{2} + 2 \gamma^{2})r_{11111} + (\beta
\delta^{3} + 3 \beta \gamma \delta + 3 \gamma \delta^{2} + 2 \gamma^{2})r_{1112}$\\ & $ + (\beta
\delta^{3} + 3 \beta \gamma \delta + \alpha \delta^{2} + 2 \gamma \delta^{2} + \alpha \gamma + \gamma^{2})r_{1121} $\\ & $ + (\beta^{2} \delta^{2} + \beta^{2} \gamma + 4 \beta \gamma \delta + \alpha \delta^{2} + \alpha \gamma +
\gamma^{2})r_{113}$\\ & $ + (\beta \delta^{3} + 3 \beta \gamma \delta + 2 \alpha \delta^{2} + \gamma
\delta^{2} + \alpha \gamma + \gamma^{2})r_{1 2 1 1}$\\ & $ + (\beta^{2} \delta^{2} + \beta^{2} \gamma
+ \alpha \beta \delta + 3 \beta \gamma \delta + \alpha \delta^{2} + \alpha \gamma + \gamma^{2})r_{1 2 2}$\\ &$ +
(\beta^{2} \delta^{2} + \beta^{2} \gamma + 2 \alpha \beta \delta + 2 \beta \gamma \delta + \alpha \delta^{2} + 2 \alpha
\gamma)r_{131} + (\beta^{3} \delta + 3 \beta^{2} \gamma + 3 \alpha \beta \delta + 2 \alpha
\gamma)r_{14}$\\&$ + (\beta \delta^{3} + 3 \beta \gamma \delta + 3 \alpha \delta^{2} + 2 \alpha
\gamma)r_{2111} + (\beta^{2} \delta^{2} + \beta^{2} \gamma + 2 \alpha \beta \delta + 2 \beta \gamma
\delta + \alpha \delta^{2} + 2 \alpha \gamma)r_{212}$ \\ & $+ (\beta^{2} \delta^{2} + \beta^{2} \gamma +
3 \alpha \beta \delta + \beta \gamma \delta + \alpha \delta^{2} + \alpha^{2} + \alpha \gamma)r_{221}$\\ & $ +
(\beta^{3} \delta + \alpha \beta^{2} + 2 \beta^{2} \gamma + 3 \alpha \beta \delta + \alpha^{2} + \alpha \gamma)r_{2
3}$\\&$ + (\beta^{2} \delta^{2} + \beta^{2} \gamma + 4 \alpha \beta \delta + \alpha \delta^{2} + \alpha^{2} + \alpha
\gamma)r_{311} $\\&$+ (\beta^{3} \delta + 2 \alpha \beta^{2} + \beta^{2} \gamma + 3 \alpha \beta \delta +
\alpha^{2} + \alpha \gamma)r_{32} + (\beta^{3} \delta + 3 \alpha \beta^{2} + 3 \alpha \beta \delta + 2
\alpha^{2})r_{41}$\\&$ + (\beta^{4} + 6 \alpha \beta^{2} + 2 \alpha^{2})r_{5}$ \\[0.2em]
\hline
\end{tabular}
\end{table}

\newpage

\addresseshere
\end{document}